\documentclass[12pt]{amsart}

\usepackage[T1]{fontenc}
\usepackage[utf8]{inputenc}
\usepackage{lmodern}

\usepackage[american]{babel}
\usepackage[babel, final]{microtype}
\usepackage[all]{xy}
\usepackage{ifpdf}
\usepackage{multirow}

\usepackage[pdftex, dvipsnames]{xcolor}
\usepackage[pdftex, final]{graphicx}
\usepackage{pinlabel}
\usepackage[pdftex,%
    lmargin=1in,%
    rmargin=1in,%
    tmargin=1in,%
    bmargin=1in,%
    nomarginpar,%
    includehead,%
    includefoot%
]{geometry}

\usepackage{amsmath,amsthm,amssymb,amsfonts,mathrsfs,tikz,tikz-cd,bm,verbatim,mathtools,caption,enumitem,csquotes,float,setspace,subcaption,graphicx,comment,framed}

\usepackage[normalem]{ulem}

\usepackage{hyperref}
\hypersetup{
    colorlinks=true,
    linkcolor=NavyBlue,
    citecolor=NavyBlue
}

\usepackage[hang,flushmargin]{footmisc} 

\usepackage[backend=biber,style=alphabetic,sorting=nyt, url=false,doi=false,isbn=false,backref=true,maxnames=10]{biblatex}
\renewbibmacro{in:}{}
\DefineBibliographyStrings{english}{
  backrefpage={$\uparrow$},
  backrefpages={$\uparrow$}
}
\AtBeginBibliography{\small}
\addto\extrasamerican{%
}

\numberwithin{equation}{section}

\newtheorem{theorem}{Theorem}[section]

\newtheorem{proposition}{Proposition}[section]
\newtheorem{lemma}{Lemma}[section]

\newtheorem{conjecture}{Conjecture}[section]

\newtheorem*{theorem*}{Theorem}

\theoremstyle{definition}
\newtheorem{definition}{Definition}[section]
\newtheorem{example}{Example}[section]
\newtheorem{problem}{Problem}[section]
\newtheorem{remark}{Remark}[section]

\newtheorem{question}{Question}[section]
\newtheorem{answer}{Answer}[section]

\makeatletter
\let\c@conjecture=\c@theorem
\let\c@corollary=\c@theorem
\let\c@proposition=\c@theorem
\let\c@lemma=\c@theorem
\let\c@definition=\c@theorem
\let\c@example=\c@theorem
\let\c@remark=\c@theorem
\let\c@equation=\c@theorem
\let\c@question=\c@theorem
\let\c@fact=\c@theorem
\let\c@problem=\c@theorem
\makeatother

\def\makeautorefname#1#2{\expandafter\def\csname#1autorefname\endcsname{#2}}
\makeautorefname{proposition}{Proposition}
\makeautorefname{lemma}{Lemma}
\makeautorefname{definition}{Definition}
\makeautorefname{example}{Example}
\makeautorefname{question}{Question}
\makeautorefname{conjecture}{Conjecture}
\makeautorefname{section}{Section}
\makeautorefname{claim}{Claim}
\makeautorefname{equation}{Equation}
\makeautorefname{remark}{Remark}
\makeautorefname{corollary}{Corollary}
\makeautorefname{fact}{Fact}
\makeautorefname{problem}{Problem}

\makeatletter
\def\@tocline#1#2#3#4#5#6#7{\relax
  \ifnum #1>\c@tocdepth 
  \else
    \par \addpenalty\@secpenalty\addvspace{#2}%
    \begingroup \hyphenpenalty\@M
    \@ifempty{#4}{%
      \@tempdima\csname r@tocindent\number#1\endcsname\relax
    }{%
      \@tempdima#4\relax
    }%
    \parindent\z@ \leftskip#3\relax \advance\leftskip\@tempdima\relax
    \rightskip\@pnumwidth plus4em \parfillskip-\@pnumwidth
    #5\leavevmode\hskip-\@tempdima
      \ifcase #1
       \or\or \hskip 1em \or \hskip 2em \else \hskip 3em \fi%
      #6\nobreak\relax
    \dotfill\hbox to\@pnumwidth{\@tocpagenum{#7}}\par
    \nobreak
    \endgroup
  \fi}
\makeatother

\newcommand{\hra}{\hookrightarrow}
\newcommand{\wt}{\widetilde}
\newcommand{\eps}{\varepsilon}
\newcommand{\Da}{D_\alpha}
\newcommand{\Db}{D_\beta}
\newcommand{\Z}{\mathbb{Z}}

\newcommand{\R}{\mathbb{R}}
\newcommand{\Tcal}{\mathcal{T}}

\newcommand{\Dcal}{\mathcal{D}}

\newcommand{\T}{\overline{T}}

\newcommand{\interior}{\textnormal{int}}

\title{Bridge position of $3$-manifolds embedded in the $5$-sphere}

\author[Aranda]{Rom\'an Aranda}
\address{Department of Mathematics, University of Nebraska-Lincoln, Lincoln, NE 68588}
\email{\href{romanaranda123@gmail.com}{romanaranda123@gmail.com}}

\author[Blackwell]{Sarah Blackwell}
\address{Department of Mathematics, University of Virginia, Charlottesville, VA 22902}
\email{\href{mailto:blackwell@virginia.edu}{blackwell@virginia.edu}}

\author[Kim]{Geunyoung Kim}
\address{Department of Mathematics and Statistics\\McMaster University\\Hamilton, ON L8S 4L8}
\email{\href{kimg68@mcmcaster.ca}{kimg68@mcmaster.ca}}

\author[Naylor]{Patrick Naylor}
\address{Department of Mathematics and Statistics\\McMaster University\\Hamilton, ON L8S 4L8}
\email{\href{patrick.naylor@mcmaster.ca}{patrick.naylor@mcmaster.ca}}

\author[Pongtanapaisan]{Puttipong Pongtanapaisan}
\address{Pitzer College, Claremont, CA 91711}
\email{\href{puttip@pitzer.edu}{puttip@pitzer.edu}}

\begin{document}

\begin{abstract}
    We introduce and study bridge decompositions for $3$–manifolds embedded in the $5$–sphere. These generalize both the classical notion of bridge position for knots in the $3$–sphere and the bridge trisections of surfaces in the $4$–sphere due to Meier and Zupan. Our main technical tool is the multisections of $5$–manifolds introduced by Aribi, Courte, Golla, and Moussard. We prove that every embedded $3$–manifold admits such a decomposition; in particular, any such embedding is encoded by four trivial tangle diagrams. We also present a range of explicit examples, including $S^2$-spun knots and ribbon $3$-knots.
\end{abstract}
\maketitle
 
\vspace{-1cm}
\section{Introduction}
\thispagestyle{empty}

A central challenge in the study of high-dimensional knotting lies in finding effective diagrammatic descriptions of embeddings of manifolds. In classical knot theory, knots and links in $S^3$ are encoded by planar diagrams, and many invariants and constructions can be computed directly from such a depiction. However, in higher dimensions, similar diagrammatic descriptions are much harder to obtain. A natural goal is therefore to represent embeddings of higher-dimensional manifolds using collections of lower-dimensional diagrams.

In this paper, we develop such a description for embeddings of $3$-manifolds in the $5$-sphere. Our approach encodes an embedded $3$-manifold using a collection of four simple tangle diagrams satisfying certain compatibility conditions. Roughly speaking, the data consists of four trivial tangle diagrams with the property that any pair determines an unlink and any triple determines a bridge trisection of a collection of unknotted $2$-spheres in $S^4$. We call such a representation a \emph{bridge quadrisection diagram}; see \autoref{fig:RP3_4plane} for an example. These diagrams provide a combinatorial description of embeddings of $3$-manifolds in $S^5$ analogous to the role played by planar diagrams in classical knot theory, or bridge trisection diagrams in knotted surface theory. 

\begin{figure}[ht]
    \centering
    \includegraphics[width=.7\textwidth]{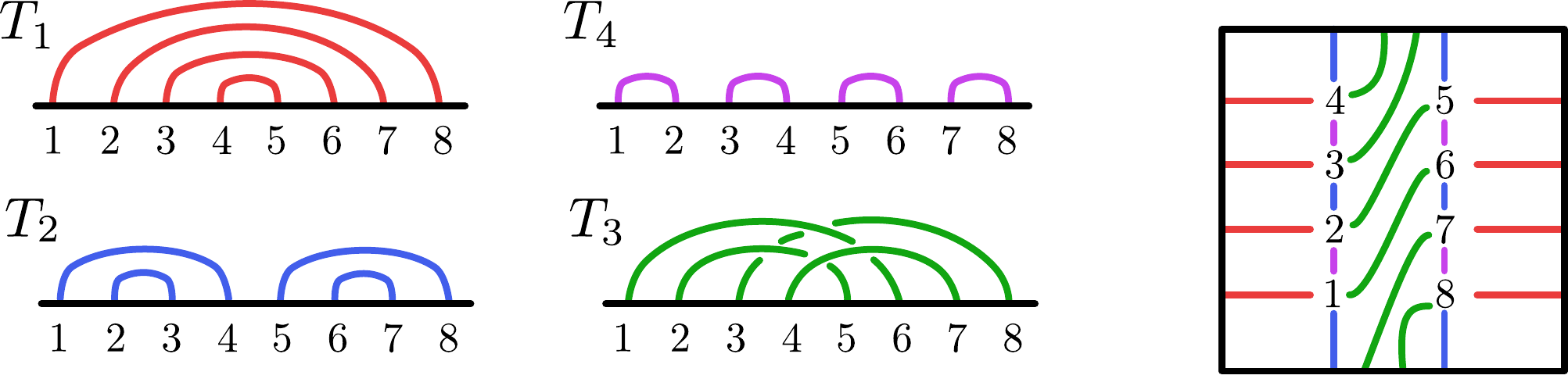}
    \caption{Left: a 4-plane diagram encoding a bridge 4-section of an embedded torus in $S^4$. In fact, the tuple also describes a bridge quadrisected embedding of $\mathbb{RP}^3$ in $S^5$, as described in \autoref{ex:RP3_explained}. Right: an embedding of the graph $\textcolor{red}{T_1}\cup\textcolor{blue}{T_2}\cup\textcolor{ForestGreen}{T_3}\cup\textcolor{purple}{T_4}$ 
    in a torus. The curves $\textcolor{red}{T_1}\cup\textcolor{ForestGreen}{T_3}$ and $\textcolor{blue}{T_2}\cup\textcolor{purple}{T_4}$ form a Heegaard diagram for $\mathbb{RP}^3$ (with some duplication of curves).}
    \label{fig:RP3_4plane}
\end{figure}

The geometric framework underlying these diagrams comes from decompositions of manifolds into simple pieces. \emph{Trisections}, introduced by Gay and Kirby \cite{gay16}, decompose a smooth, closed, orientable $4$-manifold into three $4$-dimensional $1$-handlebodies with simple intersections. Trisections provide a diagrammatic description of $4$-manifolds and have since been generalized in several ways: to decompositions with more than three pieces, called \emph{multisections} or \emph{$n$-sections} \cite{islambouli20}; to higher-dimensional PL manifolds \cite{rubinsteintillmann2020}; and to non-orientable $4$-manifolds \cite{millernaylor2024}. Trisection theory has also proved particularly useful for studying knotted surfaces. Meier and Zupan \cite{meier17,meier18} introduced the notion of \emph{bridge position} for a knotted surface $S\subset S^4$ (or an arbitrary $4$-manifold), producing a \emph{bridge trisection} in which the surface decomposes into three trivial disk systems subordinate to the trivial trisection of $S^4$. These decompositions yield diagrammatic descriptions of knotted surfaces and lead to effective diagrammatic moves relating different representations.

Our work extends this perspective to embeddings of $3$-manifolds in $S^5$. To do so, we make use of the notion of a \emph{quadrisection} of a closed, orientable $5$-manifold introduced by Aribi, Courte, Golla, and Moussard \cite{aribi23}. A quadrisection decomposes a $5$-manifold into four $5$-dimensional $1$-handlebodies so that any intersection of the pieces is again a $1$-handlebody, except for the total intersection, which is a surface. An important feature of quadrisections is that a quadrisected $5$-manifold admits a \emph{quadrisection diagram}, and conversely such a diagram uniquely determines a smooth $5$-manifold equipped with a quadrisection. 

Using the trivial quadrisection of $S^5$, we introduce the corresponding decompositions of embedded $3$-manifolds: a \emph{bridge quadrisection} of an embedded $3$-manifold $Y\subset S^5$ is a decomposition of $Y$ into boundary-parallel disk systems organized with respect to the trivial quadrisection of $S^5$. Our main structural result is an existence theorem for these decompositions, which leads to the tangle diagrams described above.

\newtheorem*{existence}{\autoref{thm:existance_bridge_4sections}}
\begin{existence}
    Every embedded 3-manifold $Y\subset S^5$ admits a bridge quadrisection.  
\end{existence}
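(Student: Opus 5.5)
The plan is to follow the Meier--Zupan strategy for bridge trisections, replacing Morse functions on $S^4$ by a Morse-theoretic description of $S^5$ adapted to the trivial quadrisection. First, isotope $Y$ so that the restriction of a standard height function $h\colon S^5\to\R$ to $Y$ is Morse and self-indexing, with critical points of index $0,1,2,3$ at heights $0,1,2,3$. By a general position argument we may further arrange that $Y$ meets a neighborhood of the middle level in a controlled way. The idea is to read $Y$ as a movie: minima (index $0$) and $1$-handles, then a middle cross-section $Y\cap h^{-1}(3/2)$ (a surface $F$ in $S^4$), then $2$-handles and maxima. Concretely:
\begin{itemize}
\item Push all index $0$ and $1$ critical points of $Y$ into $h^{-1}(-\infty,3/2)$, where they form a $3$-dimensional $1$-handlebody, i.e.\ a boundary-parallel $3$-dimensional handlebody system in a $5$-ball.
\item Push the index $2$ and $3$ critical points into the upper half, where $Y$ is dually a handlebody.
\end{itemize}
This exhibits $Y$ as a Heegaard splitting $H_-\cup_F H_+$ with $F\subset S^4=h^{-1}(3/2)$ a knotted surface.

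Next, put $F\subset S^4$ in bridge trisected position with respect to the trivial trisection of $S^4$ by Meier--Zupan, and then perturb so that the Heegaard data interact well with it. The trivial quadrisection of $S^5$, from \cite{aribi23}, should be understood as follows. Its central surface is $S^2$, and its pairwise and triple intersections are balls. It can be viewed as coming from a decomposition of $S^5$ as a suspension-like family over a trisected or $4$-sected $S^4$. Concretely, I would realize the four $5$-balls $X_1,\dots,X_4$ so that two of them, say $X_1$ and $X_3$, contain the lower and upper halves near their cores. The other two, $X_2$ and $X_4$, thicken regions of the middle $S^4$. Under this identification, the requirement that each $Y\cap X_i$ is a boundary-parallel disk system (a trivial collection of $3$-balls) becomes the following:
\begin{itemize}
\item $H_-$ and $H_+$ must be cut into balls by compressing disks.
\item $F$ must be placed in a bridge $4$-section of $S^4$ compatible with the meridian disk systems of $H_\pm$.
\end{itemize}
Thus I would choose meridian disk systems for $H_-$ and $H_+$ whose boundary curves lie on $F$, and isotope these curves into a position on $F$ adapted to a bridge multisection of $F$. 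Then I would distribute the $2$-handles of $H_\pm$ (that is, neighborhoods of the meridian disks) into the pieces $X_2$ and $X_4$. What remains in $X_1$ and $X_3$ is then a union of $3$-balls that are boundary-parallel.

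Finally, I would verify the intersection conditions. Each pairwise intersection $Y\cap X_i\cap X_j$ should be a trivial disk system in a $4$-ball, and each triple intersection should be a trivial tangle in a $3$-ball. The triple-intersection condition is exactly that any three tangles give a bridge trisection of unknotted spheres. These follow by perturbation (bridge stabilization) of the tangles, in the way Meier--Zupan obtain trivial tangles from a banded bridge splitting.

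The main obstacle is this compatibility step. We must simultaneously make the meridian curves of both handlebodies $H_\pm$ into "bridge-position" curves, and ensure the resulting pieces in the $4$-dimensional intersections are trivial disk systems rather than knotted ones. To handle it, I would first try to realize the compressions of $F$ along the meridians of $H_-$, and separately of $H_+$, as band moves. I would then apply the Meier--Zupan technique of putting a banded unlink diagram into bridge position by repeated perturbation, performed for both band sets at once, increasing bridge number as needed.
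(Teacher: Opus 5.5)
Your global architecture agrees with the paper's. You use hyperbolic Morse position, take the middle cross-section $F\subset S^4$ with its two disk systems (co-cores of $1$-handles and cores of $2$-handles), and use the Morse-function model of the genus-zero quadrisection, with two pieces containing the bottom and top and two thickening the middle level.

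\textbf{The gap.} It is the compatibility step you flag as the main obstacle. That step is essentially the whole theorem, and the tool you propose does not accomplish it.

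\emph{What is required.} You need a bridge $4$-section $(T_1,T_2,T_3,T_4)$ of $F$ with $\partial D_\alpha=T_1\cup\overline{T}_3$ and $D_\alpha$ \emph{embedded in the $3$-sphere} $B_1\cup\overline{B}_3$, and likewise $D_\beta$ in $B_2\cup\overline{B}_4$. This is forced by the definition. The meridian system of $H_-$ that a quadrisection exhibits is the intersection of the two pieces containing $H_-$. That intersection is a trivial disk system in a $4$-ball bounded by such a $3$-sphere cross-section, so it is isotopic into that cross-section.

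\emph{Why perturbation does not get you there.} Perturbations are isotopies of $F$, so they never change the isotopy class of the pair $(F,D_\alpha)$, and that pair is in general not of this form. For example, let $F$ be a knotted $1$-fusion ribbon sphere, such as the spun trefoil, and let $D_\alpha$ be the meridian disk of its fusion tube. Bridge position would cut $F$ along the unknot $\partial D_\alpha$ into two disks, each of which caps off to an unknotted sphere. Van Kampen along $B_1\cup\overline{B}_3$ would then give $\pi_1(S^4\setminus F)\cong\mathbb{Z}$, a contradiction.

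The Meier--Zupan banded-unlink argument works because the bands already sit in a single $S^3$. Here $D_\alpha$ is a $2$-dimensional object in $S^4$ that meets $D_\beta$, so there is no single diagram in which to treat ``both band sets at once.''

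\emph{The boundary condition is also not a matter of isotoping curves on $F$.} The multicurve $T_1\cup\overline{T}_3$ is dictated by the $4$-section and always separates $F$. So you must first make $\partial D_\alpha$ null-homologous mod $2$, by adding parallel disks, and then change the $4$-section itself.

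\textbf{The missing idea.} You must modify the Heegaard data, not only the bridge splitting. The paper does this in two stages.
\begin{enumerate}
\item \emph{Matching boundaries.} It connects $T_1\cup\overline{T}_3$ to $\partial D_\alpha$ by arc surgeries on the abstract $4$-section of $F$. These are realized by abstract perturbations, which lift to perturbations of the embedded $4$-section. The same is done for $\beta$.
\item \emph{Embedding the disks in the $3$-sphere.}
\begin{itemize}
\item[(a)] It pushes the interior of $D_\alpha$ into the $4$-ball $X_1\cup X_2$ and puts it in Morse position. The maxima and saddles are then pushed into $B_1\cup\overline{B}_3$ as an unlink $\ell$ and bands $\nu$.
\item[(b)] It adds unknotted spheres $U$ with $U\cap(B_1\cup\overline{B}_3)=\ell$ and tubes $F\cup U$ along the cores of $\nu$. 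These are canceling $2/3$ and $1/2$ handle pairs, so $Y$ is unchanged.
\item[(c)] The tubing lemma gives a bridge $4$-section with $L'_{13}=(L_{13}\cup\ell)[\nu]$, while $L_{24}$ only gains $1$-bridge meridian unknots.
\item[(d)] The new $\alpha$-disks are the minimum disks of $D_\alpha$, which lie in the $3$-sphere.
\end{itemize}
Repeating this for $D_\beta$ in $B_2\cup\overline{B}_4$ yields bridge position, and the quadrisection is then built directly in the Morse model.
\end{enumerate}
Without some such stabilization of the Heegaard complex, your plan does not produce the trivial disk systems it needs.
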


We construct diagrams for a range of embeddings, including simple embeddings of lens spaces, spun knotted spheres, and ribbon $3$-spheres, and in \autoref{sec:unknotted_examples} and \autoref{sec:knotted_examples}, we describe techniques for finding these diagrams in practice. One benefit of our approach is that it provides a relatively straightforward way to generate such diagrams. Thus, this complementary perspective to existing techniques (e.g., \cite{Lomonaco81_5dim_knot_theory}) is a potentially fruitful way of producing interesting examples. It reduces, in principle, the problem of generating $3$-knots in $S^5$ to finding four tangles satisfying certain combinatorial conditions. 

A natural question is how different diagrams representing the same embedding are related. In dimension four, Hughes, Kim, and Miller showed that bridge trisections of isotopic knotted surfaces in 4-manifolds are related by sequences of perturbations and deperturbations \cite{hughes_kim_miller20}. In \autoref{subsec:uniqueness_bridge_3mans}, we introduce analogous moves for bridge quadrisections and formulate a corresponding conjectural uniqueness statement.

\newtheorem*{uniqueness}{\autoref{conj:uniqueness_bridge_3mans}}
\begin{uniqueness}
    Any two 4-plane diagrams describing isotopic 3-manifolds in $S^5$ are related by a finite sequence of interior Reidemeister moves, mutual braid moves, and 3-manifold perturbations. 
\end{uniqueness}

Beyond providing a diagrammatic framework for studying embeddings of $3$-manifolds in $S^5$, bridge quadrisections also allow effective computation of invariants directly from diagrams. In \autoref{subsec:branched_covers}, we show how to adapt work of Cahn, Mati\'c, and Ruppik \cite{cahn23} to compute invariants arising from branched covers. Code implementing these computations is available on GitHub \cite{github_link}.

\begin{remark}\label{rem:general_quadrisections}
The curious reader may wonder whether bridge quadrisections for 3-manifolds in arbitrary 5-manifolds exist, just as 2-manifolds can be bridge trisected inside any trisected 4-manifold~\cite{meier18}. This statement is in production by Courte, Moussard, Ren, and Zhou~\cite{Moussard_personal}. Here, we focus on the case of the 5-sphere, but we believe that one should be able to use the ideas in this work to prove a more general existence theorem by upgrading \autoref{lem:uniqueness_disk_fillings} and \autoref{lem:abstract_perturbations_lift} appropriately.
\end{remark}

\subsection*{Organization} The paper is organized as follows. In \autoref{sec:decompositions_of_manifolds}, we briefly review bridge multisections of surfaces, and define bridge quadrisections of 3-manifolds in $S^5$. In \autoref{sec:calculus_quadrisected_surfaces}, we discuss some important properties of perturbations which will be necessary for the upcoming proof. In \autoref{sec:existence}, we prove \autoref{thm:existance_bridge_4sections}. In \autoref{sec:unknotted_examples} and \autoref{sec:knotted_examples}, we present a variety of examples and techniques for finding bridge quadrisection diagrams. In \autoref{sec:future_directions}, we discuss computations of invariants from these diagrams, as well as some questions. Finally, in \autoref{sec:appendix}, we include further details about our Sage code.

\subsection*{Acknowledgments} 
We thank Sylvain Courte, Daniel Hartman, Slava Krushkal, Jeffrey Meier, Delphine Moussard, Qiuyu Ren, Xiaozhou Zhou, and Alexander Zupan for helpful discussions. While this project did not originate at a workshop, the authors acknowledge many years of participation in Trisectors Workshops, which brought them together to explore the ideas presented in this paper, and thank Jeffrey Meier, Maggie Miller, Laura Starkston, and Alexander Zupan for organizing such wonderful collaboration spaces.

RA and PP were partially supported by an AMS-Simons Travel Grant. SB was supported by the NSF Postdoctoral Research Fellowship DMS-2303143. PN was supported by an NSERC Discovery Grant and a CRM-Simons Scholar Grant. 

\tableofcontents
 \makeatletter
\providecommand\@dotsep{5}

\section{Decompositions of low-dimensional manifolds}\label{sec:decompositions_of_manifolds}

We will work in the smooth category throughout. Manifolds are compact and connected unless stated otherwise, but not necessarily orientable. We reserve the term \emph{$n$-knot} for a embedding $S^n\hookrightarrow S^{n+2}$. 

\subsection{Trivial tangles}\label{subsec:trivial_tangles}

A \emph{trivial tangle} is a pair $(B^3,T)$, or simply $T\subset B^3$, where $T$ is a collection of properly embedded arcs such that, fixing the endpoints of $T$, we may isotope $T$ into $\partial B^3$. Every link $L$ in $S^3$ can be written as the union of two trivial tangles along their boundary, i.e., $(S^3,L)=(B_1, T_1)\cup (B_2,T_2)$. Such a decomposition is called a \emph{$b$-bridge splitting} of $L$ if each $T_i$ has $b$ components. The symbol $\T$ will denote the mirror image of a trivial tangle $T$.

The higher-dimensional version of a trivial tangle is called a \emph{trivial disk system}. A collection of properly embedded $(n-2)$-dimensional disks $\Dcal\subset B^n$ is called a \emph{$c$-patch trivial $(n-2)$-disk system} if $\Dcal$ is boundary parallel (rel. boundary) and has $c$ connected components. The boundary of a trivial $(n-2)$-disk system is an unlink of $(n-3)$-spheres in $S^{n-1}$. The following results, due to Livingston for $n=4$ and Powell for $n=5$, guarantee that trivial disk systems are unique up to isotopy.

\begin{theorem}[\cite{Liv82,powell24}]\label{lem:uniqueness_disk_fillings}
    Let $n=4$ or $n=5$ and let $\Dcal_1$ and $\Dcal_2$ be two trivial $(n-2)$-disk systems in $B^n$. If $\partial\Dcal_1=\partial \Dcal_2$, then $\Dcal_1$ is isotopic (rel. boundary) to $\Dcal_2$. 
\end{theorem}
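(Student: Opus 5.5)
Since $\Dcal_1$ and $\Dcal_2$ are trivial, each is boundary parallel: $\Dcal_i$ cobounds, together with a collection of $(n-2)$-disks $E_i\subset S^{n-1}=\partial B^n$, a family of disjoint embedded $(n-1)$-balls $H_i\subset B^n$ with $\partial E_i=\partial\Dcal_i$. So the plan is to reduce the statement to a comparison of these \emph{Seifert disk systems} in the boundary sphere, and then to push the comparison back into $B^n$ by isotopies supported near the balls $H_i$.

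\textbf{Step 1: match the boundary disks.} The common boundary $L=\partial\Dcal_1=\partial\Dcal_2$ is an unlink of $(n-3)$-spheres in $S^{n-1}$, and $E_1,E_2$ are two families of disjoint spanning disks for it. I would first arrange $E_1=E_2$ up to isotopy of $S^{n-1}$ rel.\ $L$. For $n=4$, $L$ is an unlink in $S^3$ and the disks $E_i$ are spanning disks. Standard innermost-disk arguments show that two spanning disk families for an unlink in $S^3$ are isotopic rel.\ boundary after a pushoff, because $S^3$ minus the unlink is irreducible relative to its sphere summands. For $n=5$, $L$ is an unlink of $2$-spheres in $S^4$ and $E_i$ are $3$-balls; two such families need not be isotopic in general. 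So instead of insisting on $E_1=E_2$, I would allow $E_1$ and $E_2$ to differ and absorb the difference in the next step.

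\textbf{Step 2: push into the ball.} Push $E_i$ slightly into $B^n$ rel.\ boundary to recover $\Dcal_i$ up to isotopy rel.\ $\partial$; the push is canonical via the collar of $\partial B^n$. It therefore suffices to show that the pushed-in copies of $E_1$ and $E_2$ are isotopic rel.\ boundary in $B^n$. The key observation is that an isotopy of $E$ rel.\ $\partial E$ \emph{through $B^n$} is far less constrained than one inside $S^{n-1}$: pushed-in disks can pass through the extra dimension. In particular $E_1\cup \overline{E_2}$ forms a collection of $(n-2)$-spheres in $S^{n-1}$. Pushed into $B^n$ with the two halves at different depths, each sphere bounds an $(n-1)$-ball, obtained by coning in the radial direction. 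This lets me isotope one half onto the other, provided the spheres are disjoint and the balls can be chosen disjoint.

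\textbf{Main obstacle.} The hard step is that the sheets $E_1$ and $E_2$ intersect in their interiors. For $n=4$ these intersections are circles and arcs in $S^3$, which can be removed by the usual innermost-curve surgery and by isotopies in $S^3$. For $n=5$ the intersections are surfaces inside $3$-balls in $S^4$, and surgering them away is exactly where the difficulty lies. Here I would rely on Powell's result rather than reprove it. The statement that a ball in $B^5$ with unknotted boundary sphere has a unique trivial filling rel.\ boundary is essentially a $4$-dimensional light-bulb and pseudo-isotopy argument: one shows the two fillings are concordant rel.\ boundary via $H_1\cup H_2$, and then upgrades concordance to isotopy in codimension two. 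For several components, I would handle them inductively, isotoping one disk at a time. For each new disk I would use an ambient isotopy supported in the complement of the previously fixed disks, whose complement is a boundary connected sum of copies of $S^1\times B^{n-1}$; there the uniqueness question reduces to the one-component case.
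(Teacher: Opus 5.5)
\textbf{Verdict: genuine gap.} The paper gives no proof of this statement; it quotes it from Livingston ($n=4$) and Powell ($n=5$). Your deferral to Powell for $n=5$ matches the paper. The argument you actually sketch for $n=4$, however, rests on a false step.

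Step~1 claims that innermost-disk arguments make two disjoint spanning-disk families for an unlink $L\subset S^3$ isotopic rel $L$. This fails once $L$ has three or more components, precisely because $S^3\setminus L$ is \emph{reducible}. Here is an explicit failure:
\begin{itemize}
\item Take flat disks $A_1,A_2,A_3$ for $L=L_1\cup L_2\cup L_3$. Let $A_1'$ be the connected sum of $A_1$ with the sphere $\partial\nu(A_2)$ along an arc avoiding everything else.
\item Then $\{A_1',A_2,A_3\}$ is again a disjoint spanning family.
\item But $A_1\cup A_1'$ is a sphere separating $L_2$ from $L_3$, so it generates $H_2(S^3\setminus(L_2\cup L_3))\cong\mathbb{Z}$.
\item Hence $A_1'$ is not even homotopic to $A_1$ rel $L_1$ in the complement of $L_2\cup L_3$, so no isotopy of families rel $L$ in $S^3$ connects the two.
\end{itemize}
Innermost-curve surgery only removes intersections, and it changes a disk by exactly such slides over spheres enclosing other components. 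If ``after a pushoff'' means after pushing into $B^4$, then Step~1 \emph{is} Livingston's theorem, and your justification does not prove it.

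The missing idea is how the extra dimension absorbs these slides. Push the other disks to depth $s$ and the disk being changed to depth $t>s$. At depth $t$ the other components are already capped off, so $A_1'$ can be isotoped back to $A_1$ inside that $S^3$ level with nothing in the way.

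The remaining steps do not close the gap either.
\begin{itemize}
\item The ``radial coning'' in Step~2 does not give an embedded $(n-1)$-ball bounded by $E_1\cup\overline{E_2}$ at two depths. Radial rays through $L$ meet that sphere in whole segments, and cones from different components meet at the center. For $n=5$, building such a ball is where all the difficulty lies: by Budney--Gabai, two $3$-balls in $S^4$ with the same unknotted boundary need not be isotopic rel boundary in $S^4$.
\item Your induction does not reduce to the one-component case. After matching $k-1$ disks, the $k$-th disks are boundary parallel in $\natural^{k-1}S^1\times B^{n-1}$, not in a ball, and the example above lives exactly in that situation.
\item ``Upgrading concordance to isotopy in codimension two'' is not a valid principle. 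Concordance implies isotopy only in codimension at least three, and it already fails for classical knots, so it cannot stand in for Powell's argument.
\end{itemize}

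A correct write-up should do one of two things. Either cite both theorems in their multi-component form, as the paper does. Or prove the $n=4$ case using the depth-separation trick, together with a proof that two disk families for an unlink in $S^3$ differ only by isotopy rel $L$ and such slides.
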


The rest of this section will discuss higher-dimensional analogs of bridge position for links in $S^3$. These notions can be defined for embeddings of closed $(n-2)$-manifolds in arbitrary smooth closed $n$-manifolds, but we only include the case of embeddings in $n$-spheres, which is what we will need.

\subsection{Surfaces in \texorpdfstring{$4$}{4}-space}\label{subsec:surfaces_in_4-space}

We say that an orientable surface $F\subset S^4$ is \emph{unknotted} if it can be embedded in the equatorial $S^3\subset S^4$. An \emph{unlink of $2$-knots} is one which bounds a collection of embedded $3$-balls in $S^4$ with pairwise disjoint interiors. 

\begin{definition}[\cite{islambouli2022toric}]\label{def:multisections_surfaces}
    Let $\Sigma$ be an embedded surface in $S^4$ and let $m\geq 3$. A \emph{$(b;c)$-bridge $m$-section} of $\Sigma$, where $c=(c_1,\dots,c_m)$, is a decomposition
    \[ 
    (S^4,\Sigma)=(X_1, D_1)\cup (X_2, D_2)\cup \dots\cup (X_m, D_m) , 
    \]
    such that for each $i$ (with indices taken modulo $m$), 
    \begin{enumerate}
        \item $D_i=\Sigma\cap X_i$ is a $c_i$-patch trivial 2-disk system in a 4-ball $X_i$, 
        \item $(B_i, T_i)=(X_i,D_i)\cap (X_{i-1},D_{i-1})$ is a $b$-bridge trivial tangle inside a 3-ball, and 
        \item $\bigcap_{i=1}^m (X_i,D_i)$ is a 2-sphere with $2b$-punctures. 
    \end{enumerate}
\end{definition}

The decomposition of $S^4=\bigcup_{i=1}^m X_i$ into $m$ 4-balls is called a \emph{genus-zero} $m$-section of $S^4$~\cite{islambouli20}; it may be obtained by mapping $S^4$ to an $m$-sected 2-disk and pulling back the pieces. See \autoref{def:sphere_decomposition} for a more general description of this decomposition. The quantity $b$ is called the \emph{bridge number} of the decomposition. Consecutive tangles (mod $m$) glue to $c_i$-component unlinks $T_i\cup \T_{i+1}$. We will mainly work with bridge $m$-multisections with $m=3$ or $m=4$, which we call (four dimensional) \emph{bridge trisections} and \emph{bridge quadrisections}, respectively. 
For an arbitrary bridge quadrisection, the links $T_1\cup\T_3$ and $T_2\cup\T_4$ may not be trivial. The tuple of tangles $\Tcal=(T_1,T_2,T_3,T_4)$ is called \emph{the spine} or \emph{$4$-plane diagram} of $\Sigma$, and by an application of \autoref{lem:uniqueness_disk_fillings} this data is enough to determine the bridge $m$-sected surface~\cite{meier17}. 

\subsection{\texorpdfstring{$3$}{3}-manifolds in \texorpdfstring{$5$}{5}-space}\label{subsec:3-manifolds_in_5-space}

Now we turn to the 5-dimensional setting. First, we need a standard decomposition of the $n$-sphere.

\begin{definition}[\cite{aribi23}]\label{def:sphere_decomposition}
    For $n>2$, let $S^n\subset \mathbb{R}^{n+1}$, and let $p:S^n\to \mathbb{R}^{n-2}$ be the map which forgets the last three coordinates. Identify the unit $(n-2)$-ball $\text{Im}(p)$ with the interior of an $n-2$ simplex $\Delta$, and take a subdivision of $\Delta$ (and hence $\text{Im}(p)$) into $n-1$ pieces induced by the cone on $\partial \Delta$. The \emph{genus zero multisection} of $S^n$ is the decomposition of $S^n$ obtained by pulling back these pieces using $p$.
\end{definition}

When $n=3$, this yields a genus-zero Heegaard splitting of $S^3$. When $n=4$, this yields the \emph{genus zero trisection} of $S^4$, and when $n=5$, the \emph{genus zero quadrisection} of $S^5$. See \cite[Example 2.4]{aribi23} for more details. 

\begin{figure}[ht]
    \centering
    \includegraphics[width=.5\textwidth]{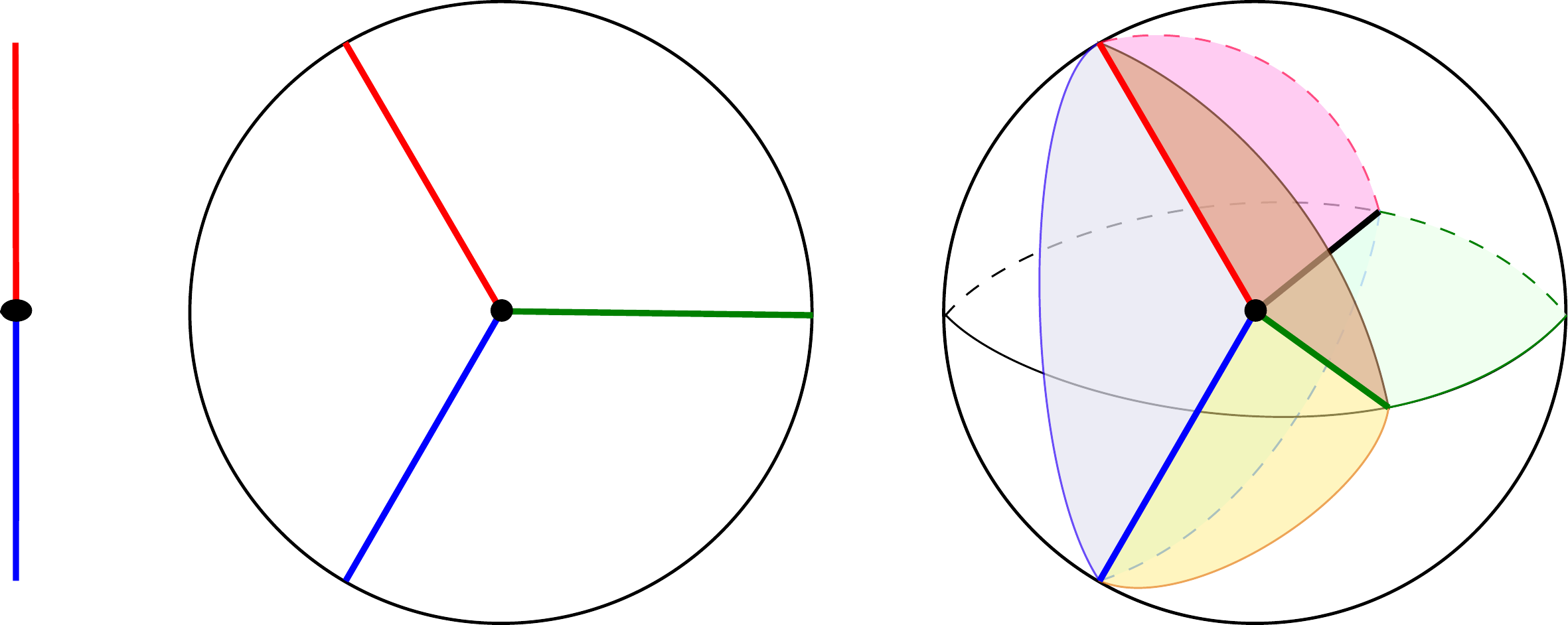}
    \caption{Decompositions of the $(n-2)$-dimensional ball into $(n-1)$ pieces.}
    \label{fig:simplicial_model}
\end{figure}

\begin{remark}
    An important subtlety of these decompositions is that different angles at the codimension zero, cornered submanifolds used in a decomposition of $\Delta$ may yield non-diffeomorphic decompositions of $S^n$; see  \cite[Figure 3]{aribi23}. Theorem 3.2 of \cite{aribi23} implies that for manifolds of dimension at most $5$, different choices of angles on each cornered piece yield diffeomorphic spaces.
\end{remark}

\begin{definition}\label{def:bridge_4section_3man}\label{def:multisections_3mans}
    Let $Y$ be a 3-manifold embedded in $S^5$. A $b$-bridge quadrisection of $Y$ is a decomposition
    \[ 
    (S^5,Y)=(W_1, E_1)\cup (W_2, E_2)\cup (W_3, E_3)\cup (W_4, E_4) , 
    \]
    where $S^5=\bigcup_{i=1}^4 W_i$ is a genus zero quadrisection, and for any permutation $\{i,j,k,\ell\}$ of $\{1,2,3,4\}$,
    \begin{enumerate}
    \item $E_i=Y\cap W_i$ is a $s_i$-patch trivial 3-disk system in the 5-ball $W_i$, 
    \item $(X_{ij}, D_{ij})=(W_i,E_i)\cap (W_{j},E_{j})$ is a $c_{ij}$-patch trivial 2-disk system in the 4-ball $X_{ij}$,
    \item $(B_{\ell}, T_{\ell})=(W_i,E_i)\cap (W_{j},E_{j})\cap (W_{k},E_{k})$ is a $b$-bridge trivial tangle in the 3-ball $B_\ell$,  
    \item $\bigcap_{i=1}^4 (W_i,E_i)$ is a 2-sphere with $2b$ marked points. 
    \end{enumerate}
\end{definition}

If necessary, we will refer to such a decomposition as a $(b;c,s)$-bridge quadrisection of $Y$, where $c=\{c_{ij}:i\neq j\}$ and $s=\{s_i\}$. The quantity $b$ is called the \emph{bridge number} of the quadrisection. Note that in a bridge quadrisection, the boundary of each 3-ball system $E_i$ is an unlink of 2-knots equipped with the bridge trisection 
\[
(\partial W_i, \partial E_i)=
(X_{ij},D_{ij})\cup (X_{ik},D_{ik})\cup (X_{i\ell},D_{i\ell}).
\]
The \emph{spine} of a bridge quadrisection of $Y$ is the tuple of tangles $(T_1,T_2,T_3,T_4)$. Note that any permutation of the spine of a quadrisected 3-manifold is also a 4-plane diagram for some knotted surface in $S^4$; different permutations may lead to non-isotopic surfaces. The following lemma states that the spine of a bridge quadrisected 3-manifold $Y$ determines the embedding $Y\hookrightarrow S^5$.

\begin{lemma}\label{lem:spine_determines_3man}
    Let $S^5=\bigcup_{i=1}^4 W_i$ be a genus-zero quadrisection of $S^5$ with intersections labeled as in \autoref{def:multisections_3mans}. Suppose that $(T_1,T_2,T_3,T_4)$ is a tuple of tangles, $T_\ell\subset B_\ell$, such that 
    for any three-element subset $\{i,j,k\}\subset \{1,2,3,4\}$, the tuple $(T_i,T_j,T_k)$ is a triplane diagram for an unlink of $2$-knots. 
    Then up to isotopy, there is a unique embedding $Y\hookrightarrow S^5$ of a 3-manifold $Y$ which intersects the splitting $S^5=\bigcup_{i=1}^4 W_i$ in a bridge quadrisection with spine $(T_1,T_2,T_3,T_4)$.  
\end{lemma}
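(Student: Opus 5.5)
We build $Y$ skeleton by skeleton, extending inductively over the strata of the genus-zero quadrisection and using \autoref{lem:uniqueness_disk_fillings} (for $n=4$ and $n=5$) at each stage for both existence and uniqueness.

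\emph{Step 1: the 1- and 2-skeleton.} Let $\Sigma=\bigcap_i W_i$ be the central 2-sphere, and let $\partial T_\ell$ be the $2b$ marked points on $\Sigma$ (these agree for all $\ell$, since the $T_\ell$ form triplane diagrams). For a pair $i\neq j$, the 4-ball $X_{ij}$ has boundary $\partial X_{ij}=B_k\cup B_\ell$, where $\{k,\ell\}$ is the complementary pair. By hypothesis $(T_i,T_j,T_k)$ is a triplane diagram, so $T_k\cup\overline{T}_\ell$ is an unlink in $\partial X_{ij}\cong S^3$. Hence it bounds a trivial 2-disk system $D_{ij}\subset X_{ij}$. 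By \autoref{lem:uniqueness_disk_fillings} with $n=4$, this system is unique up to isotopy rel boundary. This determines the 2-dimensional part of $Y$ inside $\bigcup X_{ij}$ up to isotopy.

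\emph{Step 2: the 3-disk systems.} Fix $i$ and write $\{j,k,\ell\}$ for the other indices. Then
\[
\partial W_i=X_{ij}\cup X_{ik}\cup X_{i\ell}
\]
is a genus-zero trisection of $S^4$. The surface $\partial E_i=D_{ij}\cup D_{ik}\cup D_{i\ell}$ is the bridge trisected surface with spine $(T_j,T_k,T_\ell)$. We must check that the three disk systems from Step 1 in these $X$'s are exactly the ones pinned down by this triplane diagram, i.e.\ that the tangle labels match \autoref{def:multisections_3mans}; this is where the indexing has to be checked carefully. By \autoref{lem:uniqueness_disk_fillings} in dimension four, the triplane diagram determines the surface, and by hypothesis that surface is an unlink of $2$-knots. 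It therefore bounds a trivial 3-disk system $E_i\subset W_i\cong B^5$. By \autoref{lem:uniqueness_disk_fillings} with $n=5$ (Powell), $E_i$ is unique up to isotopy rel boundary.

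\emph{Step 3: assembly and smoothing.} Set $Y=\bigcup E_i$. Each pair $E_i,E_j$ meets along $D_{ij}$, so $Y$ is a closed 3-dimensional subcomplex that is locally a union of half-spaces glued along corners. One must check that $Y$ is a smooth (after rounding corners) closed 3-manifold: near points of $T_\ell$ three sheets $E_i,E_j,E_k$ meet along the arc, and near the marked points on $\Sigma$ four sheets meet, as in the local model of the pulled-back simplicial decomposition. I expect this to be the main technical obstacle. My plan is to choose every filling in a collar-compatible way, so that near each stratum it is a product of the lower-dimensional piece with the standard corner. Then $Y$ is locally the product of an arc/point with the standard model of a trisected disk, which is a smooth 3-manifold once corners are smoothed, exactly as in the Meier--Zupan argument for bridge trisections. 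By construction $Y$ meets the quadrisection in a bridge quadrisection with spine $(T_1,T_2,T_3,T_4)$.

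\emph{Uniqueness.} Suppose $Y'$ is another embedding with the same spine. First apply the rel-boundary isotopies from \autoref{lem:uniqueness_disk_fillings} in each $X_{ij}$ to take $D'_{ij}$ to $D_{ij}$. Extend these isotopies to ambient isotopies of $S^5$ supported near $\bigcup X_{ij}$ and preserving the decomposition; this is possible because they fix $\partial X_{ij}$, so we can use a collar of $X_{ij}$ in the adjacent $W$'s. Next apply the 5-dimensional uniqueness in each $W_i$ to take $E'_i$ to $E_i$ rel boundary. The composite is an isotopy from $Y'$ to $Y$.
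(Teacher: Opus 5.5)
Your proposal is correct and follows the paper's argument: fill each unlink $T_k\cup\overline{T}_\ell$ with the unique trivial disk system $D_{ij}\subset X_{ij}$ via Livingston, note that $\partial W_i$ then carries the bridge trisected unlink with spine $(T_j,T_k,T_\ell)$, and fill it uniquely via Powell. Your corner-smoothing and explicit uniqueness steps spell out what the paper leaves implicit. One small slip: in Step 1, the fact that $T_k\cup\overline{T}_\ell$ is an unlink must come from a triple containing both $k$ and $\ell$, such as $(T_j,T_k,T_\ell)$, not from $(T_i,T_j,T_k)$.
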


\begin{proof}
Using the notation in \autoref{def:multisections_3mans}, the union $B_i\cup \overline{B}_j$ is a genus-zero Heegaard splitting for $\partial X_{k\ell}$. By \autoref{lem:uniqueness_disk_fillings}, the unlink $T_i\cup \T_j$ bounds a unique 2-disk system $D_{ij}$ inside $X_{ij}$. 
One dimension higher, the union $(X_{ij},D_{ij})\cup (X_{ik},D_{ik})\cup (X_{i\ell}, D_{i\ell})$ is a bridge trisection of a surface $F_i$ in $\partial W_i$. Note that the spine of such a bridge trisection is the tuple $(T_\ell,T_j,T_k)$. Thus, $F_i$ is an unlink of 2-knots in $\partial W_i$. By \autoref{lem:uniqueness_disk_fillings} (in the case $n=5$), there is a unique 3-disk system $E_i$ in $W_i$. Thus, we have built a unique (up to isotopy) bridge quadrisected 3-manifold $Y=\bigcup_{i=1}^4 E_i$, as desired. 
\end{proof}

\subsection{Heegaard splittings from bridge quadrisections}\label{subsec:heegaard_splittings}

Recall that a \emph{Heegaard splitting} of a closed $3$-manifold $Y$ is a decomposition $Y=H_1\cup_\Sigma H_2$ into $3$-dimensional handlebodies with common boundary a closed surface $\Sigma$. If $Y$ is orientable and connected, the surface $\Sigma$ is orientable. 
If $Y$ is non-orientable and connected, then the surface $\Sigma$ is homeomorphic to the connected sum of an even number of projective planes. A \emph{Heegaard diagram} is a tuple $(\Sigma;\alpha, \beta)$ where $\alpha$ and $\beta$ are sets of pairwise disjoint simple closed curves bounding disks in $H_1$ and $H_2$, respectively, such that $\Sigma\setminus \alpha$ and $\Sigma\setminus \beta$ are planar surfaces. At times, we will make use of Heegaard diagrams given by a collection of curves which are homologically dependent, but still prescribe a compression body, and we will call such a diagram an \emph{extended Heegaard diagram} if needed. 

The following result explains how to extract Heegaard splittings from bridge quadrisected knotted 3-manifolds in $S^5$. See \autoref{ex:RP3_explained} for an implementation of this process. This result also motivated the notion of bridge position for embeddings of Heegaard splittings into $S^5$ in \autoref{def:heegaard_complex_bridge_position}, which is an essential ingredient in the proof of existence of bridge quadrisections.

\begin{proposition}\label{prop:Heegaard_splitting_from_4section}
    Let $(S^5,Y^3)=\bigcup_{i=1}^4(W_i, E_i)$ be a bridge quadrisected \emph{connected} $3$-manifold as in \autoref{def:bridge_4section_3man}. For any permutation $\{i,j,k,\ell\}$ of $\{1,2,3,4\}$, there is a Heegaard splitting 
    \[
    Y^3=\left(E_i\cup E_k\right) \cup_\Sigma \left(E_j\cup E_\ell\right),
    \] 
    where $\Sigma\subset S^4$ is a Heegaard surface described by the $4$-plane diagram $(T_i, T_j, T_k, T_\ell)$. In particular, the tuple $(\Sigma;T_i\cup \T_k, T_j\cup \T_\ell)$ is an extended Heegaard diagram of $Y$.
\end{proposition}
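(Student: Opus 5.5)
We will show directly that $H_1:=E_i\cup E_k$ and $H_2:=E_j\cup E_\ell$ are $3$-dimensional handlebodies meeting along a closed surface $\Sigma$, and then identify $\Sigma$ and its compressing curves. First we locate the intersections. By \autoref{def:bridge_4section_3man}, $W_i\cap W_k=X_{ik}$ and $E_i\cap E_k=D_{ik}$, a $c_{ik}$-patch trivial $2$-disk system. Similarly $E_j\cap E_\ell=D_{j\ell}\subset X_{j\ell}$. The common intersection is
\[
H_1\cap H_2=(E_i\cup E_k)\cap(E_j\cup E_\ell)=D_{ij}\cup D_{i\ell}\cup D_{kj}\cup D_{k\ell}.
\]
These four disk systems lie in the four $4$-balls $X_{ij},X_{i\ell},X_{kj},X_{k\ell}$. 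We check that these four $4$-balls form a genus-zero $4$-section of a $4$-sphere $S$. This can be seen in the simplicial model of \autoref{def:sphere_decomposition}: the union of the four $2$-faces of the subdivided $3$-simplex separating $\{i,k\}$ from $\{j,\ell\}$ is a disk, cut into four pieces cyclically around the central point. Its preimage under $p$ is a $4$-sphere $S$ decomposed as $X_{ij}\cup X_{jk}\cup X_{k\ell}\cup X_{\ell i}$, with pairwise intersections the $3$-balls $B_\bullet$ and total intersection the central $2$-sphere. The cyclically consecutive triple intersections are the $B$'s containing $T_\ell, T_i, T_j, T_k$ (up to the labeling convention). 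Hence $\Sigma:=Y\cap S$ is in bridge $4$-section with $4$-plane diagram $(T_i,T_j,T_k,T_\ell)$ in the sense of \autoref{def:multisections_surfaces}. Each piece is a trivial disk system and each double intersection is a trivial tangle $T_\bullet$, so this verification is immediate. In particular $\Sigma$ is a closed surface, embedded in $S\cong S^4$.

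Next we show $H_1$ is a handlebody. $E_i$ and $E_k$ are disjoint unions of $3$-balls glued along $D_{ik}$, which is a union of disks in their boundaries. So $H_1$ is obtained from $s_i+s_k$ balls by attaching $c_{ik}$ $1$-handles (thickened disks $D_{ik}\times I$ after pushing slightly). Since $Y$ is connected and $\partial H_1=\Sigma=\partial H_2$, each component of $H_1$ meets $\Sigma$, which is the common interface; connectedness of $Y$ then gives connectedness of $H_1$ and $H_2$. The point needing care: if $\Sigma$ were disconnected, then $Y$ would be disconnected, so $\Sigma$ is connected and hence so are both sides. A union of balls joined by $1$-handles is a handlebody, orientable or not, and the same argument applies to $H_2$. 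Thus $Y=H_1\cup_\Sigma H_2$ is a Heegaard splitting.

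Finally we identify the diagram. The cocores of the $1$-handles of $H_1$ are the disks $D_{ik}$, and $\partial D_{ik}=T_i\cup\T_k\subset\Sigma$. These disks cut $H_1$ into the balls $E_i\cup E_k$ with boundaries pushed off, so $\Sigma\setminus(T_i\cup \T_k)$ is a union of planar surfaces. The curves are possibly homologically redundant, since there are $c_{ik}$ disks for $s_i+s_k-1$ fewer handles. This gives an extended Heegaard diagram, and the same reasoning gives $T_j\cup\T_\ell$ for $H_2$.

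We expect the main obstacle to be the precise identification of $S$ with its induced $4$-section from the simplicial model, including the labeling of which $B$ carries which tangle. Transversality and corner smoothing of the gluings also need care. The rest follows formally from the definitions and \autoref{lem:uniqueness_disk_fillings}.
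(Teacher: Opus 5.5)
Your proposal is correct and is essentially the paper's argument. The paper's proof consists only of your second step, observing that $E_a\cup E_b$ is a union of $3$-balls glued along the disks $D_{ab}$ in their boundaries and hence a handlebody. Your identification of the $4$-section of $S=X_{ij}\cup X_{jk}\cup X_{k\ell}\cup X_{\ell i}$ and your connectedness discussion fill in details the paper leaves implicit; the connectedness step really rests on each component of a union of balls joined by $1$-handles having connected boundary, which you should state.

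There is one indexing slip. With the convention $B_\ell=W_i\cap W_j\cap W_k$ of \autoref{def:multisections_3mans}, one has $\partial X_{ik}=B_j\cup B_\ell$. So the cocores $D_{ik}$ of $E_i\cup E_k$ are bounded by $T_j\cup\T_\ell$, and those of $E_j\cup E_\ell$ by $T_i\cup\T_k$. This swaps which handlebody each curve system belongs to, but it does not affect the conclusion.
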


\begin{proof}
    For distinct $a,b\in \{1,2,3,4\}$, $E_a\cup E_b$ is the union of 3-dimensional balls along $E_a\cap E_b=D_{ab}$ , a set of disjoint disks on their boundary. Hence, $E_a\cup E_b$ is a 3-dimensional handlebody. 
\end{proof}

If $Y$ is disconnected, then each component of $Y$ inherits a Heegaard splitting. If a component of $Y$ is non-orientable, the handlebodies and the Heegaard surface $\Sigma$ are non-orientable.

\begin{example}[Embedding $\mathbb{RP}^3$ in $S^5$]\label{ex:RP3_explained}
The left image of \autoref{fig:RP3_4plane} shows a 4-plane diagram $(T_1,T_2,T_3,T_4)$ with the property that any three tangles form a triplane diagram for an unlink of 2-spheres.\footnote{This fact is left as an exercise.} Thus, by \autoref{lem:spine_determines_3man}, it determines an embedding of a 3-manifold in $S^5$. To determine the homeomorphism type of $Y$, we invoke \autoref{prop:Heegaard_splitting_from_4section} as follows: we first consider the 4-colored graph $\Gamma=T_1\cup T_2\cup T_3\cup T_4$, with eight vertices and twelve edges equal to the strands of the four tangles. Then we embed $\Gamma$ in a closed surface $F$ such that $F\setminus \Gamma$ is a disjoint union of bicolored polygons as in the right side of \autoref{fig:RP3_4plane}, where the boundary of each polygon lies in $T_i\cup T_{i+1}$. \autoref{prop:Heegaard_splitting_from_4section} states that the collections of curves $T_1\cup T_3$ and $T_2\cup T_4$ form a Heegaard diagram for $Y$. In this case, $Y$ is homeomorphic to $\mathbb{RP}^3$.
\end{example}

\section{Calculi of quadrisected surfaces}\label{sec:calculus_quadrisected_surfaces}

In the upcoming proof in \autoref{sec:existence}, it will be convenient to place the surface of a Heegaard splitting of an embedded 3-manifold into a kind of \emph{bridge position} (\autoref{def:heegaard_complex_bridge_position}). Having done so, we will need to perform various kinds of stabilization operations on quadrisected surfaces.  In this section, we develop language for modifying quadrisection diagrams of surfaces in $S^4$ as well as quadrisections of abstract surfaces (i.e., not necessarily embedded in $S^4$).

\subsection{Surfaces in \texorpdfstring{$S^4$}{the 4-sphere}}\label{sec:calculus_embedded_surfaces}

Classically, band surgery on a link $L$ in $S^3$ is a modification that attaches a band to $L$, and replaces two segments of the link with the other two edges of the band. For bridge multisections, \emph{band surgeries} are modifications that increase the bridge number by one, while changing the quadrisected surface in a controlled way~\cite[\S 4]{aranda24}. In this paper, we will work with band surgeries that either preserve the surfaces isotopy class or add a 1-handle. 
At the level of 4-plane diagrams, band surgery changes each tangle $T$ of $\Tcal$ in one of the following two ways.

\begin{itemize}
    \item Type 0: Adding a small one-bridge strand near a point in the boundary sphere.
    \item Type 1: Dragging a strand of $T$ towards the boundary sphere and breaking the strand in two. 
\end{itemize}

The dragging effect of a type 1 modification of $T$ can be codified with a band $\rho$ that has one side in $\interior(T)$ and the opposite side in the boundary sphere. For a 4-plane diagram, we ensure that the endpoints of the new strands are the same across the tangles, so we can glue them together. In particular, if $T_1$ and $T_2$ are modified with different types of modifications, the union of the resulting tangles $T'_1\cup \T'_2$ is isotopic to $T_1\cup \T_2$. If both tangles suffered a type 0 modification, $T_1\cup \T_2$ gained a one-bridge unknot. And, if both modifications were of type 1, then $T'_1\cup \T'_2$ is the result of a classical band surgery on $T_1\cup \T_2$. In \autoref{fig:Kinoshita_Terasaka_2knot}, we see how subsets of the framed bands describe band surgeries on 4-plane diagrams. 

\subsubsection{Perturbations}\label{sec:perturbations}

They correspond to isotopies of a quadrisected surface $(S^4,F)=\bigcup_{i=1}^4 (X_i,D_i)$ that drag parts of disks in $D_i\cup \dots \cup D_{i+k}$ to the rest of the 4-dimensional sectors $X_j$. At the level of 4-plane diagrams, there are three kinds of perturbations of bridge quadrisections, depending on the number of pairs of consecutive tangles suffering a modification of type 1. 

Let $\Tcal=(T_1,T_2,T_3,T_4)$ be a 4-plane diagram. Fix $0\leq k<3$ and $i\in \Z_4$. Let $\rho_i, \dots, \rho_{i+k}$ be bands inducing modifications of type 1 in the tangles $T_i, \dots, T_{i+k}$, respectively. Let $\Tcal'=(T_1',T'_2,T'_3,T'_4)$ be the tuple obtained by performing type 1 modifications on the tangles $T_j$ for $i\leq j \leq j+k$, and type 0 modifications on the rest. 

\begin{lemma}[{{\cite[Lemma 5.1]{aranda24}}}]\label{lem:sector-perturbation}
    Suppose that for each $i\leq j\leq i+k-1$, there is a $2$-sphere $S_j$ in $B_j\cup \overline{B}_{j+1}$ that intersects $\partial B_j=\partial B_{j+1}$ in one loop, and contains both the band $\rho_j$ and exactly one component of $T_j\cup \T_{j+1}$.
    Then $\Tcal'$ represents the same surface as $\Tcal$. We call $\Tcal'$ a \emph{$k$-sector perturbation} of $\Tcal$.
\end{lemma}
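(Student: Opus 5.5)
The plan is to exhibit the perturbation as an explicit isotopy of the quadrisected surface $F$, one disk system at a time. By \autoref{lem:uniqueness_disk_fillings} (case $n=4$), the surface determined by a 4-plane diagram is determined by the four unlinks $T_j\cup\T_{j+1}$. So it suffices to show two things. First, each new pair $T'_j\cup\T'_{j+1}$ is an unlink bounding a trivial disk system $D'_j$. Second, $\bigcup_j D'_j$ is isotopic to $\bigcup_j D_j$.

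I would first sort the consecutive pairs by the types of modification they received.
\begin{itemize}
  \item If $T_j$ and $T_{j+1}$ received different types, then $T'_j\cup\T'_{j+1}$ is isotopic to $T_j\cup\T_{j+1}$, as noted before the statement. The disks $D'_j$ and $D_j$ then differ only by a collar isotopy near the bridge sphere.
  \item If both are type $0$, the unlink gains a one-bridge unknot, so $D'_j=D_j\sqcup(\text{small disk})$.
  \item If both are type $1$, so $i\le j\le i+k-1$, we must use the hypothesis. The band $\rho_j$ lies in the sphere $S_j$, which also contains exactly one component $U$ of $T_j\cup\T_{j+1}$ and meets the bridge sphere in one loop. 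Hence $\rho_j$ runs from $U$ to itself inside $S_j$, and the band surgery splits $U$ into two unknots that bound disjoint disks in $S_j$. The result $T'_j\cup\T'_{j+1}$ is therefore again an unlink with one extra component. Its new trivial disk system is obtained from $D_j$ by cutting the disk bounded by $U$ along the core of $\rho_j$, pushed into $X_j$.
\end{itemize}
So each consecutive triple still meets the requirements of \autoref{def:multisections_surfaces}, and $\Tcal'$ is a 4-plane diagram.

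For the isotopy, I would realize the whole change as a finger move. Take a small disk neighbourhood of a point of $F$ on the bridge sphere $\Sigma=\bigcap X_i$. Push it across $\Sigma$ so that it picks up the arc guided by $\rho_i$, then drag it successively through the sectors $X_i,\dots,X_{i+k}$ along the bands $\rho_i,\dots,\rho_{i+k}$. The disk $S_j$ gives the guiding region in $\partial X_j=B_j\cup\overline B_{j+1}$. This is the "dragging parts of disks in $D_i\cup\dots\cup D_{i+k}$" description from the start of \S\ref{sec:perturbations}. After the isotopy, $F$ meets $\Sigma$ in $2b+2$ points. Its intersections with the $B_j$ are exactly $T'_j$: type $1$ in the sectors the finger passed through, and type $0$ (a new trivial bridge) elsewhere. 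Since the new triple data agree with $\Tcal'$, uniqueness of fillings shows that the perturbed surface is the one $\Tcal'$ represents. It is isotopic to $F$ by construction.

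The main obstacle is the middle sectors, where two consecutive type $1$ modifications meet. There one must check that the dragged finger stays boundary parallel in $X_j$, i.e.\ that $D'_j$ is still trivial and not merely bounded by an unlink. This is exactly where the sphere $S_j$ is used: it provides an embedded 3-ball in $\partial X_j$ through which the cut disk is boundary parallel. I would first try to construct this 3-ball directly as the ball bounded by $S_j$, pushed slightly into $X_j$.
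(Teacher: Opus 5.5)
\textbf{Verdict: there is a genuine gap.} Your combinatorial half is fine once the band is read correctly, but the isotopy from $F$ to the surface of $\Tcal'$, which is the actual content of the lemma, is only gestured at.

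\textbf{What works.} What must lie in $S_j$ is the glued band $b_j=\rho_j\cup\overline{\rho}_{j+1}$, not $\rho_j$ alone, as in \autoref{lem:3man_perturbation}. The reason is that $T'_j\cup\T'_{j+1}$ is band surgery on $T_j\cup\T_{j+1}$ along $b_j$. With $S_j\cap(T_j\cup\T_{j+1})=U$, your splitting argument then shows that every consecutive union is an unlink. By \autoref{lem:uniqueness_disk_fillings}, $\Tcal'$ therefore determines \emph{some} surface $F'$. That is all the paper itself records after the statement; it gives no proof of its own and cites \cite{aranda24}.

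Drop the claim that $\Tcal'$ is a genuine 4-plane diagram. Nothing you wrote makes $T'_j$ a trivial tangle. For $k=0$, where there is no sphere hypothesis, it can fail, as \autoref{remark:0-sector_warning} points out. You also do not need it.

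\textbf{The gap: the isotopy $F\simeq F'$.} You need an isotopy from $F$ to a surface that meets each $X_j$ in a trivial disk system bounded by $T'_j\cup\T'_{j+1}$. Your finger move is not at the right place. The region of $F$ that moves is not a neighbourhood of an existing bridge point ``pushed across'' the codimension-two central sphere. It is the part of $F$ lying along the bands:
\begin{itemize}
\item neighbourhoods of the arcs where $\rho_i$ and $\rho_{i+k}$ attach to $T_i$ and $T_{i+k}$;
\item for $i\le j\le i+k-1$, a strip of $D_j$ parallel to $b_j$.
\end{itemize}

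\textbf{Where the sphere hypothesis is used.} For that strip to exist, first use \autoref{lem:uniqueness_disk_fillings} to isotope $D_j$ rel boundary inside $X_j$. Arrange that its component bounded by $U$ is a push-in of the disk $\Delta_j\subset S_j$ with $\partial\Delta_j=U$ and $b_j\subset\Delta_j$, choosing the other spanning disks off $\Delta_j$. The hypothesis is used through this disk, not through the 3-ball bounded by $S_j$. Note also that triviality of the new disk system is automatic once the link is an unlink. What actually needs proof is that the isotoped surface meets $X_j$ in such a system.

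\textbf{The sector-by-sector check.} Drag the region along the bands and sweep it across the common arc $\rho_j\cap\partial B_j$ of the central sphere into the complementary sectors. Then check each sector:
\begin{itemize}
\item In $X_j$ with $i\le j\le i+k-1$, you see $\Delta_j$ minus a strip. These are two boundary-parallel disks bounded by the surgered link.
\item In $X_{i-1}$ and $X_{i+k}$, you see the old disks extended by a collar finger along $\rho_i$, resp.\ $\rho_{i+k}$. These are still boundary parallel.
\item In each remaining sector (if any), you see one new small boundary-parallel disk bounded by the new $1$-bridge unknot.
\end{itemize}
Only after these checks does uniqueness identify the isotoped surface with $F'$.

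\textbf{Alternative route.} You could instead use the reformulation in \autoref{fig:sphere_condition}. There, the sphere condition lets you reduce, by interior Reidemeister moves and mutual braid moves, to a standard local picture in which the isotopy is the explicit Meier--Zupan perturbation.
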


The sphere condition ensures that the new link $T'_j\cup T'_{j+1}$, obtained by classical link band surgery along $\rho_j\cup \rho_{j+1}$, is an unlink with one more component than $T_j\cup \T_{j+1}$. The sphere condition may be overwhelming to verify in practice.   
An alternative description is for the tangles and type 1 bands $T_{j}$, $\rho_j$, $T_{j+1}$, and $\rho_{j+1}$ to be isotopic to the diagram as in \autoref{fig:sphere_condition} after interior Reidemeister moves and mutual braid moves. These resemble the original definition of perturbation in \cite[Fig 27]{meier17} more closely. In our examples, we will choose $\rho$-arcs near punctures to make sure this condition is easily verifiable.

\begin{figure}[ht]
    \centering
    \includegraphics[width=.4\textwidth]{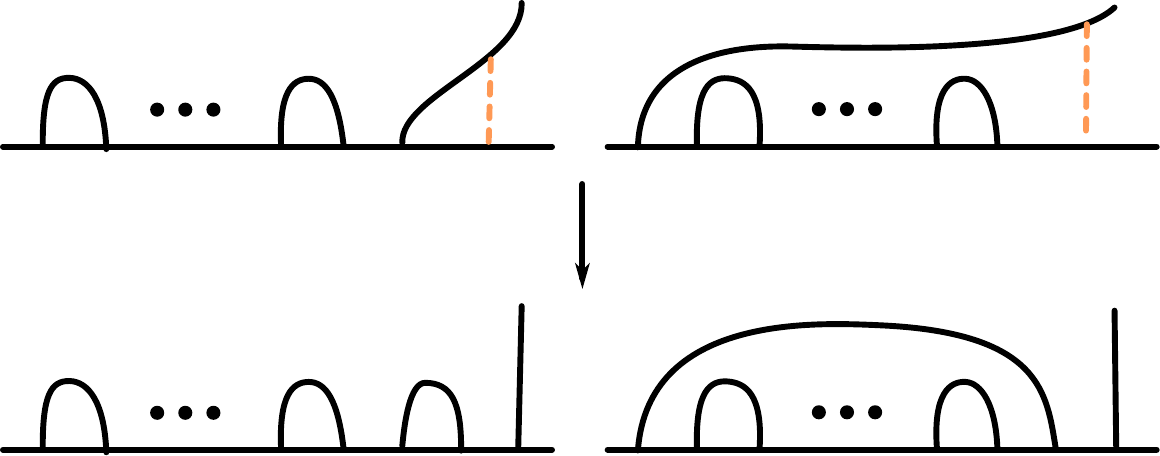}
    \caption{Top: the sphere condition in \autoref{lem:sector-perturbation} and \autoref{lem:3man_perturbation} is equivalent to each pair of tangles with type 1 bands looking as follows, after braid moves and interior Reidemeister moves. Bottom: the tangles after band surgery.}
    \label{fig:sphere_condition}
\end{figure}

Sometimes we will want to keep track of which tangles are being popped (type 0) or dragged (type 1). We will write \emph{$I$-perturbations} to refer to $(|I|-1)$-sector perturbations where the tangles $T_i$ ($i\in I$) are those suffering a type 1 modification. For instance, the 2-sector perturbations in panels (B-C) and (D-E) in \autoref{fig:double_ribbon_lemma_1} are a 123-perturbation and a 341-perturbation, respectively. 

For context, 1-sector perturbations were first introduced by Meier and Zupan in \cite{meier17} using a local model for tangles being dragged. The language of band surgeries was introduced by the first author and Engelhardt to account for multiple sectors in \cite{aranda24}; though a local model version should be achievable for $k\geq 2$. On the other hand, 0-sector sector perturbations may not always be explained with a local model, as they may not yield trivial tangles; see \autoref{remark:0-sector_warning}. 

\begin{remark}[A warning about 0-sector perturbations]\label{remark:0-sector_warning}
The result of a 0-sector perturbation on a 4-plane diagram may not be strictly a 4-plane diagram. The reason for this is that the dragged tangle $T'_j$ may not be trivial, as the band $\rho_j$ may create some local knotting in $T_j$. That said, one can see that the tuple $\Tcal'$ still satisfies the condition that each consecutive union $T'_i\cup \T'_{i+1}$ is an unlink. Thus, the tuple $\Tcal'$ still determines an embedding of the same surface, and \autoref{lem:sector-perturbation} still holds. This remark is relevant to this work, as some intermediate tuples obtained in our processes may not be a 4-plane diagram in the strict sense. If the reader desires to work with real 4-plane diagrams, they may need to perform more 0-sector perturbations to get rid of the local minima of $T'_j$. 
\end{remark}

\subsubsection{Tubings}\label{sec:tubings}

For surfaces in $S^4$, \emph{$1$-handle addition} is the result of replacing two small disks in the surface with a thin tube connecting them~\cite{book_knotted_surfaces}. Precisely, if $D^2\times I$ is an embedded 1-handle for $F\subset S^4$ with $F\cap (D^2\times I)=D^2\times \{0,1\}$, 1-handle addition of $F$ is the new surface $F'=F\setminus(D^2\times \{0,1\})\cup (S^1\times I)$. We refer to the core of the 1-handle $t=\{0\}\times I$ as the \emph{guiding arc} of the 1-handle. In this paper, we will exploit 1-handle additions that lie in the spine of the genus-zero quadrisection of $S^4$. 

\begin{lemma}[Examples 4.8 and 4.13 of \cite{aranda24}]
Let $\Tcal=(T_1,T_2,T_3,T_4)$ be a $4$-plane diagram representing a surface $F\subset S^4$. Fix $j\in \Z_4$ and let $\rho_j$ and $\rho_{j+2}$ be bands inducing modifications of type 1 in the tangles $T_j$ and $T_{j+2}$. Let $\Tcal'$ be the tuple obtained by performing type 1 modifications on the tangles $T_j$ and $T_{j+2}$ and type 0 modifications on the rest. Then the tuple $\Tcal'$ represents a surface $F'\subset S^4$ that is obtained by a $1$-handle addition to $F$. Moreover, the guiding arc of the $1$-handle is equal to the core of the band $\rho_j\cup \rho_{j+2}$. 
\end{lemma}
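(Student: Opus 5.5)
The plan is to build the surface $F'$ one dimension at a time, sector by sector, and then compare it with the $1$-handle addition using \autoref{lem:uniqueness_disk_fillings}. I would first check that $\Tcal'$ is again a tuple whose consecutive unions are unlinks. For the pairs $(j-1,j)$, $(j,j+1)$, $(j+1,j+2)$ and $(j+2,j+3)$, exactly one tangle has a type 1 modification and the other a type 0 modification. By the observation in the paragraph on band surgeries, each such union $T'_a\cup\T'_{a+1}$ is isotopic to $T_a\cup\T_{a+1}$. So it is still an unlink with the same number of components. Since these are all four consecutive pairs, $\Tcal'$ determines an embedded surface $F'=\bigcup_i D'_i$, where $D'_i$ is the unique trivial disk system bounded by $T'_i\cup\T'_{i+1}$ (again by \autoref{lem:uniqueness_disk_fillings}).

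Next I would describe $D'_i$ concretely. In the sector between $T_j$ and a type-0-modified neighbor, the dragging of $T_j$ along $\rho_j$ is realized by an isotopy of the unlink. Under this isotopy, the new small bridge in the neighbor is matched with the broken strand of $T_j$. Concretely, $D'_i$ is obtained from $D_i$ by an ambient isotopy supported near the band $\rho_j$, or $\rho_{j+2}$, pushed slightly into $X_i$. In effect, we push a finger of the disk $D_i$ along the band toward the central sphere $\bigcap B_i$. The upshot is that $F'$ agrees with $F$ outside a neighborhood $N$ of the arc $t=\text{core}(\rho_j)\cup\text{core}(\rho_{j+2})$. Here $t$ is an arc in $B_j\cup B_{j+2}$ with interior crossing the central sphere once and endpoints on $F$.

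The key step is a local model computation in $N\cong B^4$. Inside $N$, $F$ consists of two small disks near the endpoints of $t$. The surface $F'$ replaces them with what one sees on the spine: at the central sphere, two new punctures appear. These are joined in $B_j$ and $B_{j+2}$ by the broken halves of the dragged strands, and in $B_{j\pm1}$ by the type 0 bridges. Reading this as a small $4$-plane diagram, I expect $F'\cap N$ to be an annulus, namely the tube $S^1\times I$ around $t$. To justify it, I would exhibit $N$ as a genus-zero-quadrisected ball, in which the local tangles form the standard diagram of the tube. Equivalently, I would count Euler characteristic: bridge number goes up by one, and the patch numbers are unchanged, so $\chi$ drops by $2$. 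Together with connectivity of the local piece, this identifies it as an annulus. Its core is isotopic to a meridian of $t$, so the tube has guiding arc $t$.

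The main obstacle is this last identification. Euler characteristic alone shows $F'$ is $F$ with a $1$-handle attached somewhere, but pinning down the guiding arc as the core of $\rho_j\cup\rho_{j+2}$ requires the explicit local isotopy. I would handle it by working in a model: I would isotope $\rho_j$ and $\rho_{j+2}$ to short bands near a puncture of the central sphere. There I would draw the before/after tuples and directly recognize the standard tubing picture, as in Examples 4.8 and 4.13 of \cite{aranda24}.
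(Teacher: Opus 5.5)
Your outline is correct and is essentially the paper's route: the paper gives no argument of its own beyond citing the explicit local pictures of Examples 4.8 and 4.13 in \cite{aranda24}, and your finger-push description of the disks $D'_i$ localizes the change to a neighborhood of $t=\text{core}(\rho_j)\cup\text{core}(\rho_{j+2})$, which reduces the lemma to exactly that local model. The Euler-characteristic count is superfluous (as you note, it cannot detect the guiding arc), and your finger-push picture already finishes the argument: near each band $\rho$, $F'$ is $\partial(\rho\times[-\epsilon,\epsilon])$ minus its two end faces, namely two pushed-off copies of $\rho$ in the adjacent sectors together with the broken strand halves thickened across $B_j$ or $B_{j+2}$; these pieces are joined near the central sphere through the type~0 bridges in $B_{j\pm1}$, whose union is a meridian of $t$, so the result is the tube around $t$. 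If you prefer to shorten the bands instead, do it by dragging the strands of $T_j$ and $T_{j+2}$ along $\rho_j$ and $\rho_{j+2}$; this is an interior isotopy that leaves $\Tcal'$ unchanged.
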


We will need the following important property.

\begin{lemma}\label{lem:tubing_4sec_with_bands}\label{lem:tubing_4sec}
    Let $\Tcal=(T_1,T_2,T_3,T_4)$ be a bridge $4$-section of $\Sigma\subset S^4$. Suppose that $t$ is a tubing arc with $t\subset B_1\cup \overline{B}_3$ and let $\rho$ be a framed band with core equal to $t$. Let $\Sigma'$ be the result of adding a $1$-handle to $\Sigma$ with guiding arc $t$. There is a bridge $4$-section $\Tcal'$ of $\Sigma'$ such that
    \begin{enumerate}
    	\item $L'_{13}$ is the result of band surgery of $L_{13}$ using the band $\rho$, and 
        \item $L'_{24}=L_{24}\cup \partial D$, where $D\subset B_2\cup \overline{B}_4$ is a set of pairwise disjoint disks away from $L_{24}$. 
    \end{enumerate}
    In fact, each disk in $D$ is a meridian of the tube $t$ with boundary a $1$-bridge unknot in $B_2\cup \overline{B}_4$. 
\end{lemma}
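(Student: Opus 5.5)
The plan is to reduce to the previous lemma (Examples 4.8 and 4.13 of \cite{aranda24}) after putting the arc $t$ into a standard position relative to the tangles. The band $\rho$ has core $t\subset B_1\cup\overline{B}_3$, and its endpoints lie on $L_{13}=T_1\cup\T_3$. First I isotope $t$ and $\rho$ inside $B_1\cup\overline{B}_3$, rel $L_{13}$. I use interior Reidemeister moves and mutual braid moves, which do not change the surface, together with perturbations from \autoref{lem:sector-perturbation}. The goal is for $\rho$ to meet the bridge sphere $\partial B_1=\partial B_3$ transversely in a collection of arcs crossing the band. Cutting $\rho$ along these arcs gives subbands. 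These alternate between $B_1$ and $B_3$, and each subband has one side on the tangle or on a previous cut arc, and the opposite side on the bridge sphere.

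Next I want each crossing of the bridge sphere to carry exactly one band of type 1 in the sense of \autoref{sec:calculus_embedded_surfaces}. The key idea is to use perturbations to push the intermediate crossing arcs off. Each time $\rho$ crosses the sphere, I perform a perturbation on $L_{13}$ that follows $\rho$ across, so that the new trivial strand absorbs that segment of the core of $t$. Perturbations preserve $\Sigma$. They affect $L_{24}$ only by adding split $1$-bridge unknots, since the tangles $T_2,T_4$ receive type 0 modifications. After finitely many such steps, $t$ meets the bridge sphere in a single arc. The band then splits as $\rho_1\cup\rho_3$, with $\rho_1\subset B_1$ and $\rho_3\subset B_3$, and each is a band inducing a type 1 modification. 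Applying the previous lemma with $j=1$ gives a $4$-plane diagram $\Tcal'$ for $\Sigma'$. Its link $L'_{13}$ is the band surgery of $L_{13}$ along $\rho_1\cup\rho_3$, which is isotopic to $\rho$, so item (1) holds.

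For item (2), observe that $T_2$ and $T_4$ received only type 0 modifications throughout. Each of these adds a small one-bridge strand near a point of the bridge sphere. In $B_2\cup\overline{B}_4$ these strands pair up into $1$-bridge unknots $\partial D$ that bound small disks disjoint from $L_{24}$. The modifications are placed at the endpoints of the new strands, which sit at the points where $t$ meets the bridge sphere. So the $1$-bridge unknot that appears in $L'_{24}$ is the boundary of a small disk transverse to $t$ at that point, i.e.\ a meridian of the tube. The same holds for the unknots created by the preliminary perturbations once the tube is added.

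I expect the main obstacle to be the second step. Perturbations are only legitimate under the sphere condition of \autoref{lem:sector-perturbation}, and a $0$-sector perturbation may produce a nontrivial tangle, as noted in \autoref{remark:0-sector_warning}. I would first isotope $t$ so that each subband runs from a puncture along a short arc that is trivial in the relevant tangle, which verifies the model in \autoref{fig:sphere_condition}. I would accept intermediate tuples that fail to be strict $4$-plane diagrams, as the remark allows, and clean them up at the end with further $0$-sector perturbations. The subtle point is tracking that the disks $D$ stay disjoint from $L_{24}$ and from each other, and are genuinely meridians of the tube. For this I would keep every type 0 modification within a small neighborhood of a point of $t\cap\partial B_1$.
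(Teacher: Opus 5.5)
Your proposal is correct and follows essentially the same argument as the paper's sketch. Both isotope $t$ to be transverse to the bridge sphere with endpoints on the tangles, shrink it with $0$-sector perturbations along sub-bands of $\rho$ until it crosses the bridge sphere once, tube using the type~1/type~1 lemma, and observe that each step gives $T_2\cup\T_4$ a new $1$-bridge unknot bounding a meridian of the tube. One minor correction: for $0$-sector perturbations the sphere condition of \autoref{lem:sector-perturbation} is vacuous, so the only real caveat is the possible nontriviality of the dragged tangle, which you already handle via \autoref{remark:0-sector_warning}.
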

This follows from the proof of Proposition 6.1 in \cite{aranda24}, but for the reader's convenience, we include a sketch of the proof. 

\begin{proof}
    While fixing the tangles in $\Tcal$, isotope $t$ in $B_1\cup \overline{B}_3$ so that $t$ has endpoints in both tangles $T_1$ and $T_3$ and is transverse to the bridge sphere. An example of the resulting framed arc is shown in \autoref{fig:Kinoshita_Terasaka_2knot} (A). If $t$ crosses the bridge sphere exactly once, like in \autoref{fig:Kinoshita_Terasaka_2knot} (C), we can use $\rho$ to tube the bridge 4-sected surface as in \autoref{fig:Kinoshita_Terasaka_2knot} (D). If $t$ crosses the bridge sphere more than once, we can perform a 0-sector perturbation using a sub-band of $\rho\cap B_1$ with endpoint in $T_1$ as in \autoref{fig:Kinoshita_Terasaka_2knot} (A)-(B). In each of the steps above (0-perturbations and tubing), the link $T_2\cup \T_4$ gains a 1-bridge unknot bounding a meridian of the tube $t$; see \cite[Figure 13]{aranda24}.
\end{proof}

\begin{example}\label{ex:KT_double_as_tubes}
    The Kinoshita-Terasaka knot $11_{n42}$ bounds a ribbon disk $D_{KT}$ in $B^4$ shown as a banded presentation with a yellow band in $\textcolor{red}{T_1}\cup \textcolor{ForestGreen}{\T_3}$ as in \autoref{fig:Kinoshita_Terasaka_2knot} (A). Thus, the 2-knot $D_{KT}\cup \overline{D}_{KT}$ in $S^4$ is obtained by tubing a two-component unlink of spheres. \autoref{fig:Kinoshita_Terasaka_2knot} shows the process of drawing a 4-plane diagram for $D_{KT}\cup \overline{D}_{KT}$ described in \autoref{lem:tubing_4sec}.
\end{example}

\begin{figure}[ht]
    \centering
    \includegraphics[width=1\textwidth]{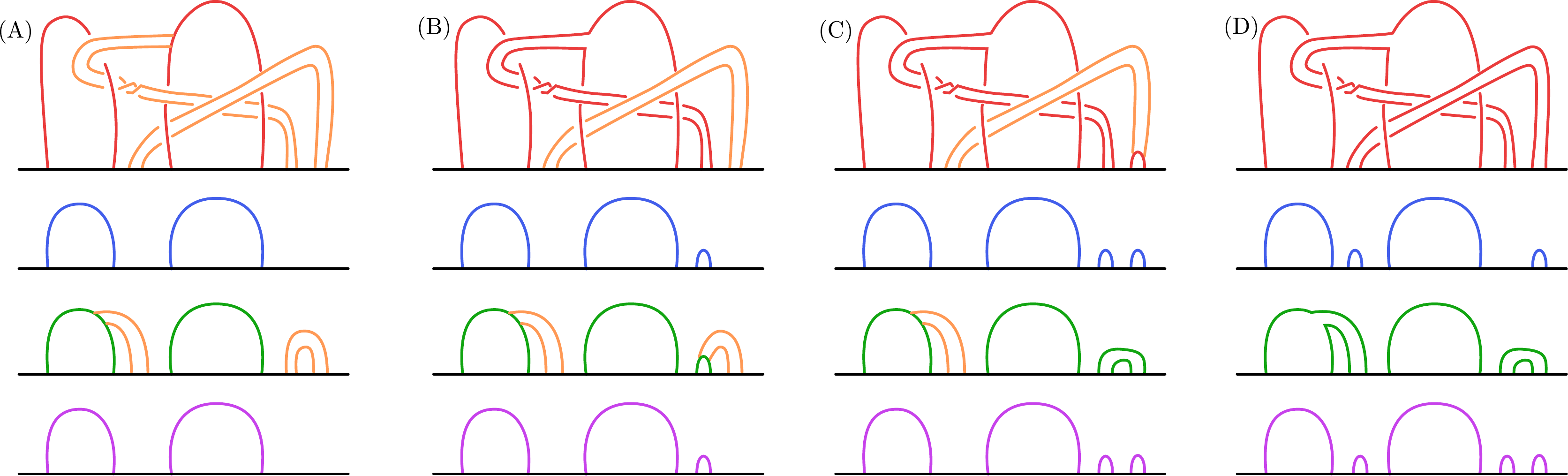}
    \caption{In (A), a 4-plane diagram of an unlink of 2-spheres; the yellow framed arc (or band) in $B_1\cup \overline{B}_3$ gives a ribbon presentation for the Kinoshita-Terasaka knot $11_{n_{42}}$. From (A) to (C), we perform 0-sector perturbations, as in \autoref{lem:sector-perturbation}, to shrink the yellow band while preserving the isotopy type of the bridge 4-sected surface. From (C) to (D), we tube as in \autoref{lem:tubing_4sec}. The result is a 4-plane diagram of the double of the ribbon disk bounding $11_{n_{42}}$.}
    \label{fig:Kinoshita_Terasaka_2knot}
\end{figure}

\subsection{Abstract surfaces}\label{sec:calculus_abstract_surfaces}

In this section, we develop terminology for working with curves on a Heegaard diagram of an abstract surface which is not necessarily embedded in $S^4$. 

\begin{definition}\label{def:multicurve}
    Let $S$ be a (possibly non-orientable) closed surface. A \emph{multicurve} is a collection of pairwise disjoint embedded loops in $S$ with annular neighborhoods. 
\end{definition}

Note that we allow multicurves to have trivial or parallel components. If $S$ is non-orientable, the definition of multicurve excludes cores of M\"obius bands inside $S$.

\begin{definition}\label{def:arc_surgery}
    Let $S$ be a surface, and suppose $x\subset S$ is a multicurve. Let $\rho\subset S$ be an embedded arc with interior disjoint from $x$ and both endpoints on $x$, i.e., $x\cap \rho=\partial \rho$. Let $u$ and $v$ be the components of $x$ connected by $\rho$. We define \emph{arc surgery} of $x$ along $\rho$, denoted by $x[\rho]$, to be the multicurve obtained by replacing the two boundary components $u$ and $v$ of $\partial \nu(u\cup v\cup \rho)$ in $x$ with the third boundary component. 
    
    In the case that $u$ and $v$ are equal, we define $x[\rho]$ to be the result of replacing the boundary component $u$ of $\partial\nu(u\cup \rho)$ with the two other boundary components. In this second case, we require that $\nu(u\cup \rho)$ is an orientable subsurface of $S$, so that this process produces two curves with annular neighborhoods. 
    See \autoref{fig:arc_surgery_model} for an illustration of these two cases.  
\end{definition}

\begin{figure}[ht]
    \centering
    \includegraphics[width=.3\textwidth]{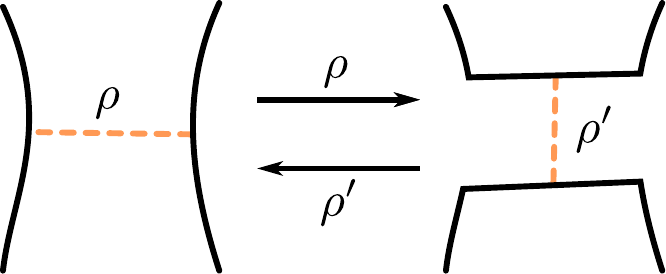}
    \caption{An illustration of the local model of an arc surgery.}
    \label{fig:arc_surgery_model}
\end{figure}

For example, in a pair of pants with boundaries $u$, $v$, and $w$, one can obtain $w$ from $u\cup v$ by one arc surgery along a seam. One can interpret arc surgery as a 2-dimensional 1-handle attachment to $\nu(u\cup v)$, and in particular, one arc surgery along $\rho$ can be undone by arc surgery along the cocore of $\nu(\rho)$; see \autoref{fig:arc_surgery_model}.

\begin{lemma}\label{lem:arc_surgery_nullhom}
    Let $x,y\subset S$ be two multicurves with $[x]=[y]=0$ in $H_1(S;\Z_2)$. There is a sequence of multicurves \[x=x_0, x_1, \dots, x_n=y\]
    such that $x_{i+1}$ is obtained from $x_i$ by one arc surgery.
\end{lemma}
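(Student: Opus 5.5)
Since arc surgeries can be reversed (surgery along $\rho$ is undone by surgery along the cocore of $\nu(\rho)$), the relation ``related by a finite sequence of arc surgeries'' is an equivalence relation on multicurves. It therefore suffices to connect every $\mathbb{Z}_2$-nullhomologous multicurve $x$ to one fixed multicurve, say a single inessential loop $\partial D$ bounding a small disk $D\subset S$. My plan has three steps: merge all components of $x$ into one curve, show that this curve separates $S$, and then shrink it to $\partial D$.

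\emph{Step 1 (merging).} If $x$ has at least two components $u,v$, I choose an embedded arc $\rho$ in $S$ from $u$ to $v$ whose interior misses $x$. Such an arc exists once $u,v$ are chosen adjacent to a common component of $S\setminus x$, which is possible because $S$ is connected. Arc surgery along $\rho$ replaces $u\cup v$ by the third boundary curve of the pair of pants $\nu(u\cup v\cup\rho)$. This curve has an annular neighborhood and is $\mathbb{Z}_2$-homologous to $u+v$. Iterating, I reduce $x$ to a single curve $c$ with $[c]=0$ in $H_1(S;\mathbb{Z}_2)$. If $x$ is empty, I first create a component: a trivial loop, reached from the empty multicurve by the inverse of a surgery that joins two parallel trivial loops. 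I should check how the definition treats the empty multicurve, and if needed simply allow $x$ to contain trivial components.

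\emph{Step 2 (separation).} A simple closed curve with $[c]=0$ in $H_1(S;\mathbb{Z}_2)$ separates $S$. This holds because a nonseparating curve has a dual curve meeting it in exactly one point, giving a nonzero $\mathbb{Z}_2$ intersection number. So $S\setminus c = P\sqcup Q$, where $Q$ is a compact surface with one boundary component $c$ (possibly non-orientable).

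\emph{Step 3 (reducing the complexity of $Q$).} I induct on $-\chi(Q)$. If $Q$ is a disk, $c$ is isotopic to $\partial D$. An isotopy of a single curve should be realizable by arc surgeries: split off a small trivial loop with a self-surgery arc in a bigon region, or equivalently merge with a trivial loop, then slide. Making this rigorous is a detail I expect to handle with local models.

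Otherwise $Q$ contains either a handle (a punctured torus) or a crosscap. The key move is to choose an arc $\rho\subset Q$ with both endpoints on $c$, which is nonseparating in $Q$ and has $\nu(c\cup\rho)$ orientable. Self-surgery of $c$ along $\rho$ yields two curves $c_1,c_2$ cobounding a pair of pants with $c$. Two of these curves together bound a subsurface $Q'\subset Q$ of smaller complexity.

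\begin{itemize}
\item \emph{Orientable handle case.} Take $\rho$ to be a nonseparating arc in $Q$. Then $c_1,c_2$ are parallel to the two sides of a nonseparating curve in $Q$. Merging $c_1$ and $c_2$ back along an arc that goes around the other way, through a dual curve, gives a curve bounding $Q$ minus a handle.
\item \emph{Crosscap case.} The orientability requirement forbids arcs through a single crosscap. Instead I use an arc passing through two crosscaps, which is orientation-preserving; this is why the parity of crosscaps in $Q$ matters. Parity is controlled by $\mathbb{Z}_2$-nullhomology: when $Q$ has an odd number of crosscaps, one must check that $c$ still reduces. The alternative is to instead reduce the complementary side $P$, and both sides cannot be bad.
\end{itemize}

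I expect Step 3, specifically the non-orientable bookkeeping with the orientability constraint in the self-surgery case, to be the main obstacle. I would first try to reduce to the orientable argument by working in a two-crosscap (Klein bottle) neighborhood at a time.
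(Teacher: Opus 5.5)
Your merge-then-shrink argument is correct for orientable $S$, modulo routine details. There it is essentially the paper's proof reorganized: the paper shrinks each complementary region to its $0$-handle by surgering along co-cores of $1$-handles and then merges the resulting trivial loops, whereas you merge first and then shrink one side of a single separating curve.

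\textbf{The gap: the claim that ``both sides cannot be bad'' is false.} This is a genuine gap in the crosscap case, and it cannot be repaired. For a $\Z_2$-nullhomologous multicurve $x$, let $B$ be one class of the checkerboard colouring of $S\setminus x$, so that $\partial B=x$. Set $I(x)=\chi(B)+|x| \bmod 2$; this is the total number of crosscaps of $B$ mod $2$.
\begin{itemize}
\item A merge changes $|x|$ by $-1$, and a self-surgery with $\nu(u\cup\rho)$ orientable changes $|x|$ by $+1$.
\item In both cases a band is added to or cut out of $B$, so $\chi(B)$ changes by $\pm 1$.
\item Hence $I$ is invariant under arc surgery as in \autoref{def:arc_surgery}.
\item The two colour classes give values summing to $\chi(S) \bmod 2$. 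So for your separating curve $c$, the crosscap parities of $P$ and $Q$ add up to $\chi(S)$ mod $2$.
\end{itemize}
When $\chi(S)$ is odd, exactly one side is good and your argument closes. When $\chi(S)$ is even, both sides can be bad, and then no sequence of moves helps. This even case covers every non-orientable Heegaard surface $\#_{2g}\mathbb{RP}^2$.

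\textbf{Counterexample.} In the Klein bottle $K=M_1\cup_u M_2$, the separating curve $u$ has $I(u)=0+1=1$. A trivial loop has $I=0$ for either colouring. So $u$ and $\partial D$ are both $\Z_2$-nullhomologous but are not related by arc surgeries. Thus \autoref{lem:arc_surgery_nullhom} is false as stated for such $S$, and no refinement of your Step 3 can fix this.

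\textbf{Why the paper's proof misses this.} It surgers along co-cores of ``possibly non-orientable'' $1$-handles. Take a twisted $1$-handle whose co-core returns to the same boundary curve, e.g.\ when $S^*$ is a M\"obius band. Surgery along that co-core is exactly the self-surgery with non-orientable $\nu(u\cup\rho)$ that \autoref{def:arc_surgery} excludes. It produces one curve rather than two, and it is the only move that flips $I$.

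\textbf{What a correct statement needs.} An extra hypothesis: $S$ orientable, or $\chi(S)$ odd, or, when $\chi(S)$ is even, $I(x)=I(y)=0$. Under the last hypothesis your argument goes through, since both sides of $c$ then have even crosscap number and either one can be shrunk to a disk.

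\textbf{The empty multicurve.} Separately, $x$ and $y$ must be nonempty. No arc surgery produces or consumes the empty multicurve, so your proposed fix for $x=\emptyset$ cannot work.
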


\begin{proof}
    We will show that $x$ admits a sequence of arc surgeries to $\partial D$ where $D\subset S$ is an embedded disk. Since arc surgeries can be undone by more arc surgeries, the result will follow.

    Let $x\subset S$ be a nullhomologous multicurve. There exists a multicurve $x^*\subseteq x$ bounding a subsurface $S^*\subset S$ with interior disjoint from $x$. This surface may be abstractly built from a 0-handle and some number of (possibly non-orientable) 1-handles. Performing arc surgeries along the co-cores of these 1-handles converts $x^*$ to the boundary of the 0-handle, i.e., an embedded disk. 
    
    Note that since $S^*$ is connected, our procedure will end with exactly one trivial loop in $S$. We can repeat the above process until every curve in $x$ bounds a disk, and then merge these components. 
\end{proof}

In what follows, it will be convenient for us to work with an abstract surface $S$ instead of embedded surfaces $\Sigma\subset S^4$. To this end, we specialize our vocabulary of bridge 4-sections to abstract surfaces. We will include the word \emph{abstract} to differentiate between the two settings. 

\begin{definition}\label{def:abstract_4-section}
    An \emph{abstract $4$-section} of a surface $S$ is a finite, connected, 4-valent graph $\Gamma\subset S$ with the following properties. 
    \begin{enumerate}
        \item We have $\Gamma=\Gamma_1\cup \Gamma_2\cup \Gamma_3\cup \Gamma_4$, where each $\Gamma_i$ is a subgraph of  $\Gamma$, and every vertex is the endpoint of an edge in each $\Gamma_i$,
        \item the union $\Gamma_i\cup \Gamma_{i+1}$ of two consecutive collections of edges bounds a disjoint union of polygonal disks in $S$ with interior disjoint from $\Gamma$.
    \end{enumerate}
    If $I$ is a subset of $\{a,b,c,d\}$ with $1\leq |I|\leq 3$, we will write $\Gamma_I=\cup_{i\in I}\Gamma_i$, and occasionally treat these subgraphs as multicurves when appropriate. 
\end{definition}

The canonical example of an abstract 4-section of $S$ corresponds to the cell decomposition of $S$ induced by a bridge 4-section of an embedding $S \hra S^4$. In analogy with this case, we now define perturbations of abstract 4-sections of $S$. There are three kinds, depending on the number of subgraphs $\Gamma_i$ involved. 

\begin{definition}\label{def:abstract_permutation}
    Let $\Gamma$ be an abstract 4-section $S$. Let $(a,b,c,d)$ be some cyclic permutation of $(1,2,3,4)$. A \emph{perturbation} of $\Gamma$ is one of the following local modifications of $\Gamma$. 
    
    \begin{enumerate}
        \item[(1)] An \emph{$a$-perturbation} of $\Gamma$ is the result of modifying $\Gamma$ in a region containing a small portion of an edge from $\Gamma_a$, as in the top frame of \autoref{fig:perturbation_abstract}. 

        \item[(2)] An \emph{$ab$-perturbation} of $\Gamma$ is the result of modifying $\Gamma$ along an embedded arc $\delta$ joining an edge from each of $\Gamma_a$ and $\Gamma_b$, as in the middle frame of \autoref{fig:perturbation_abstract}. 

        \item[(3)] An \emph{$abc$-perturbation} of $\Gamma$ is the result of modifying $\Gamma$ along an embedded arc $\rho$ joining an edge from each of $\Gamma_a$ and $\Gamma_c$, which meets an edge of $\Gamma_b$ in a single point, as in the bottom frame of \autoref{fig:perturbation_abstract}.
    \end{enumerate}
    If $I$ is a subset of $\{a,b,c,d\}$ with $1\leq |I|\leq 3$, we will generally refer to this operation as an \textit{$I$-perturbation}. 
\end{definition}

\begin{figure}[ht]
    \centering
    \includegraphics[width=.5\textwidth]{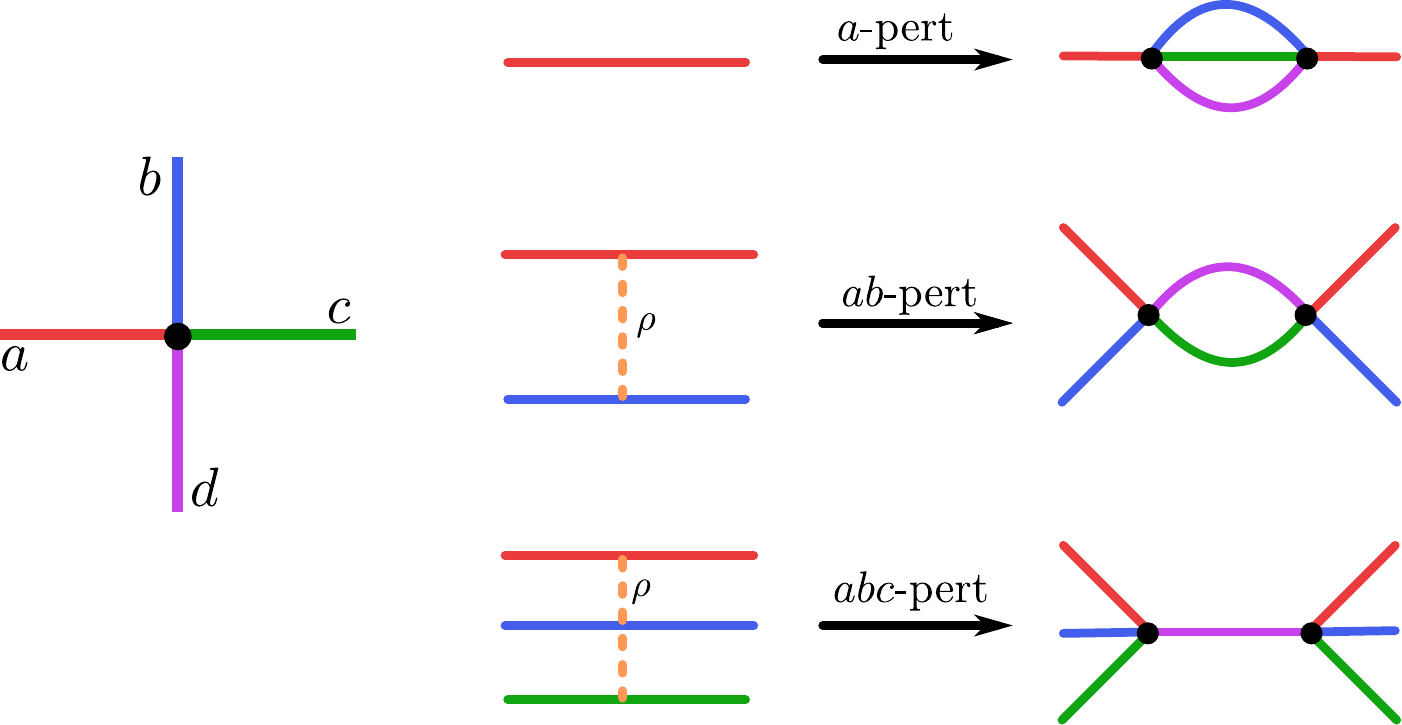}
    \caption{Three local models for a perturbation of an abstract 4-section, where $a,b,c$ and $d$ correspond to the red, blue, green, and purple subgraphs. Top: an illustration of an $a$-perturbation. Middle: an illustration of an $ab$-perturbation. Bottom: an illustration of an $abc$-perturbation.}
    \label{fig:perturbation_abstract}
\end{figure}

After each kind of local modification above, the result is clearly still an abstract 4-section. In fact, the simple closed curves determined by the spine change in a very controlled way. The following lemma follows immediately from the models in \autoref{fig:perturbation_abstract}. 

\begin{lemma}\label{lem:arc_surgery_changes_Gamma_curves}
    Let $\Gamma=\Gamma_1\cup \Gamma_2 \cup \Gamma_3 \cup \Gamma_4$ be the spine of an abstract $4$-section of $S$. Let $\Gamma'$ be an $I$-perturbation of $\Gamma$.
    \begin{enumerate}
        \item If $I=\{a\}$, then $\Gamma'_{ac}$ is isotopic to $\Gamma_{ac}$ and $\Gamma'_{bd}$ is isotopic to $\Gamma_{bd}\cup \partial D$, where $D\subset S$ is a small disk. 
        \item If $|I|=2$, then $\Gamma'_{ac}$ is isotopic to $\Gamma_{ac}$ and $\Gamma'_{bd}$ is isotopic to $\Gamma_{bd}$.
        \item If $I=\{a,b,c\}$ or $\{c,d,a\}$, then $\Gamma'_{ac}$ is isotopic to arc surgery $\Gamma_{ac}[\rho]$ of $\Gamma_{ac}$ along $\rho$ and $\Gamma'_{bd}$ is isotopic to $\Gamma_{bd}$.
    \end{enumerate}
\end{lemma}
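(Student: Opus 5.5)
The plan is a direct local verification, one case of \autoref{def:abstract_permutation} at a time. Since every perturbation is supported in a small disk (or a regular neighborhood of the arc $\delta$ or $\rho$), I would fix such a neighborhood $N$ and compare $\Gamma_{ac}$ with $\Gamma'_{ac}$, and $\Gamma_{bd}$ with $\Gamma'_{bd}$, inside $N$. Outside $N$ the graphs agree, and each of $\Gamma_{ac}$, $\Gamma_{bd}$ is a multicurve: opposite colors $a,c$ share no edges, and at each 4-valent vertex the $a$- and $c$-edges pair up to pass straight through. So it suffices to read off, in each local model of \autoref{fig:perturbation_abstract}, how the arcs of $\Gamma'_{ac}\cap N$ and $\Gamma'_{bd}\cap N$ connect the points of $\partial N$. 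Then isotope rel $\partial N$, or recognize a local arc surgery.

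\textbf{Case $I=\{a\}$.} The $a$-perturbation pushes a small subarc of an $a$-edge, creating two new vertices. Each new vertex gets new short edges of colors $b,c,d$. The new $c$-edge joins the two new vertices, while the new $b$- and $d$-edges together form a small bigon. The modified $a$-strand together with the short $c$-edge is then isotopic rel $\partial N$ to the original $a$-arc, so $\Gamma'_{ac}\simeq\Gamma_{ac}$. Inside $N$, $\Gamma'_{bd}$ is just the new $b$-$d$ bigon, which bounds a small disk $D$. Hence $\Gamma'_{bd}\simeq \Gamma_{bd}\cup\partial D$.

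\textbf{Case $|I|=2$.} For an $ab$-perturbation along $\delta$, the $a$-edge and the $b$-edge are each cut and rerouted along $\delta$. The two new vertices carry short $c$- and $d$-edges. In the local model, the $a$-strand and the new $c$-edge form an arc isotopic to the original $a$-arc, and the $b$-strand and new $d$-edge likewise recover the original $b$-arc. Both isotopies are supported near $\delta$, and they involve disjoint color classes, so they do not interfere. This gives the two claimed isotopies.

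\textbf{Case $I=\{a,b,c\}$ (or $\{c,d,a\}$).} The arc $\rho$ runs from an $a$-edge, through one point of a $b$-edge, to a $c$-edge. In the local model, the $a$- and $c$-edges are cut at the endpoints of $\rho$ and joined along two parallel copies of $\rho$. On the $a$ and $c$ curves this is exactly the replacement of two boundary components of $\partial\nu(u\cup v\cup\rho)$ by the third (or the one-curve version when $u=v$), that is, $\Gamma'_{ac}\simeq\Gamma_{ac}[\rho]$. Meanwhile, the $b$-edge crossed by $\rho$ is split at the crossing point and reconnected through new $d$-edges. This $b$/$d$ configuration is an $ab$-type local picture, so the argument of the previous case shows $\Gamma'_{bd}\simeq\Gamma_{bd}$. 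The case $\{c,d,a\}$ is identical with $d$ playing the role of $b$.

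\textbf{Main obstacle.} I expect the third case to be the main obstacle. There one must check that the reconnection of $\Gamma_{ac}$ near $\rho$ is genuinely the arc surgery of \autoref{def:arc_surgery}. In particular, when both endpoints of $\rho$ lie on the same component, one needs $\nu(u\cup\rho)$ to be orientable. I would argue this holds because the perturbed graph is still an abstract 4-section: the new $a$-$b$ and $b$-$c$ polygons near $\rho$ are disks, which forces the two-sided local picture. Otherwise the verification amounts to inspecting the figures.
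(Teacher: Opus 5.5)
\textbf{Where the proposal breaks.} Your approach matches the paper's, which reads the lemma directly off the three local models. Your descriptions of the cases $I=\{a\}$ and $|I|=2$ are correct, and so is your account of how $\Gamma_{ac}$ and $\Gamma_{bd}$ reconnect near $\rho$ in the $abc$ case.

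The gap is in your last paragraph. There you argue that when both ends of $\rho$ lie on one component $u$, $\nu(u\cup\rho)$ must be orientable because the faces near $\rho$ are disks. That inference fails. The local picture only shows that $ac$-curves are two-sided. But every $ac$-curve in every abstract $4$-section is two-sided, since its two sides are distinguished by the colors $b$ and $d$. Also, $\rho$ always leaves and returns on the $b$-side. Whether the band $\nu(\rho)$ is twisted relative to $\nu(u)$ depends on how $u$ closes up outside a neighborhood of $\rho$, which the local model cannot see.

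\textbf{Counterexample.} Take the Klein bottle $[0,1]^2/\{(0,y)\sim(1,y),\ (x,0)\sim(1-x,1)\}$. Let $\Gamma_{ac}=u=\{x=\tfrac14\}\cup\{x=\tfrac34\}$, which is a single two-sided curve. Let $\Gamma_{bd}=\{y=\tfrac14\}\sqcup\{y=\tfrac34\}$. Color the edges so that the central square $[\tfrac14,\tfrac34]^2$ is the $(a,b)$-face; then $[\tfrac14,\tfrac34]\times([\tfrac34,1]\cup[0,\tfrac14])$ is the $(b,c)$-face. This gives a valid abstract $4$-section with four vertices and four faces.
\begin{itemize}
\item The arc $\rho$ from $(\tfrac14,\tfrac12)$ through the central square, across $y=\tfrac34$, to $(\tfrac34,\tfrac78)$ is a legitimate $abc$-arc with both ends on $u$.
\item $\rho$ together with either arc of $u$ crosses the seam $y=0\sim y=1$ exactly once. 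So $\nu(u\cup\rho)$ is a twice-punctured $\mathbb{RP}^2$.
\item Following the reconnection you described, the perturbation replaces $u$ by a single curve, not two. That curve is the boundary component of $\nu(u\cup\rho)$ other than the $d$-side pushoff of $u$.
\end{itemize}
Hence $\Gamma'_{ac}$ is not $\Gamma_{ac}[\rho]$ in the sense of \autoref{def:arc_surgery}, which requires orientability when $u=v$.

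\textbf{How to repair it.} To complete case (3) you need one of two things. Either add the hypothesis that $\nu(u\cup\rho)$ is orientable, which is automatic if $S$ is orientable or $u\neq v$. Or enlarge the definition of arc surgery so that, when $\nu(u\cup\rho)$ is non-orientable, it replaces $u$ by the remaining boundary component. The paper's one-line proof is silent on this point too. Its proof of \autoref{lem:arc_surgery_nullhom}, which does surgery along cocores of possibly non-orientable $1$-handles, in fact needs the enlarged version.
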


On the other hand, modulo abstract perturbations, we can perform certain band surgeries on an abstract 4-section.

\begin{lemma}\label{lem:abstract_perturbations_trick}
    Let $\Gamma=\Gamma_1\cup \Gamma_2 \cup \Gamma_3 \cup \Gamma_4$ be an abstract $4$-section of $S$. Suppose that $\rho\subset S$ is an embedded arc with $\rho\cap \Gamma_{ac}=\partial \rho$. Then, after some perturbations of $\Gamma$, there is an abstract $4$-section $\Gamma'$ of $S$ such that $\Gamma'_{ac}=\Gamma_{ac}[\rho]$ is isotopic to an arc surgery along $\rho$ and $\Gamma'_{bd}$ is isotopic to $\Gamma_{bd}$. 
\end{lemma}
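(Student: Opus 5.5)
We reduce to the case where $\rho$ crosses $\Gamma$ in the simplest possible pattern and then apply a single $abc$- or $cda$-perturbation, using \autoref{lem:arc_surgery_changes_Gamma_curves}(3).

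Fix $\rho$ with $\rho\cap\Gamma_{ac}=\partial\rho$. The interior of $\rho$ may cross edges of $\Gamma_b$ and $\Gamma_d$ many times, and it passes through the polygonal faces of the cell decomposition given by $\Gamma$. Put $\rho$ in general position with respect to $\Gamma$ (avoiding vertices), and induct on $n=|\interior(\rho)\cap\Gamma_{bd}|$.

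\textbf{Base case $n=1$.} Here $\rho$ starts on an edge of $\Gamma_a$ or $\Gamma_c$, crosses one edge of $\Gamma_b$ or $\Gamma_d$, and ends on $\Gamma_a$ or $\Gamma_c$.
\begin{itemize}
\item If the endpoints lie on one edge of $\Gamma_a$ and one of $\Gamma_c$ and the crossed edge is in $\Gamma_b$, this is precisely the local model of an $abc$-perturbation. The crossed edge must be in $\Gamma_b$ or $\Gamma_d$; by symmetry of the cyclic labeling, $d$ gives a $cda$-perturbation.
\item If both endpoints lie on $\Gamma_a$ (or both on $\Gamma_c$), first perform a $c$-perturbation (type (1)) near one endpoint. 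This creates a small new edge of $\Gamma_c$ adjacent to that endpoint, and we slide the endpoint of $\rho$ onto it. By \autoref{lem:arc_surgery_changes_Gamma_curves}(1), this changes $\Gamma_{ac}$ only up to isotopy, while $\Gamma_{bd}$ gains a trivial curve $\partial D$.

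The gained trivial curve is the delicate part. We either remove it later by an inverse $a$-perturbation, or avoid it by using an $ab$- or $cd$-perturbation instead, which by \autoref{lem:arc_surgery_changes_Gamma_curves}(2) preserves both multicurves up to isotopy. The $cd$-type move can convert the endpoint's edge color at the cost of rerouting $\rho$ near the endpoint.
\end{itemize}

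\textbf{Case $n=0$.} Then $\rho$ lies in a single face bounded by $\Gamma_a\cup\Gamma_b$ or $\Gamma_c\cup\Gamma_d$, etc. We perform a small $b$- or $d$-perturbation to insert one crossing, which reduces to the case $n=1$.

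\textbf{Inductive step $n\geq 2$.} Use $|I|=2$ perturbations to shorten $\rho$. Take the first face that $\rho$ traverses after leaving its endpoint on, say, $\Gamma_a$. The segment of $\rho$ from its endpoint to the next crossing with $\Gamma_{bd}$ lies in a polygon whose boundary alternates between two consecutive colors. An $ab$-perturbation (or $da$-perturbation) along an arc $\delta$ pushes a finger of $\Gamma_a$ across the crossed $\Gamma_b$ (or $\Gamma_d$) edge. This places a new $\Gamma_a$ edge on the other side of that edge, and we move the endpoint of $\rho$ there, decreasing $n$ by one. By \autoref{lem:arc_surgery_changes_Gamma_curves}(2), this changes neither $\Gamma_{ac}$ nor $\Gamma_{bd}$ up to isotopy. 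Moreover, the shortened arc still realizes the same arc surgery, since the finger is isotopic to the original curve together with the traversed segment of $\rho$. Iterating reduces to $n=1$.

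The main obstacle is this finger move. One has to check that the local model of an $ab$-perturbation really carries the $\Gamma_a$ edge across a $\Gamma_b$ edge in the needed direction, and that the result is still an abstract $4$-section. If $\rho$ crosses a $\Gamma_b$ edge in a region bounded by $\Gamma_c\cup\Gamma_d$ faces, one may need to first convert the endpoint to color $c$ via a $cd$-move. This is also where orientability of $\nu(u\cup\rho)$ enters, when $\rho$ has both endpoints on the same curve.
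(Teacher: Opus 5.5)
Your strategy is the paper's: shorten $\rho$ one crossing at a time using perturbations that fix $\Gamma_{ac}$ and $\Gamma_{bd}$ up to isotopy, then finish with a single $abc$- or $cda$-perturbation and \autoref{lem:arc_surgery_changes_Gamma_curves}(3). Your inductive step, an $ab$- or $da$-perturbation along the initial segment of $\rho$, is sound, and it is exactly what part (2) of that lemma certifies. One correction: the edge of the endpoint's curve that becomes reachable beyond the crossed $\Gamma_b$-edge is a $\Gamma_c$-edge, not a $\Gamma_a$-edge. This is because the face on the far side of a $\Gamma_b$-edge from an $ab$-face is a $bc$-face.

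The gap is at the bottom of the induction. Every $\Gamma_b$-edge separates an $ab$-face from a $bc$-face, and every $\Gamma_d$-edge separates a $da$-face from a $cd$-face. Meanwhile, $\Gamma_a$-edges only bound $ab$- or $da$-faces, and $\Gamma_c$-edges only bound $bc$- or $cd$-faces. Hence $n=|\interior(\rho)\cap\Gamma_{bd}|$ is odd exactly when the two endpoints of $\rho$ lie on edges of different colours. Two consequences follow:
\begin{itemize}
\item Your subcase ``$n=1$ with both endpoints on $\Gamma_a$'' cannot occur, and your worry about first needing a $cd$-move in the inductive step is moot.
\item The case that actually needs an argument is $n=0$, and there your proposed step fails. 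A $b$- or $d$-perturbation leaves the $\Gamma_{ac}$-curves through the endpoints untouched, so it cannot change the endpoint colours, which parity says must change. Moreover, by \autoref{lem:arc_surgery_changes_Gamma_curves}(1) with the labels rotated, it adds a trivial component to $\Gamma_{ac}$, which the conclusion does not allow.
\end{itemize}

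The fix needs no perturbation. Suppose $\rho$ lies in an $ab$-face $F$ and has an endpoint on an edge $e\subset\Gamma_a$. Slide that endpoint along its curve of $\Gamma_{ac}$, past an endpoint $w$ of $e$, onto the next edge, which lies in $\Gamma_c$. This is an isotopy of $\rho$ through arcs with endpoints on $\Gamma_{ac}$ and interiors disjoint from it, so $\Gamma_{ac}[\rho]$ is unchanged. The new arc meets $\Gamma_{bd}$ exactly once, in the $\Gamma_b$-edge at $w$, which is precisely the $abc$ configuration. A $da$-face works the same way and gives the $cda$ configuration. The paper's proof passes over $n=0$ silently, and this slide covers it.
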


\begin{proof}
    Note that the interior of $\rho$ may cross $\Gamma_{bd}$. If $|\interior (\rho)\cap \Gamma_{bd}|=1$, then $\rho$ guides an $axc$-perturbation of $\Gamma$ for $x\in \{b,c\}$. By \autoref{lem:abstract_perturbations_trick}, this effects an arc surgery on $\Gamma_{ac}$. If $|\interior(\rho)\cap \Gamma_{bd}|>1$, we can do $b$- or $d$- perturbations as in \autoref{fig:arc_surgery_trick} to effectively contract $\rho$ while preserving the isotopy classes of $\Gamma_{ac}$ and $\Gamma_{bd}$. 

    \begin{figure}[ht]
        \centering
        \includegraphics[width=.7\textwidth]{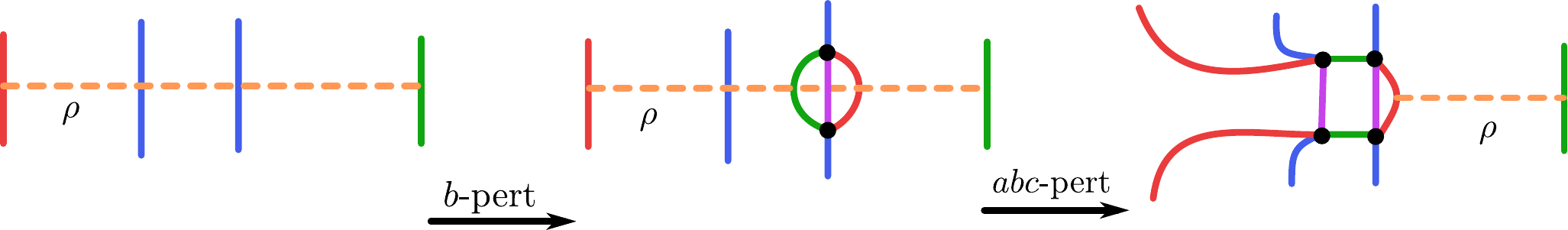}
        \caption{An illustration of the proof of \autoref{lem:abstract_perturbations_trick}. By performing repeated perturbations along an arc, we can find a new abstract 4-section which realizes arc surgery of $\Gamma_{ac}$ along $\rho$.}
        \label{fig:arc_surgery_trick}
    \end{figure}

    Once $|\interior (\rho)\cap \Gamma_{bd}|=1$, we proceed as in the previous case.
\end{proof}

\begin{example}\label{ex:arc_surgery_example}
    We illustrate the previous process with an example. \autoref{fig:arc_surgery_ex1} and \autoref{fig:arc_surgery_ex2} illustrate the result of two different arc surgeries on an abstract 4-section of the torus. 
    
    \begin{figure}[ht]
        \centering
        \includegraphics[width=.65\textwidth]{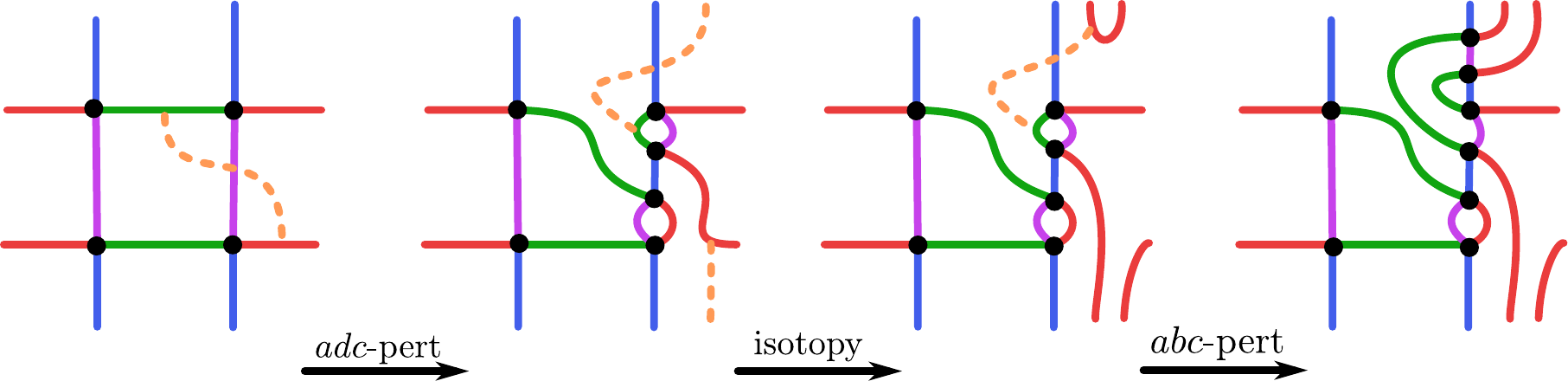}
        \caption{An illustration of a sequence of arc surgeries on an abstract 4-section of the torus. Each arc surgery is indicated by a dashed arrow. The process converts $\Gamma_{ac}$ from two copies of the $(0,1)$ curve to two copies of the  $(0,1)$ curve. }
        \label{fig:arc_surgery_ex1}
    \end{figure}

    \begin{figure}[ht]
        \centering
        \includegraphics[width=.85\textwidth]{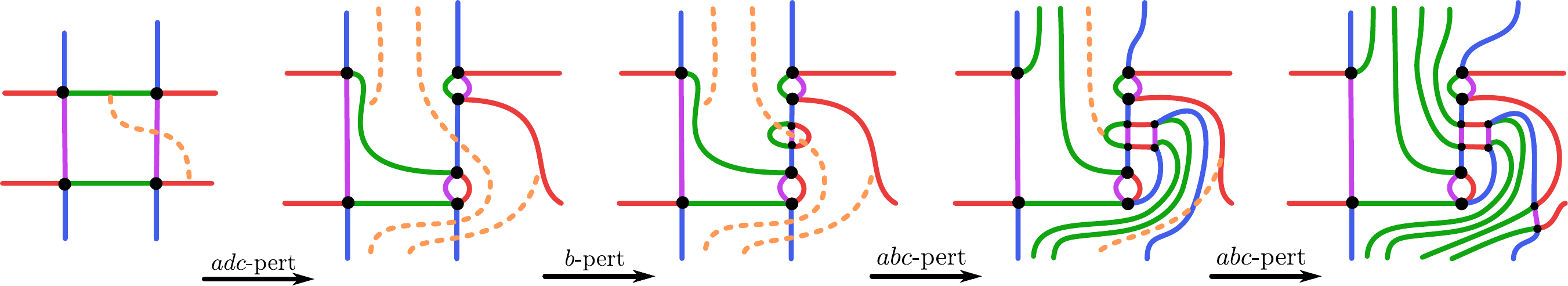}
        \caption{An illustration of a sequence of arc surgeries on an abstract 4-section of the torus. Each arc surgery is indicated by a dashed arrow. In this case, the process converts $\Gamma_{ac}$ from two copies of the $(0,1)$ curve to two copies of the  $(2,1)$ curve. }
        \label{fig:arc_surgery_ex2}
    \end{figure}
\end{example}

\subsection{Perturbations of both abstract and embedded surfaces}\label{sec:blending_abstract_and_embedded_quadrisections}

Let $\Tcal=(T_1,T_2,T_3,T_4)$ be a bridge 4-section of an embedded surface $\Sigma:S\hra S^4$. If $\Gamma$ is the spine of the abstract 4-section of $S$ given by $\Gamma_i=\Sigma^{-1}(T_i)$, we write $\Gamma=\Gamma_\Tcal$. By construction, $I$-perturbations of $\Tcal$ descend to $I$-perturbations of $\Gamma_\Tcal$. The following result shows that a kind of converse holds. 

\begin{lemma}\label{lem:abstract_perturbations_lift}
    Let $\Tcal$ be a bridge $4$-section of $\Sigma\subset S^4$ and let $\Gamma=\Gamma_{\Tcal}$. Suppose that $\Gamma'$ is an $I$-perturbation of $\Gamma$. Then there exists an $I$-perturbation of $\Tcal$, denoted by $\Tcal'$, such that $\Gamma_{\Tcal'}=\Gamma'$. 
\end{lemma}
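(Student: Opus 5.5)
We argue by cases on $|I|$, in each case realizing the local model of \autoref{fig:perturbation_abstract} by bands in the tangles. The basic dictionary is as follows. The surface $\Sigma$ meets the bridge sphere $\Sigma\cap\partial B_i$ in its $2b$ punctures, which are the vertices of $\Gamma$. Each edge of $\Gamma_i$ is $\Sigma^{-1}$ of a strand of $T_i$. Each polygonal face of $\Gamma_i\cup\Gamma_{i+1}$ is a component of the disk system $D$ in the sector between $B_i$ and $B_{i+1}$, and its boundary is a component of the unlink $T_i\cup\T_{i+1}$.

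\textbf{Case $|I|=1$.} An $a$-perturbation is supported near a small portion of an edge $e\subset\Gamma_a$. Let $\tau\subset T_a$ be the strand with $\Sigma^{-1}(\tau)=e$. Choose a short band $\rho_a$ from a point of $\tau$ to a nearby point of the bridge sphere, and perform the type 1 modification on $T_a$ and type 0 modifications on the other three tangles, placed at the new pair of punctures. This is a $0$-sector perturbation, so it preserves $\Sigma$ by \autoref{lem:sector-perturbation}; the sphere condition is vacuous because $k=0$. Reading off the induced cell structure, the new edges form exactly the top model of \autoref{fig:perturbation_abstract}: $e$ is split in two, and small bigons appear in the faces adjacent to $e$.

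The one point needing care is that the abstract arc is the only data given, so we must produce an embedded band whose preimage is that arc. Choose $\rho_a$ inside a small neighbourhood of the disk component containing $e$. Then the twisting of the band can be arranged to match the local side of $e$ on which the bigon appears. Any local knotting this introduces in $T'_a$ is allowed, by \autoref{remark:0-sector_warning}.

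\textbf{Cases $|I|=2$ and $|I|=3$.} An $ab$-perturbation is guided by an arc $\delta$ joining edges of $\Gamma_a$ and $\Gamma_b$. By genericity we may take $\delta$ in the interior of a single face of $\Gamma_a\cup\Gamma_b$, since that is how the local model sits. That face is a disk $\Delta$ of $D$ in the sector between $B_a$ and $B_b$, and $\partial\Delta$ is one component of $T_a\cup\T_b$. Push $\Sigma(\delta)\subset\Delta$ rel endpoints into the bridge sphere $\partial B_a=\partial B_b$ along the boundary-parallelism of the trivial disk $\Delta$. The two halves of the pushed arc give bands $\rho_a\subset B_a$ and $\rho_b\subset B_b$. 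The $2$-sphere $S_a$ is then formed from the pushing region together with a parallel copy of $\Delta$; it meets the bridge sphere in one loop and contains $\rho_a$ and the single component $\partial\Delta$. So the hypothesis of \autoref{lem:sector-perturbation} holds, and we obtain a $1$-sector perturbation.

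For an $abc$-perturbation, the arc $\rho$ crosses one edge of $\Gamma_b$. This splits $\rho$ into two subarcs, one in a face of $\Gamma_a\cup\Gamma_b$ and one in a face of $\Gamma_b\cup\Gamma_c$. Apply the same construction to each subarc, producing two spheres $S_a$ and $S_b$. Their bands in $B_b$ must agree, which we arrange by choosing the crossing point on the $\Gamma_b$ edge consistently for both subarcs. This gives a $2$-sector perturbation.

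\textbf{Verification.} In each case we check $\Gamma_{\Tcal'}=\Gamma'$ by comparing with the models of \autoref{fig:perturbation_abstract}. This holds because each band lies in a collar of the face containing the guiding arc, so the surgery on $\Sigma$ takes place inside the corresponding face.

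The main obstacle is the $|I|\geq2$ cases: showing that the push of the guiding arc into the bridge sphere can be chosen so that the sphere condition holds, particularly that each $S_j$ contains exactly one component of the unlink. We resolve this by using the face disk $\Delta$ itself, since trivial disk systems are boundary parallel component by component.
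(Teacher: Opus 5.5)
\textbf{There is a genuine gap in your cases $|I|\ge 2$.} Your $|I|=1$ case is essentially fine. Any band from the strand $\tau$ to the bridge sphere gives a $0$-sector perturbation, and it realizes the $a$-perturbation at the corresponding point of $e$. The remark about twisting the band is unnecessary.

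\textbf{Pushing the arc does not give the bands.} Boundary-parallelism of the face $\Delta\subset X_a$ pushes $\Delta$ into the $3$-sphere $\partial X_a=B_a\cup\overline{B}_b$, not into the bridge sphere. Also, $\delta$ cannot be pushed into the bridge sphere rel endpoints, since its endpoints are interior points of strands of $T_a$ and $T_b$. If $\partial\Delta$ is a $k$-bridge component, the pushed disk $\Delta'$ meets $\partial B_a$ in exactly $k$ arcs, plus possibly circles. The image of $\delta$ can cross several of these arcs. For example, with $k=3$ the regions of $\Delta'$ can form a chain lying in $B_a, B_b, B_a, B_b$, with $e_a$ and $e_b$ in the two end regions, so the pushed arc crosses the bridge sphere three times. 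In that case there are no ``two halves'' $\rho_a\subset B_a$ and $\rho_b\subset B_b$.

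\textbf{Your sphere $S_a$ does not satisfy the sphere condition.} ``The pushing region together with a parallel copy of $\Delta$'' is not a surface in $B_a\cup\overline{B}_b$, since the pushing region is $3$-dimensional and lies in $X_a$. The natural candidate, $\Delta'$ together with a pushoff of it rel boundary, meets the bridge sphere in $k$ loops, not one. The one-loop requirement of \autoref{lem:sector-perturbation} says the component $\partial\Delta$ is in standard position with respect to the splitting of $S^3$. It does not follow from boundary-parallelism in the $4$-ball. To make your approach work you would need two things:
\begin{itemize}
\item an Otal-type standard-position theorem for bridge splittings of unlinks;
\item an argument that, in that standard position, the prescribed edges $e_a,e_b$ can be joined by a band lying on the sphere and crossing its equator once.
\end{itemize}

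\textbf{The $|I|=3$ case has a further problem.} The band $\rho_b$ is shared by the two sphere conditions. So a single band in $B_b$ must lie both on a sphere for a component of $T_a\cup\overline{T}_b$ and on a sphere for a component of $T_b\cup\overline{T}_c$. Putting these two unlinks, which share the tangle $T_b$, into compatible standard positions along a common band is not automatic. Choosing the attaching point on the $\Gamma_b$-edge consistently only matches where the bands touch the strand, not the bands themselves.

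\textbf{How the paper handles this.} It avoids the construction entirely. It observes that $\Gamma$ is the edge-compression of $\Gamma'$ along the new edge of color $x\notin I$. It then invokes Proposition 7.1 of \cite{aranda24}, which supplies exactly the lift you are trying to build by hand.
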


\begin{proof}
    Let $x\in \{1,2,3,4\}\setminus I$. Let $e\subset \Gamma_x'$ be the new edge of $\Gamma'$ that is labeled $x$; such an edge exists by the definition of $I$-perturbation. Using the language in \cite[\S7]{aranda24}, the graph $\Gamma$ is the \emph{edge-compression} of $\Gamma'$ along $e$. Then Proposition 7.1 of \cite{aranda24} gives us the desired conclusion.     
\end{proof}

\begin{remark}\label{rem:unique_perturbation_lift}
    It is not clear (and possibly false) that every $I$-perturbation of $\Gamma_\Tcal$ lifts to a \emph{unique} $I$-perturbation of $\Tcal$. 
\end{remark}

\section{Existence of bridge \texorpdfstring{$4$}{4}-sections of \texorpdfstring{$3$}{3}-manifolds}\label{sec:existence}

In this section, we prove the main theorem of this paper, \autoref{thm:existance_bridge_4sections}, that any embedded 3-manifold in $S^5$ may be isotoped into bridge position as in \autoref{def:bridge_4section_3man}. The proof is broken down into three steps. First, we isotope the 3-manifold $Y^3\subset S^5$ into relative Morse position so that the equatorial $S^4$ in $S^5$ cuts $Y^3$ into two 3-dimensional handlebodies $H_\alpha$ and $H_\beta$. Such an embedding of $Y^3$ can be codified with a tuple $(\Sigma; \Da, \Db)$ where $\Sigma$ is a surface in $S^4$ and, for each $\varepsilon=\alpha,\beta$, $D_\varepsilon$ are compressing disks for the handlebody $H_\varepsilon$ embedded in $S^4$. 
Next, in \autoref{sec:bridge_position_heegaard_complexes}, we find a bridge 4-section of $\Sigma$ in $S^4$ that also contains the data of $\Da$ and $\Db$; we call such a 4-section is called a \emph{bridge position} for the Heegaard complex. To end, \autoref{sec:4sections_from_bridge_position} builds a quadrisection of an embedded 3-manifold using a Heegaard complex in bridge position as input data.

\subsection{Hyperbolic embeddings and Heegaard complexes of \texorpdfstring{$3$}{3}-manifolds}\label{sec:Heegaard_complexes}

A Heegaard complex is a convenient way of describing a (possibly non-orientable) 3-manifold $Y$ embedded in $S^5$. 

\begin{definition}\label{def:hyperbolic_morse}
    Let $h \colon S^5\subset \mathbb{R}^6\to \mathbb{R}$ be the natural Morse function with two critical points. We say that an embedding of a connected 3-manifold $Y\subset S^5$ is \emph{hyperbolic} if the critical values of $h\vert_Y$ appear with increasing index.
\end{definition}

The terminology is borrowed from the corresponding language for knotted surfaces. Note that such an embedding may have many handles of a given index, and in particular, many local minima. 

\begin{proposition}\label{prop:hyperbolic_embedding}
    Any embedding of a $3$-manifold $Y\subset S^5$ may be ambiently isotoped so that it is hyperbolic. 
\end{proposition}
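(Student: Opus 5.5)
The plan is to use standard general-position and handle-rearrangement arguments for Morse functions restricted to submanifolds, following the analogous proof for surfaces in $S^4$ (where knotted surfaces can be put in ``hyperbolic'' or normal form). After a small ambient isotopy, $h\vert_Y$ is Morse with distinct critical values, and no critical point of $h\vert_Y$ sits at the poles of $S^5$. Isotope $Y$ slightly so that it misses the two critical points of $h$; then $Y$ lives in $S^4\times(-1,1)$ and $h\vert_Y$ is a height function on $Y\subset S^4\times \R$. Each critical point of $h\vert_Y$ of index $k$ corresponds to attaching an embedded $k$-handle to the level sets as we pass the critical value, so $Y$ is described by a movie of embedded surfaces in $S^4$ changed by handle attachments.

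The main step is to reorder the critical values so that they increase with index. We proceed inductively. Suppose a critical point $q$ of index $k$ lies immediately above a critical point $p$ of index $k'>k$ (no critical values in between). Consider the ascending manifold of $p$ and the descending manifold of $q$ inside the intermediate level $h^{-1}(t)\cap Y$, a surface $F_t$ (or more precisely, inside the region $Y\cap h^{-1}[h(p),h(q)]$). The attaching sphere of the handle at $q$ has dimension $k-1$ in $F_t$, and the belt sphere of the handle at $p$ has dimension $2-k'$ in $F_t$. Since $(k-1)+(2-k')<2=\dim F_t$, after a small isotopy of $Y$ supported near the level (equivalently, perturbing the gradient-like vector field within $Y$, which can be realized by an ambient isotopy), these are disjoint. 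Then the two handles are independent, and we can push $q$ down below $p$. To make this an ambient move in $S^5$ rather than just a change of Morse function on $Y$, we need to realize the lowering of the handle at $q$ by an isotopy of $Y$ in $S^5$. This is the standard argument: the core of the $k$-handle at $q$ together with the flow lines down to level $h(p)-\epsilon$ sweep out an embedded $k$-disk (``the descending trace'') in $Y$; since it avoids the ascending disk of $p$, we can isotope $Y$ in a neighborhood of this disk in $S^5$, vertically lowering the handle in the height coordinate of $S^4\times\R$. The embedding codimension is large enough that the vertical translation of a handle in $S^4\times \R$ is an ambient isotopy as long as the handle in the lower levels does not collide with other parts of $Y$; collisions occur between the moved handle (of dimension $\le 3$ in the product, i.e. $k$-dimensional core plus level directions) and the rest of $Y$ in the levels swept. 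Generically in each level $S^4$, the moved $k$-handle's trace is a $k$-dimensional family of level disks, and we arrange disjointness from $F_t$ in $S^4$ by general position: dimensions $k-1+1$ (core times time) versus $2$ inside a $4$-dimensional level with one parameter give $k+2+1\le 5$ requiring care only for $k=2$.

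The main obstacle is this last dimension count: lowering a $2$-handle past level surfaces could force intersections between the $2$-handle core (a $2$-disk, swept over an interval) and the surface $F_t$ in the levels $S^4$; this is a $3$-dimensional sweep against a $2$-dimensional surface in $S^4\times I$ (dimension $5$), which is generically $0$-dimensional intersections, not empty. I would handle this by first shrinking the $2$-handle to a small neighborhood of its attaching circle (so it becomes a thin band-like object) and using the fact that we only need to swap past a $3$-handle (index $k'=3$, a local maximum, i.e. a capping 3-ball), whose support can itself be shrunk to a tiny ball away from the core of the $2$-handle; similarly swapping $1$-handles past $2$- or $3$-handles is handled by shrinking the $1$-handle to a small neighborhood of its guiding arc, which generically misses everything in codimension. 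Iterating the swaps finitely many times yields critical values in order of increasing index, giving a hyperbolic embedding.
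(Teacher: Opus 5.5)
Your overall plan is the standard rearrangement fact that the paper does not prove but simply cites (Perron; Sharpe, Theorem 3.4): make $h|_Y$ Morse, swap adjacent out-of-order critical points one pair at a time, and realize each swap by an ambient isotopy. Your intrinsic step is also correct: making the attaching sphere of $q$ miss the belt sphere of $p$ in $F_t$ is where the index hypothesis enters.

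The gap is in the ambient step, which is the only part with real content in codimension two. You lower the handle at $q$ \emph{vertically} in $S^4\times\mathbb{R}$. But the rest of $Y$ is not vertical: the level surfaces $F_t$ move in $S^4$ as $t$ varies, and your general-position count for ruling out collisions is wrong. The object that can collide is the guiding $k$-disk of $q$ (its core pushed into a level). Swept over an interval it is $(k+1)$-dimensional, while $Y$ is $3$-dimensional in a $5$-dimensional slab, so generically they meet in a set of dimension $k-1$. That means isolated collisions already for $k=1$, so a moving guiding arc does \emph{not} generically miss everything, and arcs of collisions for $k=2$. Shrinking the $3$-handle at $p$ does nothing about the $k=2$ collisions with the rest of $F_t$ in the regular levels. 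Nor can a $2$-handle be shrunk to a neighborhood of its attaching circle, since its core is the whole compressing disk.

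The missing idea is to lower the handle along the product structure of $Y$ rather than vertically. On a slab with no critical values, isotopy extension gives a level-preserving ambient isotopy $\Psi_t$ of $S^4$ with $\Psi_t(F_{t_0})=F_t$. Transporting the guiding disk $D_q$ by $\Psi_t$ keeps $D_q\cap F_t=\partial D_q$ at every level, with no general position needed. So the only possible collision is with the handle of $p$ at its critical level. After a level-preserving isotopy, that handle is supported in $B\times[h(p)-\delta,h(p)+\delta]$ for a small $4$-ball $B$ about a point $x_p\in S^4$. Your intrinsic perturbation keeps $\partial D_q$ off $B$, since $B$ meets $F_{h(p)+\delta}$ only near the belt sphere. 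Because $\dim D_q=k\le 2$, a small perturbation rel boundary makes $D_q$ miss $x_p$, hence miss a small enough $B$, and then $D_q$ passes through level $h(p)$ untouched. This works uniformly for every pair $k<k'$, so no case analysis is needed; shrinking a local extremum is just the special case $k=0$ or $k'=3$. With this repair your argument becomes a complete proof of the fact the paper cites.
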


\begin{proof}
    It is a standard fact (e.g., see \cite{perron75} or \cite[Theorem 3.4]{sharpe88}) that in codimension two, embedded lower index handles can be ambiently isotoped to appear below higher index handles. In particular, we can arrange that the handles of $Y$ induced by $h\vert_Y$ appear with increasing index. 
\end{proof}

We now describe how to record such an embedding.

\begin{definition}\label{def:heegaard_complex}
    An (oriented) \emph{Heegaard complex} is a triple $\left(\Sigma;D_\alpha, D_\beta\right)$ embedded in $S^4$ such that: 
    \begin{enumerate}
        \item $\Sigma\subset S^4$ is a closed, connected (orientable) surface, 
        \item For $\eps\in\{\alpha,\beta\}$, $D_\eps\subset S^4$ is a collection of embedded disks such that 
        \begin{enumerate}
            \item[(i)] $D_\eps \cap \Sigma=\partial D_\eps$,
            \item[(ii)] Each component $D\in D_\eps$ is \emph{framed}, that is, the framing $\overline{\partial D}$ on $\nu(D)\vert_{\partial D}\cong \partial D\times D^2$ obtained by restricting the unique framing of $\nu(D)$ agrees with the surface framing on $\nu(D)\vert_{\partial D}$,
            \item[(iii)] The result $\Sigma\mid D_\eps:=\left(\Sigma-\eta(D_\eps)\right)\cup \left(D_\eps \times \{-1,1\}\right)$ of surgering $\Sigma$ along $D_\eps$ is an unlink of 2-spheres in $S^4$.
        \end{enumerate}
    \end{enumerate}
\end{definition}

\begin{remark}\label{rem:framed_disks}
    Note that $D_\alpha$ and $D_\beta$ will generally intersect in $S^4$. Moreover, we do not require $\partial D_\alpha$ or $\partial D_\beta$ to be a minimal cut system for $\Sigma$, so the various components of $\partial D_\eps\subset \Sigma$ may be isotopic or trivial in $\Sigma$. The corresponding realizing 3-manifold may have many 0- and 3-handles. 
    
    Moreover, the condition in (ii) ensures that $\Sigma$ can be ambiently surgered along along $D_\eps$ to yield an embedded surface. Indeed, a section of $\nu(D)\vert_{\partial D}$ obtained from a pushoff of $\partial D$ on $\Sigma$ bounds an embedded disk in $\nu(D)$ disjoint from $D$. 
\end{remark}

\begin{remark}
    The surface $\Sigma$ in a Heegaard complex is a ribbon surface, i.e., it is obtained from unlinked $2$-spheres in (iii) by tubing along the cocores of $D$'s in (ii).
\end{remark}

\begin{remark}\label{rem:nonorientable_heegaard_complex}
    Though we will deal mainly with the orientable case, we can also define \emph{non-orientable Heegaard complexes}. In this case, we allow $\Sigma\subset S^4$ to be non-orientable in (1) above, but require it to have normal Euler number in $S^4$ equal to zero. In particular, it is abstractly homeomorphic to a sum of an even number of projective planes. In (i), we also require the boundary of a disk in $D_\alpha$ or $D_\beta$ to have an annular neighborhood in $\Sigma$, so that it is sensible to discuss compressing $\Sigma$ along $D_\eps$ in (iii). 
\end{remark}

From a Heegaard complex, we can completely recover an embedded 3-manifold. 

\begin{proposition}\label{prop:heegaard_complex_determines_embedding}
    A Heegaard complex $(\Sigma;D_\alpha,D_\beta)$ determines a unique embedding of a closed $3$-manifold $Y(\Sigma;D_\alpha,D_\beta)$ into $S^5$ up to isotopy. 
\end{proposition}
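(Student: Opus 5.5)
We build $Y$ as the union of two 3-dimensional handlebodies in the two hemispheres of $S^5$, mimicking the hyperbolic position of \autoref{def:hyperbolic_morse} read backwards. Write $S^5=B^5_-\cup_{S^4}B^5_+$, with $S^4$ the equator. Place $\Sigma\subset S^4$ and take a collar $S^4\times[-1,1]$ of the equator.

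In $S^4\times[0,1/2]$, put $\Sigma\times[0,1/2]$. At level $1/2$, attach 2-handles to $\Sigma$ along $D_\beta$. Condition (ii) of \autoref{def:heegaard_complex} guarantees that the surface framing extends over each disk. We then realize the ambient surgery $\Sigma\mid D_\beta$ as the upper boundary of an embedded cobordism $\Sigma\times\{1/2\}\cup (D_\beta\times[-1,1])$, pushed slightly upward; see \autoref{rem:framed_disks}. One must check that the disks of $D_\beta$, which may intersect one another in $S^4$, can be separated. Since they are 2-dimensional in a 4-dimensional slice, we can push distinct disks (and their thickenings $D\times[-1,1]$) to slightly different heights in $[1/2,1]$. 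The 3-dimensional 2-handles in the 5-dimensional collar then become disjoint, and the same level-shifting trick resolves self-intersections of a single disk generically.

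By (iii), the upper boundary is an unlink of 2-spheres in $S^4\times\{1\}$. These bound disjoint 3-balls in $S^4$, which we push into the remaining cap of $B^5_+$ to obtain a trivial 3-disk system. Doing the same with $D_\alpha$ in $B^5_-$ produces a closed 3-manifold $Y=H_\alpha\cup_\Sigma H_\beta$ embedded in $S^5$.

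\textbf{Uniqueness.} The choices made are:
\begin{enumerate}
\item the height ordering and the small pushes of the 2-handles;
\item the framed thickening of each disk;
\item the 3-ball fillings of the unlinks.
\end{enumerate}
For (1), different height orderings are related by isotopy, since the handles can be slid past each other in levels. This is again a codimension-two general position argument: a 2-dimensional core meets a 3-dimensional trace generically in isolated points, and these can be removed by varying the height. For (2), the framing is unique, because $\pi_1(SO(2))$ choices are pinned down by the condition in (ii) and disks have contractible framings rel boundary. For (3), after pushing into $B^5$, the trivial 3-disk systems bounded by a fixed unlink of 2-spheres are unique up to isotopy rel boundary by \autoref{lem:uniqueness_disk_fillings} with $n=5$. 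Here we view the cap above level $1$ as a 5-ball whose boundary contains the unlink.

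\textbf{Main obstacle.} The delicate step is the first kind of independence: showing that intersections among the disks of $D_\beta$ (and self-intersections) do not affect the isotopy class of the thickened handle attachments. I would handle this by parametrizing the levels and appealing to general position in the 5-dimensional collar, in the style of the handle-rearrangement arguments in \cite{perron75,sharpe88} used for \autoref{prop:hyperbolic_embedding}.
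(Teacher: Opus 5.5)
This is essentially the paper's argument: a product collar on $\Sigma$, the $D_\alpha$- and $D_\beta$-handles attached on opposite sides of the equator, condition (ii) for the framed thickening, condition (iii) for the unlinks, and \cite{powell24} for unique 3-ball fillings. Attaching the two families on opposite sides is precisely what separates the $\alpha$-disks from the $\beta$-disks. Your accounting of choices is somewhat more explicit than the paper's, which only invokes Powell. Your items (2) and (3) are right: existence of the thickening is the $\pi_1(S^1)$ obstruction that (ii) kills, and uniqueness rel boundary follows from $\pi_2(S^1)=0$.

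You should, however, drop your ``main obstacle'' and uniqueness item (1). The disks of a single family $D_\beta$ are embedded and pairwise disjoint by \autoref{def:heegaard_complex}. They are the components of $D_\beta$, and the surgered surface $\Sigma\mid D_\beta$ in condition (iii) only makes sense for disjoint disks. The intersections mentioned in \autoref{rem:framed_disks} are between $D_\alpha$ and $D_\beta$, not within one family. So all $\beta$-handles are attached simultaneously at a single level, and there is nothing to order.

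The level-shifting fix you propose would not work anyway. Suppose $D_1,D_2\in D_\beta$ met at a point. After surgering along $D_1$, the disk $D_2$ would meet the parallel copies $D_1\times\{\pm1\}\subset\Sigma\mid D_1$ in its interior. Then $D_2\cap(\Sigma\mid D_1)\neq\partial D_2$, so it cannot be the core of an embedded 2-handle at the next level. Likewise, you cannot place different parts of one disk at different heights and still have a level 2-handle.
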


\begin{proof}
    Given a Heegaard complex $(\Sigma;D_\alpha,D_\beta)$, we can construct an embedded 3-manifold, which we call the \emph{realizing $3$-manifold}, as follows. Beginning with $\Sigma\times [-1,1,]\subset S^4\times [-2,2]$, attach thickened 3-dimensional 2-handles corresponding to $D_\alpha$ to $\Sigma\times \{-1\}\subset S^4\times \{-1\}$ and thickened 3-dimensional 2-handles corresponding to $D_\beta$ to $\Sigma\times \{1\}\subset S^4\times \{1\}$. Note that by condition (ii) in \autoref{def:heegaard_complex}, the disks $D_\alpha$ and $D_\beta$ may be used as the cores of ambiently attached 2-handles. After adding these handles, condition (iii) guarantees that the result is an unlink of 2-spheres in $S^4\times \{1\}$ and $S^4\times \{-1\}$. By \cite{powell24}, we can fill these unlinks of 2-spheres with a unique (up to isotopy) collection of boundary parallel 3-balls in $S^4\times [1,2]$ and $S^4\times [-2,-1]$ to produce a closed 3-manifold $Y(\Sigma,D_\alpha,D_\beta)\subset S^4\times [-2,2]\subset S^5$. 
\end{proof}

\begin{proposition}\label{prop:admit_heegaard_complex}
    Every connected embedded $3$-manifold $Y\subset S^5$ admits a Heegaard complex. 
\end{proposition}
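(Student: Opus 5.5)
We plan to start from \autoref{prop:hyperbolic_embedding} and isotope $Y\subset S^5$ so that it is hyperbolic with respect to the height function $h\colon S^5\to\mathbb{R}$. Since $Y$ is connected, we then cancel or rearrange so that all critical points of index $0$ and $1$ of $h\vert_Y$ lie below the level $h=0$, and all critical points of index $2$ and $3$ lie above it. This uses the same codimension-two handle-reordering fact invoked there, now applied to handles of index $1$ versus $2$. We may also push the index-$1$ critical values close to $0$ from below and the index-$2$ critical values close to $0$ from above.

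Set $\Sigma=Y\cap h^{-1}(0)$. This is a closed surface in the equatorial $S^4$, and it is connected because $Y\cap h^{-1}(-\infty,0]$ is built from $0$- and $1$-handles and is connected. Indeed, connectedness of $Y$ forces the lower half to be connected after we discard $0$/$1$-handle pairs appropriately, or else we add cancelling $1$-handles, which is always possible ambiently since arcs in codimension two can be chosen unknotted. Thus the lower half $H_\alpha$ and the upper half $H_\beta$ are handlebodies.

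To define $D_\beta$, we read the index-$2$ critical points above $0$ as $3$-dimensional $2$-handles attached along curves in $\Sigma$. Each such handle has a core $2$-disk, embedded in $S^4\times[0,\epsilon]$, whose boundary lies on $\Sigma\times\{0\}$. We project these cores into $S^4\times\{0\}$ via the product structure, after first flowing every index-$2$ critical value down to a single level just above $0$. The projected disks need not be disjoint from one another (and \autoref{rem:framed_disks} allows this), but after a small generic perturbation their interiors meet $\Sigma$ only along $\partial D$. Indeed, in $S^4\times[0,\epsilon]$ the core is a disk attached to $\Sigma\times[0,t]$, and a general-position argument applies: a $2$-disk and a surface in $S^4$ meet generically in points, and the core's interior lies in levels above $\Sigma$, so we push these intersections off along the height direction before projecting. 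We treat $D_\alpha$ symmetrically, using index-$1$ handles viewed upside down as $2$-handles from above. Condition (ii) follows because the thickened $3$-dimensional handle in $Y$ provides a framing of $\nu(D)$ whose restriction is exactly the surface framing of $\Sigma$ along $\partial D$. Condition (iii) holds because surgering $\Sigma$ along $D_\beta$ yields the level set just above all index-$2$ critical points, which bounds the $3$-handles, that is, a union of index-$3$ maxima. Such a level set is an unlink of $2$-spheres: each sphere bounds a $3$-ball consisting of descending disks of a maximum, and these balls are disjoint after flowing to nearby levels.

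The main obstacle is making the projection step rigorous. Once the $2$-handles are at distinct heights, their projected cores may meet $\Sigma$, or the surgered surfaces may fail to be literally the level sets. We would handle this by first isotoping so that all index-$2$ handles are attached simultaneously at one level, which is possible since their attaching circles are disjoint in $\Sigma$. Then $Y\cap S^4\times[0,\epsilon]$ is exactly the trace of ambient surgery along disjointly projected cores, and the identification $\Sigma\mid D_\beta\cong Y\cap h^{-1}(\epsilon)$ becomes tautological. Finally, we check that the resulting triple reproduces $Y$ via \autoref{prop:heegaard_complex_determines_embedding}, using uniqueness of the $3$-ball fillings \cite{powell24}.
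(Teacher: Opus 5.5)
Your proposal is correct and follows essentially the same route as the paper. Put $Y$ in hyperbolic position, take $\Sigma$ to be the level set separating the index $\le 1$ critical points from the index $\ge 2$ ones, and push the cocores of the $1$-handles and the cores of the $2$-handles into that level to get $D_\alpha$ and $D_\beta$; you give more detail on the framing and unlink conditions than the paper's ``by construction.''

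Three small cautions, none fatal.
\begin{enumerate}
\item General position in $S^4$ does not remove interior intersections of a $2$-disk with $\Sigma$, since these are stable isolated points. Your last paragraph fixes this by placing all $2$-handles at one level in thin, disjoint Morse charts.
\item The disks within $D_\beta$ must be pairwise disjoint for $\Sigma\mid D_\beta$ to make sense; the paper's remark only lets $D_\alpha$ meet $D_\beta$. The same last-paragraph construction gives this disjointness.
\item Connectedness of the lower half is automatic, with no need to add or discard handles: $2$- and $3$-handles attach along connected regions, so they cannot join components.
\end{enumerate}
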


\begin{proof}
    We may perturb the embedding of $Y\subset S^5$ so that if $h \colon S^5\to [-1,1]$ is the natural Morse function with two critical points, then $h\vert_S$ is also Morse. Furthermore, by \autoref{prop:hyperbolic_embedding}, we may assume that the critical values of $h\vert_S$ are isolated and that they appear with increasing index. We may also assume that all index 0 and 1 critical values are negative, and all index 2 and 3 critical values are positive. 

    Thus, $\Sigma=h\vert_S^{-1}(0)=\Sigma$ is a connected (ribbon) surface in $S^4$, and bounds a handlebody to both sides of $h^{-1}(0)$. In particular, the cores of the 1-handles and co-cores of the 2-handles can each be pushed into $h^{-1}(0)$. Consequently, the co-cores of the 1-handles (which we will call $D_\alpha$) and cores of the 2-handles (which we will call $D_\beta$) form two systems of disks with boundary lying on $\Sigma$. By construction, they satisfy conditions (i), (ii), and (iii) in \autoref{def:heegaard_complex} above. 
\end{proof}

\begin{remark}\label{rem:heegaard_disks}
    We can modify a Heegaard complex without substantially changing the corresponding embedding. For example, for $\eps \in {\alpha, \beta}$, we may add parallel copies of disks contained in $D_\eps$ to $D_\eps$; in particular, we may assume that $\partial D_\eps$ is trivial in $H_1(\Sigma;\mathbb{Z}_2)$. We can also slide $\eps$-disks over $\eps$-disks without changing the isotopy class of the embedding $Y\hra S^5$, since this corresponds to performing ambient handle slides of $Y$. 

    Lastly, starting from $(\Sigma;D_\alpha,D_\beta)$, one obtains a natural Heegaard splitting of $Y(\Sigma;D_\alpha,D_\beta)$ by forgetting the embedding of $\Sigma$ and deleting any homologically redundant curves in $D_\alpha$ and $D_\beta$. 
\end{remark}

\begin{question}
Suppose $Y\hookrightarrow S^5$ admits a genus-one Heegaard complex $(\Sigma;\Da,\Db)$ with $|\Da|=|\Db|=1$. What can we say about $Y$?
\end{question}

We now describe the effect of adding canceling pairs of handles to the embedding of $Y$ on its associated Heegaard complex. 

\begin{definition}\label{def:canceling_handles} 
    Let $(\Sigma;D_\alpha,D_\beta)$ be a Heegaard complex, with realizing 3-manifold $Y\subset S^5$.  
    \begin{enumerate}
        \item \textbf{Adding a canceling 1-/2-handle.} Let $t$ be an arc in $S^4$ with $t\cap \Sigma = \partial t$ and suppose that $t\subset E\subset D_\alpha$. If it is not already the case, we may arrange that $t\cap D_\beta=\emptyset$ by a small isotopy of $D_\alpha$. Let $\Sigma'$ be the result of tubing $\Sigma$ along $t$; note that by \cite[\S 2]{boyle88}, the result depends only the homotopy class of $t$ in $S^4\setminus \Sigma$, and whether the tube preserves the local orientations of $\Sigma$ at its endpoints. Let $D'_\beta=D_\beta \cup c$, where $c\subset \Sigma$ is a meridional disk corresponding to $t$. The disk $E$ is divided by $t$ into two smaller disks $E_1$ and $E_2$, and we let $D'_\alpha=\left(D_\alpha-E\right) \cup \left(E_1\cup E_2\right)$. The Heegaard complex $(\Sigma';D_\alpha',D_\beta')$ is called a \emph{1/2 stabilization} of $(\Sigma;D_\alpha,D_\beta)$.
        \item \textbf{Adding a canceling 2-/3-handle.} Let $U\subset S^4$ be an unknotted 2-sphere unlinked from $\Sigma$; that is, $U$ bounds a 3-ball in $S^4$ disjoint from $\Sigma$. Let $t$ be an arc in $S^4$ with interior disjoint from $U$ and $\Sigma$ and one endpoint on each surface. Let $\Sigma'$ be the result of tubing $\Sigma\cup U$ along $y$. Let $D'_\alpha=D_\alpha$ and $D'_\beta=D_\beta\cup c$, where $c$ is a meridional disk corresponding to $t$. In this case, the Heegaard complex $(\Sigma';D_\alpha',D_\beta')$ is called a \emph{2/3 stabilization} of $(\Sigma;D_\alpha,D_\beta)$. 
        \item \textbf{Adding a canceling 0-/1-handle.} Let $U$, $S^4$, $t$, and $\Sigma'$ be as in the previous case. Let $D'_\beta=D_\beta$ and $D'_\alpha=D_\alpha\cup c$, where $c$ is a meridional disk corresponding to $t$. In this case, the Heegaard complex $(\Sigma';D_\alpha',D_\beta')$ is called a \emph{0/1 stabilization} of $(\Sigma;D_\alpha,D_\beta)$. 
        \item \textbf{Handle slide.} Let $E, F\subset D_\alpha$ be two disks. Let $E'$ be a disk obtained by handle sliding $E$ over $F$. Let $D'_{\alpha}=(D_\alpha\setminus E)\cup E'$ and $D_\beta'=D_\beta$. Then we say that $(\Sigma';D_\alpha',D_\beta')$ is obtained from $(\Sigma;D_\alpha,D_\beta)$ by a handle slide. We define the $\beta$-side handle slide similarly.
        \item \textbf{Handle swim.} We say that $(\Sigma; D_\alpha', D_\beta)$ is obtained from $(\Sigma; D_\alpha', D_\beta)$ by a handle swim if there exist $E\subset D_\alpha$ and $E'\subset D_\alpha'$ such that $D_\alpha\setminus E=D_\alpha'\setminus E'$, and $E$ and $E$' are isotopic rel boundary in $S^4\setminus (\Gamma \cup D_\alpha)$, where $\Gamma$ is the result of a surgery on $\Sigma$ along a disk in $D_\alpha \setminus E$. We define the $\beta$-side handle swim similarly.
    \end{enumerate}
\end{definition}

\begin{theorem}\label{thm:canceling_handles_HC}
    Let $(\Sigma;D_\alpha, D_\beta)$ and $(\Sigma';D_\alpha', D_\beta')$ be Heegaard complexes for embedded $3$-manifolds $Y$ and $Y'$ in $S^5$, respectively. Then $Y$ and $Y'$ are isotopic if and only if the Heegaard complexes are related by a sequence of the moves in \autoref{def:canceling_handles}.
\end{theorem}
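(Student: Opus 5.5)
The plan is to handle the two directions separately; the "if" direction is essentially a local check, and the substance lies in the "only if" direction, which I would reduce to a Cerf-theoretic statement about the height function $h$ restricted to the embedding.

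\textbf{Moves preserve the isotopy class.} I would verify that each move in \autoref{def:canceling_handles} changes the realizing 3-manifold of \autoref{prop:heegaard_complex_determines_embedding} only by an ambient isotopy.
\begin{itemize}
\item For a 1/2 stabilization, the tube along $t$ is the boundary of an ambient 3-dimensional 1-handle attached below level $0$, and the meridian disk $c$ is the core of a 2-handle attached above level $0$. Because $t$ lies in the $\alpha$-disk $E$, which is split into $E_1$ and $E_2$, this pair is geometrically cancelling. Cancellation is realized by an isotopy of $Y$ supported near the 3-ball $\nu(E)\cup(\text{handles})$, using the uniqueness of trivial disk fillings (\autoref{lem:uniqueness_disk_fillings}) to identify the resulting caps.
\item The 2/3 and 0/1 stabilizations are handled the same way, now using the unknotted 2-sphere $U$ bounding a ball disjoint from $\Sigma$, together with the birth/death of a cancelling pair.
\item Handle slides of $D_\alpha$ or $D_\beta$ are ambient handle slides of $Y$.
\item Handle swims change the cores of 2-handles by an isotopy in the complement of the lower part. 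This extends to an ambient isotopy, again invoking \cite{powell24} for the capping 3-balls.
\end{itemize}

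\textbf{Isotopic embeddings give related complexes.} Given an isotopy $Y_t$, $t\in[0,1]$, between $Y$ and $Y'$, I would consider the two-parameter family of functions $h|_{Y_t}$ and put it in general position as a Cerf family. Generically, this family passes through:
\begin{enumerate}
\item births and deaths of cancelling critical pairs,
\item crossings of critical values, and
\item handle slides (flow lines between equal-index critical points).
\end{enumerate}
The moves in \autoref{def:canceling_handles} should correspond to these events:
\begin{itemize}
\item births and deaths of index $0/1$, $1/2$ and $2/3$ pairs correspond to the three stabilizations;
\item handle slides correspond to slides;
\item the isotopy of $Y_t$ between events changes the disk systems by isotopies of the attaching data, which are recorded as handle swims or isotopies of $\Sigma$.
\end{itemize}

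The difficulty is that a generic family need not stay hyperbolic: index orders can cross, for instance a 2-handle passing below a 1-handle. I would first try to upgrade \autoref{prop:hyperbolic_embedding} to a one-parameter version. The idea is to rearrange critical values along the whole family, using the codimension-two rearrangement argument of \cite{perron75,sharpe88} parametrically, so that at every time index $\le1$ values lie below $0$ and index $\ge2$ values lie above $0$. Then the level $h^{-1}(0)$ gives a family of Heegaard complexes $(\Sigma_t;D_{\alpha,t},D_{\beta,t})$ that changes only at the events above.

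For the events that are not births, deaths or slides, I need a separate argument. When two index-2 critical values pass each other, the complex is unchanged up to isotopy. When a 1-handle core passes through a 2-handle belt in the 4-dimensional level, the change is absorbed by a handle swim, which is exactly why swims are allowed "rel boundary in the complement of the surgered surface."

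\textbf{Main obstacle.} The step I expect to be hardest is the parametrized rearrangement: showing that a generic isotopy can be made hyperbolic at every time simultaneously. This requires controlling the Whitney-type moves in codimension two in families, and handling the moments where index-1 and index-2 critical values must swap order. If that fails, I would instead allow index crossings and show directly that each crossing changes the middle-level complex by a composition of swims and slides.
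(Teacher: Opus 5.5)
Your plan is essentially the paper's argument: the paper's proof is a one-line appeal to Swenton's Cerf-theoretic uniqueness theorem for banded unlink presentations, with cups and caps becoming the cancelling-pair stabilizations and band slides and swims becoming handle slides and swims, and the parametrized rearrangement you flag as the main obstacle is exactly the step the paper absorbs into ``essentially the same as Swenton's proof'' rather than proving. There is one local slip in your check of the 1/2 stabilization: the tube along $t$ is not the boundary of the new $\alpha$-side 1-handle, since a 2-handle attached along that handle's belt circle $\partial c$ would not cancel it; instead, the cancelling pair is the 2-handle with core $c$ and the 1-handle with cocore $E_1$ (or $E_2$), which cancel because $\partial c$ meets $\partial E_1$ in exactly one point.
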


\begin{proof}
    The proof is essentially the same as Swenton's uniqueness theorem for \emph{banded unlink presentations} of surfaces in $S^4$~\cite{swenton2001calculus}; 
    Swenton's cup and cap moves correspond to additions of canceling pairs of handles in \autoref{def:canceling_handles}, and band slide and swim correspond to a local isotopy of $1$-handles. 
    
\end{proof}

\subsection{Bridge splittings of Heegaard complexes}\label{sec:bridge_position_heegaard_complexes}

In this section, we will prove that every Heegaard complex of $Y^3\subset S^5$ can be isotoped and stabilized into \emph{bridge position}, a structure analogous to a \emph{banded bridge splitting} of an unlink (see \cite[\S3]{meier17}). 

\begin{definition}\label{def:heegaard_complex_bridge_position}
    A Heegaard complex $(\Sigma;\Da, \Db)$ is in \emph{bridge position} with respect to a genus-zero 4-section of $S^5$ if there is a bridge 4-section $\Tcal=(T_1,T_2,T_3,T_4)$ of $\Sigma\subset S^4\subset S^5$ where $T_i\subset B_i$ satisfying the following conditions. 
    \begin{enumerate}
        \item $\Da \subset B_1\cup\overline{B}_3$ with $T_1\cup \T_3 =\partial \Da$, 
        \item $\Db \subset B_2\cup\overline{B}_4$ with $T_2\cup \T_4 =\partial \Db$.
    \end{enumerate}
\end{definition}

In the next subsection, we will show that if a Heegaard complex is in bridge position, it admits a natural 4-section in $S^5$. An example of a Heegaard complex for $S^3$ in bridge position is illustrated in \autoref{fig:heegaard_complex_example}.  

\begin{figure}[ht]
    \centering
    \includegraphics[width=.6\textwidth]{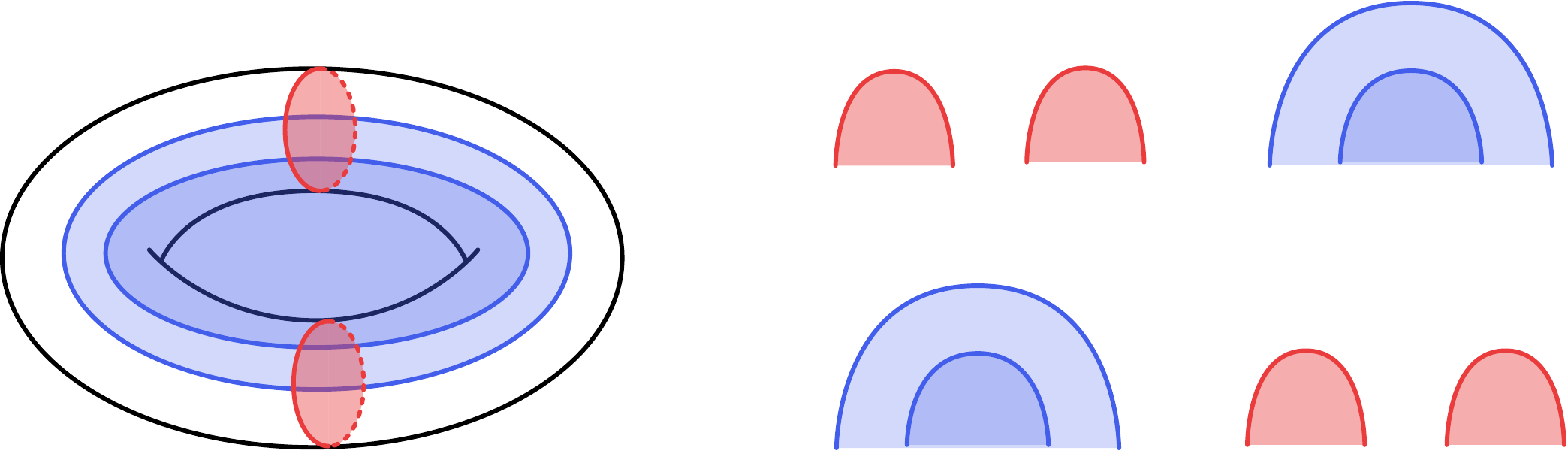}
    \caption{Left: a schematic Heegaard complex of $S^3\subset S^5$, notice that all the data in the Heegaard complex is actually embedded in 3-space. Right: a depiction of the intersection of $S^3$ with the four sectors of the genus-zero $4$-section of $S^5$. Note that the disks $\Da$ and $\Db$, shaded in the figure, lie in the cross-section 3-spheres obtained by gluing two 3-balls.}
    \label{fig:heegaard_complex_example}
\end{figure}

The following proposition shows that we can always arrange the boundary conditions in \autoref{def:heegaard_complex_bridge_position}.

\begin{proposition}\label{prop:existance_step1}
    Suppose that $Y\subset S^5$ is an embedded $3$-manifold, described by a Heegaard complex $(\Sigma;\Da, \Db)$. Then there is a bridge $4$-section $\Tcal=(T_1,T_2,T_3,T_4)$ of $\Sigma$ such that
    \begin{enumerate}
        \item $\partial \Da = T_1 \cup \T_3$, and 
        \item $\partial \Db = T_2 \cup \T_4$.
    \end{enumerate}
\end{proposition}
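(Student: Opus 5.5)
Start from an arbitrary bridge $4$-section $\Tcal^0$ of $\Sigma\subset S^4$. One exists because every surface in $S^4$ admits a bridge trisection \cite{meier17}; a trisection becomes a $4$-section after a $0$-sector perturbation. Pass to the abstract $4$-section $\Gamma=\Gamma_{\Tcal^0}$ of the abstract surface $S$.

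By \autoref{rem:heegaard_disks} we may add parallel or trivial copies of disks to $\Da$ and $\Db$. So we may assume $\partial\Da$ and $\partial\Db$ are $\Z_2$-nullhomologous multicurves in $S$. Also $\Gamma_{13}$ and $\Gamma_{24}$ are nullhomologous, since each bounds a union of polygons (faces of $\Gamma_1\cup\Gamma_3$ complementary regions are unions of $\Gamma_{12}$/$\Gamma_{23}$-type disks; alternatively use that $T_1\cup\T_3$ bounds a surface in $S^4$ meeting $\Sigma$ along it). By \autoref{lem:arc_surgery_nullhom}, there is a sequence of arc surgeries from $\Gamma_{13}$ to $\partial\Da$, and another from $\Gamma_{24}$ to $\partial\Db$.

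We realize these sequences one arc at a time.

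\textbf{Realizing the $\alpha$ side.} Each arc surgery on the $13$-curves is realized, by \autoref{lem:abstract_perturbations_trick}, by perturbations of the abstract $4$-section. Those perturbations change $\Gamma_{13}$ by the arc surgery and leave $\Gamma_{24}$ unchanged up to isotopy. By \autoref{lem:arc_surgery_changes_Gamma_curves}, single-index $b$- or $d$-perturbations are harmless here, except that they add trivial circles to $\Gamma_{24}$. By \autoref{lem:abstract_perturbations_lift}, each abstract perturbation lifts to a perturbation of the embedded bridge $4$-section, and by \autoref{lem:sector-perturbation} the lift still represents $\Sigma$. Iterating, we get $\Tcal$ with $T_1\cup\T_3$ isotopic in $\Sigma$ to $\partial\Da$, together with possibly some extra trivial circles.

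\textbf{Realizing the $\beta$ side.} Do the same for the $24$-curves. The only side effect on the $13$-curves is the addition of small trivial circles, which by \autoref{lem:arc_surgery_changes_Gamma_curves} arise only from single-index perturbations.

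\textbf{Final adjustments.} Trivial circles are absorbed by adding trivial disks to $\Da$ or $\Db$; each bounds a small disk in $\Sigma$, pushed off slightly, and this is allowed by \autoref{rem:heegaard_disks}. Finally, an isotopy of $\Sigma$ in $S^4$ (equivalently, of the disk systems along $\Sigma$) matches $\partial\Da$ with $T_1\cup\T_3$ and $\partial\Db$ with $T_2\cup\T_4$ exactly. Such an isotopy extends to the disks, so we get conditions (1) and (2).

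\textbf{Main obstacle.} The delicate point is that the lifts in \autoref{lem:abstract_perturbations_lift} are not unique (\autoref{rem:unique_perturbation_lift}), and that $0$-sector perturbations may produce non-trivial tangles (\autoref{remark:0-sector_warning}). Neither matters for this statement: we only need the curves on the abstract surface to match, and the surface embedding is preserved. A further subtlety arises when $\Sigma$ is non-orientable. There the orientability requirement in \autoref{def:arc_surgery} must be respected when choosing the arcs from \autoref{lem:arc_surgery_nullhom}. The proof of that lemma uses cocores of $1$-handles of a subsurface, and we would check that they can be chosen compatibly, if necessary adding parallel copies of curves first.
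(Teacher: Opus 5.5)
Your argument is the paper's proof: you start from any bridge $4$-section, make $\partial\Da$ and $\partial\Db$ nullhomologous mod $2$ via \autoref{rem:heegaard_disks}, join $\Gamma_{13}$ to $\partial\Da$ by arc surgeries (\autoref{lem:arc_surgery_nullhom}), realize these by abstract perturbations preserving $\Gamma_{24}$ (\autoref{lem:abstract_perturbations_trick}), lift them to $\Tcal$ (\autoref{lem:abstract_perturbations_lift}), and then swap the roles of $13$ and $24$. Your check that $\Gamma_{13}$ is nullhomologous (it bounds the union of the $12$- and $23$-polygons) and your absorption of stray trivial circles are sound additions; note only that by \autoref{lem:arc_surgery_changes_Gamma_curves}(1) a single-index $2$- or $4$-perturbation adds its trivial circle to $\Gamma_{13}$, not to $\Gamma_{24}$. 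In the last step, $\Gamma_{13}\simeq\partial\Da$ and $\Gamma_{24}\simeq\partial\Db$ come from unrelated isotopies, so one isotopy of $\Sigma$ cannot match both pairs; instead slide $\Da$ and $\Db$ along $\Sigma$ separately, which does not change $Y$ because the two systems are attached on opposite sides of the level $S^4$ (the paper's proof uses this tacitly too).
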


\begin{proof}
    By \autoref{rem:heegaard_disks}, we may assume that the curves $\partial \Da$ and $\partial \Db$ are null-homologous multicurves in $H_1(\Sigma;\mathbb{Z}_2)$. Let $\Tcal_0$ be a bridge 4-section for $\Sigma\subset S^4$ and let $\Gamma^0=\Gamma_{\Tcal_0}$ be the spine of the abstract 4-section for $\Sigma$. 
    
    By \autoref{lem:arc_surgery_nullhom}, there is a sequence of arc surgeries of $\Gamma^0$ taking $\Gamma^0_{13}$ to $\partial \Da$. By \autoref{lem:abstract_perturbations_trick}, these arc surgeries can be achieved via perturbations of $\Gamma^0$ that do not change the multicurve $\Gamma^0_{24}$ up to isotopy. By \autoref{lem:abstract_perturbations_lift}, we can lift each of these abstract perturbations to a perturbation of the bridge 4-section $\Tcal_0$. Let $\Tcal_1$ be the corresponding bridge 4-section with $\Gamma^1=\Gamma_{\Tcal_1}$. By construction, we have $\Gamma^1_{13}=\partial \Da$ and $\Gamma^1_{24}=\Gamma^0_{24}$. 
    Then, we can repeat the argument above, reversing the roles of $(13)$ and $(24)$ to obtain the desired bridge 4-section.
\end{proof}

Now, we will stabilize our surface $\Sigma$ to arrange that the disks $\Da$ and $\Db$ are embedded in the 3-sphere cross-sections $B_i\cup \overline{B}_{i+2}$.  

\begin{theorem}\label{thm:existance_bridge_position} 
    Every embedded $3$-manifold $Y\subset S^5$ admits a Heegaard complex in bridge position.
\end{theorem}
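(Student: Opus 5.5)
We start from \autoref{prop:admit_heegaard_complex} to obtain a Heegaard complex $(\Sigma;\Da,\Db)$ for $Y$. By \autoref{rem:heegaard_disks} we may assume the boundary multicurves are null-homologous mod 2. Then \autoref{prop:existance_step1} gives a bridge $4$-section $\Tcal$ of $\Sigma$ with $\partial\Da=T_1\cup\T_3$ and $\partial\Db=T_2\cup\T_4$.

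At this point the boundaries are correct, but the disks $\Da$ and $\Db$ are only embedded somewhere in $S^4$. What remains is to isotope and stabilize so that $\Da$ lies in the cross-sectional $3$-sphere $B_1\cup\overline{B}_3$, and likewise $\Db$ in $B_2\cup\overline{B}_4$.

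First I would treat $\Da$. The link $L_{13}=T_1\cup\T_3$ bounds the disks $\Da$ in $S^4$. It also sits in the $3$-sphere $B_1\cup\overline{B}_3$, which splits $S^4$ into two $4$-balls, namely $X_{12}\cup X_{23}$-type pieces. The plan is to push $\Da$ into one of these $4$-balls relative to its boundary and put it in Morse position there. Its local minima, saddles and maxima then become, after the usual isotopy, a banded unlink diagram in $B_1\cup\overline{B}_3$: $L_{13}$ is obtained from an unlink $U$ by bands $\rho$, and $U$ bounds disks in the $3$-sphere.

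Each band should be realized by the stabilization moves. A band of $\Da$ is dual to a $1$-handle for $\Sigma$, and the relevant move is the 1/2 stabilization of \autoref{def:canceling_handles}. That move tubes $\Sigma$ along an arc $t$ contained in an $\alpha$-disk. It splits that disk into two pieces and adds a meridional $\beta$-disk. Using \autoref{lem:tubing_4sec_with_bands} with $t\subset B_1\cup\overline{B}_3$ gives a new bridge $4$-section in which:
\begin{itemize}
\item $L'_{13}$ is band surgery of $L_{13}$ along $\rho$;
\item $L'_{24}$ only gains $1$-bridge unknots bounding meridian disks of the tube.
\end{itemize}
Those meridian disks lie in $B_2\cup\overline{B}_4$, which is exactly what the new $\beta$-disks require.

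Iterating over all the saddles of $\Da$, we reduce $\Da$ to disks whose boundaries are unlinks in $B_1\cup\overline{B}_3$ with trivial disks there. By \autoref{lem:uniqueness_disk_fillings} (for $n=4$) these agree up to isotopy rel boundary with the disks coming from $\Da$. The minima and maxima are handled by 0/1 and 2/3 stabilizations, which add unknotted spheres, or by 0-sector perturbations that add trivial unknotted components. We then repeat the whole procedure symmetrically for $\Db$, using tubes in $B_2\cup\overline{B}_4$ and perturbations that preserve $L_{13}$ up to adding split unknots. Throughout, \autoref{thm:canceling_handles_HC} ensures the realized $3$-manifold stays isotopic to $Y$.

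The main obstacle is the interaction between the two disk systems. Tubing to fix $\Da$ must not destroy the property already achieved for $\Db$, nor the fact that $\partial\Db=L_{24}$ with $\Db$ in $B_2\cup\overline{B}_4$. There is also the issue that $\Da$ and $\Db$ may intersect in $S^4$. To handle this I would fix $\Da$ first, then argue that the operations for $\Db$ only add split $1$-bridge unknots to $L_{13}$ (as in \autoref{lem:tubing_4sec_with_bands}(2)). The disks of $\Da$ can then be enlarged by the corresponding trivial disks inside $B_1\cup\overline{B}_3$, which amounts to adding parallel or trivial $\alpha$-disks, permitted by \autoref{rem:heegaard_disks}. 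A second delicate point is checking the framing condition (ii), and that the disks produced inside the $3$-spheres are genuinely isotopic to the original ones rather than merely having the same boundary; that is where \autoref{lem:uniqueness_disk_fillings} must be invoked carefully.
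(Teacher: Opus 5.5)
Your proposal is correct and is essentially the paper's proof. After \autoref{prop:existance_step1}, you push $\Da$ rel boundary into the $4$-ball $X_1\cup_{B_2}X_2$ bounded by $B_1\cup\overline{B}_3$ and put it in Morse position. You turn the saddles into tubes along band cores via \autoref{lem:tubing_4sec_with_bands}, whose meridians become new $1$-bridge $\beta$-disks in $B_2\cup\overline{B}_4$, and the maxima into added unknotted $2$-spheres. You then appeal to \autoref{thm:canceling_handles_HC} and rerun the argument for $\Db$.

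The one piece of bookkeeping to fix is the banded description. Reading the disk from the boundary inward, the paper attaches the saddle bands $\nu$ to $L_{13}\sqcup\ell$, where $\ell$ is the cross-section in $B_1\cup\overline{B}_3$ of the added spheres. Each added sphere is given a $1$-bridge $4$-section, so \autoref{lem:tubing_4sec_with_bands} applies to the disjoint union. The new $\alpha$-disks are then just the minima disks bounded by $(L_{13}\cup\ell)[\nu]$. Writing $L_{13}$ as an unlink banded along $\rho$ tacitly assumes $\Da$ is ribbon, and the minima need no stabilization at all.
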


\begin{proof}
Consider $(\Sigma;\Da, \Db)$ and $\Tcal$ as in the conclusion of \autoref{prop:existance_step1}. In what follows, we will modify $\Sigma$ and $\Tcal$ to ensure that $\Da\subset B_1\cup \overline{B_3}$, without altering the boundary conditions $\partial \Da =T_1\cup \T_3$ and $\partial \Db = T_2\cup \T_4$. The theorem will follow by rerunning the argument with $\Db$ instead of $\Da$. 

Denote the sectors of the genus-zero 4-section of $S^4$ by $\{X_1,X_2,X_3,X_4\}$. Let $X_{123}=X_{1}\cup_{B_2} X_{2}$, with $\partial X_{123}= B_1\cup \overline{B}_3$. Push the interior of $\Da$ into the interior of  $X_{123}$ via an isotopy rel. $\partial \Da=T_1\cup \T_3=L_{13}$, so that $\partial D_\alpha$ bounds a collection of pairwise disjoint slice disks $\Da\subset X_{123}$. By a further isotopy of $\Da$ (rel. $\partial D_\alpha$), we may assume $D_\alpha$ is in Morse position with respect to the radial height function on $X_{123}$ induced from $S^5$, which is constant on $\partial X_{123}$ and has one minimum. By an isotopy, we can push a neighborhood of the local maxima of $\Da$ into $\partial X_{123}$; these are small 2-disks disjoint from $L_{13}$. Similarly, we may isotope the saddles of $\Da$ into $\partial X_{123}$, and view these as bands connecting the components of this link. 

Thus, there exists an unlink $\ell\subset B_1\cup \overline{B}_3$ disjoint from $L_{13}$ and a set of bands $\nu\subset B_1\cup \overline{B}_3$ such that the link $\left(L_{13}\cup \ell\right)[\nu]$ obtained by band surgery on $L_{13}\cup \ell$ along $\nu$ is an unlink. In fact, $\left(L_{13}\cup \ell\right)[\nu]$ bounds the sub-disks of $\Da$ corresponding to the local minima of $\Da\subset X_{123}$.

By transversality, we can assume that $\Db$ is disjoint from $\nu$ and $\ell$. Moreover, we can assume that $\ell$ and $\nu$ are transverse to the middle sphere $F$ of $S^3=B_1\cup_F\overline {B}_3$, and each component of $\ell$ is a one-bridge unknot with respect to the splitting $S^3=B_1\cup\overline {B}_3$. 

We are now ready to modify $\Sigma$. Let $U$ be an $|\ell|$-component unlink of 2-spheres in $S^4$ unlinked with $\Sigma$; i.e., so that $U$ bounds 3-balls disjoint from $\Sigma$. Choose $U$ so that $U\cap \left(B_1\cup \overline{B}_3\right)=\ell$. 

We can use the cores of $\nu$ as the guiding arcs for 1-handle attachments of $\Sigma\cup U$. By \autoref{thm:canceling_handles_HC}, the resulting surface can be completed to a Heegaard complex $(\Sigma'; \Da', \Db')$ for the same (up to isotopy) embedded 3-manifold $Y^3\subset S^5$. In fact, we know that same result shows that $\Db'$ is the union of $\Db$ and some meridians of the cores of $\nu$, and $\partial \Da'=\left(L_{13}\cup \ell\right) [\nu]$. 

We now pay attention to the bridge 4-sections of $\Sigma$ and $\Sigma'$. Let $\Tcal_0=(T^0_1,T^0_2,T^0_3,T^0_4)$ be a bridge 4-section of $U$ such that $\ell=T^0_1\cup \T^0_3$; this may be obtained from the union of $|\ell|$ copies of 1-bridge 4-sections of an unknotted 2-sphere. Let $\Tcal\cup \Tcal_0$ be the 4-section of $\Sigma\cup U$ obtained by taking the disjoint union of the respective pieces; this is also a bridge 4-section since $\ell=T^0_1\cup \T^0_3$ is unlinked with $L_{13}=T_1\cup \T_3$. Now, \autoref{lem:tubing_4sec_with_bands} states that $\Tcal\cup \Tcal_0$ can be modified to a bridge 4-section $\Tcal'$ of $\Sigma'$ with $L'_{13}=L_{13}[\nu]$ and $L'_{24}=L_{24}\cup c$ for some unknots $c$ corresponding to copies of meridians of cores of $\nu$. In other words, 
\[ 
\Da \subset B_1\cup \overline{B}_3, ~ \partial \Da'=T'_1\cup \overline{T'}_3, \text{ and } \partial \Db' = T'_2 \cup \overline{T'}_4.
\]    
Applying the same argument above to $\Db$ completes the proof. 
\end{proof}

\subsection{From bridge position to \texorpdfstring{$4$}{4}-sections}\label{sec:4sections_from_bridge_position}

In this final subsection, we will show that a Heegaard complex in bridge position, as in \autoref{def:heegaard_complex_bridge_position}, yields a bridge quadrisection of the underlying 3-manifold. 

Fix a Morse function $f:S^5\twoheadrightarrow [-2,2]$ with exactly two critical points of index 0 and 5. For $I\subset [-2,2]$, we use the notation $S^5_{I}=f^{-1}(I)$. For a subset $A\subset S^4_t$ with $t\in [s,r]$, we let $A[s,r]$ denote the vertical cylinder $A\times [s,r]$ obtained by pushing $A$ along the gradient flow of $f$ during time $[s,r]$. The symbol $A\{r\}\subset S^5_r$ will denote the image of the gradient flow at time $r$. 

\begin{figure}[ht]
    \centering
    \includegraphics[width=.7\textwidth]{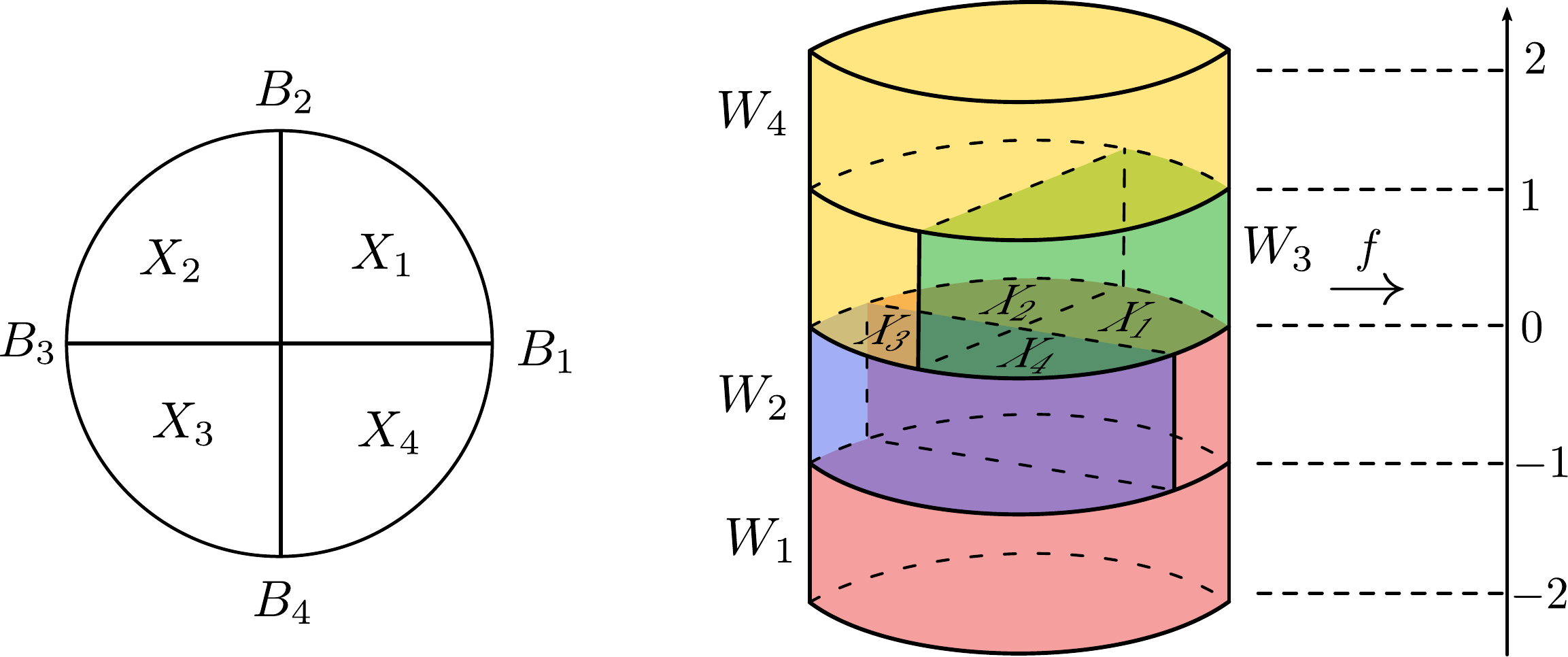}
    \caption{Quadrisecting the 5-sphere.}
    \label{fig:model_quadrisection}
\end{figure}

We first review how genus-zero quadrisections of $S^5$ are obtained from Morse functions as in \cite[Theorem 7.3]{aribi23}. Fix a genus-zero quadrisection of the level set $S^5_0$; we use the notation $S^4=X_1\cup X_2\cup X_3\cup X_4$ from \autoref{def:multisections_surfaces}. Let $W_1=\left(X_1\cup X_2\right)[-1,0]\cup S^5_{[-2,-1]}$, $W_2=\left(X_3\cup X_4\right)[-1,0]$, $W_3=\left(X_4\cup X_1\right)[0,1]$, and $W_4=\left(X_3\cup X_2\right)[0,1]\cup S^5_{[0,1]}$; see \autoref{fig:model_quadrisection} for reference. The splitting $S^5=W_1\cup W_2\cup W_3\cup W_4$ is a genus-zero quadrisection. It is worth noting that the pairwise intersection $W_1\cap W_2$ is the union of a collar of an $S^3$ with half a quadrisection of a regular level; i.e., $W_1\cap W_2=\left(B_1\cup B_3\right)[0,-1]\cup \left(X_3\cup X_4\right)\{-1\}$. A similar description holds for $W_3\cap W_4$. 

\begin{proposition}\label{prop:quadrisection_from_bridge_position}
    Let $(\Sigma;\Da,\Db)$ be a Heegaard complex for $Y$ in bridge position. Then there is a bridge quadrisection of $Y\subset S^5$. 
\end{proposition}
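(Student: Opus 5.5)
We build the embedding of $Y$ directly from the Heegaard complex, adapted to the Morse-function description of the genus-zero quadrisection $S^5=W_1\cup W_2\cup W_3\cup W_4$ just given. Place $\Sigma$ in the level $S^5_0$, in bridge position with spine $\Tcal=(T_1,T_2,T_3,T_4)$ with respect to $S^4=X_1\cup X_2\cup X_3\cup X_4$. By \autoref{def:heegaard_complex_bridge_position}, $\Da\subset B_1\cup\overline{B}_3$ and $\Db\subset B_2\cup\overline{B}_4$. Recall $B_1\cup \overline B_3=\partial(X_1\cup X_2)=\partial (X_3\cup X_4)$, and similarly for $B_2\cup\overline B_4$. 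The realizing $3$-manifold will be
\[
\Sigma[-1,1]\ \cup\ \Da\{-1\}\text{-handles}\ \cup\ \Db\{1\}\text{-handles}\ \cup\ \text{3-balls in } S^5_{[-2,-1]}\cup S^5_{[1,2]}.
\]
We choose the handles concretely. The $\alpha$-handles are $\Da[-1,0]$-type thickenings: we isotope $Y$ so that the part below level $0$ is $(\Sigma\cap (X_1\cup X_2))[-1,0]\cup \Da$-handle, swept so that the $2$-handle cores lie in the cross-section $(B_1\cup\overline B_3)\{t\}$. Above level $0$ we do the same with $\Db$ in $(B_2\cup \overline B_4)\{t\}$. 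By \autoref{prop:heegaard_complex_determines_embedding}, this is the same embedding up to isotopy.

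Next we compute the intersections piece by piece, using the explicit formulas $W_1=(X_1\cup X_2)[-1,0]\cup S^5_{[-2,-1]}$, etc. (relabelling indices if needed to match $B_1\cup\overline B_3$ with $W_1\cap W_2$). The key observations are as follows.

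First, $Y\cap W_2=(D_{34})[-1,0]$, where $D_{34}=\Sigma\cap(X_3\cup X_4)$ is a trivial disk system. Its product with an interval is a trivial $3$-disk system in the $5$-ball $W_2$. The same argument applies to $W_3$.

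Second, $Y\cap W_1$ is $(\Sigma\cap (X_1\cup X_2))[-1,0]$ together with the $\alpha$-handles and the bottom $3$-balls. Since $\Sigma\mid\Da$ is an unlink, this is the trace of compressing a trivial disk system along disks in the boundary sphere $B_1\cup\overline B_3$. We expect this to be boundary parallel: it is a $3$-ball system whose boundary is an unlink of $2$-spheres, namely $D_{12}\cup \Da\cup \Da$, which is unknotted because $\Da$ lies in an $S^3$ cross-section. Invoking \autoref{lem:uniqueness_disk_fillings} ($n=5$), it is then a trivial $3$-disk system. The same argument applies to $W_4$ with $\Db$.

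Third, the pairwise intersections $W_i\cap W_j$ meet $Y$ in products of trivial disk systems $D_{ij}$ or in the collar $(T_1\cup\T_3)[-1,0]\cup D_{34}\{-1\}$, which is $D_{34}$ with a collar added, hence a trivial $2$-disk system. Triple intersections are products of the tangles $T_\ell$ with intervals, which are again trivial tangles. The quadruple intersection is $\Sigma\cap(\text{bridge sphere})$, giving $2b$ points.

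The main obstacle is the second point: verifying that $Y\cap W_1$ and $Y\cap W_4$ are genuinely boundary parallel $3$-disk systems rather than merely $3$-balls with unknotted boundary. Our approach is to exhibit the boundary-parallelism directly. Because $\Da$ is embedded in the $3$-sphere $B_1\cup\overline B_3=\partial(X_1\cup X_2)$, the union $D_{12}\cup\Da$ can be pushed into $\partial W_1$, and the $\alpha$-handles then cobound with it the required product region. If this fails, the fallback is to check only that $\partial(Y\cap W_1)$ is an unlink of $2$-knots bounding $3$-balls, and then use Powell's uniqueness (\autoref{lem:uniqueness_disk_fillings}) together with the fact that $Y\cap W_1$ is a union of $3$-balls.
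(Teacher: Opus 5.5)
There is a genuine gap, and it comes before the step you flag as the main obstacle. With your placement (the $\alpha$-handles attached at level $-1$ and the caps in $S^5_{[-2,-1]}$, as in the construction of the realizing $3$-manifold), the whole handlebody $H_\alpha$ lies in $S^5_{[-2,-1]}$, hence inside $W_1$. Then $Y\cap W_1=(\Sigma\cap(X_1\cup X_2))[-1,0]\cup H_\alpha\cong H_\alpha$. This is a handlebody whose boundary is a copy of $\Sigma$, not $D_{12}\cup\Da\cup\Da$, so it is not a union of balls once $g(\Sigma)\geq 1$.

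Likewise $Y\cap W_2=(\Sigma\cap(X_3\cup X_4))[-1,0]$ and $Y\cap W_1\cap W_2\cong\Sigma\cap(X_3\cup X_4)$. Your claim that $\Sigma\cap(X_3\cup X_4)$ is a trivial disk system is false in general: it is one side of $\Sigma$ cut along $T_1\cup\T_3$, a planar surface. In the $L(p,1)$ example, $T_1\cup\T_3$ is two parallel curves of slope $p$ on the torus. Both halves are then annuli, so $Y\cap W_1$ and $Y\cap W_2$ are solid tori. No boundary-parallelism argument or appeal to Powell's uniqueness theorem can repair this. Note that your construction never actually uses the hypothesis $\Da\subset B_1\cup\overline{B}_3$.

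The missing idea is to use exactly that hypothesis to cut $H_\alpha$ along $\Da$ across the interface between $W_1$ and $W_2$. For $t\in(-1,0)$, the level $S^5_t$ is split by the $3$-sphere $(B_1\cup B_3)\{t\}$ into $(X_1\cup X_2)\{t\}\subset W_1$ and $(X_3\cup X_4)\{t\}\subset W_2$, and $\Da$ lies in that $3$-sphere. So, as the paper does, flow $\Sigma$ only over $[-1/2,0]$ and carry out the entire $\alpha$-side construction inside the single level $S^5_{-1/2}$:
\begin{enumerate}
\item Take the $2$-handles to be $\Da\times[-1,1]$ in a bicollar of $(B_1\cup B_3)\{-1/2\}$.
\item Then $\Sigma\mid\Da$ splits into two unlinks of $2$-spheres on opposite sides of that $3$-sphere: the $X_1\cup X_2$ part of $\Sigma$ capped by one copy of $\Da$, and the $X_3\cup X_4$ part capped by the other.
\item Cap each unlink by a $3$-ball system $E_\pm$ on its own side, in the same level.
\end{enumerate}
Now $Y\cap W_1$ is $E_+$ with collars attached along part of its boundary, hence a ball system, and similarly for $Y\cap W_2$. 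Moreover $Y\cap W_1\cap W_2=(T_1\cup\T_3)[-1/2,0]\cup\Da\{-1/2\}\cong\Da$ is a trivial $2$-disk system, and nothing of $Y$enters $S^5_{[-2,-1]}$. The $\beta$ side is symmetric at level $1/2$, using $\Db\subset B_2\cup\overline{B}_4=\partial(X_4\cup X_1)$.

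A smaller point: the triple intersections are the $3$-balls $B_\ell\{0\}$. These meet $Y$ in the tangles themselves, not in products of tangles with intervals.
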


\begin{proof}
Let $(S^4,\Sigma)=\bigcup_{i=1}^4(X_i,D_i)$ be the bridge quadrisection satisfying conditions (1) and (2) in \autoref{def:heegaard_complex_bridge_position}. We start by embedding $\Sigma$ in $S^5_0$ and flowing $\Sigma=\bigcup_{i=1}^4 D_i$ through $[-1/2,0]$. We will attach the 3-dimensional 2-handles corresponding to $\Da$ at the level set $S^5_{-1/2}$. To do this, we consider a bicollar neighborhood $C=(B_1\cup B_3)\times [-1,1]$ of the 3-sphere $(B_1\cup B_3)\{-1/2\}$ inside $S^5_{-1/2}$; see \autoref{fig:model_quadrisection} for notation. We label the interval $[-1,1]$ so that $(B_1\cup B_3)\times\{-1\}\subset X_2\cup X_3$ and $(B_1\cup B_3)\times\{1\}\subset X_1\cup X_4$. The 3-dimensional 2-handle with cores equal to $\Da$ can be seen inside $C$ as the product $h_\alpha=\Da\times [-1,1]$. After compressing $\Sigma$ along $\Da$, the resulting surface link can be described as 

\begin{align*}
\Sigma|{\Da} &
= \Sigma-\left(\Da \times [-1,1]\right) \cup \left(\Da \times \{-1,1\}\right)
\\
= & \left[\wt D_2\cup \wt D_3 \cup \left(\Da\times \{-1\}\right)\right]
\cup 
\left[\wt D_1\cup \wt D_4 \cup \left(\Da\times \{+1\}\right)\right],
\end{align*}

where $\wt D_i$ are the smaller disks obtained from $D_i$ after removing what is inside $C$. Note that the unions of the square brackets form closed surfaces, separated by the 3-sphere $(B_1\cup B_3)\times \{0\}$. Now, as $\Sigma\mid{\Da}$ is an unlink of 2-spheres, there exist two sets of pairwise disjoint 3-balls $E_-$ and $E_+$ embedded in $S^5_{-1/2}$ such that 
\[ 
\partial E_- = \wt D_2\cup \wt D_3 \cup \left(\Da\times \{-1\}\right),  ~ ~ 
\partial E_+ = \wt D_1\cup \wt D_4 \cup \left(\Da\times \{+1\}\right),
\]
and $E_\pm$ are disjoint from $(B_1\cup B_3)\times \{0\}$. Notice that the 3-dimensional handlebody $H_\alpha\subset Y$ determined by $\Da$ is isotopic (rel. boundary) to the union 
$H_\alpha = \Sigma[-1/2,0]\cup \left( h_\alpha^2 \cup E_- \cup E_+\right)$, where $\left( h_\alpha \cup E_- \cup E_+\right)$ is a subset of $S^5_{-1/2}$. One can check that $H_\alpha$ already intersects the sectors of the genus-zero quadrisection of $S^5$ in trivial disk systems of the correct dimension. 

Note that the situation is completely symmetric for $\Db$ and $H_\beta$: we flow $\Sigma$ through time $[0,1/2]$, we attach the 2-handles of $\Db$ along a collar of $(B_2\cup B_4)\{1/2\}$, and find 3-balls $E_\pm$ bounded by $\Sigma\mid \Db$ disjoint from $(B_2\cup B_4)\{1/2\}$. We conclude that $Y=H_\alpha\cup_\Sigma H_\beta$ is in bridge quadrisected position. 
\end{proof}

\begin{theorem}\label{thm:existance_bridge_4sections}
    Every knotted $3$-manifold $Y$ in $S^5$ admits a bridge quadrisection.
\end{theorem}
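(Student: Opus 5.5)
The plan is to assemble the results of this section in order. One preliminary point: \autoref{prop:admit_heegaard_complex} and the Heegaard complex machinery are stated for connected $Y$. So first I reduce to that case, or handle components one at a time. If $Y$ has several components, I would either
\begin{itemize}
\item tube the components together by an ambient $1$-handle and later undo it, or, more simply,
\item run the argument componentwise: make each component hyperbolic with respect to $h$, and choose the Heegaard complexes in the level $h^{-1}(0)$ with disjoint surfaces. A disjoint union of such data is still a Heegaard complex in the obvious generalized sense (allowing $\Sigma$ disconnected).
\end{itemize}
Nothing in \autoref{prop:existance_step1}, \autoref{thm:existance_bridge_position}, or \autoref{prop:quadrisection_from_bridge_position} genuinely uses connectedness of $\Sigma$. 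Bridge $4$-sections of disconnected surfaces exist, and arc surgeries and perturbations are local.

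For connected $Y$, the chain is as follows.
\begin{enumerate}
\item Apply \autoref{prop:hyperbolic_embedding} to isotope $Y$ into hyperbolic position.
\item Apply \autoref{prop:admit_heegaard_complex} to obtain a Heegaard complex $(\Sigma;\Da,\Db)$ in $S^4=h^{-1}(0)$ whose realizing $3$-manifold $Y(\Sigma;\Da,\Db)$ is isotopic to $Y$, by \autoref{prop:heegaard_complex_determines_embedding}.
\item Apply \autoref{thm:existance_bridge_position}, which uses \autoref{prop:existance_step1} together with stabilizations justified by \autoref{thm:canceling_handles_HC}. This replaces the complex by one in bridge position with respect to a bridge $4$-section $\Tcal$ of $\Sigma'$, still realizing an embedding isotopic to $Y$.
\item Apply \autoref{prop:quadrisection_from_bridge_position}. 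This produces, inside the genus-zero quadrisection $S^5=W_1\cup W_2\cup W_3\cup W_4$ built from the Morse function $f$, a copy $H_\alpha\cup_{\Sigma'}H_\beta$ meeting the sectors as in \autoref{def:bridge_4section_3man}.
\end{enumerate}

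The remaining point is that this copy is isotopic to the original $Y$, rather than just abstractly homeomorphic. By construction, $H_\alpha\cup H_\beta$ is obtained from $\Sigma'\times[-1/2,1/2]$ by attaching ambient $2$-handles along $\Da'$ and $\Db'$ and then capping with trivial $3$-ball systems. This is exactly the recipe in the proof of \autoref{prop:heegaard_complex_determines_embedding}. Uniqueness of the ball fillings then identifies the result with $Y(\Sigma';\Da',\Db')$ up to isotopy; that uniqueness is \autoref{lem:uniqueness_disk_fillings} in dimension $5$, equivalently \cite{powell24}. Since isotopies of $S^5$ carry bridge quadrisections to bridge quadrisections of the image, $Y$ inherits one.

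The main obstacle I expect is verifying conditions (1)--(4) of \autoref{def:bridge_4section_3man} precisely in the last step. This is the assertion in \autoref{prop:quadrisection_from_bridge_position} that ``$H_\alpha$ already intersects the sectors in trivial disk systems of the correct dimension.'' For example, $E_1=Y\cap W_1$ consists of $\wt D_2\cup\wt D_3$ flowed over $[-1/2,0]$, together with half of $h_\alpha$ and the balls $E_-$. One must check three things:
\begin{itemize}
\item $E_1$ is a boundary-parallel $3$-disk system in the $5$-ball $W_1$;
\item its pairwise intersections are the disk systems $D_{ij}$, which are trivial because consecutive tangle unions are unlinks;
\item its triple intersections are the tangles $T_\ell$.
\end{itemize}
I would argue boundary-parallelism by pushing the product pieces $D_i[-1/2,0]$ and the handle halves $\Da\times[0,\pm1]$ back into $\partial W_1$ along the flow and collar coordinates, which exhibits $E_1$ as a collar-thickened $3$-ball system. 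If necessary, I would instead invoke the fact that a $3$-ball system in $B^5$ with unlinked boundary is trivial exactly when it is isotopic into the boundary. I would treat the $\Db$ side symmetrically.
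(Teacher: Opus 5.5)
Your proposal is correct and follows the paper's route. The paper's proof is just \autoref{thm:existance_bridge_position} (which internally runs through \autoref{prop:admit_heegaard_complex}, \autoref{prop:existance_step1} and the stabilizations of \autoref{thm:canceling_handles_HC}) followed by \autoref{prop:quadrisection_from_bridge_position}, which is exactly your chain (1)--(4). Your preliminary reduction for disconnected $Y$ is unnecessary, given the paper's standing convention that manifolds are connected; the sector check you single out as the main obstacle is exactly what the paper asserts with ``one can check'' inside \autoref{prop:quadrisection_from_bridge_position}.
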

\begin{proof}
    By \autoref{thm:existance_bridge_position} the embedding of $Y$ in $S^5$ can be described with a Heegaard complex $(\Sigma;\Da, \Db)$ that is in bridge position. Then \autoref{prop:quadrisection_from_bridge_position} gives us the desired bridge quadrisection of $(S^5,Y)$. 
\end{proof}

\section{Examples of simple embeddings of 3-manifolds}\label{sec:unknotted_examples}

\subsection{Low-complexity diagrams}

To build examples of bridge quadrisected 3-manifolds in $S^5$, one needs to find tuples of tangles $(T_1,T_2,T_3,T_4)$ such that each triplet $(T_i,T_j,T_k)$ is a triplane diagram representing an unlink of 2-spheres. For instance, if $\Tcal=(T_1,T_2,T_3)$ is a triplane diagram for an $m$-component unlink of 2-spheres, then $\widetilde{\Tcal}=(T_1,T_2,T_3,T_3)$ satisfies the desired conditions. Uninterestingly, $\widetilde{\Tcal}$ describes an unlink of 3-spheres in $S^5$: this claim can be checked by observing that the associated Heegaard complex is equal to adding 0/1 and 2/3 pairs of canceling handles to a Heegaard complex with empty disk sets.

\subsubsection{Quadrisections with low-bridge index}

Bridge quadrisections with at most two bridges can be completely classified using the work in Sections 4.1-4.3 of \cite{meier17}. There is exactly one 1-bridge quadrisection, in which each tangle in the spine is the unique 1-stranded trivial tangle. Tuples $(T_1,T_2,T_3,T_4)$ for 2-bridge quadrisections have the property that the rational slope of each $T_i$ is either 0 or $\infty$. The following also holds. 

\begin{proposition}
    Up to PL-homeomorphism, the only $3$-manifolds in $S^5$ admitting $2$-bridge quadrisections are the unknotted $S^3$ and the unlinked $S^3\sqcup S^3$.
\end{proposition}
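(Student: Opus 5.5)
Since each tangle of a 2-bridge quadrisection has rational slope $0$ or $\infty$, the plan is to enumerate the possible spines and read off the 3-manifold from each one.

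\textbf{Setup.} Fix a $2$-bridge spine $(T_1,T_2,T_3,T_4)$ in the 4-punctured bridge sphere. By the classification of $2$-bridge trisections in \cite[\S4.1--4.3]{meier17}, each $T_i$ is one of the two rational tangles, slope $0$ or slope $\infty$. The unions behave as follows:
\begin{itemize}
\item if $T_i$ and $T_j$ have the same slope, then $T_i\cup\T_j$ is a $2$-component unlink;
\item if they have different slopes, then $T_i\cup\T_j$ is a $1$-bridge-type unknot, that is, a $1$-component unlink.
\end{itemize}
So every pairwise union is automatically an unlink. We must also check that every triple $(T_i,T_j,T_k)$ is a triplane diagram of an unlink of $2$-knots. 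With only two slope values, each triple either has all slopes equal (two unknotted spheres, $1$-bridge each) or has slopes $(0,0,\infty)$ up to permutation (a single $2$-bridge unknotted sphere, perturbed). Both are unlinks, so every tuple in $\{0,\infty\}^4$ is a valid spine. Up to relabeling and swapping the two slopes, the cases are determined by how many tangles have slope $0$: four, three, or two. The two-slope-$0$ case splits further by whether the equal-slope tangles are adjacent ($0,0,\infty,\infty$) or opposite ($0,\infty,0,\infty$).

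\textbf{Identifying $Y$.} For each case, apply \autoref{prop:Heegaard_splitting_from_4section} to the $4$-colored graph $\Gamma=T_1\cup T_2\cup T_3\cup T_4$, which has four vertices and eight edges. Embed $\Gamma$ in a surface $F$ by gluing bicolored polygons along the consecutive unions $T_i\cup\T_{i+1}$. There are two polygons when the slopes agree and one $4$-gon otherwise. An Euler characteristic count then identifies $F$:
\begin{itemize}
\item all slopes equal: eight bigons, so $F=S^2\sqcup S^2$ and each $\Gamma_{13},\Gamma_{24}$ is a pair of trivial curves, giving $Y=S^3\sqcup S^3$;
\item three equal: $\chi=4-8+(2+2+1+1)=2$, so $F=S^2$ and $Y=S^3$ connected;
\item adjacent pattern $0,0,\infty,\infty$: faces $2+1+2+1$, so again $\chi=2$ and $Y=S^3$;
\item alternating pattern $0,\infty,0,\infty$: four faces, so $\chi=0$ and $F$ is a torus or Klein bottle. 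Here $\Gamma_{13}$ is two parallel copies of one curve and $\Gamma_{24}$ two parallel copies of a curve meeting it twice. I expect this to give $S^3$, or possibly $S^1\times S^2$, and the slope data should decide which.
\end{itemize}
Handling the genus-one case is the main obstacle. I expect the two curves to meet once essentially (two intersection points of the same sign cancel geometrically only in the Klein bottle setting, which would give the nonorientable $S^1\tilde\times S^2$, so orientability must be tracked carefully). I would draw the embedding of $\Gamma$ explicitly to confirm that $\Gamma_{13}$ and $\Gamma_{24}$ are dual curves on a torus, yielding $S^3$.

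\textbf{Unknottedness.} Once $Y$ is a sphere or two spheres, I would show the embedding is standard. Each spine has the form $(T,T,T',T')$-like, with repeated tangles. Using the observation made just above for $(T_1,T_2,T_3,T_3)$, the associated Heegaard complex arises from the empty complex by $0/1$ and $2/3$ stabilizations (\autoref{thm:canceling_handles_HC}). Alternatively, one can show each spine is a perturbation of the $1$-bridge spine (or a disjoint union of two), using \autoref{lem:sector-perturbation} lifted to $3$-manifolds. In either case the result is the unknotted $S^3$ or the unlinked $S^3\sqcup S^3$. The PL qualifier covers any smooth subtlety in identifying these unknots.
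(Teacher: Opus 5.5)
Your slope enumeration and the Euler characteristic counts in the first three cases are fine. Your last paragraph contains the argument the paper actually relies on: it gives no separate proof, only the two-slope remark plus the preceding observation that a spine $(T_1,T_2,T_3,T_3)$ describes an unlink of $3$-spheres. As written, though, the proposal has a hole exactly at your ``main obstacle''. For $(0,\infty,0,\infty)$ you leave open whether $Y$ is $S^3$ or $S^1\times S^2$, and your sketch rests on a miscount: a curve of $\Gamma_{24}$ does not meet a curve of $\Gamma_{13}$ twice. If the slope-$0$ arcs pair the punctures as $\{1,2\},\{3,4\}$ and the slope-$\infty$ arcs as $\{2,3\},\{4,1\}$, then each component of $\Gamma_{13}$ shares exactly one puncture with each component of $\Gamma_{24}$. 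They cross transversally there, because the colors occur around every vertex in the cyclic order $1,2,3,4$. The surface is a torus, not a Klein bottle, since signing the punctures $+,-,+,-$ makes every arc join opposite signs. So the diagram consists of duplicated dual curves and $Y\cong S^3$. More simply, \autoref{prop:Heegaard_splitting_from_4section} holds for every ordering of the indices, and the ordering $(T_1,T_3,T_2,T_4)$ has slopes $(0,0,\infty,\infty)$, a case you already settled with a genus-zero splitting.

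The same reordering is missing from your unknottedness step. The paper's observation has the repeated tangle in adjacent slots, but in $(0,\infty,0,\infty)$ the equal tangles are opposite. You therefore need to know that permuting the spine changes $Y$ only by a diffeomorphism of $S^5$. This holds because the model of \autoref{def:sphere_decomposition} can be chosen invariant under the symmetries of a regular tetrahedron acting on the first three coordinates of $\mathbb{R}^6$. These symmetries realize every permutation of the sectors, fix the central sphere pointwise, and carry a bridge quadrisection with spine $(T_1,T_2,T_3,T_4)$ to one with the permuted spine. Once you have this, the Euler characteristic analysis is redundant. Four tangles with only two slopes must repeat one, so after permuting, every spine has the form $(T_1,T_2,T_3,T_3)$. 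Then $Y$ is an unlink of $m$ $3$-spheres, where $m\in\{1,2\}$ is the number of components of the unlink of $2$-spheres described by $(T_1,T_2,T_3)$, and $m=2$ exactly when all four slopes agree. Finally, the reduction to two slopes uses the triple condition, not only the pairwise unlinks. Three pairwise Farey-adjacent slopes such as $0,1,\infty$ pass every pairwise test but give a triplane of an unknotted $\mathbb{RP}^2$.
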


To classify 3-bridge quadrisections, we introduce some vocabulary. Let $\Tcal$ and $\Tcal'$ be two bridge quadrisections of 3-manifolds $Y$ and $Y'$ in $S^5$. 
We can form either their \emph{connected sum} $Y\# Y'$ or their \emph{distant sum} $Y\sqcup Y'$ obtained by connect summing their ambient 5-manifolds along 5-balls away from or inside $Y$ and $Y'$, respectively. At the level of bridge quadrisections, one can build a bridge quadrisection for $Y_1\sqcup Y_2$ by taking the connected sum along one puncture of the central 2-sphere away from the tangles of $Y$ and $Y'$. If we choose a 5-ball neighborhood of a puncture of the central 2-spheres, we obtain a bridge quadrisection for $Y\# Y'$; see \cite[\S 2.2]{meier17} or \cite[Remark 2.6]{aribi23}.

If $(B,T)$ is a trivial tangle, a \emph{c-disk} is a properly embedded disk $D$ in $B$, transverse to $T$, with boundary a non-trivial loop in the punctured $\partial B$ and $|D\cap T|\leq 1$. The classic work of Birman and Hilden explains a correspondence between genus-two handlebodies and 3-bridge trivial tangles \cite{BH75, Haas89_genustwo}. This correspondence relates meridians of a handlebody with c-disks for a 3-bridge tangle~\cite[\S 4.3.3]{pants}. Curves in punctured spheres bounding c-disks in all tangles of a 4-plane diagram indicate that the underlying bridge quadrisection is a sum of lower bridge quadrisections. This observation is key in  \autoref{prop:3-bridge_classification}, which is a 5-dimensional analog of Theorem 1.8 of \cite{meier17}.

\begin{proposition}\label{prop:3-bridge_classification}
Every $3$-bridge quadrisection of $(S^5,Y^3)$ is either a distant sum or connected sum of lower bridge quadrisections. 
\end{proposition}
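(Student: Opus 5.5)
Let $\Tcal=(T_1,T_2,T_3,T_4)$ be the spine of a $3$-bridge quadrisection of $(S^5,Y)$. By \autoref{lem:spine_determines_3man}, the whole quadrisected pair is determined by $\Tcal$. So the plan is to find a single essential curve $\gamma$ in the six-punctured central sphere $\Sigma_0=\bigcap_i W_i$ that bounds a c-disk in every one of the four tangles $(B_\ell,T_\ell)$. Given such a $\gamma$, the four c-disks glue, through the flat pieces of the genus-zero quadrisection, to a $4$-sphere in $S^5$ meeting $Y$ in either the empty set or an unknotted $2$-sphere. This sphere splits the quadrisection into two quadrisections of lower bridge number, realizing $(S^5,Y)$ as a distant sum (if $\gamma$ separates the punctures evenly and each disk misses $T_\ell$) or a connected sum (if each disk meets $T_\ell$ once), exactly as in the description of sums before the statement.

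\textbf{Step 1: reduce to the surface classification.} For each triple $\{i,j,k\}$, the tuple $(T_i,T_j,T_k)$ is a $3$-bridge triplane diagram of an unlink of $2$-knots. By Theorem 1.8 of \cite{meier17} (every $3$-bridge trisection is reducible, hence standard), each such triple admits a common curve $\gamma_{ijk}\subset\Sigma_0$ bounding c-disks in $T_i,T_j,T_k$. The key point is then to upgrade these pairwise-compatible curves to a single curve working for all four tangles simultaneously.

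\textbf{Step 2: use Birman--Hilden.} Pass to the $2$-fold branched cover of $B_\ell$ over $T_\ell$, which is a genus-two handlebody $H_\ell$ with common boundary the genus-two surface $\widetilde\Sigma_0$. By \cite[\S 4.3.3]{pants}, c-disks of $T_\ell$ correspond to (hyperelliptic-invariant) meridians of $H_\ell$. The condition that $T_\ell\cup\T_m$ is an unlink says that each pair $(H_\ell,H_m)$ is a Heegaard splitting of a connected sum of copies of $S^1\times S^2$ (or $S^3$). By Waldhausen's theorem, each such genus-two splitting is standard, so each pair of handlebodies shares a reducing sphere. The main task is to show that four genus-two handlebodies that are pairwise standard in this sense admit a common invariant reducing curve, namely a separating curve bounding disks in all $H_\ell$. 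Projected back down, this curve is $\gamma$.

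\textbf{Main obstacle.} I expect Step 2 to be the hard part, because pairwise reducing curves need not agree. My first attempt would be a case analysis on the pairs $(c_{ij})$, using the slope description of $2$- and $3$-bridge tangles. If some $T_\ell\cup\T_m$ has $3$ components, then $T_\ell=T_m$ up to the puncture-fixing mapping class, so the spine effectively has only three distinct tangles. In that case Meier--Zupan's reducing curve for the triple already bounds c-disks in all four tangles, and we are done. Otherwise every pair has at most $2$ components. Then I would show that two of the tangles share a strand-free reducing disk, by exhibiting a common "dual" arc via the genus-two disk complex. If this direct argument fails, the fallback is to exploit the constraint that every triple is reducible, and apply the Meier--Zupan argument to a triple containing the two tangles that realize a given $\gamma_{ijk}$ together with the remaining tangle, iterating until the curve stabilizes.
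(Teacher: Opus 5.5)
\textbf{Where the proposal breaks.} Your reduction matches the paper's. You look for one essential curve in the six-punctured central sphere that bounds a c-disk in every $T_\ell$, and you detect it in the genus-two handlebodies $H_\ell$ given by Birman--Hilden. The gap is that the step which actually proves the proposition, a \emph{simultaneous} reducing curve for all four $H_\ell$, is never supplied. The tools you propose cannot supply it.

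Pairwise standardness via Waldhausen holds in every bridge trisection, so it says nothing about simultaneous reducibility. Take the $(3;1)$-bridge trisection of the unknotted torus. Every $T_i\cup\overline{T}_j$ is an unknot, yet it lifts to the genus-two trisection of $S^2\times S^2$. That trisection has no curve bounding disks in all three handlebodies. A separating such curve would split it into two genus-one trisections of $\pm\mathbb{CP}^2$, giving an odd intersection form. A non-separating one would produce a non-separating $3$-sphere in a simply connected manifold. Listing one of those handlebodies twice gives four pairwise-standard genus-two handlebodies with no common reducing curve. So your ``main task'' is false as formulated.

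The triple conditions therefore have to enter essentially, and your proposal does not use them effectively:
\begin{itemize}
\item Step 1 only yields four curves $\gamma_{ijk}$, with no reason to coincide.
\item The ``iterate until the curve stabilizes'' fallback has no decreasing complexity and no termination argument.
\item The generic case, where every pair has at most two components, is left as a placeholder.
\end{itemize}

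\textbf{How the paper fills it.} The paper closes exactly this gap with a genuinely five-dimensional input. The four handlebodies $H_\ell$ form a genus-two quadrisection of the closed $5$-manifold $Z$, the $2$-fold cover of $S^5$ branched along $Y$. A forthcoming theorem of Meier, Moussard, and Zupan states that every genus-two quadrisection of a $5$-manifold is a connected sum of genus-one ones. This produces a separating curve bounding meridian disks in all four handlebodies. Every simple closed curve on a genus-two surface is isotopic to one invariant under the hyperelliptic involution, so this curve descends to an essential curve in the six-punctured sphere bounding a c-disk in each $T_\ell$. The sum decomposition then follows as in your opening paragraph.

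Your Step 2 builds precisely this quadrisected $5$-manifold but examines it only through pairwise and triple data. Without that classification result, or an independent argument of comparable strength, the proposal does not prove the statement.
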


\begin{proof}
Let $(B_1,T_1)\cup (B_2,T_2)\cup (B_3,T_3)\cup (B_4,T_4)$ be the spine of the 3-bridge quadrisection of $(S^5,Y)$. The 2-fold branched cover of each 3-ball $B_i$ branched along the tangle $T_i$ is a quadruple of 3-dimensional handlebodies $(H_1,H_2,H_3,H_4)$ with common boundary a genus-two surface. Thus, we get a genus-two quadrisected 5-manifold $Z$. By an upcoming result of Meier, Moussard, and Zupan, genus-two quadrisections of 5-manifolds are connected sums of genus-one splittings~\cite{Meier_Moussard_Zupan_personal}. This means that there is a separating curve in the genus-two surface bounding a meridian disk in all handlebodies. Such a curve descends to a non-trivial loop in the six-puncture sphere $\partial B_i$ bounding a c-disk for the tangle $T_i$. Hence, the bridge 4-section of $(S^5,Y)$ is either a distant sum or a connected sum of lower bridge quadrisections, as desired. 
\end{proof}

\subsubsection{Crossingless \texorpdfstring{$4$}{4}-plane diagrams.}
Similar to knots in 3-dimensions, one can filter 3-manifolds by the number of crossings in their 4-plane diagrams. Manifolds with the smallest crossing number (zero) are \emph{unknotted} by \autoref{prop:crossingless_4planes}.
In particular, 4-plane diagrams of non-trivial lens spaces $L(p,q)$ must have crossings, since these manifolds do not embed in $S^4$~\cite{Hantzsche_obstruction_no_Lpq_embeds_in_S4}.

\begin{proposition}\label{prop:crossingless_4planes}
    If a $3$-manifold $Y^3\subset S^5$ admits a crossingless quadrisection diagram, then $Y^4$ can be isotoped into $S^4$.
\end{proposition}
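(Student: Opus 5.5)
The plan is to build an explicit copy of $Y$ inside an equatorial $S^4\subset S^5$ that is transverse to the genus-zero quadrisection and has the same spine. Then \autoref{lem:spine_determines_3man} forces the given embedding to be isotopic to it.

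\textbf{Step 1: place the tangles in a plane.} A crossingless $4$-plane diagram means that each trivial tangle $T_\ell\subset B_\ell$ can be drawn with no crossings. Since the diagram projects all four tangles onto a common half-plane whose boundary line is the bridge line, each $T_\ell$ is isotopic rel. boundary to a collection of disjoint arcs lying in a properly embedded disk $P_\ell\subset B_\ell$ with $\partial P_\ell$ a great circle of the central sphere $S=\partial B_\ell$. We choose the $P_\ell$ so that they share this common boundary circle $\gamma\subset S$, which contains all $2b$ punctures.

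\textbf{Step 2: find a copy of $S^4$ meeting the quadrisection nicely.} Using the coordinate model of \autoref{def:sphere_decomposition}, the map $p\colon S^5\to\mathbb{R}^3$ forgets the last three coordinates, and $S=p^{-1}(\text{center})$. Dropping one of those three coordinates gives an equatorial $S^4\subset S^5$ that contains $\gamma\times(\text{interior})$, and this $S^4$ meets each $W_i$ in a $4$-ball, each $X_{ij}$ in a $3$-ball, and each $B_\ell$ in the disk $P_\ell$. In other words, the genus-zero quadrisection of $S^5$ restricts to the genus-zero quadrisection of $S^4$, viewed as the pullback of the same $3$-simplex under a map $S^4\to\mathbb{R}^3$ with one-dimensional fibers. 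Informally, intersecting with the $S^4$ drops every piece by one dimension.

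\textbf{Step 3: build $Y'\subset S^4$ with the same spine.} The arcs $T_\ell\subset P_\ell$ form a planar crossingless ``$4$-plane diagram one dimension down''; the pair $(P_\ell,T_\ell)$ is a trivial $1$-dimensional tangle of arcs in a $2$-disk. The hypothesis says that every triple $(T_i,T_j,T_k)$ is a triplane diagram of an unlink of $2$-knots, and every pair $T_i\cup\T_j$ is an unlink.
\begin{itemize}
\item In the $2$-spheres $P_i\cup P_j$, each pair $T_i\cup\T_j$ is a crossingless collection of circles, so it bounds disjoint disks $D'_{ij}$ inside the $3$-ball $X_{ij}\cap S^4$. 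These disks are boundary parallel, so by \autoref{lem:uniqueness_disk_fillings} they agree with the $D_{ij}$ of the quadrisection.
\item Each union $D'_{ij}\cup D'_{ik}\cup D'_{i\ell}$ is a closed surface in the $3$-sphere $\partial(W_i\cap S^4)$. It is the slice of the unlink of $2$-knots $\partial E_i$, so it should be a union of spheres, which bound disjoint $3$-balls $E'_i$ in the $4$-ball $W_i\cap S^4$.
\end{itemize}
Setting $Y'=\bigcup_i E'_i\subset S^4$ gives a bridge quadrisected $3$-manifold with spine $(T_1,T_2,T_3,T_4)$. By \autoref{lem:spine_determines_3man}, $Y$ is isotopic to $Y'$, which lies in $S^4$.

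\textbf{Main obstacle.} The hard step is the second bullet: showing that the planar surface $\bigcup_j D'_{ij}$ in $S^3$ consists of spheres, and that the resulting $3$-balls are boundary parallel in the $5$-ball $W_i$. First I would check the Euler characteristic. In this crossingless model, $\chi$ of the $2$-knot $\partial E_i$ can be read off from the planar graph $T_i\cup T_j\cup T_k$, and this count agrees with $\chi$ of the surface in $S^3$, so the components are spheres. Second, a sphere in $S^3\subset S^4$ bounds a $3$-ball in $S^3$, and pushing that ball into $W_i$ makes it boundary parallel. Finally, Step 2 needs care with the cornered structure of the pieces, so that each $W_i\cap S^4$ really is a $4$-ball and the sectors meet as described; the angle remark after \autoref{def:sphere_decomposition} handles the ambiguity in the choice of corners.
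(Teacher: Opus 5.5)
Your proof is correct, and it takes a genuinely different route from the paper's. The paper's route:
\begin{itemize}
\item it draws all four crossingless pages in a single $S^3$, a four-page book with four complementary $3$-balls;
\item it reads off a Heegaard complex $(\Sigma;D_\alpha,D_\beta)$ lying in that $S^3$ and caps each triple of tangles with a trivial $3$-ball system;
\item it isotopes these caps into the appropriate complementary balls, a step justified only by a schematic figure;
\item it assembles $Y$ in $S^3\times[0,1]$, with $\Sigma\times[0,1]$ in the middle and the caps at the two ends.
\end{itemize}
You instead slice the standard quadrisection of $S^5$ by the equatorial $S^4=\{x_6=0\}$. This $S^4$ meets every stratum in an equatorial substratum: the $4$-balls $W_i\cap S^4$, the $3$-balls $X_{ij}\cap S^4$, the pages $P_\ell$ and the binding $\gamma$. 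You build $Y'$ stratum by stratum inside that $S^4$ and let \autoref{lem:spine_determines_3man} supply the isotopy $Y\simeq Y'$.

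The key observation, which you should state explicitly, is that $\partial(X_{ij}\cap S^4)\subset\partial X_{ij}$ and $\partial(W_i\cap S^4)\subset\partial W_i$. Hence anything boundary parallel inside a slice is boundary parallel in the ambient ball.

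Your route avoids the Heegaard-complex machinery and the informal isotopy of the caps. All the real input is concentrated in the Livingston--Powell uniqueness already packaged in that lemma. The paper's route gives a more structured conclusion: $Y$ sits in $S^3\times[0,1]$ with a product Heegaard surface, i.e.\ it comes from a Heegaard complex embedded in $3$-space as in \autoref{sec:example_embeddings_in_R3xR}.

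Minor points:
\begin{itemize}
\item With the paper's conventions $\partial X_{ij}=B_k\cup\overline{B}_\ell$, so in your first bullet it is $T_k\cup\overline{T}_\ell$ that bounds $D'_{ij}$. Your second bullet already uses this convention.
\item The slice decomposition of $S^4$ is not the genus-zero $4$-section of \autoref{def:multisections_surfaces}, though nothing in your argument depends on the name.
\item Your ``main obstacle'' is easy. The surfaces $D'_{ij}\cup D'_{ik}\cup D'_{i\ell}$ and $\partial E_i$ are built from the same graph $T_j\cup T_k\cup T_\ell$, with one $2$-cell glued along each component of each pairwise union. They are therefore homeomorphic, so the former is a union of spheres.
\item If some of these spheres are nested in $S^3$, push the $3$-balls they bound into $W_i\cap S^4$ to different depths so that the balls stay disjoint.
\end{itemize}
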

\begin{proof}
    We can embed the union of the 3-balls of the quadrisection in $S^3$ since it is crossingless. The 3-balls cut $S^3$ into four 3-balls $Z_{12},Z_{13},Z_{24},Z_{34}$ and each trivial disk system either embeds in $Z_{ij}$ or embeds in the spine itself. In conclusion, the Heegaard surface for $Y$ and $D_{\alpha}$ embed in $S^3.$ By definition of a quadrisection, each triple union of tangles is a collection of trivial 2-spheres and we can cap them off with four trivial 3-ball systems $B^3_{123}, B^3_{124},B^3_{234}$ and $B^3_{134}$. 

    As shown in \autoref{fig:lieinoneside}, the 3D projection of $B^3_J$ may a priori intersect $Z_{I\not\subset J}$. However, after an isotopy preserving the knot-type of $Y^3$ (as shown schematically in \autoref{fig:lieinoneside}), $B^3_I$ can be made to lie completely in $Z_{I\subset J}$. After performing these isotopies for $B^3_{I}$'s, we can embed $Y$ in $S^4$ as follows. We consider an embedding in $S^3\times [0,1],$ where the Heegaard surface is present in $S^3\times t$ for all  $t \in [0,1]$. Then,  $B^3_{123}\cup B^3_{234}$ is embedded in $S^3\times \{0\}$ and $B^3_{124}\cup B^3_{134}$ is embedded in $S^3\times \{1\}$.
\end{proof}
\begin{figure}[ht!]
    \centering    \includegraphics[width=.3\textwidth]{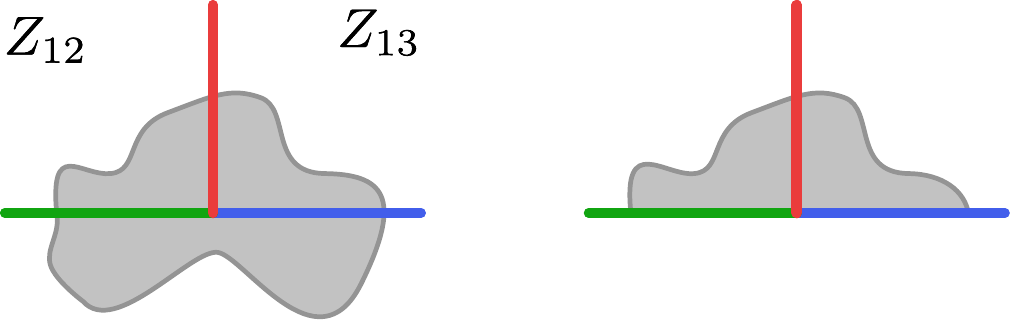}
    \caption{Treating a triple of crossingless tangles as a trisection, it is possible to cap off with trivial disk systems to form a closed surface, which is a trivial link of 2-spheres. Since the surface-link is trivial, we can fill in with a collection of 3-balls, which a priori lies in the union of the 3 balls $Z_{12}\cup Z_{13}\cup Z_{24} \cup Z_{34}$. The intersection $B^3_{123}$ with $Z_{24} \cup Z_{34}$ is a collection of 3-balls that trace an isotopy so that $B^3_{123}$ lies in $Z_{12}\cup Z_{13}$.}
    \label{fig:lieinoneside}
\end{figure}

\begin{question}
    What is the smallest crossing number of a non-trivial 3-knot?
\end{question}

\subsection{Lens spaces} 

Lens spaces are the closed 3-manifolds admitting genus-one Heegaard splittings. \autoref{fig:lens_space} shows a bridge 4-section $\Tcal=(T_1,T_2,T_3,T_4)$ of an embedding of $L(p,1)$. The bridge quadrisected surface by $\Tcal$ is a torus, as the abstract bridge 4-section corresponding to $\Tcal$ is shown in the left panel of \autoref{fig:lens_space}. Notice that the \textcolor{red}{red}-\textcolor{ForestGreen}{green} and \textcolor{blue}{blue}-\textcolor{purple}{purple} curves form the standard genus-one Heegaard diagram of $L(p,1)$, where each curve appears twice. 

\begin{figure}[ht]
    \centering
    \includegraphics[width=1\textwidth]{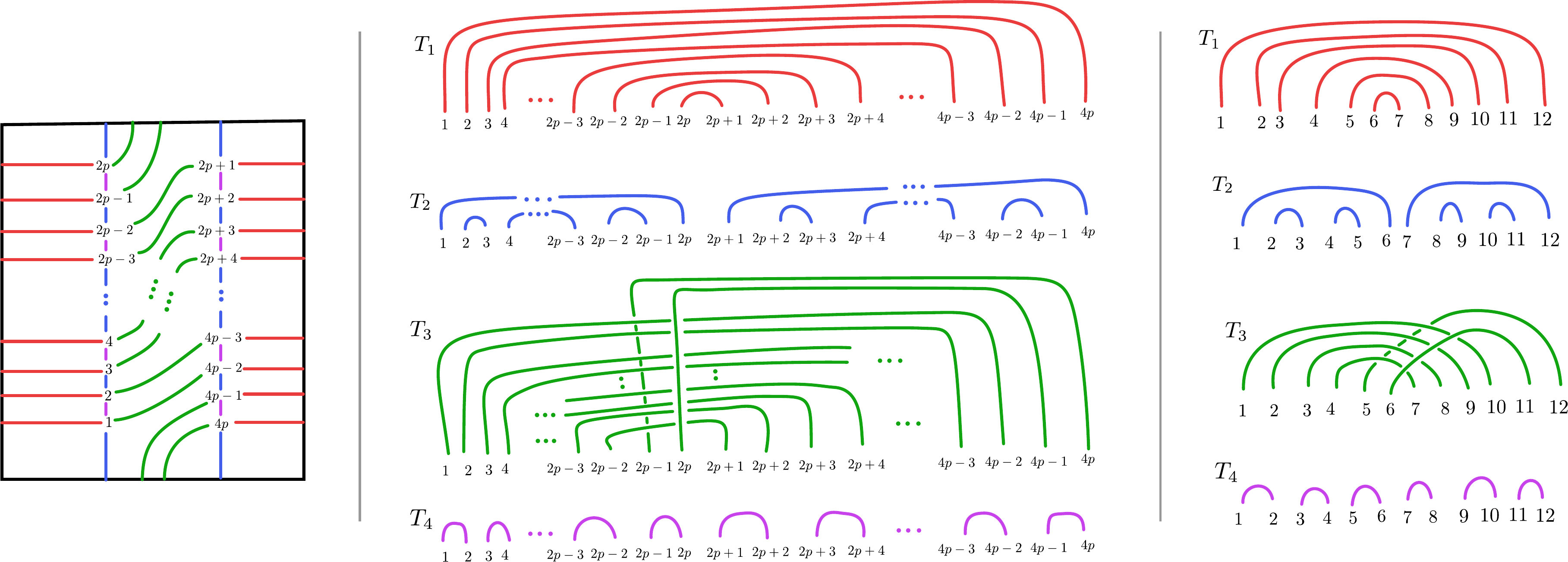}
    \caption{Left: an abstract 4-section of a torus. The \textcolor{blue}{blue}-\textcolor{purple}{purple} curves are meridians while the \textcolor{red}{red}-\textcolor{ForestGreen}{green} have slope $p/1$. Middle: a bridge 4-sected embedding of the lens space $L(p,1)$ in $S^5$, $p\geq 1$. Right: the particular case $p=3$.}
    \label{fig:lens_space}
\end{figure}

To formally check that $\Tcal$ from \autoref{fig:lens_space} determines an embedded 3-manifold, one needs to check that each tuple $(T_i, T_j, T_k)$ is a triplane diagram for an unlink of 2-spheres. We do this for the $p=3$ case and leave it as an exercise for the reader to generalize the figures for arbitrary $p\geq 2$: \autoref{fig:lens_space_check2} and \autoref{fig:lens_space_check1} show how to perform mutual braid moves to each tuple $(T_i, T_j, T_k)$ to obtain crossingless triplane diagrams, which describe unlinks of unknotted surfaces by Proposition 4.4 of \cite{meier17}. To end, an Euler characteristic computation checks that the respective surfaces are 2-spheres.

\begin{proposition} 
Let $\Tcal$ be a $4$-plane diagram describing an embedded lens space $L(p,q)$ in $S^5$, where $p\geq 2$. Then, 
\[ 
2p\leq \text{bridge}(\Tcal).
\]
In particular, the $4$-sections of $L(p,1)$ in \autoref{fig:lens_space} have the smallest possible bridge index. 
\end{proposition}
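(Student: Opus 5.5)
The plan is to reduce the inequality to intersection counting on the Heegaard surface supplied by \autoref{prop:Heegaard_splitting_from_4section}. Let $b=\text{bridge}(\Tcal)$ and let $\Gamma=\Gamma_1\cup\Gamma_2\cup\Gamma_3\cup\Gamma_4\subset\Sigma$ be the abstract $4$-section of the Heegaard surface. By \autoref{prop:Heegaard_splitting_from_4section}, $(\Sigma;\Gamma_{13},\Gamma_{24})$ is an extended Heegaard diagram of $Y=L(p,q)$. Since $L(p,q)$ is orientable, $\Sigma$ is orientable.

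\textbf{Step 1: the two multicurves meet exactly $2b$ times, transversally.} The $\alpha$-curves are the components of $\Gamma_{13}$ and the $\beta$-curves are the components of $\Gamma_{24}$. The graph $\Gamma$ is $4$-valent with $2b$ vertices, one for each puncture of the central sphere. At each vertex the four edges appear in the cyclic order $1,2,3,4$, because the complementary faces are bicolored polygons of types $12,23,34,41$. Hence $\Gamma_{13}$ and $\Gamma_{24}$ cross transversally at each vertex and nowhere else. This gives
\[
\sum_{k,l}|a_k\cap b_l| = 2b .
\]

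\textbf{Step 2: both multicurves are $\mathbb{Z}_2$-nullhomologous.} The union of the $12$-faces and the $23$-faces has mod-$2$ boundary $\Gamma_1+\Gamma_3$, since the edges of $\Gamma_2$ are counted twice. So $[\Gamma_{13}]=0$ in $H_1(\Sigma;\Z_2)$, and likewise $[\Gamma_{24}]=0$. This parity is what recovers the factor of two; it is the phenomenon visible in \autoref{fig:lens_space}, where every curve appears twice.

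\textbf{Step 3: the genus-one case.} Suppose $\Sigma$ is a torus. All essential components of $\Gamma_{13}$ are disjoint, so they are parallel of a single slope $\alpha$. Because $[\Gamma_{13}]=0$ mod $2$, the number of essential $\alpha$-curves is even. It is nonzero, since otherwise the $\alpha$-side of the splitting would not be a solid torus. So there are at least two of them, and likewise at least two essential $\beta$-curves of a slope $\beta$. Since $Y=L(p,q)$, the geometric intersection number satisfies $\Delta(\alpha,\beta)=p$. Hence each essential $\alpha$-curve meets each essential $\beta$-curve at least $p$ times, and the total intersection is at least $2\cdot 2\cdot p=4p$. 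Combined with Step 1 this gives $4p\le 2b$, that is, $2p\le b$. For the diagram of \autoref{fig:lens_space}, $b=2p$: there are two copies of the $p/1$ curve and two meridians, giving exactly $4p$ intersection points. So that diagram realizes the bound.

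\textbf{Step 4: higher genus, which I expect to be the main obstacle.} In general $\Sigma$ has genus $g\ge1$, and I would argue homologically. Choose sub-collections of $g$ $\alpha$-curves and $g$ $\beta$-curves that each cut $\Sigma$ into a planar surface. The algebraic intersection matrix $M$ of these sub-collections presents $H_1(Y)=\Z_p$, so $|\det M|=p$. The inequality needed is therefore $2|\det M|\le \tfrac12\sum_{k,l}|a_k\cap b_l|$ over all curves, including the redundant ones.

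First I would try to reduce the genus: find a curve on $\Sigma$ bounding disks on both sides, or use destabilizations combined with the perturbation calculus of \autoref{sec:calculus_abstract_surfaces}, to pass to a genus-one subdiagram. The counting should not increase under this reduction, and the mod-$2$ parity of Step 2 should be preserved.

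Failing that, I would take the $\alpha$-multicurve modulo $\mathbb{Z}_2$-relations. Step 2 forces every class appearing among the essential $\alpha$-curves to be accompanied by a second curve whose intersections with the $\beta$-curves have matching parity. The same holds for the $\beta$-curves. Then I would bound $|\det M|$ by a Hadamard-type estimate in which each row and column is effectively counted twice.

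This is the step where I am least certain the naive estimate suffices. A product of row sums can exceed the total sum, so the argument must genuinely exploit the structure of lens spaces; the genus-reduction route is what I would attempt first.
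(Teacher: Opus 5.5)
\textbf{Genus one is fine; Step 4 is a gap.} Your Steps 1--3 are correct and give a complete proof when the Heegaard surface $\Sigma$ is a torus. This is the paper's argument in substance. The paper forms the algebraic intersection matrix $Q=(\alpha_i\cdot\beta_j)$ and uses nullhomology of $\alpha$ and $\beta$ to find a $2\times 2$ block of nonzero entries, each of absolute value at least $p$. In genus one that is exactly your count: two essential $\alpha$-curves, two essential $\beta$-curves, and $\Delta(\alpha,\beta)=p$. Your check that the diagrams of \autoref{fig:lens_space} have $b=2p$ is also right.

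The gap is Step 4, which contains no argument. Genus $\ge 2$ does occur. For example, a $\{1,3\}$ $3$-manifold perturbation of the genus-one diagram tubes $\Sigma$ to itself, splits an $\alpha$-curve into two, and adds a tube meridian to $\beta$.

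\textbf{The paper does not close this gap either.} Its proof covers all genera by asserting that the entries of $Q$ have greatest common divisor $p$. That holds only when $\Sigma$ is a torus. Since $b_1(L(p,q))=0$, $Q$ has rank $g$ with invariant factors $1,\dots,1,p$, so for $g\ge 2$ the gcd is $1$. In the perturbed example above, the tube meridian meets each of the two split $\alpha$-curves once, so $|\alpha_i\cdot\beta_j|=1$ there.

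\textbf{Your proposed remedies cannot work from the data of Steps 1--2 alone.} There is a genus-two Heegaard diagram with $\pi_1=\langle x,y\mid x^ky,\ y^kx^{-1}\rangle\cong\Z/(k^2+1)$, hence of a lens space, with only $2k+2$ intersection points. The two $\beta$-curves can be realized disjointly and filling in the four-holed sphere cut along $\alpha_1,\alpha_2$. Doubling every curve gives a filling pair of $\Z_2$-nullhomologous multicurves, that is, an abstract $4$-section, with $2b=8k+8<4(k^2+1)$ for $k\ge 3$.

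So no Hadamard-type or parity estimate on intersection data can give $b\ge 2p$ in higher genus. A proof there would have to use the embedding itself: triviality of the tangles and the unlink condition on every triple. Otherwise the statement should be restricted to diagrams whose Heegaard surface is a torus, which is what both your Steps 1--3 and the paper's argument actually establish.
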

\begin{proof}

Choose a cyclic ordering $(T_1,T_2,T_3,T_4)$ of $\Tcal$; this yields a 4-plane diagram for an embedded surface $\Sigma\subset S^4$. From \autoref{prop:Heegaard_splitting_from_4section}, the tuple $(\Sigma;\alpha, \beta)$ is an extended\footnote{Recall that this means that the curves may be linearly dependent in $H_1(\Sigma;\Z)$.} Heegaard diagram for an abstract $L(p,q)$, where $\alpha=T_1\cup \T_3$ and $\beta=T_2\cup \T_4$ as subsets of $\Sigma$. Since lens spaces are orientable $\Sigma$ is an orientable surface. An orientation of $\Sigma$ translates to a consistent choice of signs for the $2b$ endpoints $\{p_1,\dots, p_{2b}\}$ of the tangles in $\Tcal$; $b=\text{bridge}(\Tcal)$. Thus, the algebraic intersection of curves in $\alpha$ and $\beta$ is equal to the signed count of the common punctures in $\{p_i\}_{i=1}^{2b}$. In particular, the number of intersections between the $\alpha$ and $\beta$ curves is at most the number of punctures of $\Tcal$.

Consider the matrix of algebraic intersections $Q=\left(\alpha_i\cdot \beta_j\right)_{i,j}$, where $\{\alpha_i\}_{i=1}^n$ and $\{\beta_j\}_{j=1}^m$ are the connected components of $\alpha$ and $\beta$. Since the fundamental group of $L(p,q)$ is cyclic, the Smith normal form of $Q$ must have exactly one non-zero entry (equal to $p$). In fact, the greatest common divisor of the entries in $Q$ is equal to $p$~\cite{norman2012SmithForms}. Take a pair of curves $\alpha_i$ and $\beta_j$ with non-zero algebraic intersection; i.e., $|\alpha_i\cdot \beta_j|\geq p$. 
Since $\alpha$ and $\beta$ were obtained from a bridge 4-section, we know that $\alpha$ and $\beta$ are nullhomologous sets in $H_1(\Sigma)$. Thus, there must be subsets of curves $\alpha_I\subset \alpha\setminus\alpha_i$ and $\beta_J\subset \beta\setminus \beta_j$ with $[\alpha_i]=[\alpha_I]$ and $[\beta_j]=[\beta_J]$ in $H_1(\Sigma)$. In particular, there exist $i_0\in I$ and $j_0\in J$ such that the quantities $\alpha_{i_0}\cdot \beta_j$, $\alpha_i\cdot \beta_{j_0}$, and $\alpha_{i_0}\cdot \beta_{j_0}$ are non-zero. Hence, 
\[
2b= |\alpha\cap\beta| \geq |\alpha_{i}\cdot \beta_{j}| + |\alpha_{i_0}\cdot \beta_{j}| + |\alpha_{i}\cdot \beta_{j_0}| + |\alpha_{i_0}\cdot \beta_{j_0}| \geq p + p + p + p.
\]
\end{proof}

\begin{figure}[ht]
    \centering
    \includegraphics[width=.85\textwidth]{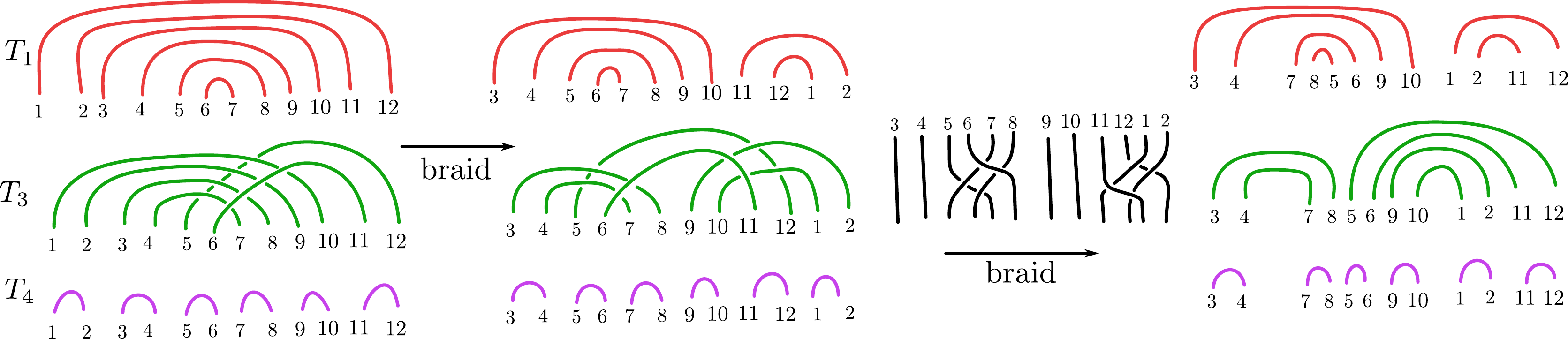}
    \caption{The tangles $(\textcolor{red}{T_1},\textcolor{blue}{T_2},\textcolor{ForestGreen}{T_3},\textcolor{purple}{T_4})$ from \autoref{fig:lens_space}, and mutual braid moves turning $(\textcolor{red}{T_1},\textcolor{ForestGreen}{T_3},\textcolor{purple}{T_4})$ into a crossingless triplane diagram.}
    \label{fig:lens_space_check2}
\end{figure}

\begin{figure}[ht]
    \centering
    \includegraphics[width=.85\textwidth]{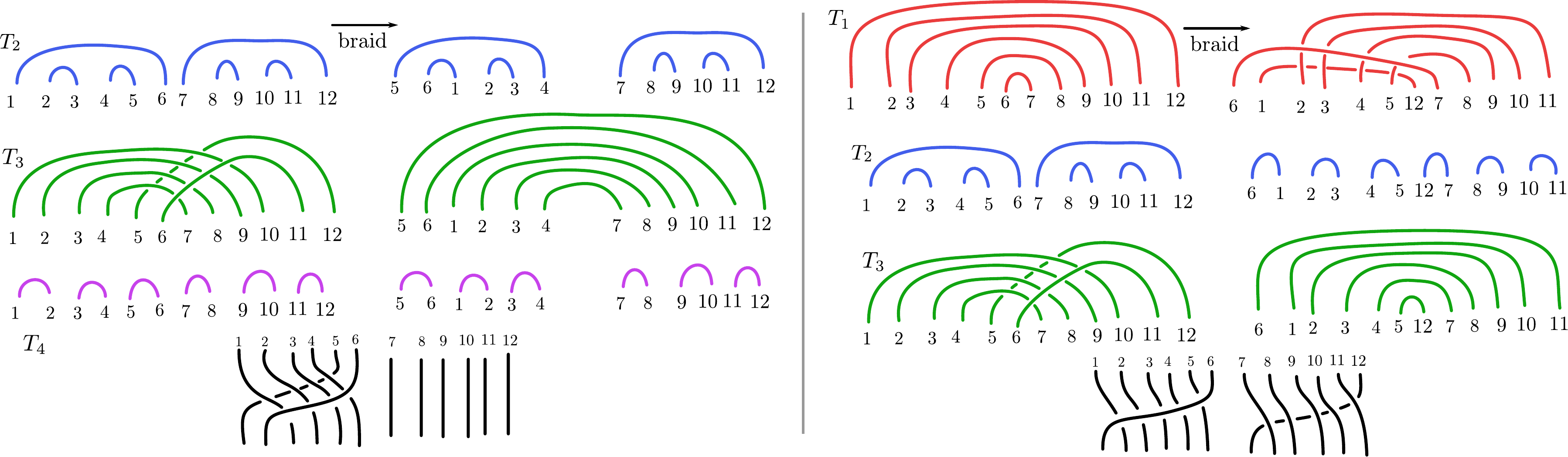}
    \caption{The tangles $(\textcolor{red}{T_1},\textcolor{blue}{T_2},\textcolor{ForestGreen}{T_3},\textcolor{purple}{T_4})$ from \autoref{fig:lens_space}. Left: mutual braid moves turning $(\textcolor{blue}{T_2},\textcolor{ForestGreen}{T_3},\textcolor{purple}{T_4})$ into a crossingless triplane diagram. Right: mutual braid moves turning $(\textcolor{red}{T_1},\textcolor{blue}{T_2},\textcolor{ForestGreen}{T_3})$ into a triplane diagram equal to the mirror image of $(\textcolor{ForestGreen}{T_3},\textcolor{purple}{T_4},\textcolor{red}{T_1})$.}
    \label{fig:lens_space_check1}
\end{figure}

\subsection{Embeddings of \texorpdfstring{$3$}{3}-manifolds in  \texorpdfstring{$\R^3\times \R$}{R3 x R}}\label{sec:example_embeddings_in_R3xR}

We now consider Heegaard complexes $(\Sigma;\Da,\Db)$ \emph{embedded in 3-space}; that is, each one of $\Sigma$, $\Da$, and $\Db$ embeds in a fixed $\R^3$ in the equatorial $S^4\subset S^5$. An example of such a Heegaard complex for the 3-torus in $\R^4$ is shown in \autoref{fig:Example_T3_1}. One can check that the underlying embedded 3-manifolds can be isotoped into $\R^4\subset S^4$ with the property that the projection onto the fourth coordinate $f:Y\subset \R^4\to \R$ is a Morse function. Agol and Freedman used the curve complex to find an obstruction for such an embedding to exist~\cite{Agol21_Embedding_Heegaard}. 

In what follows, we will explain how to find a bridge 4-section of $Y\subset \R^4$ from a Heegaard complex embedded in 3-space. Let $(\Sigma;\Da,\Db)$ be a Heegaard complex with $\Sigma$, $\Da$, and $\Db$ embedded in a fixed $\R^3$; denote the intersections of $\Sigma$ with the disk sets by $\alpha=\Da\cap \Sigma$ and $\beta=\Db\cap \Sigma$. 
First, alter the disk sets so that $\Sigma\setminus \left(\alpha\cup \beta\right)$ is a disjoint union of 2-disks. One way to achieve this is to artificially add bigons between $\alpha$ and $\beta$ curves so that they fill the surface. Then, consider $\alpha'=\partial \eta(\alpha)$ and $\beta'=\partial\eta(\beta)$ be the new multicurves resulting from doubling each curve in $\alpha$ and $\beta$. Note that each intersection point between $\alpha$ and $\beta$ turns into a small 4-gon. We color the arcs of $\alpha'\cup \beta'$ that connect the intersections $\alpha'\cap\beta'$ in one of four colors as follows: arcs in $\alpha'$ (resp. $\beta'$) that lie in the new 4-gons are green (resp. purple), and the rest of the arcs are red (resp. blue); see \autoref{fig:Example_T3_1} for reference. To end, drag the curves $\alpha'\cap \beta'$, without altering the intersection pattern, so that the intersection points $\alpha'\cap \beta'$ lie in a plane $P\subset \R^3$ tangent to $\Sigma$. By a small isotopy of $\Sigma$ we can choose $P$ such that $P\cap \Sigma=\alpha'\cap \beta'$ and $\Sigma$ lies in one side of $P$. This way, the colored arcs determine a tuple of tangles $(T_1,T_2,T_3,T_4)$ given by the ordering (\textcolor{red}{red}, \textcolor{blue}{blue}, \textcolor{ForestGreen}{green}, \textcolor{purple}{purple}). In \autoref{fig:Example_T3_1}, the plane $P$ is parallel to the paper, and the surface, together with the tangles $T_i$, is pushed into the paper. 

\begin{figure}[ht]
    \centering
    \includegraphics[width=.7\textwidth]{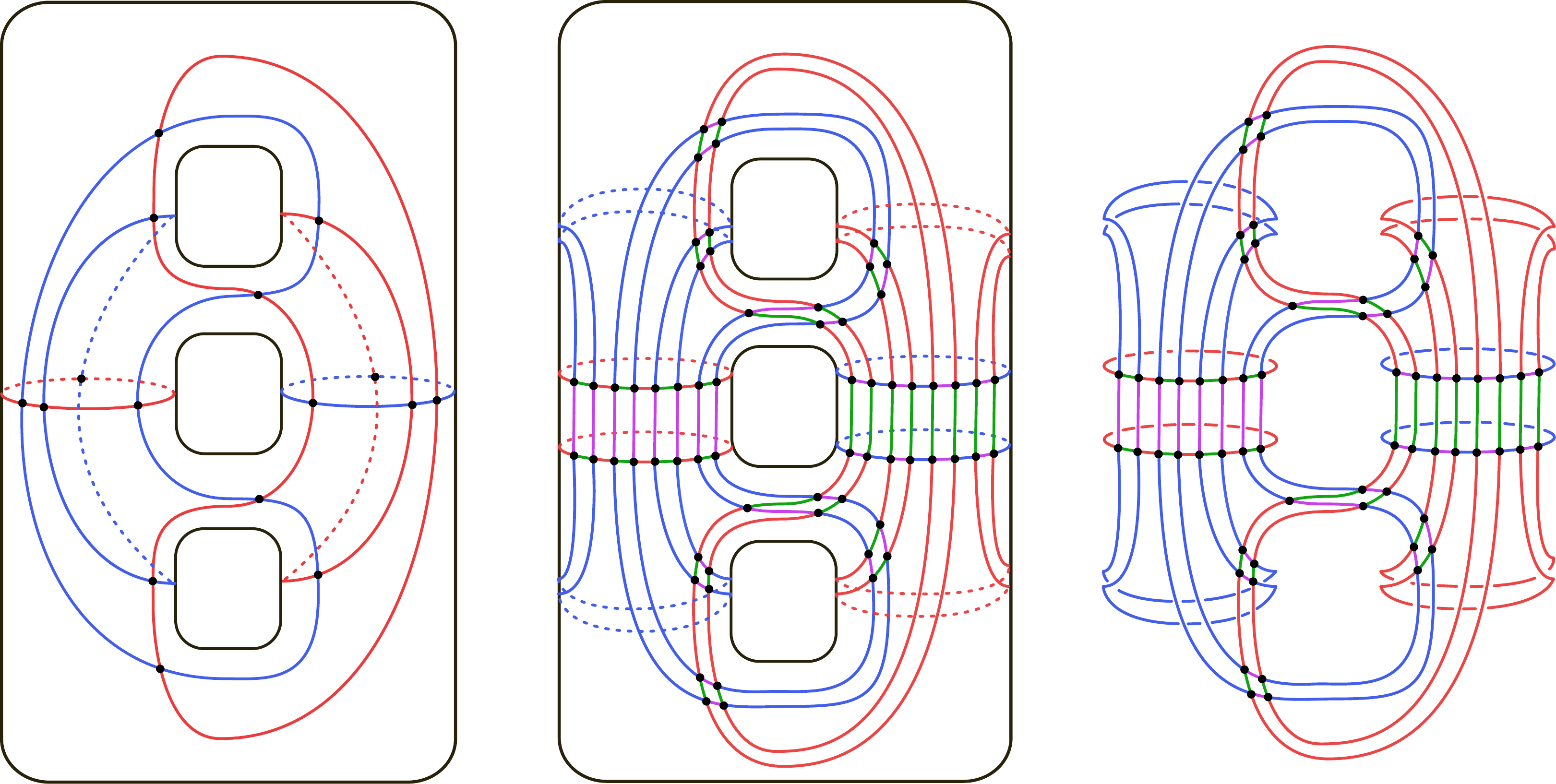}
    \caption{Left: a Heegaard diagram $(\Sigma;\alpha, \beta)$ of the 3-torus. Note that, looking at the way $\Sigma$ is drawn in $\R^3$, each curve bounds a compressing disk for $\Sigma$ embedded in $\R^3$. Thus, we have a Heegaard complex for a 3-torus in $\R^4=\R^3\times \R$. Middle: doubling each curve in $\alpha\cup \beta$ yields an abstract 4-section for $\Sigma$. This is drawn so that all the punctures lie in the same plane $P\subset \R^3$; the rest of $\Sigma$ and the arcs lie behind $P$. Right: after forgetting $\Sigma$, colored arcs can be thought of as tangles with endpoints in $P$. According to \autoref{prop:example_embeddings_in_R3xR}, this is the spine for a 32-bridge 4-section of $T^3$.}
    \label{fig:Example_T3_1}
\end{figure}

\begin{figure}[ht]
    \centering
    \includegraphics[width=.5\textwidth]{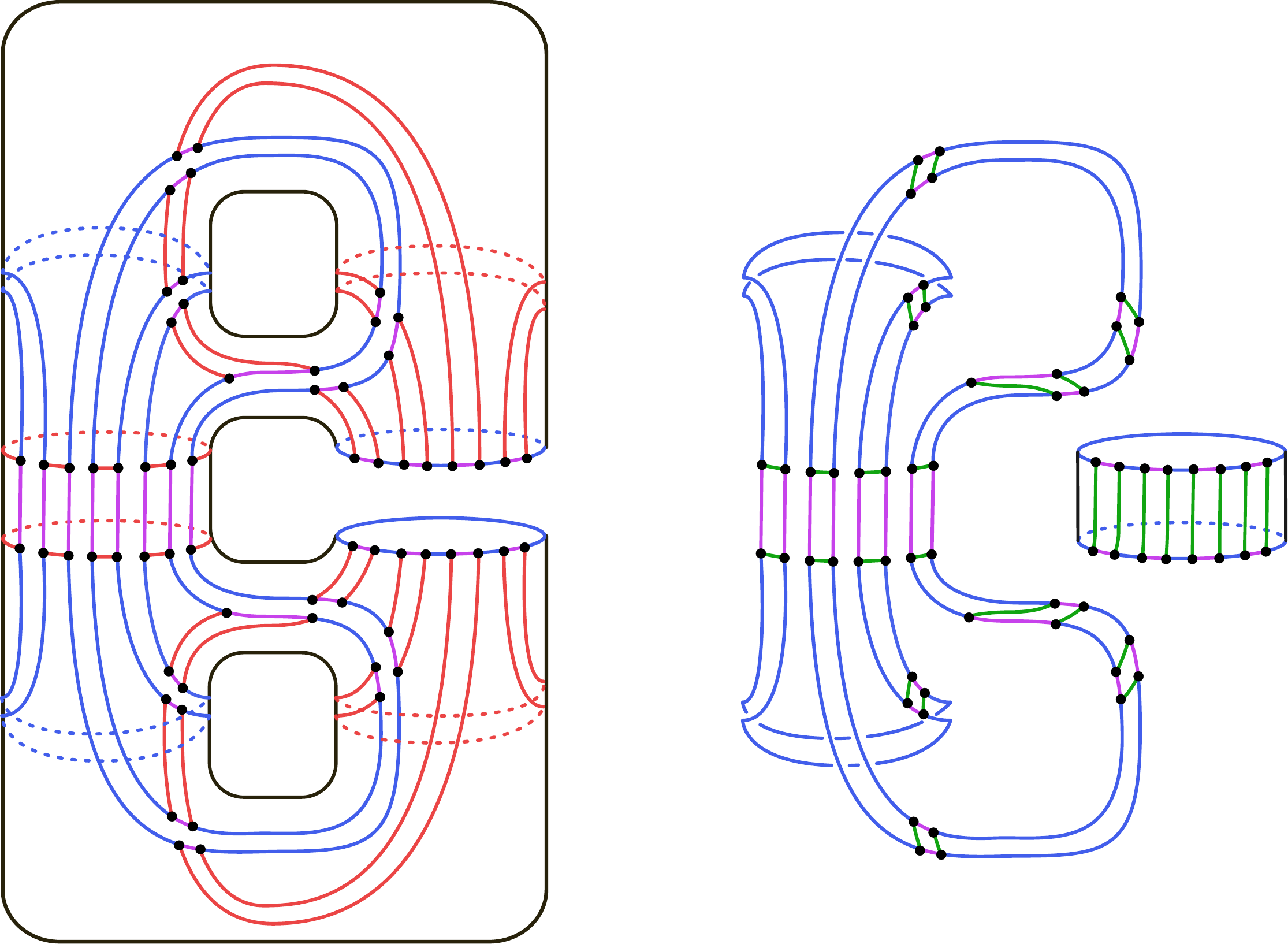}
    \caption{Consider the tangles $(\textcolor{red}{T_1},\textcolor{blue}{T_2},\textcolor{ForestGreen}{T_3},\textcolor{purple}{T_4})$ from \autoref{fig:Example_T3_1}. The triplets $(\textcolor{red}{T_1},\textcolor{blue}{T_2},\textcolor{purple}{T_4})$ (shown in the left panel) and $(\textcolor{blue}{T_2},\textcolor{ForestGreen}{T_3},\textcolor{purple}{T_4})$ (shown in the right panel) are subsets of surfaces embedded in $\R^3$. The left surface is equal to $\Sigma$ compressed along the disk set $\Da$, and the right surface is equal to the boundary of a tubular neighborhood of $\Da$.}
    \label{fig:Example_T3_2}
\end{figure}

\begin{proposition}\label{prop:example_embeddings_in_R3xR}
The tuple $\Tcal=(T_1,T_2,T_3,T_4)$ is the spine of a bridge $4$-section of $Y\subset S^5$.
\end{proposition}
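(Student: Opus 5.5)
The plan is to reduce the statement to \autoref{prop:quadrisection_from_bridge_position}. We will show that the doubled Heegaard complex, with the colored arcs, is a Heegaard complex in bridge position in the sense of \autoref{def:heegaard_complex_bridge_position}, and that its spine is exactly $\Tcal$. Alternatively, we can apply \autoref{lem:spine_determines_3man} directly: check that every triple $(T_i,T_j,T_k)$ is a triplane diagram of an unlink of 2-spheres, and identify the resulting 3-manifold via \autoref{prop:Heegaard_splitting_from_4section}. We pursue the first route and use the second as a consistency check.

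\textbf{Step 1: $\Tcal$ is a bridge $4$-section of $\Sigma$.} The plane $P$ meets $\Sigma$ exactly in the points $\alpha'\cap\beta'$, so we isotope $\Sigma$ to lie just behind $P$. The complementary regions of the colored graph $\alpha'\cup\beta'$ in $\Sigma$ are of two kinds: the small $4$-gons at former intersection points, and the disks of $\Sigma\setminus(\alpha\cup\beta)$, now bounded by red/blue arcs. A bigon between $\alpha'$ and its parallel copy in $\alpha'$ is bounded by red and green arcs, and similarly blue/purple for $\beta'$; the small $4$-gons are bounded by green/purple arcs. Hence every region is a polygon bounded by arcs of two consecutive colors, and $\Gamma=T_1\cup\dots\cup T_4$ is an abstract $4$-section of $\Sigma$. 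Each region is a disk, so pushing it into the half-space behind $P$ and slicing by parallel planes puts these disk families into the sectors $X_i$ as trivial disk systems. Consequently each $T_i\cup\T_{i+1}$ bounds disjoint disks in the half-space, the $T_i$ are trivial tangles in balls bounded by $P$, and $\Tcal$ is a bridge $4$-section of $\Sigma\subset\R^3\times\R$.

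\textbf{Step 2: bridge position of the doubled complex.} Each curve of $\alpha$ bounds a disk of $\Da$ in $\R^3$, so both parallel copies in $\alpha'$ bound parallel disks $\Da'$. Doubling disks does not change the embedding (\autoref{rem:heegaard_disks}), and by construction $T_1\cup\T_3=\alpha'=\partial\Da'$, and similarly $T_2\cup\T_4=\partial\Db'$. We must also realize $\Da'$ inside the $3$-sphere $B_1\cup\overline{B}_3$. Since all the data lives in one $\R^3$, we view $B_1\cup\overline B_3$ and $B_2\cup\overline B_4$ as two copies of this $\R^3\cup\{\infty\}$ obtained by the different gluings of half-spaces across $P$. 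In $B_1\cup\overline B_3$ the link $\alpha'$ is isotopic to its original position on $\Sigma$ and still bounds the disks $\Da'$, and likewise for $\beta'$. \autoref{prop:quadrisection_from_bridge_position} then yields a bridge quadrisection of $Y$ with spine $\Tcal$.

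\textbf{Step 3: the main obstacle.} The delicate point is the isotopy in Step 2: we must check that pulling the red and green arcs apart from $\Sigma$ into two different balls does not knot $\alpha'$ relative to $\Da'$. Concretely, the disks of $\Da'$ must miss $\Sigma$ away from their boundaries, which holds because they are compressing disks in $\R^3$. As a sanity check via \autoref{lem:spine_determines_3man}, we verify the triples as in \autoref{fig:Example_T3_2}:
\begin{itemize}
\item $(T_1,T_2,T_4)$ traces $\Sigma$ compressed along $\Da$, which is an unlink of spheres by condition (iii) of \autoref{def:heegaard_complex};
\item $(T_2,T_3,T_4)$ traces $\partial\eta(\Da)$, a collection of unknotted spheres in $\R^3$;
\item the remaining two triples are the symmetric ones with $\Db$ and $\partial\eta(\Db)$.
\end{itemize}
We expect matching these pictures precisely, and in particular keeping track of which bigons belong to which surface, to be the main bookkeeping difficulty.
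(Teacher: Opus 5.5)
Your route differs from the paper's, and it is a reasonable alternative. The paper never places the disk systems inside the cross-sectional $3$-spheres. Instead it checks the hypotheses of \autoref{lem:spine_determines_3man} directly: consecutive pairs bound the complementary regions of $\alpha'\cup\beta'$, and $T_1\cup\T_3$, $T_2\cup\T_4$ are $\partial\widetilde{D}_\alpha$, $\partial\widetilde{D}_\beta$. Each triple is then identified as in \autoref{fig:Example_T3_2}: $(T_1,T_2,T_4)$ gives $\Sigma|\Db$, $(T_2,T_3,T_4)$ gives $\partial\eta(\Db)$, their union gives $\Sigma|\widetilde{D}_\beta$, and symmetrically for $\alpha$. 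You instead verify \autoref{def:heegaard_complex_bridge_position} for the doubled complex and invoke \autoref{prop:quadrisection_from_bridge_position}. This makes the triple check an output rather than an input, and ties the resulting $3$-manifold to $Y$ more directly. The cost is that you must actually justify $\widetilde{D}_\alpha\subset B_1\cup\overline{B}_3$ and $\widetilde{D}_\beta\subset B_2\cup\overline{B}_4$. You should also note, from the proof of that proposition since its statement does not say so, that the quadrisection it builds has spine equal to the $4$-plane diagram of $\Sigma$ in the level $S^5_0$, i.e.\ $\Tcal$.

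Several specifics are wrong.

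First, the strip colors in Step 1. A component of $\eta(\alpha)\setminus\eta(\beta)$ is a rectangle with red long sides and purple short sides, because the $\beta'$-arcs crossing it are sides of the small squares. A component of $\eta(\beta)\setminus\eta(\alpha)$ is blue/green. Your red/green and blue/purple are precisely the non-consecutive pairs, so Step 1 as written contradicts its own conclusion.

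Second, the triples in Step 3 are swapped. The non-consecutive pair in $(T_1,T_2,T_4)$ is $(2,4)$, whose union is $\beta'$, so this triple gives $\Sigma|\Db$, and $(T_2,T_3,T_4)$ gives $\partial\eta(\Db)$.

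Third, the $T_i$ need not be trivial tangles (see the remark following the proposition). You may only claim that consecutive unions are unlinks, and you must work in the relaxed sense of \autoref{remark:0-sector_warning}.

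Fourth, the real reason $T_1\cup\T_3$ is isotopic to $\alpha'$ in its original position, and so still bounds $\widetilde{D}_\alpha$ in $B_1\cup\overline{B}_3$, is that the green arcs are short sides of the small squares touching $P$. Mirroring them across $P$ is therefore a small isotopy, and likewise purple for $\beta'$. That $\widetilde{D}_\alpha$ misses $\Sigma$ is not the relevant issue.

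Finally, ``slicing by parallel planes'' does not show that the sector pieces are trivial disk systems. The relevant link is $T_i\cup\T_{i+1}$, with $T_{i+1}$ mirrored across $P$. The regions of $\Sigma$ only exhibit disks bounded by $T_i\cup T_{i+1}$ with both tangles on the same side of $P$. The short-arc observation above handles every pair involving green or purple, but not $T_1\cup\T_2$. For instance, a red--blue bigon that is a thin band along an arc knotted relative to $P$ yields $K\#\overline{K}$. The paper also asserts the pairwise unlink condition ``by construction,'' so in either approach this depends on performing the dragging step sensibly. Still, your write-up should not present it as automatic.
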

\begin{proof}
Let $(\Sigma;\widetilde{D}_\alpha, \widetilde{D}_\beta)$ be the Heegaard complex of $Y$ obtained by doubling each disk; i.e., $\widetilde{ \mathcal{D}}_* = \mathcal{D}_*\sqcup \mathcal{D}_*$. 
By construction, unions of consecutive tangles bound disk components of $\Sigma\setminus (\alpha'\cup \beta')$. The union of non-consecutive pairs, (red, green) and (blue, purple), is equal to the boundaries of the disks $\widetilde{D}_\alpha$ and $\widetilde{D}_\beta$, respectively. Hence, each pair of tangles glues up to unlinks.

To end, we need to check that each triplet of tangles represents a sublink unlink of 2-spheres obtained by compressing $\Sigma$ along $\widetilde{D}_\alpha$ or $\widetilde{D}_\beta$. This is shown in \autoref{fig:Example_T3_2} for the 3-torus, where we observe that $(T_1,T_2,T_4)$ (no green) is a triplane for the surface $\Sigma|\Db$, and $(T_2, T_3, T_4)$ (no red) describes the boundary of a neighborhood of $\Db$. Thus, the disjoint union of the surfaces described by $(T_1,T_2,T_4)$ and $(T_2, T_3, T_4)$ are isotopic to $\Sigma|\widetilde{D}_\beta$. The same holds for the union of $(T_1,T_2,T_3)$ (no purple), $(T_1, T_3, T_4)$ (no blue), and $\Sigma|\widetilde{D}_\alpha$. Thus, the result follows from \autoref{lem:spine_determines_3man}.
\end{proof}

\begin{remark}
    The cautious reader may notice that the tangles built in this subsection may not always be trivial tangles. That said, as explained in \autoref{remark:0-sector_warning}, the tuple still describes a bridge 4-sected Heegaard surface in $S^4$ and 3-manifold in $S^5$. This issue can be solved by performing enough 0-sector perturbations as explained in that remark. The tangles in \autoref{fig:Example_T3_1} are trivial; thus, the bridge number of the 3-torus in $S^4\subset S^5$ is at most 32. 
\end{remark}

\section{Examples of knotted embeddings of 3-manifolds}\label{sec:knotted_examples}

To quadrisect more interesting 3-manifold embeddings, we first discuss how to construct Heegaard complexes for families of 3-knots. 
This is done in \autoref{subsec:doubles} and \autoref{subsec:spuns} for the double of ribbon handlebodies and $S^2$-spun knots, respectively. 
Before getting into the examples, in \autoref{subsec:tricks} we will present lemmas that serve as ``tricks'' that are helpful for isotoping Heegaard complexes into bridge position in practice.

\subsection{Trick lemmas}\label{subsec:tricks}

Recall the notation from \autoref{def:multisections_surfaces} for the pieces in a bridge 4-section of $(S^4,\Sigma)=\bigcup_{i=1}^4 (X_i,D_i)$. For $i\in \Z_4$, denote $X_{i,i+1,i+2}=X_{i}\cup_{B_{i+1}} X_{i+1}$, with $\partial X_{i,i+1,i+2}= B_i\cup \overline{B}_{i+2}$.

\begin{lemma}\label{lem:extension_lem_HC}
Let $(\Sigma;D_\alpha, \Db)$ be a Heegaard complex for $Y^3\subset S^5$. Let $\Tcal$ be a $4$-plane diagram of $\Sigma$ with 
\begin{enumerate}
    \item $\Da\subset B_1\cup \overline{B}_3$ and $\partial \Da \subset T_1\cup \T_3$, and
    \item $\Db\subset B_2\cup \overline{B}_4$ and $\partial \Db \subset T_2\cup \T_4$.
\end{enumerate}
Suppose that $(T_i, T_j, T_k)$ is a triplane diagram for an unlink of $2$-spheres for all $\{i,j,k\}\subset\{1,2,3,4\}$. Then $\Tcal$ is the spine of a bridge $4$-section of $Y^3\subset S^5$.
\end{lemma}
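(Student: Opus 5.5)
The plan is to combine \autoref{lem:spine_determines_3man} with the construction in \autoref{prop:quadrisection_from_bridge_position}. The hypothesis that every triple $(T_i,T_j,T_k)$ is a triplane diagram of an unlink of $2$-spheres is exactly the input of \autoref{lem:spine_determines_3man}. That lemma therefore gives an embedded $3$-manifold $Y'\subset S^5$, unique up to isotopy, which is in bridge quadrisected position with spine $\Tcal$. It remains to show that $Y'$ is isotopic to $Y=Y(\Sigma;\Da,\Db)$. For this, I will run the argument of \autoref{prop:quadrisection_from_bridge_position} on the given data and check that it produces a bridge quadrisected manifold with spine $\Tcal$. Uniqueness in \autoref{lem:spine_determines_3man} then identifies it with $Y'$.

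The first step is to enlarge the disk systems so that the boundary conditions become equalities, as in \autoref{def:heegaard_complex_bridge_position}. The hypothesis only gives $\partial\Da\subset T_1\cup\T_3$. Every component $L$ of $T_1\cup\T_3$ not already in $\partial\Da$ is a component of an unlink in $B_1\cup\overline{B}_3$. Moreover, $\Da$ is a disjoint disk family in that $3$-sphere. Hence I add to $\Da$ parallel copies of disks, or small new disks, bounded by these extra components and disjoint from $\Da$. Since $T_1\cup\T_3$ is an unlink, such disks exist in the complement of $\Da$. A pushoff argument makes them framed, because surface framing and disk framing agree for curves bounding disks in the $3$-sphere slice.

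The main obstacle is to show that adding these disks does not change the isotopy class of the realizing manifold. The approach is to use the freedom in \autoref{rem:heegaard_disks} and the moves of \autoref{thm:canceling_handles_HC}. An extra curve either bounds a disk in $\Sigma$, or is parallel in $\Sigma$ to a curve of $\partial\Da$ or to a combination of them. In the first case, adding its disk is a 2/3 cancelling pair. Otherwise, it is a handle slide followed by a parallel copy. I need the compressed surface $\Sigma|\Da'$ to remain an unlink of spheres. This follows from the hypothesis itself: $\Sigma|\Da'$ is described by the triplanes $(T_1,T_2,T_3)$ and $(T_1,T_3,T_4)$, so the condition on triples gives (iii) of \autoref{def:heegaard_complex}. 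The same treatment applies to $\Db$. To conclude that the embedding is unchanged, I would argue directly that the handlebody $H_\alpha$ built from $\Da'$ differs from the one built from $\Da$ by cancelling 2/3 pairs. The reason is that each extra disk cobounds, with $\Sigma$ and the old disks, a $3$-ball inside the $3$-sphere $B_1\cup\overline{B}_3$, and this ball lifts to a 3-cell in the collar.

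Once $\partial\Da=T_1\cup\T_3$ and $\partial\Db=T_2\cup\T_4$ hold with the disks embedded in the respective $3$-sphere slices, $(\Sigma;\Da,\Db)$ is in bridge position in the sense of \autoref{def:heegaard_complex_bridge_position}. \autoref{prop:quadrisection_from_bridge_position} then yields a bridge quadrisection of $Y$ whose triple intersections are the tangles $T_\ell$, so its spine is $\Tcal$. Finally, \autoref{lem:spine_determines_3man} identifies this bridge quadrisection with the one determined by $\Tcal$, completing the argument.
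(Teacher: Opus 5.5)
There is a genuine gap at the step you yourself call the main obstacle. Nothing in the proposal shows that enlarging $\Da$ to $\Da'$ with $\partial\Da'=T_1\cup\T_3$ leaves the realizing $3$-manifold unchanged.

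Classifying the extra curves abstractly on $\Sigma$ (bounding a disk, or a band sum of curves of $\partial\Da$) does not give you moves of \autoref{def:canceling_handles}, because those moves concern \emph{embedded} disks. Adding a disk $B$ with $\partial B=\partial E$, $E\subset\Sigma$, is a 2/3 stabilization only if the sphere $E\cup B$ is unknotted and split from the rest of the complex. Likewise, a disk whose boundary is parallel to some $\partial D$, $D\in\Da$, need not be isotopic in the complement of $\Sigma$ to a parallel copy of $D$.

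Your ``direct'' justification is false as stated. Since $\Sigma\cap(B_1\cup\overline{B}_3)=T_1\cup\T_3$ is one-dimensional and the disks are pairwise disjoint, no $3$-ball in that cross-section is cobounded by an extra disk, old disks and $\Sigma$. The sphere that actually matters, $E\cup B$, has $E$ lying off the cross-section, in $X_{123}$ or $X_{341}$. Finally, checking condition (iii) for the enlarged system only shows that $(\Sigma;\Da',\Db')$ is \emph{some} Heegaard complex, not one for $Y$.

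The missing idea, which is the paper's argument, is to use the triple hypothesis at every stage through an innermost-disk argument.
\begin{enumerate}
\item Write $\Sigma\mid\Da'=\Sigma_{123}\sqcup\Sigma_{341}$. This is split by the cross-section and is an unlink by hypothesis.
\item The curves $\partial(\Da'\setminus\Da)$ lie on the spheres $\Sigma\mid\Da$. So some added disk $B$ has boundary bounding a disk $E\subset\Sigma\mid\Da$ whose interior misses the other added disks.
\item Then $E\cup B$ is a component of the unlink $\Sigma\mid\Da'$. Compressing along $B$ therefore splits off an unknotted, unlinked sphere, so adding $B$ is a 2/3-canceling pair and $(\Sigma;\Da\cup\{B\},\Db)$ is still a Heegaard complex for $Y$.
\item Iterate until $\Da=\Da'$.
\end{enumerate}

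Alternatively, you can make your collar idea precise.
\begin{enumerate}
\item $\Sigma\mid\Da$ is $\Sigma_{123}\sqcup\Sigma_{341}$ tubed along the short arcs dual to the added disks, each crossing $B_1\cup\overline{B}_3$ once.
\item Choose filling balls for $\Sigma_{123}$ and $\Sigma_{341}$ inside $X_{123}$ and $X_{341}$, away from a collar of the cross-section.
\item These balls together with the $1$-handles form a boundary-parallel filling of $\Sigma\mid\Da$.
\item \autoref{lem:uniqueness_disk_fillings} then identifies the handlebodies built from $\Da$ and $\Da'$.
\end{enumerate}

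The rest of your outline is fine: the existence and framing of the extra disks, and the final appeal to \autoref{prop:quadrisection_from_bridge_position}. The appeals to \autoref{lem:spine_determines_3man} are harmless but unnecessary.
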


\begin{proof}
Let $\Sigma_{ijk}\subset S^4$ be the unlink of 2-spheres described by the triplane $(T_i,T_j,T_k)$. Let $D_{13}\subset B_1\cup \overline{B}_3$ be a collection of embedded disks satisfying $\partial D_{13}=T_1\cup \T_3$ and $\Da\subset D_{13}$. Such a set of disks exists since $(T_1,T_2,T_3)$ is a triplane diagram. In fact, since the 2-cells in a bridge trisected surface are boundary-parallel disks, we can assume that $D_{13}$ is a subset of $\Sigma_{123}$. For the same reason, $D_{13}\subset \Sigma_{341}$ and $\Sigma_{341}$ are obtained by gluing $D_{13}$ to $\Sigma\cap X_{341}$. Although this is not a transverse intersection for $\Sigma_{123}$, $\Sigma_{341}$ and $\Sigma$, this shows that compressing $\Sigma$ along $D_{13}$ yields the distant sum of $\Sigma_{123}\subset X_{123}$ and $\Sigma_{341}\subset X_{341}$; i.e., $\Sigma\mid D_{13}=\Sigma_{123} \sqcup \Sigma_{341}$ is an unlink of 2-spheres. 

Suppose that $\Da\neq D_{13}$ and let $D'_{13}=D_{13}-\Da$. Note that $\partial D'_{13}$ is a collection of simple closed curves embedded in the 2-spheres $\Sigma\mid \Da$. So there exist disks $B\in D'_{13}$ and $E\subset \Sigma\mid \Da$ with $\partial B=\partial E$ and $E$ having interior disjoint from $D'_{13}$. In particular, compressing $\Sigma \mid \Da$ along $B$ will create a 2-sphere component $S$ that is isotopic to a component of $\Sigma \mid \Da \mid D'_{13}= \Sigma \mid D_{13}$. Since we know that $\Sigma\mid D_{13}$ is an unlink of 2-spheres, $S$ must be unknotted and $\Sigma\mid (\Da\cup \{B\})$ is an unlink of 2-spheres. This shows that $\left(\Sigma; \Da\cup \{B\}, \Db\right)$ is also a Heegaard complex for $Y^3$; in fact, adding $B$ to $\Da$ corresponds to adding a 2/3-canceling pair of handles to the Heegaard complex as in \autoref{def:canceling_handles} . 

Thus, we have shown that we can add some disks of $D'_{13}$ to $\Da$ while preserving the condition of being a Heegaard complex for $Y_3$. After finite iterations of this argument, we can conclude that $\Da=D_{13}$. The same argument gives us that $\partial \Db=T_2\cup \T_4$; finishing the proof of this lemma. 
\end{proof}

\begin{remark}[Moving disks around]\label{rem:bubling_disks}
Let $\Tcal$ be a 4-plane diagram for a surface $\Sigma\subset S^4$ and $D$ a compressing disk for $\Sigma$. Suppose that $D$ is embedded in the spine of the genus-zero 4-section of $S^4$ and the boundary of $D$ is a subset of the spine of $\Tcal$; i.e., $D\subset \bigcup_{i=1}^4 B_i$ and $\partial D\subset \bigcup_{i=1}^4 T_i$. In each 3-ball $B_i$, $D\cap B_i$ is the union of polygonal disks with boundary the union of arcs alternating between strands in $T_i$ and arcs in $\partial B_i$ as in \autoref{fig:double_ribbon_lemma_1} and \autoref{fig:double_ribbon_lemma_2}. The subdisks that are bigons 
correspond to \emph{bridge disks} for strands in $T_i$ like in \autoref{fig:double_ribbon_lemma_1} (B). 

We can exploit the bridge 4-section of $\Sigma$ to move subdisks of $D\cap T_i$ around. \autoref{fig:bubling_disks} shows an isotopy of a bigon subdisk $E$ in $D\cap T_i$ near the boundary parallel disks of $F\cap X_{i+1}$ that replaces $E$ with other subdisks inside $B_i\cup B_{i+1}$. At the level of the abstract 4-section of $\Sigma$, such an isotopy of $D$ corresponds to an isotopy of $\partial D$ through a 2-cell of $\Sigma$. The technical condition needed for such an isotopy to exist is for the bigon in $D\cap B_i$ to be contained in a disk bounded by $T_i\cup \T_{i+1}$~\cite[Theorem 1.1]{Lee_Bridgedisks}.
\end{remark}

\begin{figure}[ht]
    \centering
    \includegraphics[width=.95\textwidth]{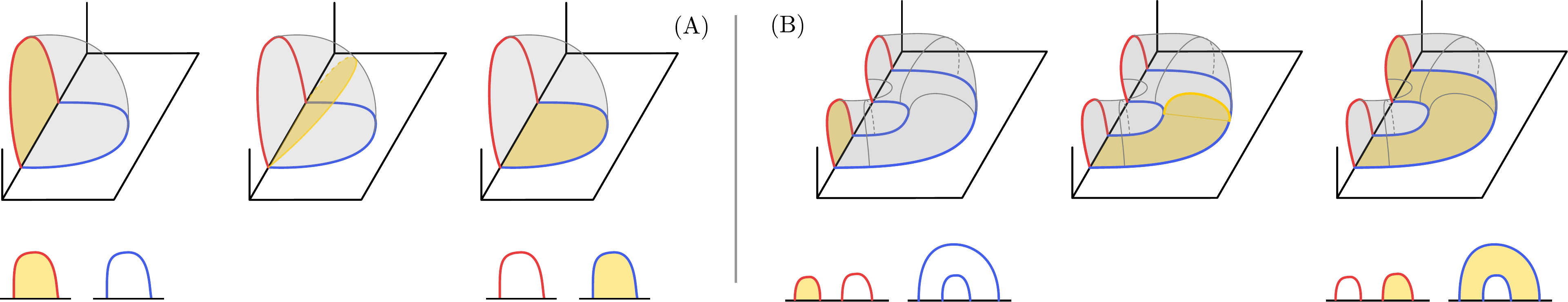}
    \caption{Top: each panel is a local picture of a boundary parallel disk of $F\cap X_{i+1}$ near the boundary $\partial X_{i+1}$. Sequences (A) and (B) depict an isotopy of the yellow surface in $S^4$ obtained by sliding the boundary of the surface through the trivial disk $F\cap X_{i+1}$. Bottom: the effect of this isotopy is to trade a bridge disk for the red tangle with a combination of spanning surfaces for the red and blue tangles.}
    \label{fig:bubling_disks}
\end{figure}

Suppose that some $\Da$ disks in a Heegaard surface are subsets of the 3-sphere cross-section $B_2\cup \overline{B}_4$. The following lemma will enable us also to see the disks in $D_\alpha$ inside the 3-sphere cross-section $B_1\cup \overline{B}_3$. To ease the presentation, we write (and use) the statement only for the case where the boundary of such a disk is a 1-bridge unknot. We leave it to the reader to write down the more general statement. 
    
\begin{lemma}\label{lem:moving_disk}
Let $\Tcal$ be a $4$-plane diagram for $F$. Assume that $T_2\cup \T_4$ contains a $1$-bridge unknot $\ell_{24}$ bounding a disk that bounds a disk $D\subset B_2\cup \overline{B}_4$.
Then, after perturbations of $\Tcal$ as in \autoref{fig:double_ribbon_lemma_1} (B)-(E), the cross-sections of the resulting $4$-plane diagram $\Tcal'$ have the properties that
\begin{enumerate}
\item as curves in $F$, $\ell_{24}$ is isotopic to components $\ell'_{13}\subset T'_1\cup \overline{T'_3}$ and $\ell'_{24}\subset T'_2\cup \overline{T'_4}$,
\item $D$ is isotopic in $S^4-F$ to a disk $D'_{24}\subset B_2\cup \overline{B}_4$ bounded by $\ell'_{24}$, and 
\item $D$ is isotopic in $S^4-F$ to a disk $D'_{13}\subset B_1\cup \overline{B}_3$ bounded by $\ell'_{13}$. 
\end{enumerate}
Furthermore, if $\Tcal$ is obtained by tubing a $4$-plane diagram $\Tcal_0$ 
with meridian equal to $D$, then $\Tcal'$ is obtained by tubing a $4$-plane diagram $\Tcal'_0$ with meridian $D'_{13}$, where $\Tcal'_0$ is obtained by 0-perturbations of $\Tcal_0$.
\end{lemma}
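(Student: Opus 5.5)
We will prove the lemma by an explicit local construction near the $1$-bridge unknot $\ell_{24}$, using the perturbations of \autoref{lem:sector-perturbation} and then verifying the three properties directly from pictures. Since $\ell_{24}$ is a $1$-bridge unknot in $B_2\cup\overline{B}_4$, it consists of one strand $s_2\subset T_2$ and one strand $s_4\subset T_4$ sharing two punctures $p,q$ of the central sphere. The disk $D$ is the union of two bridge disks $\Delta_2\subset B_2$ and $\Delta_4\subset B_4$ for $s_2$ and $s_4$, glued along an arc $\delta\subset\partial B_2$ joining $p$ and $q$. After an isotopy of $D$ rel.\ boundary inside $B_2\cup\overline{B}_4$ (and interior Reidemeister and mutual braid moves), we may assume $\delta$ is a short arc and $\Delta_2,\Delta_4$ are small, so that everything happens in a neighborhood of $\delta$ in the bridge sphere. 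Let $a$ and $c$ be the strands of $T_1$ and $T_3$ incident to $p$ (or $q$).

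Step one is to perform the perturbations of panels (B)--(E) of \autoref{fig:double_ribbon_lemma_1}. Concretely, we do a $123$-perturbation followed by a $341$-perturbation, with bands $\rho_1,\rho_3$ chosen parallel to $\delta$ and close to the punctures. This closeness is what makes the sphere condition of \autoref{lem:sector-perturbation} visibly satisfied, via the model in \autoref{fig:sphere_condition}. By \autoref{lem:sector-perturbation} the surface $F$ is unchanged. At the abstract level, \autoref{lem:arc_surgery_changes_Gamma_curves} identifies the effect on the curves. The first perturbation does an arc surgery on $\Gamma_{13}$ along an arc parallel to $\delta$, splitting off a new component $\ell'_{13}$ that runs parallel to $\ell_{24}$ on $F$. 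It leaves $\Gamma_{24}$ unchanged up to isotopy, so a component $\ell'_{24}$ isotopic to $\ell_{24}$ persists. The second perturbation cleans up the remaining local configuration so that $\ell'_{13}$ and $\ell'_{24}$ are again $1$-bridge unknots. This gives (1).

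Step two handles (2) and (3). For (2), track $D$ through the perturbation: since the bands avoid the interiors of $\Delta_2,\Delta_4$, apart from dragging near $p$, the disk $D$ is carried to a disk $D'_{24}$, which is a union of bridge disks for the new strands of $T'_2,T'_4$. For (3), use the move of \autoref{rem:bubling_disks}. We slide the bigon $\Delta_2$ through the trivial disk of $F\cap X_{2}$ (or $X_3$) bounded by $T'_1\cup\T'_2$ (respectively $T'_2\cup \T'_3$), and similarly $\Delta_4$ through $F\cap X_4$ or $X_1$. This trades the bigons in $B_2,B_4$ for bridge disks of the new strands of $T'_1$ and $T'_3$, producing $D'_{13}\subset B_1\cup\overline{B}_3$ with boundary $\ell'_{13}$. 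These slides are isotopies in $S^4-F$ because they push across a collar of boundary-parallel disks of $F$. The hypothesis needed (the bigon lying in a disk bounded by consecutive tangles, \cite{Lee_Bridgedisks}) holds by construction, since after the perturbation $\ell'_{13}$ and $\ell'_{24}$ cobound, together with two arcs, a small square on $F$ that is a union of $2$-cells.

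For the final claim, note that if $D$ is a meridian of a tube of $\Tcal$ from $\Tcal_0$, all perturbations above are supported near $\delta$, hence near the tube. They can therefore be commuted past the tubing: they correspond to $0$-sector perturbations of $\Tcal_0$ near the tube's feet, as in the proof of \autoref{lem:tubing_4sec}, after which tubing yields $\Tcal'$ with meridian $D'_{13}$. The main obstacle is step two, in particular the bookkeeping that the isotopies of bigons stay disjoint from $F$ and from the other tangles. I expect to settle this by drawing the explicit local model and checking it, rather than by a general argument.
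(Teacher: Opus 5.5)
Your overall architecture is the paper's: perturb as in panels (B)--(E), read (1) off the abstract $4$-section, carry $D$ along for (2), and bubble the disk across $2$-cells of $F$ via \autoref{rem:bubling_disks} for (3). However, the argument for the final clause has a genuine gap.

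Commuting the perturbations past the original tubing would at best present $\Tcal'$ as a tubing along a guiding arc in $B_1\cup\overline{B}_3$. By \autoref{lem:tubing_4sec} (including the $0$-sector perturbations in its proof), the $1$-bridge meridian unknot created by such a tubing is a component of $T'_2\cup\overline{T'_4}$. The statement needs the opposite: a tubing whose created $1$-bridge unknot is $\ell'_{13}\subset T'_1\cup\overline{T'_3}$, i.e.\ a tubing along a new arc in $B_2\cup\overline{B}_4$ dual to $D'_{13}$. Moreover, the strands $s_2,s_4$ that your $123$- and $341$-perturbations drag are exactly the type~$0$ strands created by the original tubing, so these moves do not descend to perturbations of $\Tcal_0$ at all.

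What is missing is re-presenting the same $1$-handle with the roles of $13$ and $24$ swapped. You need to detube $\Tcal'$ along the tube with meridian $D'_{13}$, then check that the resulting $\Tcal'_0$ deperturbs to $\Tcal_0$ by $0$-sector moves. The paper does this with the explicit detube-and-deperturb sequence of \autoref{fig:double_ribbon_lemma_3}. That sequence is also what \autoref{prop:4section_double} reuses when the blue tangle is deleted.

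Your description of step one is also inaccurate, even though the end state needed for (1) is correct. Near $\ell_{24}$, the multicurve $\Gamma_{13}$ is two arcs crossing an annular neighborhood of $\ell_{24}$ at $p$ and $q$. By \autoref{lem:arc_surgery_changes_Gamma_curves}, each of your perturbations is one arc surgery on $\Gamma_{13}$ along an arc crossing $\ell_{24}$ once (at $s_2$, resp.\ $s_4$). A $13$-curve running parallel to $\ell_{24}$ must traverse both complementary rectangles of that annulus, so it needs one surgery arc in each. Hence the first perturbation does not split off $\ell'_{13}$; it appears only after both, built from one copy of each surgery arc, which is why it is $1$-bridge.

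Conversely, each perturbation breaks a strand of $\ell_{24}$ and inserts a new type~$0$ strand into it. So the $24$-curve isotopic to $\ell_{24}$ does not become $1$-bridge again; in the paper's model it is a $4$-bridge unknot. Consequently $D'_{24}$ is a polygon made of several bridge disks rather than $\Delta_2\cup\Delta_4$. Getting from $D'_{24}$ to $D'_{13}$ then requires sweeping across several $2$-cells of $F$, as in \autoref{fig:double_ribbon_lemma_2}, rather than just sliding two bigons.
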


\begin{proof}
Panels (B) to (E) of \autoref{fig:double_ribbon_lemma_1} contain a sequence of perturbations, which do not change the 4-sected surface, turning $\Tcal$ in panel (B) into $\Tcal'$ in panel (E). 
Let $\ell'_{24}\subset T'_2\cup \overline{T'_4}$ and $\ell'_{13}\subset T'_1\cup \overline{T'_3}$ be the loops in $F$ passing through the punctures 1-8 and 6-7 punctures of $\Tcal'$, respectively. The bottom row of the figure depicts the strands of the tangles in the 4-sections as a subset of an abstract copy of $F$; note that the subsurface of $F$ represented is an annular neighborhood of $\ell_{24}$. In particular, condition (1) holds. 
Throughout the perturbations from $\Tcal$ to $\Tcal'$, the disk $D \subset B_2\cup \overline{B}_4$ is isotoped into a disk $D'_{24}\subset B_2\cup \overline{B}_4$ shown in panel (E); see the shaded disks in \autoref{fig:double_ribbon_lemma_1}.
%
\begin{figure}[ht]
    \centering
    \includegraphics[width=.9\textwidth]{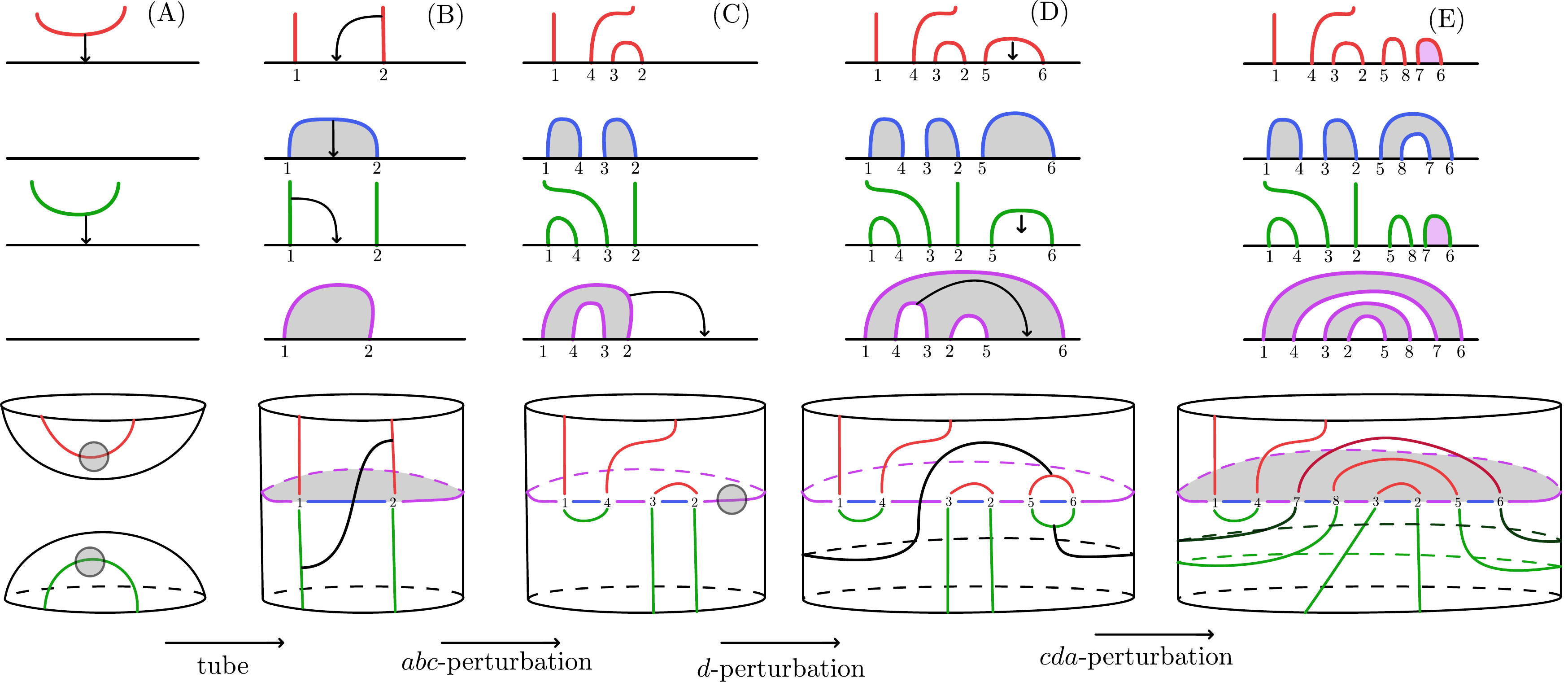}
    \caption{In (B)-(E): a sequence of perturbations of a bridge 4-sected surface in a neighborhood of a 1-bridge unknotted component of $\textcolor{blue}{T_2}\cup \textcolor{purple}{\T_4}$. The effect of this sequence, shown in panel (E), is a new 1-bridge unknotted component of $\textcolor{red}{T_1}\cup \textcolor{ForestGreen}{\T_3}$ that appears with the property that the disk it bounds is isotopic in $S^4$ to the 4-bridge unknotted component of $\textcolor{blue}{T_2}\cup \textcolor{purple}{\T_4}$. See \autoref{lem:moving_disk}. 1-bridge unknots as in (B) often appear as meridians of tubes shown in (A).}
    \label{fig:double_ribbon_lemma_1}
\end{figure}

For condition (2), note that in \autoref{fig:double_ribbon_lemma_2} we see a sequence of isotopies taking $\left(\ell'_{24}, D'_{24}\right)$ to $\left(\ell'_{13}, D'_{13}\right)$. Between each panel, we push the loop-disk pair through marked bicolored 2-cells of $F$; see \autoref{rem:bubling_disks}. 
The last part of the lemma follows from \autoref{fig:double_ribbon_lemma_3}, where $\Tcal'_0$ is the 4-section in panel (B) and $\Tcal_0$ is the 4-section in both panel (D) and \autoref{fig:double_ribbon_lemma_1} (A). 
\end{proof}
\begin{figure}[ht]
    \centering
    \includegraphics[width=.9\textwidth]{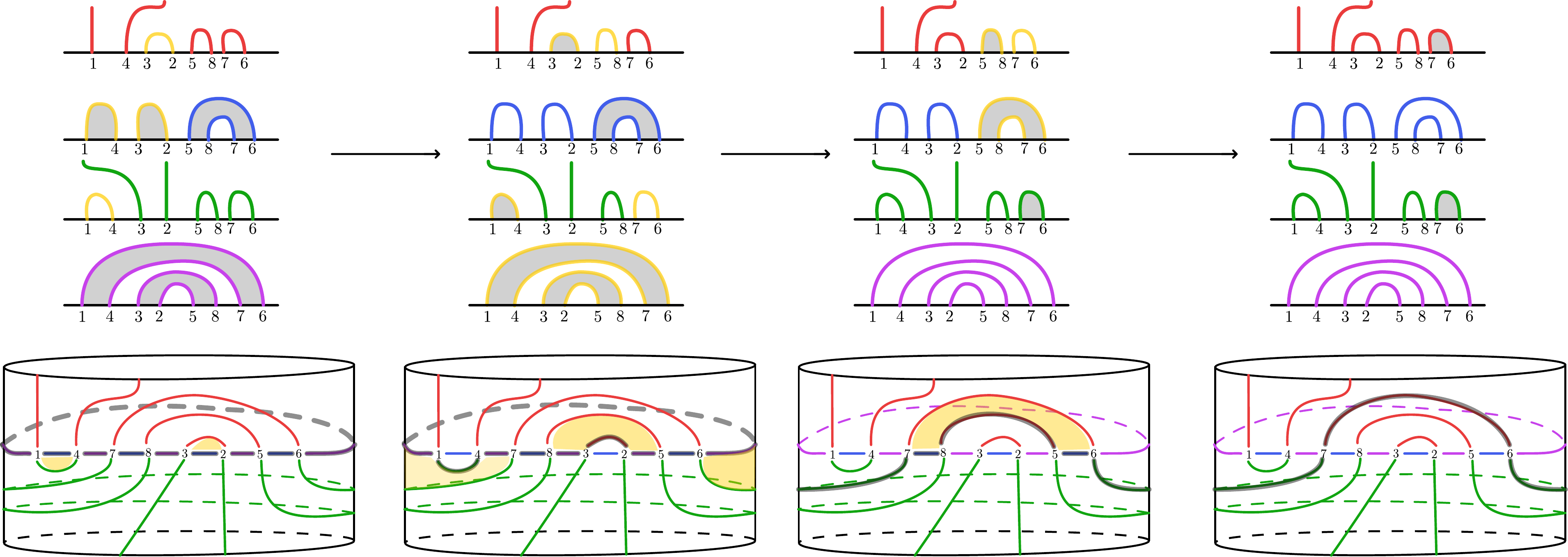}
    \caption{Sequence of isotopies of the gray shaded disk in the left panel using the modifications in \autoref{rem:bubling_disks} and \autoref{fig:bubling_disks}. The yellow shaded arcs in each panel bound subdisks of the bridge 4-sected surface $F$ we use to isotope our gray disk. The bottom row keeps track of boundary of the gray disk in an abstract 4-section of $F$. }
    \label{fig:double_ribbon_lemma_2}
\end{figure}
\begin{figure}[ht]
    \centering
    \includegraphics[width=.75\textwidth]{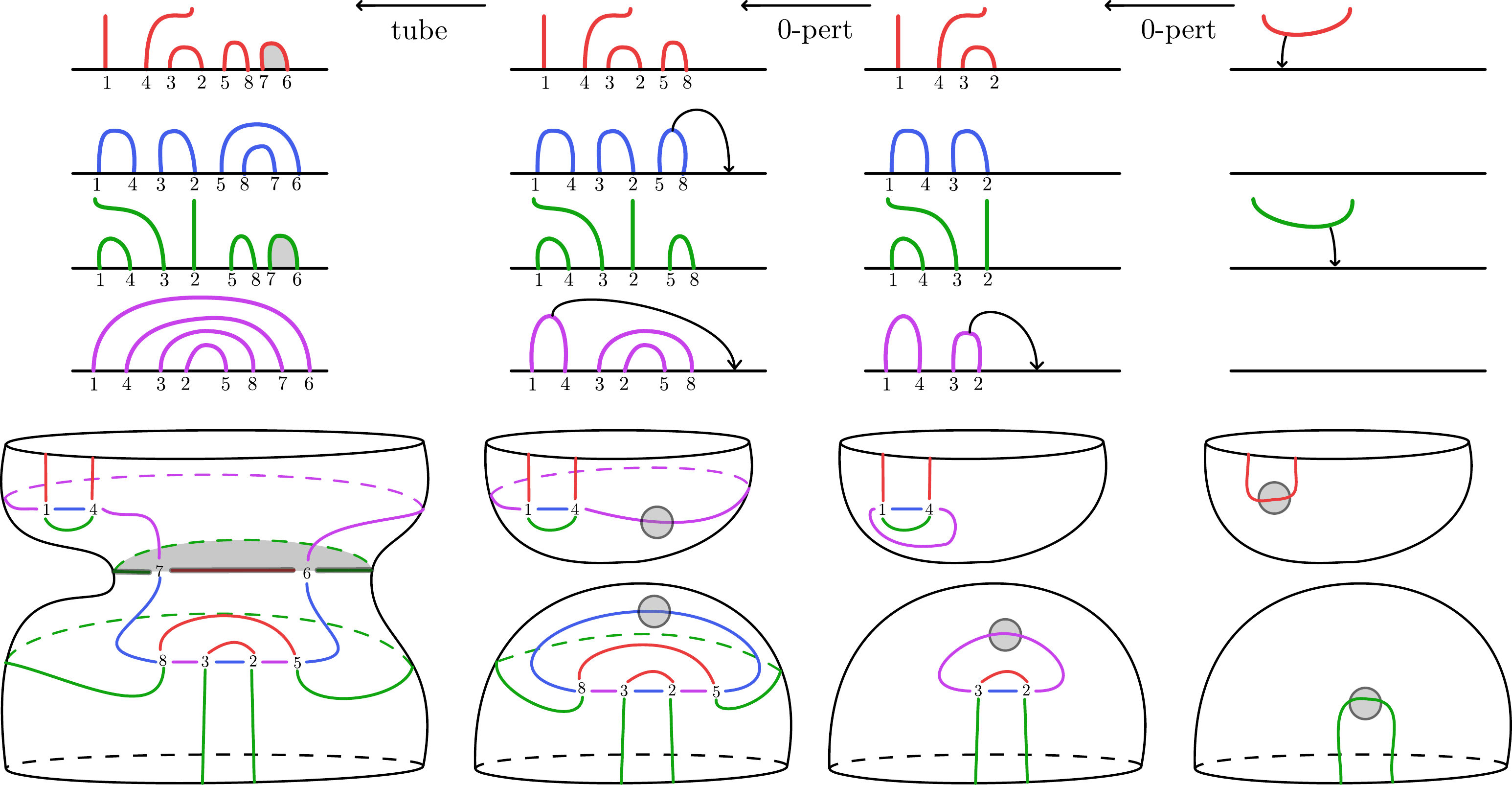}
    \caption{Alternative sequence of detubings and deperturbations transforming the 4-plane diagram in \autoref{fig:double_ribbon_lemma_1} (E) into that in \autoref{fig:double_ribbon_lemma_1} (A). Notice that the meridian of the tube used in this sequence is the pink-shaded disk. The bottom row keeps track of the abstract 4-section of $F$ throughout the sequence. }
    \label{fig:double_ribbon_lemma_3}
\end{figure}

\subsection{Doubles of ribbon \texorpdfstring{$3$}{3}-manifolds}\label{subsec:doubles}
Let $F$ be a connected ribbon surface in $S^4$. By definition, $F$ is built by adding finitely many tubes to an unlink of 2-spheres in $S^4$. In particular, $F$ bounds a 3-dimensional 1-handlebody in $B^5$ with the 2-spheres bounding the 0-handles and the tubes determining the 1-handles. Denote by $D_F$ the closed 3-manifold obtained by ``doubling'' the handlebody $H$ bounded by $F$, so that $D_F$ is an embedding of $\#_{g(F)} S^1\times S^2$ if $F$ is orientable and $\#_{1-\chi(F)} S^1\widetilde{\times} S^2$ if $F$ is non-orientable. 

To describe a bridge 4-section of $Y_F\subset S^5$, we first need to see a Heegaard complex of $Y_F$ ``inside'' a 4-plane diagram of $F$. To do this, one can consider an $m$-bridge 4-plane diagram in which all tangles are identical; such a diagram represents an $m$-component unlink of 2-spheres. 
The cores of the tubes that form $F$ can be isotoped to lie in the 3-sphere cross-section $B_1\cup \overline{B}_3$, where the link $T_1\cup \T_3$ lies. \autoref{fig:Kinoshita_Terasaka_2knot} (A) shows an example of this situation. 
We now modify the bridge quadrisection so the core of each tube intersects the bridge surface transversely at a single point; denote by $\Tcal_0$ the resulting 4-plane diagram for $\displaystyle \sqcup_{i=1}^m S^2$. This can be achieved by 0-sector perturbations along subarcs of the cores that shrink the tubes; see \autoref{fig:Kinoshita_Terasaka_2knot} (A)-(C). 
We now tube $\Tcal_0$ as in \autoref{sec:tubings}. According to \autoref{lem:tubing_4sec_with_bands}, the resulting 4-plane diagram $\Tcal_1$ describes $F$ and the new 1-bridge unknots in $T_2\cup \T_4$ bound a set of disks $\Da$ equal to meridians of the new tubes; see \autoref{fig:Kinoshita_Terasaka_2knot} (D) or the left panel of \autoref{fig:double_kinoshita_terasaka_disk}. In particular, since $Y_F$ is a doubled 3-manifold, the tuple $(F; \Da, \Db=\Da)$ is a Heegaard complex for $Y_F$. 

\begin{proposition}\label{prop:4section_double}
Let $F$ be a ribbon surface in $S^4$ and let $\Tcal_1$, $\Da$, be as above. For each $1$-bridge unknot in $\Da$, modify the $4$-plane diagram $\Tcal_1$ as in \autoref{fig:double_ribbon_lemma_1} (B)-(E). The resulting tangle, denoted by $\Tcal_2$, is the spine of a bridge $4$-section of $Y_F\subset S^5$. 
\end{proposition}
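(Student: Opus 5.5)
The plan is to show that $\Tcal_2$ satisfies the hypotheses of \autoref{lem:extension_lem_HC} for a suitable Heegaard complex of $Y_F$. Conditions (1) and (2) come from the two copies of the meridian disks produced by \autoref{lem:moving_disk}. The triplet condition comes from the ribbon structure of $F$.

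First, fix the Heegaard complex $(F;\Da,\Db)$ with $\Db=\Da$, where $\Da$ is the set of meridian disks of the tubes. Apply \autoref{lem:moving_disk} once for each $1$-bridge unknot $\ell_{24}\subset T_2\cup\T_4$ bounding a meridian. These modifications are perturbations, so by \autoref{lem:sector-perturbation} the $4$-plane diagram $\Tcal_2$ still represents $F$. For each meridian $D$, the lemma provides two disks isotopic to $D$ in $S^4-F$:
\begin{itemize}
\item $D'_{24}\subset B_2\cup\overline{B}_4$, with boundary a component of $T'_2\cup\overline{T'_4}$;
\item $D'_{13}\subset B_1\cup\overline{B}_3$, with boundary a component of $T'_1\cup\overline{T'_3}$.
\end{itemize}
Isotopies of compressing disks in the complement of $F$, with boundaries isotopic on $F$, are handle swims and isotopies of the Heegaard complex. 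By \autoref{thm:canceling_handles_HC}, $(F;\{D'_{13}\},\{D'_{24}\})$ is therefore a Heegaard complex for the same $Y_F$. Calling these $\Da$ and $\Db$ gives conditions (1) and (2) of \autoref{lem:extension_lem_HC}. The modifications are local near distinct meridians, so they can be performed one at a time without disturbing the disks already arranged.

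The main step is checking that every triple $(T_i,T_j,T_k)$ of $\Tcal_2$ is a triplane diagram for an unlink of $2$-spheres. I would use the last clause of \autoref{lem:moving_disk}: $\Tcal_2$ is obtained by tubing a diagram $\Tcal'_0$, where $\Tcal'_0$ comes from $0$-perturbations of $\Tcal_0$. Here $\Tcal_0$ is a diagram of the $m$-component unlink in which the tangles are nearly identical, up to $0$-sector perturbations along the tube cores.

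Consider a triple omitting, say, $T_4$. The surface it describes is obtained from $F$ by compressing along the disks bounded by $T_1\cup\T_3$ that lie in the omitted region, exactly as in the proof of \autoref{lem:extension_lem_HC}. Concretely, the surface given by the triple is recovered from the diagram $\Tcal'_0$ with the tubing arcs cut. Tubing and perturbations happen in the sectors adjacent to the omitted tangle, so deleting one tangle undoes the tubes: the triple describes the unlink of $\Tcal'_0$ together with some extra trivial spheres coming from the $1$-bridge unknot meridians.

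I would verify this case by case with the local pictures of \autoref{fig:double_ribbon_lemma_1} and \autoref{fig:double_ribbon_lemma_3}. Away from the tubes, every triple is a perturbation of the identical-tangle triplane, so it is an unlink. Near each tube, the local model shows explicitly that removing any one color leaves a crossingless local piece or a standard perturbation; an Euler characteristic count then confirms that the components are spheres. This local verification, which has to be done for all four omitted colors, is the expected obstacle.

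Once the triplet condition holds, \autoref{lem:extension_lem_HC} shows that $\Tcal_2$ is the spine of a bridge $4$-section of $Y_F$. By \autoref{lem:spine_determines_3man}, this determines the embedding $Y_F\subset S^5$.
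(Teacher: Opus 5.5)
Your outline is the paper's: \autoref{lem:moving_disk} places one copy of each meridian in $B_1\cup\overline{B}_3$ and one in $B_2\cup\overline{B}_4$. \autoref{lem:extension_lem_HC} then reduces everything to the triplet condition, and that condition is checked against $\Tcal_0$. The gap is in the mechanism you give for the triplet step.

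The tubing $\Tcal'_0\to\Tcal_2$ from the last clause of \autoref{lem:moving_disk} has its meridian bounded by a component of $T_1\cup\T_3$. So it is a band move of type~1 on $T_2$ and $T_4$, and type~0 on $T_1,T_3$. Deleting $T_2$ or $T_4$ turns it into a $0$-sector perturbation of the remaining triple, so those two cases work as you say. Deleting $T_1$ or $T_3$ does not. Then $T_2$ and $T_4$ become consecutive in the triple, and $T_4\cup\T_2$ is band-surgered with no sphere condition available. A priori the surface may change, and $T_4\cup\T_2$ need not even stay an unlink, so ``deleting one tangle undoes the tubes'' is unjustified there.

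Appealing to the proof of \autoref{lem:extension_lem_HC} does not help, because the splitting $\Sigma\mid D_{13}=\Sigma_{123}\sqcup\Sigma_{341}$ presupposes the triple is already a triplane diagram. Nor can a crossingless local piece plus an Euler characteristic count certify an unlink. What must be shown is that the whole triple deperturbs to the corresponding triple of $\Tcal_0$. That triple is an unlink because $\Tcal_0$ comes from $0$-perturbations of $(T,T,T,T)$, and $\Tcal_0$ is typically not crossingless.

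The missing idea, which your citation of both figures gestures at without using, is that the two descriptions of $\Tcal_2$ handle complementary cases. Deleting $T_\ell$ turns a band move of the $4$-plane diagram into a perturbation of the triple, possibly plus split $1$-bridge spheres, with the needed sphere conditions inherited. The one exception is when the surviving type~1 tangles include an opposite pair $T_j,T_{j+2}$.

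For deleting $T_1$ or $T_3$, use $\Tcal_0\to\Tcal_1$ (the $13$-tubing) followed by the perturbations (B)--(E). The tubing becomes a $0$-sector perturbation, and the $123$- and $341$-perturbations become $1$-sector perturbations. So \autoref{fig:double_ribbon_lemma_1} read backwards deperturbs the triple to that of $\Tcal_0$.

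That route fails when deleting $T_2$ or $T_4$, because the $13$-tubing and the $123$- and $341$-perturbations all keep both $T_1$ and $T_3$. There you use the $24$-tubing description instead. This is exactly how the paper splits the cases between \autoref{fig:double_ribbon_lemma_1} on one side and \autoref{fig:double_ribbon_lemma_3}, \autoref{fig:double_ribbon_deperts_no_purple} on the other.
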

\begin{proof}
By \autoref{lem:moving_disk}, the disk sets $\Da$ and $\Db$ can be seen as subsets of $B_1\cup \overline{B}_3$ and $B_2\cup \overline{B}_4$, respectively, with $\partial \Da\subset T_1\cup \T_3$ and $\partial \Db \subset T_2\cup \T_4$. By \autoref{lem:extension_lem_HC}, it remains to show that removing any tangle from $\Tcal_2$ yields a triplane diagram of an unlink of 2-spheres. 

For $r=0,1,2$, denote the tangles of $\Tcal_r$ by $(T^r_1,T^r_2, T^r_3, T^r_4)$. We color-code our figures in the order $(\text{red}, \text{blue}, \text{green}, \text{purple})$. Panels (A), (B), and (E) of \autoref{fig:double_ribbon_lemma_1} are local models for $\Tcal_0$, $\Tcal_1$, and $\Tcal_2$ near the disks in $\Da$, respectively. By construction, $\Tcal_0$ is obtained by 0-perturbations of a 4-plane of the form $(T,T,T,T)$. This implies that removing a tangle from $\Tcal_0$ yields a triplane diagram for an unlink of 2-spheres. 
If we remove the red tangle $T^r_1$ from $\Tcal_r$, \autoref{fig:double_ribbon_lemma_1}, read from right-to-left, becomes a sequence of deperturbations taking $(T^2_2, T^2_3, T^2_4)$ into $(T^0_2, T^0_3, T^0_4)$, which a diagram for the unlink of 2-spheres. The same argument holds if we remove the green tangle $T^2_3$ from $\Tcal_2$. 
If we remove the blue tangle $T^r_2$ from $\Tcal_r$, \autoref{fig:double_ribbon_lemma_3} becomes a sequence of
deperturbations from $(T^2_1, T^2_3, T^2_4)$ to $(T^0_1, T^0_3, T^0_4)$. \autoref{fig:double_ribbon_deperts_no_purple} shows sequences of deperturbations from $(T^2_1, T^2_2, T^2_3)$  to distant sum of the triplane $(T^0_1, T^0_2, T^0_3)$ with 1-bridge trisected 2-spheres. 
\end{proof} 


\begin{figure}[ht]
    \centering
    \includegraphics[width=.8\textwidth]{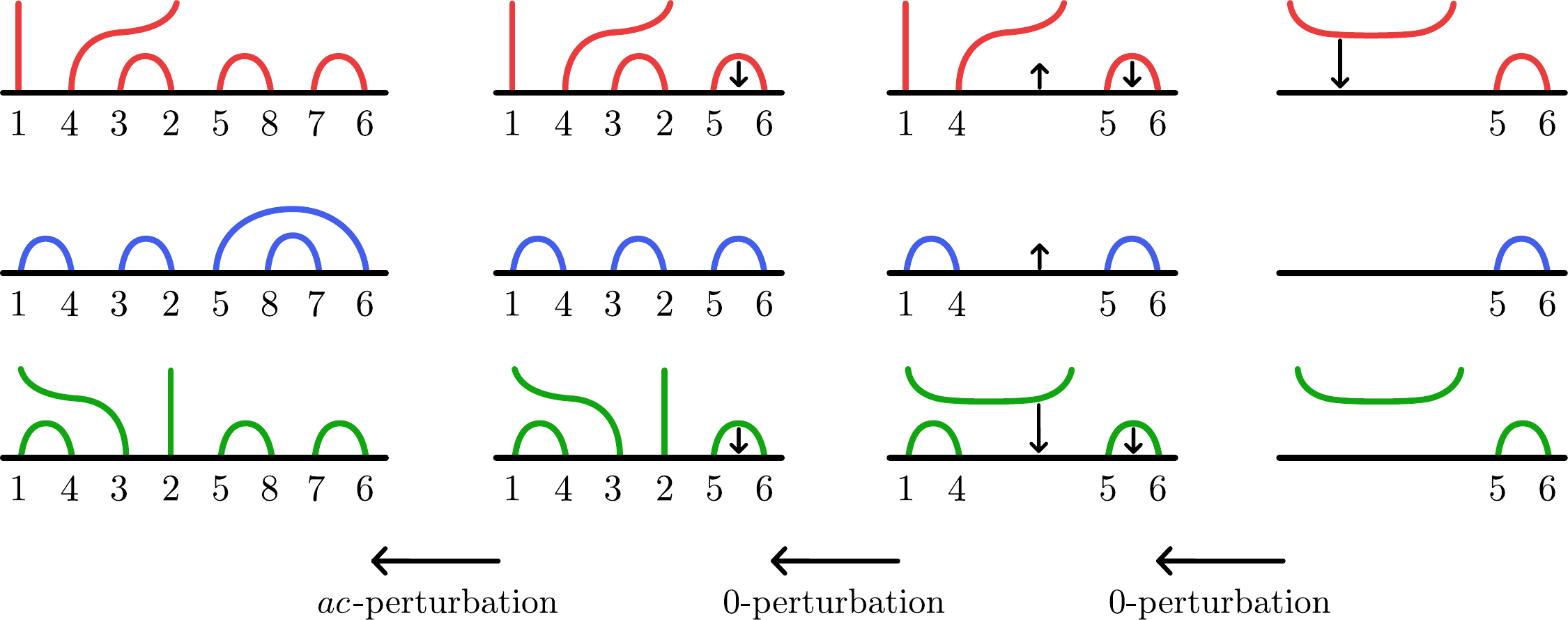}
    \caption{After removing the \textcolor{purple}{purple} tangle from $\Tcal_2$ in \autoref{fig:double_ribbon_lemma_1} (E), the left triplane deperturbs to a distant sum of the triplane $(\textcolor{red}{T^0_1},\textcolor{blue}{T^0_2},\textcolor{ForestGreen}{T^0_3})$ from \autoref{fig:double_ribbon_lemma_1} (A) with 1-bridge trisected 2-spheres with endpoints $\{5,6\}$.}
    \label{fig:double_ribbon_deperts_no_purple}
\end{figure}

\begin{example}\label{example:KT_double_double}
    Let $F_{KT}$ be the double of the ribbon disk of the Kinoshita-Terasaka ribbon presentation of $11_{n_{42}}$, which is a ribbon surface. \autoref{ex:KT_double_as_tubes} explained how to obtain a 4-plane diagram for $F_{KT}$ where the meridian of the tube bounds a 1-bridge unknot component of $\textcolor{blue}{T_2}\cup \textcolor{ForestGreen}{\T_4}$. In \autoref{fig:double_kinoshita_terasaka_disk}, we perform the procedure from \autoref{prop:4section_double} to obtain the spine of a bridge 4-section for the 3-knot $D(F_{KT})$. This is a diagram for a trivial 3-knot since $11_{n_{42}}$ is smoothly superslice~\cite{Livingston15_superslice}. We encourage the reader to find other ribbon surfaces $F$ for which $D_F$ is not trivial.
\end{example}

\begin{figure}[ht]
    \centering
    \includegraphics[width=.5\textwidth]{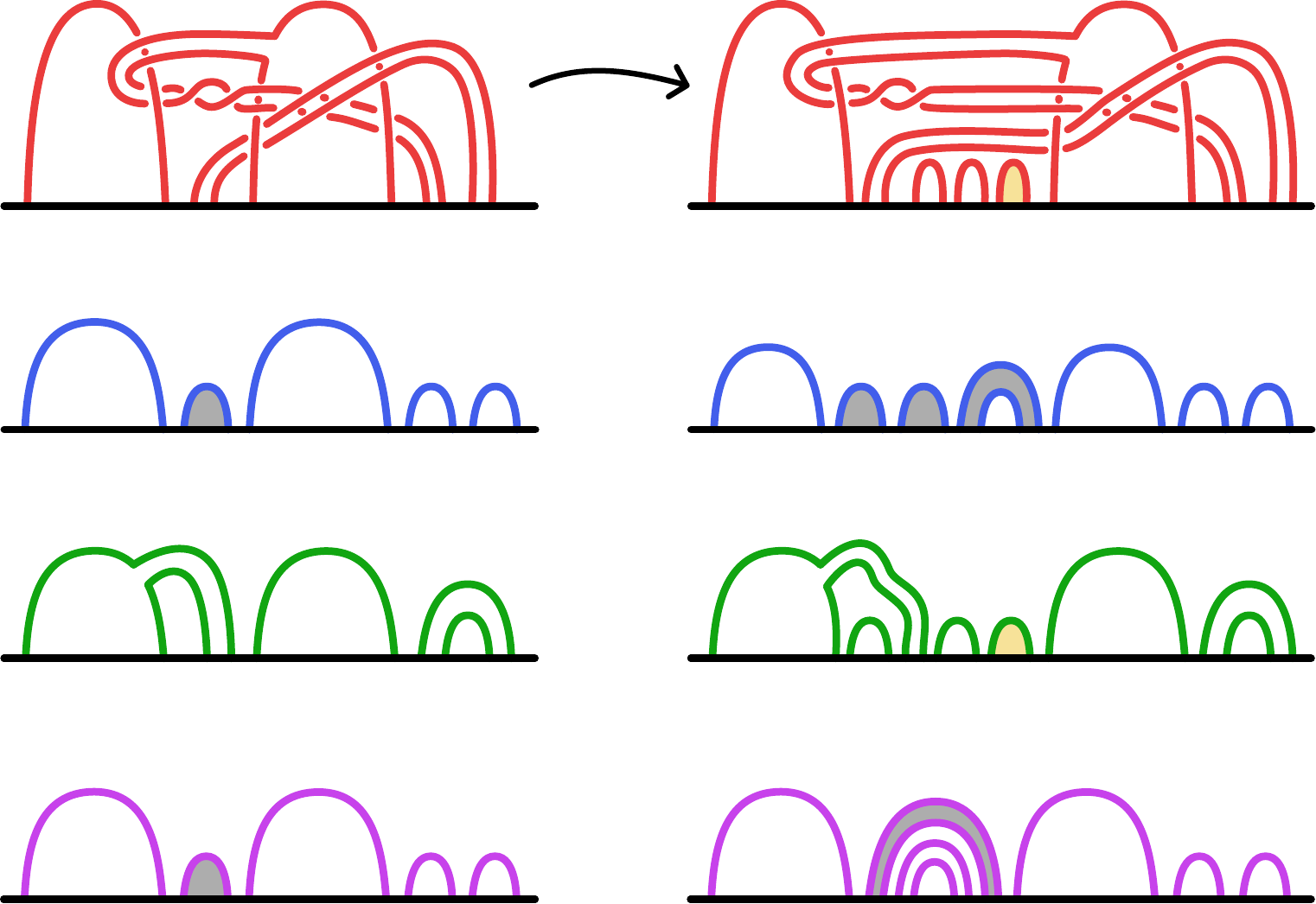}
    \caption{Left: a 4-plane diagram for the ribbon 2-knot $F_{KT}$. The shaded regions depict the meridian of the tube that undoes $F_{KT}$. Right: the spine of a bridge 4-section of the 3-knot $D(F_{KT})$; see \autoref{example:KT_double_double}. }
    \label{fig:double_kinoshita_terasaka_disk}
\end{figure}

\subsection{Spun \texorpdfstring{$3$}{3}-manifolds}\label{subsec:spuns}

Spinning is a process for building knotted objects from knots in lower dimensions, initiated by Artin a century ago~\cite{Artin_spinning}. To build 3-knots in $S^5$, we can spin knots in either $S^3$ or $S^4$, using the observation that $B^6$ can be written as $B^3\times B^3$ and $B^4\times B^2$~\cite{friedman_spinning}.

\subsubsection{Spinning knots}

Let $(S^3, K)$ be a knot, and let $(B^3, K^\circ)$ be the tangle obtained by removing a small open ball centered at a point in $K$. The boundary of $K^\circ$ is two points $\{N,S\}$ in $S^2$. Define the $S^2$-spin of $K^1$, denoted by $S^2(K)$, to be the embedded 3-manifold given by 
\[ 
\left(S^5, S^2(K)\right) = 
\left( B^3\times S^2, K^\circ\times S^2\right) \cup \left(S^3\times B^2, \{N,S\} \times B^2\right).
\] 
Alternatively, one can parametrize $S^2(K^1)\subset \R^5$ as follows: if $K^\circ$ has coordinates $\left(x(t), y(t), z(t)\right)$ with $x(t)\geq 0$, then $S^2(K)$ has spherical coordinates $(t, \theta, \phi)$ given by
\begin{equation}\label{eq:spherical_coords}
S^2(K)(t, \theta, \phi) = 
\left( 
x(t)\cos{\theta}\sin{\phi}, y(t), z(t), x(t)\sin{\theta}\sin{\phi}, x(t)\cos{\phi}
\right).
\end{equation}
Note that $S^2$-spinning works for links as well. If $K$ is connected, the $S^2$-spin is an embedded 3-sphere. If $K$ has more than one component, $S^2(K)$ is a link of one knotted 3-sphere and $(|K|-1)$ embeddings of $S^1\times S^2$. \autoref{prop:diagram_S2_spun} gives a procedure to find a 4-plane diagram for $S^2(K)$ by observing that $S^2(K)$ is the double of a ribbon 3-manifold in $B^5$.

\begin{figure}[ht]
    \centering
    \includegraphics[width=1\textwidth]{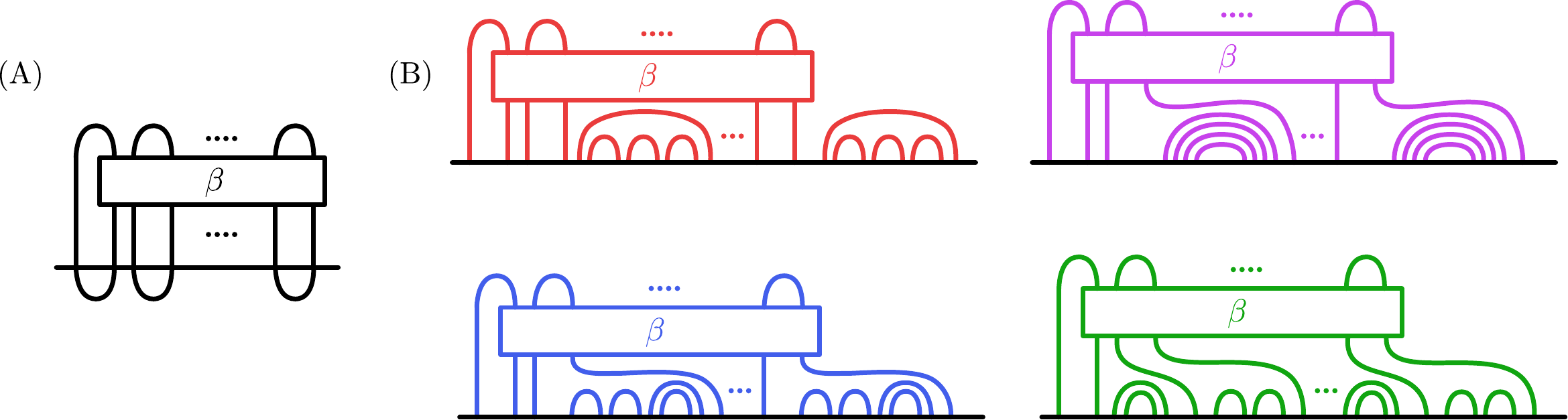}
    \caption{In (A): a plat projection of a link $K\subset S^3$. In (B): a quadrisection diagram for $S^2(K)\subset S^5$.}
    \label{fig:S2_Spun_3} 
\end{figure}

\begin{proposition}\label{prop:diagram_S2_spun}
    Let $K\subset S^3$ be a $b$-bridge knot or link. Then $S^2(K)$ admits a $(5b-4)$-bridge quadrisection with spine described in \autoref{fig:S2_Spun_3}.
\end{proposition}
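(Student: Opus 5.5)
The plan is to realize $S^2(K)$ as the double $Y_F$ of a ribbon $3$-manifold and then run the procedure of \autoref{prop:4section_double}, keeping track of bridge number.

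\textbf{Step 1: the ribbon surface.} Write $K$ in $b$-bridge plat position, $K=T\cup\overline{T}'$, where $T$ and $T'$ are trivial $b$-strand tangles. The spun $2$-knot $S(K)$, or the spun surface-link when $K$ is a link, is ribbon. Concretely, take $b$ unknotted $2$-spheres, the spins of the $b$ minima of $K$. Attach $b-1$ tubes whose cores are the arcs coming from the maxima, that is, from the bands of a ribbon presentation of $K\#\overline{K}$. Using the parametrization \eqref{eq:spherical_coords}, I will check that $S^2(K)$ is the double of the $3$-dimensional $1$-handlebody in $B^5$ bounded by this ribbon surface $F$. This identifies $S^2(K)$ with $Y_F$ and gives a Heegaard complex $(F;\Da,\Db=\Da)$ in which $\Da$ consists of $b-1$ tube meridians.

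\textbf{Step 2: placing $F$ in bridge position.} Start from the $b$-bridge $4$-plane diagram $(T,T,T,T)$ of the $b$-component unlink of spheres. The tube cores lie in $B_1\cup\overline{B}_3$ and, in the plat picture, can be chosen to cross the bridge sphere exactly once. That makes the $0$-sector perturbations of \autoref{fig:Kinoshita_Terasaka_2knot}(A)--(C) unnecessary, or at least bounded. Tubing as in \autoref{lem:tubing_4sec_with_bands} then gives $\Tcal_1$. Each tube raises the bridge number by one, so $\Tcal_1$ has bridge number $b+(b-1)=2b-1$.

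\textbf{Step 3: moving the meridian disks.} For each of the $b-1$ meridian disks, apply the modification of \autoref{lem:moving_disk} (\autoref{fig:double_ribbon_lemma_1}(B)--(E)). If each application costs three perturbations, the total is
\[
2b-1+3(b-1)=5b-4.
\]
By \autoref{prop:4section_double}, which itself rests on \autoref{lem:extension_lem_HC}, the result is the spine of a bridge quadrisection of $Y_F=S^2(K)$. Finally, I will verify that the resulting diagram agrees, up to interior Reidemeister and mutual braid moves, with \autoref{fig:S2_Spun_3}(B).

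\textbf{Main obstacle.} The hard part is Step 1 together with the bookkeeping: showing that $S^2(K)$, rather than the ordinary spin of a $2$-knot, is exactly the double of the handlebody bounded by $F$, and confirming that the counts per tube really sum to $5b-4$. If the count does not match, I would instead read off a Heegaard complex directly from the plat, with $\Sigma$ the boundary of a neighborhood of the spun tangle, and apply \autoref{lem:extension_lem_HC} to the figure by checking each triplane via deperturbations.
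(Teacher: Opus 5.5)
Your proposal is correct and follows essentially the same route as the paper. You realize $S^2(K)$ as the double of the ribbon handlebody obtained by tubing the spin of the trivial half of the plat; the paper does this through the $x_5$-slicing movie from \eqref{eq:spherical_coords}, written out only for $b=2$. You then take $\Da=\Db$ to be the tube meridians and apply \autoref{prop:4section_double}. Your hedged counts do come out as claimed: the tube cores are short arcs at the central bridge sphere, so no extra $0$-sector perturbations are needed, and each application of \autoref{fig:double_ribbon_lemma_1}(B)--(E) adds exactly three bridges. This gives $b+(b-1)+3(b-1)=5b-4$, which matches the paper's $6$-bridge diagram for $b=2$.
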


\begin{proof}
We give the proof for $b=2$. 
Draw $K^\circ$ as a tangle with one local minima below two local maxima with heights 1, 2, and 3, respectively. Consider $S^2(K)\subset\R^5$ described by the spherical coordinates in \autoref{eq:spherical_coords}. Let $\pi \colon \R^5\to \R^3$ be the projection $\pi(x_1, \dots, x_5) = (x_1, x_4, x_5)$. The restriction of $\pi$ to $S^2(K)$ has three 2-spheres with critical values\footnote{In spherical coordinates, the determinant of the Jacobian of $\pi\mid_{S^2(K)}$ is $x^2x'\sin{\phi}$ with $0<\phi<\pi$.} of radii equal to 1, 2, and 3, corresponding to 
the three critical points of $K^\circ$; see~\autoref{fig:S2_Spun_1}. In particular, if $\alpha\subset \R^3$ is an unbounded path that starts at the origin and is transverse to each 2-sphere centered at the origin, then the preimage $h^{-1}(\alpha)$ is a tangle equivalent to $K^\circ$. 

\begin{figure}[ht]
    \centering
    \includegraphics[width=.75\textwidth]{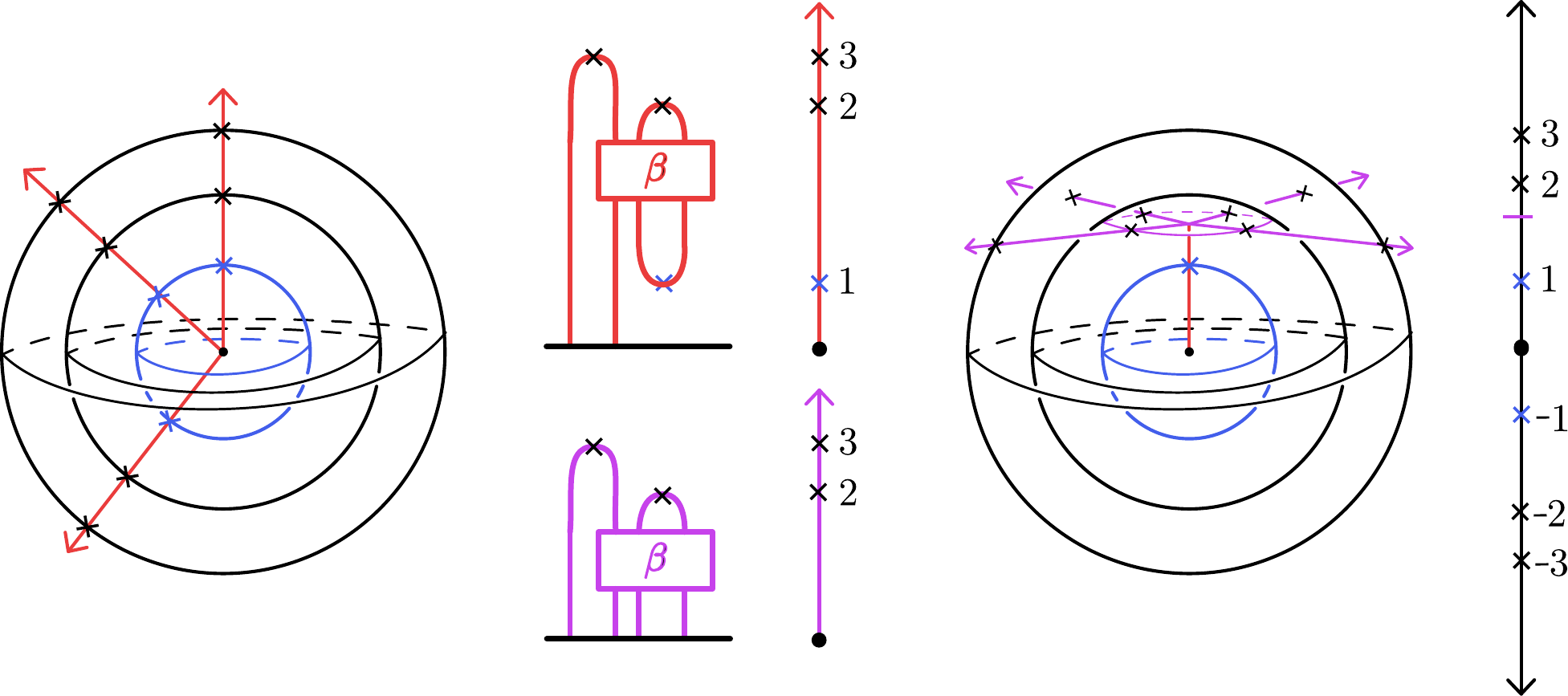}
    \caption{The critical values of the function $\pi\mid_{S^2(K)}$. The preimage of every red ray is equal to the red tangle $K^\circ$. The purple rays lie in a plane $x_3=1.5$ and are based in the $x_3$-axis; their preimage under $h$ is the purple tangle $K^\circ_{1.5}$ shown in the middle panel.}
    \label{fig:S2_Spun_1}
\end{figure}

Let $K^\circ_r$ be the subset of points in $K^\circ$ with height at least $r$, and let $\R^4_r=\{\vec{x}\in \R^5 : x_5=r\}$. For $r\in (0,3)\setminus \{1,2\}$, $K^\circ_r$ is a properly embedded tangle inside a 3-ball. From the discussion in the previous paragraph, we see that $S^2(K)\cap \R^4_r$ is the surface obtained by spinning the tangle $K^\circ_r$ around its boundary; see \autoref{fig:S2_Spun_1}. Moreover,
$\left\{
S^2(K)\cap \R^4_r : t\in (-\infty, \infty)
\right\}$ 
is a one-parameter family of surfaces in $\R^4$ tracing $S^2(K)$.  

The movie from $0$ to $\infty$ is the same as from $0$ to $-\infty$. As $r$ goes from $-\infty$ to $-1.5$, we see two births. As $t$ goes from $-1.5$ to $0$, the two endpoints of $K^\circ_r$ collide at $r=-1$ to form the local minimum of $K^\circ_r$. The respective spun surfaces $S^2(K)\cap \R^4_r$ change by a 1-handle addition along an arc that connects such endpoints. Panels (A) and (B) of \autoref{fig:S2_Spun_2} show banded unlink diagrams for $S^2(K)\cap \R^4_{-1.5}$ and $S^2(K)\cap \R^4_{0}$, respectively. The 4-plane diagrams of these surfaces describing the same 1-handle addition are shown in \autoref{fig:S2_Spun_2} (C-D). The shaded bigons in panel (D) glue to a disk $D$, which is the meridian of the 1-handle. In conclusion, the 4-sected surface in panel (D), together with disk sets $\Da=\Db=\{D\}$, forms a Heegaard complex for $S^2(K)$. Note that $S^2(K)$ is a double 3-manifold as in~\autoref{subsec:doubles}. To end, \autoref{prop:4section_double} describes how to modify panel (D) to get the spine for a bridge 4-section of $S^2(K)$. The final result is shown in \autoref{fig:S2_Spun_2} (E). 
\end{proof}

\begin{figure}[ht]
    \centering
    \includegraphics[width=.75\textwidth]{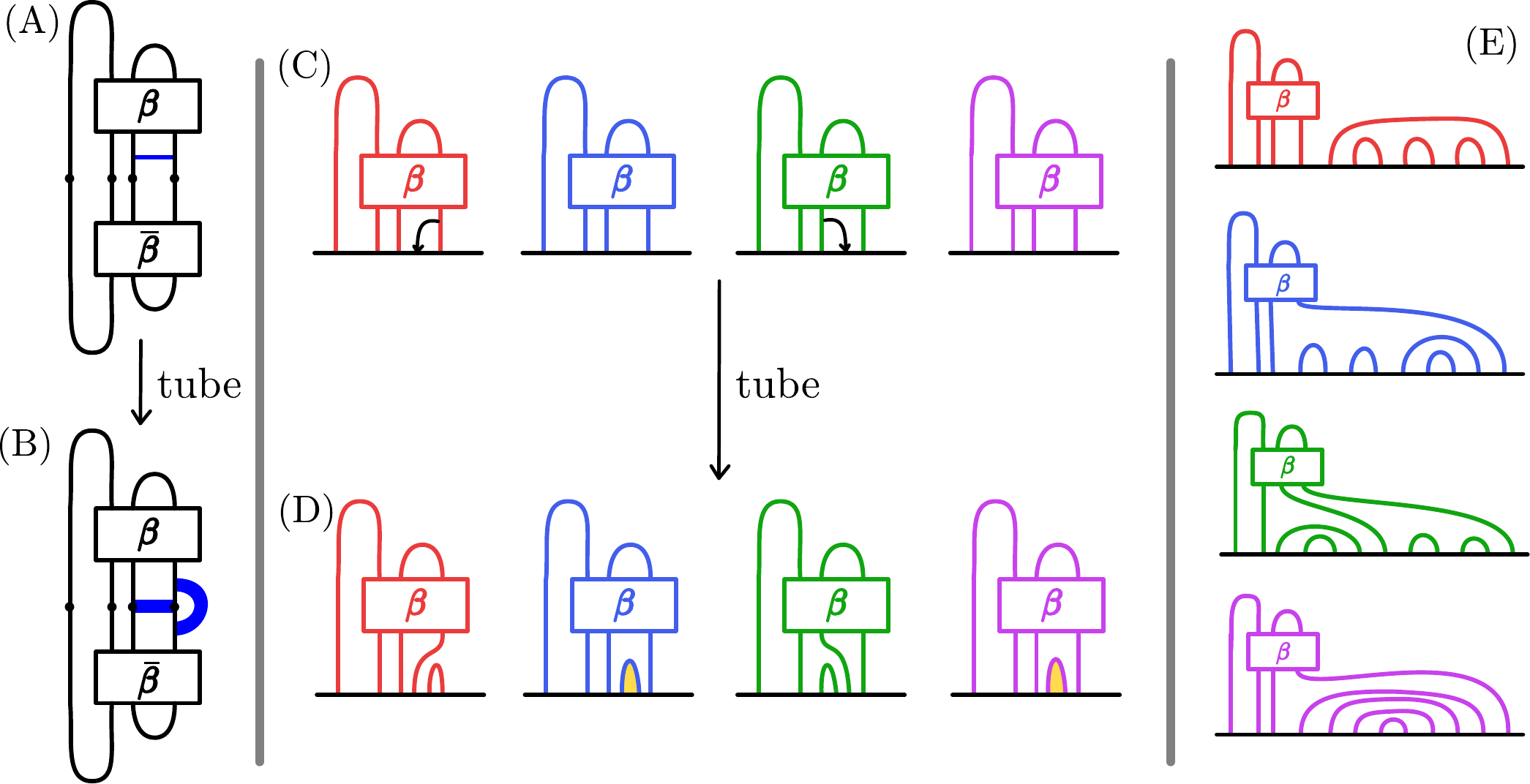}
    \caption{In (A): a banded unlink diagram, and in (C): a 4-plane diagram representing the surface $S^2(K)\cap \R^4_{-1.5}$, obtained by spinning the tangle $K^\circ_{-1.5}$ along its boundary. In (B): a banded unlink diagram and, in (D): a 4-plane diagram representing $S^2(K)\cap \R^4_{0}$, equal to the spin of $K$. These two surfaces differ by a 1-handle addition shown. In (D): a Heegaard complex for $S^2$-spun of $K$; the disk sets $\Da=\Db$ contain one copy of the shaded disk in the blue and purple tangles. In (E): a Heegaard complex for $S^2(K)$ in bridge position. }
    \label{fig:S2_Spun_2}
\end{figure}

\subsubsection{Spinning surfaces}

    Given an embedded surface $(S^4,F)$, let $(B^4,F^\circ)$ be the 2-tangle resulting from removing a small 4-ball centered in $F$. Then, the $S^1$-spin of $F$ is defined by 
    \[ 
    \left(S^5, S^1(F) \right) = 
    \left(B^4\times S^1, F\times S^1\right) \cup \left(S^3\times B^2, \partial F^\circ\times B^2\right).
    \]
    Spinning an orientable surface will yield a link of connected sums of some copies of $S^1\times S^2$ (or $S^3$). If $F$ is given by a banded unlink diagram with more than one maximum, the $S^1$-spin may not be a doubled 3-manifold in $S^5$. This obstacle prevents us from using the ideas in \autoref{subsec:doubles} to find a bridge quadrisection.

\begin{problem}
    Find a bridge quadrisection diagram for $S^1(F)$. 
\end{problem}

\section{Applications and future directions}\label{sec:future_directions}

We end this section with potential directions and applications of the theory of quadrisected embeddings of 3-manifolds in $S^5$. These are by no means complete; see \cite{Joseph_classical_surface, Joseph_Solids} for more ideas.

\subsection{Branched covers}\label{subsec:branched_covers}
An attractive feature of multisections of a 3-manifold $Y$ in $S^5$ is their convenient connection to the multisected closed 5-manifolds that arise as branched covers of $S^5$ along $Y$. The following proof is probably well known to experts, but we include a proof for the reader's convenience.

\begin{proposition}
    Consider a bridge quadrisection of a 3-manifold \[ 
(S^5,Y)=(W_1, E_1)\cup (W_2, E_2)\cup (W_3, E_3)\cup (W_4, E_4) , 
\] and a branched covering $f:Z\rightarrow S^5$ along $Y$, where $Z$ is a connected orientable closed 5-manifold. Then, \[ 
Z=f^{-1}(W_1)\cup f^{-1}(W_2)\cup f^{-1}(W_3)\cup f^{-1}(W_4), 
\] is a quadrisection of $Z.$
\end{proposition}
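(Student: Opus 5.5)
The plan is to verify the quadrisection axioms of \cite{aribi23} piece by piece, using one basic fact: a branched cover of a ball along a boundary-parallel (trivial) disk system is again a standard handlebody-type piece. The quadrisection $S^5=\bigcup W_i$ is genus zero, so every stratum is a ball. Each $W_i$ is a $5$-ball, each $X_{ij}$ a $4$-ball, each $B_\ell$ a $3$-ball, and the central surface is $S^2$. The branch locus $Y$ meets these strata, respectively, in trivial $3$-disk systems $E_i$, trivial $2$-disk systems $D_{ij}$, trivial tangles $T_\ell$, and $2b$ points. Preimages commute with intersections, so $f^{-1}(W_i)\cap f^{-1}(W_j)=f^{-1}(X_{ij})$, and similarly for triple and quadruple intersections. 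It therefore suffices to identify the preimage of each stratum. Connectedness of each preimage will come from the fact that each stratum contains part of the branch locus. One also needs that the monodromy of $f$ restricted to each stratum complement is transitive. Both points are inherited from $Z$ being connected, since each complement $W_i\setminus E_i$ etc. carries the meridians of $Y$, and these generate the monodromy group.

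The key local statement is the following. Let $(B^n,\mathcal D)$ be a $c$-patch trivial $(n-2)$-disk system. Since $\mathcal D$ is boundary parallel, we can write $(B^n,\mathcal D)\cong (B^{n-1},\mathcal D')\times[0,1]$, where $\mathcal D'$ is obtained by pushing $\mathcal D$ into the boundary, which is the standard model. More concretely, $B^n\setminus \nu(\mathcal D)$ is a boundary connected sum of $c$ copies of $S^1\times B^{n-1}$. Its fundamental group is free on the $c$ meridians, and $B^n$ is obtained by regluing $c$ copies of $\mathcal D\times D^2$. A connected branched cover of such a pair is then built from the cover of $\natural^c S^1\times B^{n-1}$ (a handlebody, since covers of free-group handlebodies are handlebodies: the cover of the $1$-complex spine is a graph) by filling in $D^{n-2}\times D^2$ pieces. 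These fillings are attached along disks, so they do not change the handlebody type. Hence the preimage is a $1$-handlebody $\natural^k S^1\times B^{n-1}$. Applying this with $n=5,4,3$ gives that $f^{-1}(W_i)$, $f^{-1}(X_{ij})$, and $f^{-1}(B_\ell)$ are $5$-, $4$-, and $3$-dimensional $1$-handlebodies. Finally, $f^{-1}$ of the central sphere is a branched cover of $S^2$ over $2b$ points, which is a closed connected orientable surface $\Sigma$ (orientable since $Z$ is orientable). Its genus follows from Riemann--Hurwitz, although the genus is not needed.

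The remaining check is that the pieces fit together in the way \cite{aribi23} requires, namely that boundaries decompose correctly and the gluings are along the prescribed lower strata. This is formal once the preimage strata are identified. The main obstacle I expect is making the local statement rigorous. Depending on the precise definition of quadrisection in \cite{aribi23}, the statement may need more than the abstract diffeomorphism type of each piece. It may also require, for example, that each $f^{-1}(X_{ij})$ sits in $\partial f^{-1}(W_i)$ as part of a standard genus-$g$ trisection of $\#^k S^1\times S^3$. To handle this, I would argue in the model: the genus-zero quadrisection restricted to $(\partial W_i,\partial E_i)$ is a bridge trisection of an unlink. By the uniqueness results (\autoref{lem:uniqueness_disk_fillings} together with the classification of bridge trisections of unlinks in \cite{meier17}), that bridge trisection is a perturbation of a standard one. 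Its branched cover is therefore a stabilized standard trisection of $\#^k S^1\times S^3$, which is exactly the boundary structure required of a $5$-dimensional sector.
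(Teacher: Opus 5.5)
Your overall plan matches the paper's. Preimages commute with intersections, so everything reduces to a local lemma (a branched cover of a ball along a trivial disk system is a $1$-handlebody) plus the fact that the preimage of the central $S^2$ is a closed surface. The gap is in your proof of that local lemma.

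The fillings are \emph{not} attached along disks. Each component of the preimage of $\mathcal{D}_k\times D^2$ is a copy of $\mathcal{D}_k\times D^2$, glued to the cover of $B^n\setminus\nu(\mathcal{D})$ along $\mathcal{D}_k\times S^1$, the lift of $\mathcal{D}_k\times\partial D^2$. So each filling is an $n$-dimensional $2$-handle whose attaching circle is a lift of $\mu_k^m$. Attaching $2$-handles can certainly destroy handlebody type, so ``they do not change the handlebody type'' is exactly what the lemma asserts and needs a reason.

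One reason is handle cancellation. In the lifted handle structure of the cover of $\natural^c S^1\times B^{n-1}$, each lifted $\mu_k$-cycle of length $m$ runs geometrically once over each of $m$ lifts of the $k$-th $1$-handle. Distinct cycles use disjoint sets of $1$-handles, so every $2$-handle cancels a $1$-handle.

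More simply, fix your product model. As written, crossing the pushed-off $(n-2)$-disks with $[0,1]$ has the wrong dimension. The correct model is $(B^n,\mathcal{D})\cong B^{n-2}\times(B^2,\{c\text{ points}\})$. The cover is then $B^{n-2}\times F$, with $F$ a compact connected orientable surface with nonempty boundary, which is visibly $\natural^{1-\chi(F)}S^1\times B^{n-1}$.

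The paper avoids $2$-handles a different way. It splits $W_i$ into small balls around the individual patches and a complementary ball missing the branch locus, glued along $(n-1)$-balls. The preimage of each small ball is a union of balls, being cyclic branched covers over a single trivial patch. The preimage of the complementary ball is a disjoint union of balls. The lift is therefore directly a $0$- and $1$-handle decomposition.

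Two smaller points. First, connectedness, which the paper's proof does not address. Containing part of the branch locus is irrelevant, and ``carries the meridians'' is not quite enough, because the monodromy group is generated by the images of all conjugates of meridians. The precise input is that the inclusion of the punctured central sphere is $\pi_1$-surjective onto every stratum complement and onto $S^5\setminus Y$. It surjects onto each tangle group, and van Kampen propagates this upward through the strata. Hence every stratum complement carries the full transitive monodromy, and every preimage is connected.

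Second, drop your last paragraph. The definition in \cite{aribi23} asks only that each $|I|$-fold intersection be a $1$-handlebody of dimension $6-|I|$ (a closed surface when $|I|=4$), with the expected boundary pattern. It does not require the trisection induced on $\partial f^{-1}(W_i)$ to be standard. Your proposed route to standardness would also need a classification of bridge trisections of unlinks that \cite{meier17} provides only for small bridge number.
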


\begin{proof}
    Let $\widetilde{M}$ denote $f^{-1}(M).$ It is well-known that the branched cover $\widetilde{\Sigma}$ of $S^2$ along $2b$ bridge points is a closed orientable surface of genus $g$, where $g$ is calculated in \autoref{prop:Riemann-Hurewicz}. We next argue that the branched cover of an $n$-ball along a trivial $(n-2)$ ball tangle is an $n$-dimensional 1-handlebody.

    Let $(W_i,E_i)$ be a trivial $(n-2)$-ball tangle in an $n$-ball. We decompose $W_i$ into two pieces $W_i^1$ and $W_i^2$, where $W_i^1$ is a disjoint union $n$-balls, each containing a component of the trivial $(n-2)$-ball tangle. Define $W_i^2 = W\backslash W_i^1.$ Then, $\widetilde{W_i^1}$ is a disjoint union of $n$-balls. The submanifold $W_i^2$ does not contain the branched locus, so $\widetilde{W_i^2}$ is also a disjoint union of $n$-balls. The process of identifying  $W_i^2$ and $W_i^1$ together to form $W_i$ lifts to attaching $n$-dimensional 1-handles to disjoint union of $n$-balls. More precisely, the boundary of each component of $W_i^1$ is an $(n-1)$ spheres that can further be decomposed into two parts: the part that contains the boundary of the trivial $(n-2)$ ball tangle, and the part that meets $W_i^2$ in an $(n-1)$-ball. The branched covering $f$ restricted to this latter piece is again an $(n-1)$-ball. In conclusion, the manifold $\widetilde{W_i^2}$ is attached to each component of $\widetilde{W_i^1}$ along a disjoint collection of $(n-1)$-balls. As a 1-handlebody is constructed by attaching 1-handles to a disjoint collection of $n$-balls, the claim is verified.
\end{proof}
Thus, bridge 4-sections lift to 4-sected closed 5-manifolds. \autoref{fig:S2xS3_2fold_cover} demonstrates this process, which gives rise to Figure 6 from Section 4.4 of \cite{aribi23}. From \autoref{prop:Heegaard_splitting_from_4section}, we know we are looking at an embedding of $S^1\times S^2$, which is unknotted by \autoref{prop:crossingless_4planes}.

By a famous result of Alexander, every closed, connected, oriented $n$-manifold is a branched cover of the $n$-sphere~\cite{Alexander_Branched_Covers}. Thus, \autoref{thm:existance_bridge_4sections} can be used as an alternative proof of the existence of quadrisections of smooth 5-manifolds \cite{aribi23}. However, if the number of sheets is constrained to a certain range, the problem becomes more difficult. In particular, the following question is open. 

\begin{question}\label{prob:montesionsanalog}
Does every closed $5$-manifold admit a degree-5 cover of the 5-sphere?    
\end{question}

The main theorem of \cite{Blair24_BranchedCovers} solved a version of this question in dimension four using the tool of trisections. \autoref{thm:existance_bridge_position} then offers a potential tool to solve \autoref{prob:montesionsanalog} on relating 5-manifolds as branched covers of $S^5$ approached with 4-plane diagrams.

\begin{figure}[ht]
    \centering
    \includegraphics[width=.7\textwidth]{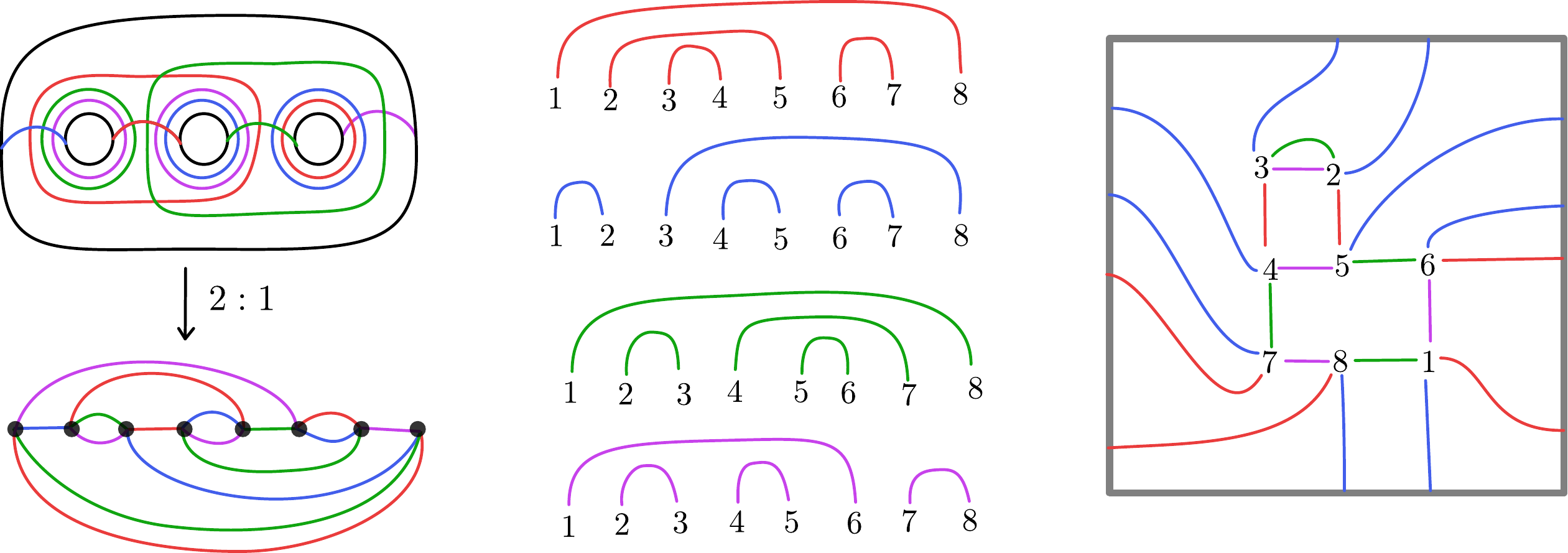}
    \caption{An illustration of $S^2\times S^3$ as a 2-fold cover of $S^5$ branched along an unknotted $S^1\times S^2$. Left: a genus-two 4-section diagram as a 2-fold cover of a 2-phere branched along eight points. Middle: the colored curves descend to arcs, which correspond to a tuple of trivial tangles $(\textcolor{red}{T_1},\textcolor{blue}{T_2},\textcolor{ForestGreen}{T_3},\textcolor{purple}{T_4})$ shown in the middle panel. Right: embedding the graph $\textcolor{red}{T_1}\cup\textcolor{blue}{T_2}\cup\textcolor{ForestGreen}{T_3}\cup\textcolor{purple}{T_4}$ in a torus. From \autoref{prop:Heegaard_splitting_from_4section}, we see that this tuple of tangles is the spine of an embedding of $S^1\times S^2$ in $S^5$.}
    \label{fig:S2xS3_2fold_cover}
\end{figure}

If one is interested in measuring how complicated a 3-manifold in a 5-manifold is, we give a lower bound for the bridge index of a knotted 3-manifold in terms of its branched cover, which follows from the Riemann-Hurwitz formula. Since a permutation can be written uniquely as a product of disjoint cycles up to ordering, the number of disjoint cycles $cyc(\rho)$ of a permutation $\rho$ is well-defined. Noticing that a branched cover is determined by $\rho:\pi_1(\Sigma \backslash P)\rightarrow S_n$, we will write the formula in terms of $cyc(\sigma)$. 

\begin{proposition}\label{prop:Riemann-Hurewicz}
    Suppose $Z$ is a closed $5$-manifold which is an $n$-fold branched cover over $S^5$ along a knotted $3$-manifold $Y$. Suppose also that $Y$ admits a $b$-bridge $4$-section. Let $g(Z)$ denote the $4$-section genus of $Z.$ Then, $g(Z) \leq 1-n+\frac{1}{2}\sum_{i=1}^{2b}(n-cyc(\rho(x_i)))$. In particular, when the cover is cyclic, we have
    \[
    g(Z) \leq 1-n+b(n-1).
    \] 
\end{proposition}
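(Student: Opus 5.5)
The plan is to pull the quadrisection back along the branched cover, as in the preceding proposition, and then compute the genus of the central surface with Riemann--Hurwitz. By that proposition, $Z=\bigcup_i f^{-1}(W_i)$ is a quadrisection of $Z$. Its central surface is $\widetilde{\Sigma}=f^{-1}(\Sigma_0)$, where $\Sigma_0=\bigcap_i W_i$ is the central $2$-sphere with $2b$ marked points $P=\{x_1,\dots,x_{2b}\}=\Sigma_0\cap Y$. The $4$-section genus $g(Z)$ is the minimum over all quadrisections of $Z$, so it is at most the genus of this particular $\widetilde{\Sigma}$. It therefore suffices to show
\[
g(\widetilde{\Sigma}) = 1-n+\tfrac12\sum_{i=1}^{2b}\bigl(n-cyc(\rho(x_i))\bigr).
\]

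To compute the genus, restrict $f$ to $\widetilde{\Sigma}\to S^2$. This is an $n$-fold branched cover of $S^2$, branched over $P$. Its monodromy is $\rho$ restricted to $\pi_1(S^2\setminus P)$, and I interpret $\rho(x_i)$ as the image of a small meridian loop around $x_i$. Two facts make this restriction well behaved:
\begin{itemize}
\item $\Sigma_0$ meets $Y$ transversely in the points of $P$, so a meridian of $x_i$ in $\Sigma_0$ is a meridian of $Y$ in $S^5$;
\item therefore the preimage of a small disk around $x_i$ is a disjoint union of disks, one for each cycle of $\rho(x_i)$, with ramification index equal to the cycle length.
\end{itemize}

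Next, use $\chi(S^2)=2$ and apply the multiplicativity of $\chi$ on the unbranched part. This gives
\[
\chi(\widetilde\Sigma)=n\,(2-2b)+\sum_i cyc(\rho(x_i)) = 2n-\sum_i\bigl(n-cyc(\rho(x_i))\bigr).
\]

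The last step needs $\widetilde{\Sigma}$ to be connected and orientable. Orientability is immediate, since $\widetilde\Sigma$ is a branched cover of an oriented surface. Connectedness is the one point requiring care. I would argue that each $f^{-1}(W_i)$ is a connected $1$-handlebody, because $Z$ is connected and the pieces are pulled back from a quadrisection. The main obstacle is ruling out that the central surface of the lifted decomposition is disconnected. I expect this to follow from the quadrisection axioms (the total intersection is a connected surface) together with the previous proposition, which asserts that the lift is a quadrisection. Alternatively, it follows because the meridians of the components of $Y$ generate the monodromy group. The monodromy group is transitive since $Z$ is connected, and every component of $Y$ meets $\Sigma_0$.

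Given connectedness, $g(\widetilde\Sigma)=1-\chi/2$, which is the stated formula. For the cyclic case, assume moreover that the cover is branched fully along $Y$, so that each meridian maps to a generator; this is the standard meaning of a cyclic branched cover. Then each $\rho(x_i)$ is a single $n$-cycle, so $cyc(\rho(x_i))=1$ and the sum equals $2b(n-1)$. This gives $g(Z)\le 1-n+b(n-1)$.
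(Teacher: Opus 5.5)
Your proposal is correct and is essentially the paper's argument: lift the bridge quadrisection to a quadrisection of $Z$ via the preceding proposition, then compute the genus of the central surface $f^{-1}(\Sigma_0)$ by Riemann--Hurwitz, with the cyclic case given by $cyc(\rho(x_i))=1$ (in fact $cyc(\rho(x_i))\geq 1$ already gives that weaker bound for any cover, so your extra hypothesis is unnecessary). The paper takes connectedness of $f^{-1}(\Sigma_0)$ for granted; if you justify it by transitivity, the needed input is that $\pi_1(\Sigma_0\setminus P)$ surjects onto $\pi_1(S^5\setminus Y)$, which follows from van Kampen over the bridge-quadrisected pieces, not merely that every component of $Y$ meets $\Sigma_0$.
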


Given a 4-sected oriented connected 3-manifold $Y$, an Euler characteristic calculation gives us that $\chi(Y) = 0 = 2b-4b+6c-4d$ as follows. There are $2b$ 0-handles corresponding to the bridge points. There are $4b$ 1-handles corresponding to the number of bridge arcs in the four trivial tangles. We can form the Heegaard complex by attaching $6c$ 2-handles. Recall that the Heegaard complex is a Heegaard surface for $Y$ equipped with the compressing disks that determine the handlebodies. In other words, after compressions, we can attach $4d$ 3-handles to form $Y$. Therefore, the knowledge of the number of handles needed to build $Y$ bounds the bridge number from below.

For a more tractable lower bound, we can turn to homology groups and branched coverings. Let $\Sigma_2(Y)$ denote the 2-fold branched cover of $S^5$ along $Y$ and $\beta_i$ denote the $i$th-Betti numbers. Modifying the algorithm presented in \cite{cahn23}, we provide a Sage code that takes a colored bridge $4$-sected diagram as the input and produces the homology groups of the branched cover as the output. Detailed examples are presented in \autoref{sec:appendix}. The computer code can be found at \cite{github_link}.

\subsection{Group quadrisections of \texorpdfstring{$3$}{3}-manifold complements}

\textit{Group trisections}, originally due to \cite{abrams2018group}, are algebraic objects which capture all of the smooth information of a trisected $4$-manifold. Roughly, a group trisection is a cube of groups which is the result of applying van Kampen's theorem to the pieces of a trisected $4$-manifold, but perhaps surprisingly, given such a cube of groups which satisfy the requirements to be a group trisection, a trisected $4$-manifold corresponding to this data can be recovered \cite{abrams2018group}. 

In \cite{BKKLR} the second author, along with Kirby, Klug, Longo, and Ruppik, extended the notion of group trisections to the case of knotted surfaces in $4$-manifolds, and gave an explicit construction for how to find diagrams for the manifolds determined by the algebraic information of a group trisection. It is natural to ask whether this theory can be extended to group quadrisections of $5$-manifolds and $3$-manifold complements.

Given a quadrisected $S^5$ into four $5$-balls $W_i$, we can apply van Kampen's theorem to each of the four $4$-spheres $X_i = \partial W_i$ to produce four group trisections of $S^4$. Similarly, given a bridge quadrisected $3$-manifold in $S^5$, we can apply van Kampen's theorem to each of the four $4$-spheres $X_i = \partial W_i$ with a finite collection of disjoint, embedded, unknotted $2$-spheres removed, to produce four group trisections of this space. In either case, we informally call the collection of these four group trisections a \textit{group quadrisection}, which can be clustered in a 4-dimensional cube as in \autoref{fig:hypercube}.

\begin{figure}[ht]
    \centering
    \includegraphics[width=1\textwidth]{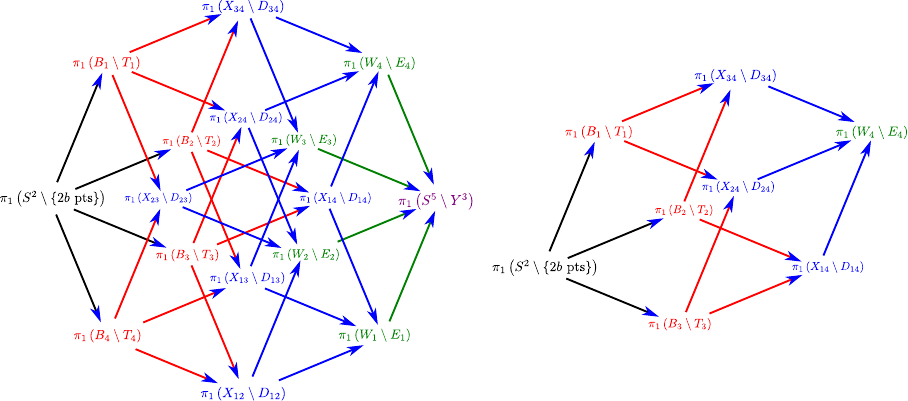}
    \caption{Consider a bridge quadrisection $(S^5,Y^3)=\bigcup_{i=1}^4 (W_i, E_i)$. The labeling convention in \autoref{def:multisections_3mans} yields a hypercube of epimorphisms determined by tuples of homomorphisms (in black) from the fundamental group of a $2b$-punctured surface to the fundamental group of tangle complements $B_i\setminus T_i$. Left and right diagrams correspond to the group quadrisection and group trisection for $\pi_1(S^5\setminus Y^3)$ and $\pi_1(W_4\setminus E_4)\cong \pi_1(\partial W_4\setminus \partial E_4)$, respectively.}
    \label{fig:hypercube}
\end{figure}

In order to recover a bridge quadrisected 3-manifold in $S^5$ from a group quadrisection, one would need to start with an algebraic object defined independently of any quadrisected manifold. However, it is crucial that the four group trisections push out to a group which is the fundamental group of the complement of an unlink of 2-spheres in $S^4$, i.e., a free group, and recognizing when a finite presentation is a free group is hard. Note that while \cite{BKKLR} shows that a group trisection uniquely determines a trisected knotted surface in a $4$-manifold, and furthermore produces a diagram of the surface, actually determining what these manifolds are in practice is difficult. Thus, we propose the following problem.

\begin{problem} \label{prob:group}
    Determine what algebraic conditions could be imposed on a group quadrisection in order to recover a bridge quadrisected 3-manifold in $S^5$.
\end{problem}

On the one hand, one might simply require that the four group trisections push out to free groups, as our desired groups are certainly free. However, it is unclear whether this condition is sufficient. By shifting the parameters of the trisections (and thus the Euler characteristic), we can ensure that we are building spheres, but it may not be obvious whether the embedding of the resulting $3$-manifold built from this process is smooth. As a consolation prize, we could, however, cone each component and build a PL-embedded $3$-manifold.

Although recognizing when a group trisection represents unknotted 2-spheres in $S^4$ is generally difficult, note that if we we start with four such trisections which are compatible in such a way as necessary to create a group quadrisection (meaning the parameters align as needed between the four trisections), then the work in this present paper combined with \cite{BKKLR} implies that this group quadrisection determines a $3$-manifold in $S^5$. 

This discussion also motivates the following problem, assuming a successful resolution to \autoref{prob:group}.

\begin{problem}
Use group quadrisections to compute the second homotopy groups of 3-knot complements. A paper by Lomonaco outlines this process \cite[\S6]{Lomonaco81_5dim_knot_theory}.
\end{problem}

\subsection{Braiding \texorpdfstring{$3$}{3}-manifolds}\label{subsec:braiding_3-manifolds}

The first and fifth authors, together with Carter and Courtney, showed how to braid a bridge trisected surface~\cite{aranda25}. Concretely, they gave procedures for turning a triplane diagram of a knotted surface in $S^4$ into a braid chart, a 2-dimensional version of a movie of braids. To braid a triplane diagram, the authors used modifications called 0-sector perturbations that do not change the underlying surface in $S^4$. These modifications make sense for 4-plane diagrams and also preserve the isotopy class of the bridge quadrisected 3-manifold in $S^5$. Thus, the proofs of the braiding methods in  \cite[\S 4]{aranda25} hold for 3-manifolds in $S^5$. The following theorem is immediate. 

\begin{theorem}\label{thm:rainbow}
Every orientable $3$-manifold embedded in $S^5$ admits a description as a \emph{rainbow diagram}; that is, a tuple $(T_1,T_2,T_3,T_4)$ of $b$-string tangles \emph{braided} with respect to a fixed axis such that each pair $T_i\cup\T_{i+1}$ is braid isotopic to a crossingless braid with some Markov stabilizations.
\end{theorem}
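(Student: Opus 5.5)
We start from \autoref{thm:existance_bridge_4sections}: any orientable $Y\subset S^5$ has a bridge quadrisection with spine $\Tcal=(T_1,T_2,T_3,T_4)$, and by \autoref{lem:spine_determines_3man} this spine determines $Y$ up to isotopy. So it suffices to modify $\Tcal$ by moves that keep the embedded $3$-manifold fixed until every $T_i$ is braided with respect to a common axis. We use two kinds of moves: interior Reidemeister moves and mutual braid moves on the central punctured sphere, and $0$-sector perturbations (\autoref{lem:sector-perturbation} with $k=0$, together with \autoref{remark:0-sector_warning}).

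Each such move preserves the embedded $3$-manifold. An interior Reidemeister move or a mutual braid move is an isotopy of the whole genus-zero quadrisection. For a $0$-sector perturbation of $T_j$, the modification is a type $1$ band move on $T_j$ together with type $0$ moves on the other three tangles. Every triple $(T_i,T_k,T_\ell)$ then changes by a $0$-sector perturbation of a triplane, or by a trivial one-bridge addition, so it still describes the same unlink of $2$-knots. Every pair $T_i\cup\T_k$ stays an unlink. By \autoref{lem:spine_determines_3man} and the uniqueness of fillings in \autoref{lem:uniqueness_disk_fillings}, the new tuple therefore describes the same $Y\subset S^5$. Since all four tangles share the same puncture set, any braiding procedure that uses only these operations on a four-tuple yields a bridge quadrisected embedding isotopic to $Y$.

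We then run the algorithm of \cite[\S4]{aranda25} tangle by tangle. Fix an axis through the central sphere and view each $T_i$ in a "rainbow" position over the punctures. A strand of $T_i$ fails to be braided exactly where it turns back relative to the rotation about the axis. At each such turn we insert a $0$-sector perturbation along a short band from the turning point to the bridge sphere, which splits the strand into pieces that wind monotonically. The new small one-bridge strands added by type $0$ moves to the other tangles are braided from the start. Iterating over all four tangles gives a tuple of $b'$-string tangles that are all braided with respect to the axis. Each consecutive union $T_i\cup\T_{i+1}$ is then a closed braid representing an unlink. By \cite{aranda25} the consecutive unions come out braid isotopic to crossingless braids up to Markov stabilization, since a braided unlink whose pieces bound trivial disk systems in the trisection sector destabilizes in this way. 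Orientability of $Y$ is used to orient all strands coherently around the axis, which is necessary for braiding.

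The main obstacle is the final claim: that $T_i\cup\T_{i+1}$ is braid isotopic to a crossingless braid with stabilizations, rather than merely being an unlink presented as a closed braid. Unlinks can have complicated braid representatives, so this is where the argument depends on \cite[\S4]{aranda25}. We would check that their argument uses only the pairwise conditions on consecutive tangles, which hold here because $(T_i,T_{i+1},T_{i+2})$ is a triplane of an unlink. We would also check that their perturbations are $0$-sector ones, which we have already shown are harmless in the four-tangle setting.
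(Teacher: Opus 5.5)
Your overall route is the paper's: take a bridge quadrisection from \autoref{thm:existance_bridge_4sections}, observe that interior Reidemeister moves, mutual braid moves and $0$-sector perturbations do not change the embedded $3$-manifold, and run the braiding procedure of \cite[\S4]{aranda25}. Like the paper, you defer the ``crossingless plus Markov stabilizations'' property to that reference.

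The one step you argue yourself, however, is argued incorrectly. Checking that after a $0$-sector perturbation every pair still closes to an unlink, and every triple is still a triplane of an unlink, only shows that $\Tcal'$ is an admissible spine. Then \autoref{lem:spine_determines_3man} attaches to it \emph{some} embedded $3$-manifold $Y'$. That lemma is a uniqueness statement for a fixed spine and says nothing about how the embeddings of two different spines compare. Spines of completely different $3$-knots pass the same checks. Moreover, by \autoref{remark:0-sector_warning}, $\Tcal'$ need not even consist of trivial tangles. Also, the triple omitting $T_j$ is not ``the same unlink'': it gains a split one-bridge unknotted $2$-sphere.

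What you need is an isotopy from $Y$ to $Y'$. Geometrically, the $0$-sector perturbation of $T_j$ is a finger move of $Y$. It pushes a small neighborhood of an interior point of a strand of $T_j$ along $\rho_j$ through the central sphere into $W_j$. The tip of the finger becomes a new small $3$-ball of $E_j$ and produces the type~$0$ strands in the three balls $B_i$, $i\neq j$. Combinatorially, this is \autoref{lem:3man_perturbation} with $|I|=1$. Its proof shows that the associated Heegaard complex changes by a canceling pair of handles, so $Y'$ is isotopic to $Y$ by \autoref{thm:canceling_handles_HC}. With this substitution your argument matches the paper's.

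Finally, drop the heuristic that a braided unlink bounding trivial disks automatically destabilizes to a crossingless braid. A closed braid representing an unlink need not be conjugate to a stabilized trivial braid, since exchange moves are needed in general. So this property has to come from the specific output of the algorithm in \cite[\S4]{aranda25}, which is exactly what the paper relies on.
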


The invested reader may wish to upgrade \autoref{thm:rainbow} so that each triplet $(T_i,T_j,T_k)$ is isotopic to \emph{braided perturbations} of the crossingless rainbow diagram~\cite[\S 3.1]{aranda25}. If true, then a notion of a braid chart for a knotted 3-manifold seems within reach. 

\begin{question}
    Is there a notion of a braid chart for a 3-knot? Do rainbow diagrams for 3-knots yield braid charts as in Section 6 of \cite{aranda25}?
\end{question} 

Lomonaco's movies of movies of knots are another alternative to the Heegaard complexes in \autoref{sec:Heegaard_complexes}. One could ask whether the 4-section bridge of a knotted 3-manifold $Y$, or a rainbow diagram instead, could be used to find descriptions of $Y$ as a movie of movies of knots~\cite{Lomonaco81_5dim_knot_theory}. The four-dimensional version of the question below is true for braided diagrams of surfaces in the 4-ball~\cite[\S 1.3]{aranda25}. 

\begin{question}
    Do braided 4-plane diagrams describe geometrically meaningful (transverse) 3-manifolds in $S^5$ with respect to the standard contact structure?
\end{question}

\subsection{Towards uniqueness of bridge quadrisections}\label{subsec:uniqueness_bridge_3mans}

In order to define new 3-knot invariants using bridge quadrisections or Heegaard complexes, one may wish to have a complete set of moves connecting any two descriptions of isotopic embeddings. The moves between Heegaard complexes were discussed in \autoref{sec:Heegaard_complexes}; see \autoref{def:canceling_handles} and \autoref{thm:canceling_handles_HC}. 


We discuss three types of moves between bridge quadrisections that do not change the isotopy class of an embedded 3-manifold. We first fix some notation. Let $\Tcal=(T_1,T_2,T_3,T_4)$ be a fixed cyclic ordering of the spine of a $b$-bridge quadrisected 3-manifold. It follows from \autoref{prop:Heegaard_splitting_from_4section} and \autoref{def:heegaard_complex_bridge_position} that $\Tcal$ defines a Heegaard complex in bridge position where $\Sigma$ is the $b$-bridge quadrisected surface in $S^4$ described by $\Tcal$ and $\Da$ and $\Db$ are embedded disks in 3-space bounded by the links $T_1\cup \T_3$ and $T_2\cup \T_4$, respectively. 

The first two moves are interior Reidemeister moves and mutual braid moves, which correspond to isotopies of the surface $\Sigma$ that do not change the number of bridge points~\cite{meier17, aranda24}. \emph{Interior Reidemeister moves} are isotopies of the tangles $T_i$ that fix their endpoints and \emph{mutual braid moves} are the result of appending the same $2b$-stranded braid to each tangle of $\Tcal$. These moves induce isotopies of their associated Heegaard complexes. 

The third move, called \emph{3-manifold perturbation},\footnote{We are working on the name.} corresponds to the addition of canceling pairs to the associated Heegaard complex. In short, such a move will behave like a multiple-sector perturbation of a 4-dimensional bridge 4-section from \autoref{lem:sector-perturbation} for any permutation of the indices $(1,2,3,4)$. Recall the two types of tangle modifications from \autoref{sec:calculus_embedded_surfaces}. 

Fix $0\leq k \leq 3$ and let $I$ be a subset of $\Z_4$ with $k$ elements. For each $i\in I$, let $\rho_i$ be a band inducing a modification of type 1 in the tangle $T_i\subset B_i$. Let $\Tcal'=(T_1',T_2',T_3',T_4')$ be the result of band surgery on each tangle of $\Tcal$. 

\begin{lemma}\label{lem:3man_perturbation}
Suppose that for each pair $i\neq j$ in $I$, there is a $2$-sphere in $B_i\cup \overline{B}_j$ that intersectcs $\partial B_i=\partial B_j$ in one loop, and contains both the band $\rho_i\cup \overline{\rho}_j$ and exactly one component of $T_i\cup \T_j$. Then the tuple $\Tcal'$ represents the same $3$-manifold as $\Tcal$. We call $\Tcal'$ a \emph{$k$-sector $3$-manifold pertubation of $\Tcal$.} 
\end{lemma}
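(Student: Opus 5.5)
Two things have to be shown: $\Tcal'$ is a legitimate spine, so that \autoref{lem:spine_determines_3man} produces some bridge quadrisected $Y'$, and $Y'$ is isotopic to $Y$. I will reduce both to the surface statement \autoref{lem:sector-perturbation}, applied to each cyclic ordering of three or four of the tangles, and then lift the conclusion through the Heegaard complex calculus of \autoref{thm:canceling_handles_HC}.

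\textbf{Triplanes.} Fix a three-element subset $\{i,j,k\}$ and consider the triplane $(T'_i,T'_j,T'_k)$. Its tangles are exactly the band-modified tangles, with type 1 modifications on the indices in $I\cap\{i,j,k\}$ and type 0 modifications on the rest. For every pair in $I\cap\{i,j,k\}$, the hypothesis gives a $2$-sphere in $B_i\cup\overline{B}_j$ meeting the bridge sphere in one loop. This is exactly the sphere condition of \autoref{lem:sector-perturbation}, transported to a bridge trisection; that lemma is stated for four tangles, but its proof in \cite{aranda24} works for any $m$-section. Therefore $(T'_i,T'_j,T'_k)$ is a sector perturbation of $(T_i,T_j,T_k)$ and represents the same unlink of $2$-spheres. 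If $|I\cap\{i,j,k\}|\le 1$, the modification is a $0$-sector perturbation or a trivial one-bridge addition; \autoref{remark:0-sector_warning} covers this, since consecutive unions are still unlinks. The same reasoning applied to pairs shows that each $T'_i\cup\T'_j$ is an unlink. Hence $\Tcal'$ satisfies the hypothesis of \autoref{lem:spine_determines_3man}, possibly after the extra $0$-sector perturbations that \autoref{remark:0-sector_warning} allows in order to restore trivial tangles.

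\textbf{Same $3$-manifold.} Fix the cyclic order $(1,2,3,4)$ and pass to Heegaard complexes, as explained just before the lemma. The complex for $\Tcal$ is $(\Sigma;\Da,\Db)$, where $\Sigma$ is the surface of $\Tcal$ and $\partial\Da=T_1\cup\T_3$, $\partial\Db=T_2\cup\T_4$. The complex for $\Tcal'$ is $(\Sigma';\Da',\Db')$, built the same way. By the previous paragraph, together with \autoref{lem:sector-perturbation} for the $4$-tuple, $\Sigma'$ is isotopic to $\Sigma$. I then compare the disk systems case by case, according to how $I$ meets the two diagonal pairs $\{1,3\}$ and $\{2,4\}$.

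If $I$ contains no diagonal pair, then along each diagonal the modifications are of mixed type, or both of type 0. In the mixed case $T'_a\cup\T'_c$ is isotopic to $T_a\cup\T_c$, as noted in \autoref{sec:calculus_embedded_surfaces}. When both are of type 0, the diagonal link gains one $1$-bridge unknot bounding a small disk in $\Sigma'$. So the disk system changes only by adding a trivial disk, which is a $2/3$ or $0/1$ stabilization.

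If $I$ contains a diagonal pair, say $\{1,3\}$, then $T'_1\cup\T'_3$ is obtained from $L_{13}$ by a band surgery along $\rho_1\cup\overline{\rho}_3$. The sphere condition for the pair $(1,3)$ says that this band, together with exactly one component of $L_{13}$, lies on a $2$-sphere meeting the bridge sphere in one loop. The band therefore splits an unknot into two unknots bounding disjoint disks, i.e.\ it cuts a disk of $\Da$ along an arc. This matches the change $E\mapsto E_1\cup E_2$ in a $1/2$ stabilization of \autoref{def:canceling_handles}. On the other diagonal, the two tangles receive type 0 modifications, or mixed ones, so $\Db$ gains at most a meridian disk; this is exactly what a $1/2$ stabilization adds.

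By \autoref{thm:canceling_handles_HC}, the two complexes then realize isotopic embeddings. Since \autoref{prop:quadrisection_from_bridge_position} and \autoref{lem:spine_determines_3man} identify these embeddings with those determined by the spines, $\Tcal'$ and $\Tcal$ represent the same $3$-manifold.

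\textbf{Main obstacle.} The hard step is the case analysis for $|I|\ge 3$. There one must check that the modified disk systems really differ from the originals by the listed moves. One must also check that the embedded disks in $B_1\cup\overline{B}_3$ are related by isotopy in the complement of the surface, as those moves require, and not merely that their boundaries agree. My first attempt would be to use \autoref{lem:uniqueness_disk_fillings} in $B^4$, or \autoref{lem:extension_lem_HC}, to pin down the filling disks from their boundary links.
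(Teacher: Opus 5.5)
Your overall strategy is the paper's: read $\Tcal$ and $\Tcal'$ as Heegaard complexes in bridge position, match the change with moves of \autoref{def:canceling_handles}, and apply \autoref{thm:canceling_handles_HC}. But the step ``$\Sigma'$ is isotopic to $\Sigma$ by \autoref{lem:sector-perturbation} for the $4$-tuple'' is false when $I$ is a diagonal pair such as $\{1,3\}$, because that lemma requires the dragged tangles to be cyclically consecutive. For $I=\{1,3\}$ every consecutive pair is of mixed type, so all $c_i$ are unchanged while $b$ grows by one, and $\chi=\sum c_i-2b$ drops by $2$. By the tubing lemma in \autoref{sec:tubings} (cf.\ \autoref{lem:tubing_4sec}), $\Sigma'$ is $\Sigma$ with a $1$-handle guided by the core of $\rho_1\cup\overline{\rho}_3$, and the new $1$-bridge unknot in $L'_{24}$ is its meridian. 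That tube is what makes this case a $1/2$ stabilization, so your uniform claim contradicts your own diagonal-case analysis. It also means you cannot lump $\{1,3\}$ together with $|I|=3$. When $|I|=3$, the surface really is unchanged (a $2$-sector perturbation) and the other diagonal is of mixed type, so $\Db$ gains nothing; the only change is $E\mapsto E_1\cup E_2$ with $\Sigma$ fixed. The paper treats $|I|=1$, $|I|=2$ consecutive, $|I|=2$ opposite, and $|I|=3$ as separate cases precisely because the surface behaves differently in them. Separately, for $I=\emptyset$ your ``trivial disk'' case is wrong too: a split $1$-bridge unknotted $3$-sphere is added, so $|I|\ge 1$ is needed.

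Your triplane paragraph misapplies the same lemma. If $|I|=3$, the triple $\{i,j,k\}=I$ has all three tangles dragged. This is not a sector perturbation of a bridge trisection, where at least one of the three tangles receives only a type~$0$ modification. Each of the three pairwise unions gains a component while $b$ grows by one, so $\chi=c_1+c_2+c_3-b$ rises by $2$. Hence this triplane does \emph{not} represent the same unlink, and showing it is still an unlink would need a separate argument. The paper never checks triplanes: validity of $\Tcal'$ comes implicitly from the new Heegaard complex being a canceling-pair modification in bridge position for $\Tcal'$, via \autoref{prop:quadrisection_from_bridge_position}. You can drop that paragraph. The obstacle you flag, namely that the split disks must agree with the filling disks up to isotopy in $S^4\setminus\Sigma$ and not merely on the boundary, is one the paper's sketch also takes for granted.
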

\begin{proof}
The 2-sphere condition on $\rho_i\cup \overline{\rho}_j$ is equivalent to the assumption that for each pair $i,j\in I$, the link $T_i\cup\T_j$ can be isotoped, via interior Reidemeister moves and mutual braid moves, to a diagram as in \autoref{fig:sphere_condition}. We briefly explain how a 3-manifold perturbation changes the associated Heegaard complex depending on the value of $k$.  

\begin{enumerate}
\item If $|I|=1$, the Heegaard complex changes by the addition of a 0/1-canceling pair since a 1-bridge 4-sected unknotted 2-sphere gets tubed to $\Sigma$ along the core of the band $\rho_i$. 
\item If $|I|=2$ and $I$ contains consecutive indices (i.e., $I=\{1,2\}$), then the new Heegaard complex is isotopic to $(\Sigma;\Da,\Db)$.
\item Suppose that $|I|=2$ and $I$ contains two opposite indices, say $I=\{1,3\}$. Then performing band surgery on $\Tcal$ splits one disk of $\Da$ into two subdisks while tubing the surface $\Sigma$ along the band $\rho_1\cup \overline{\rho}_3$; see \autoref{lem:tubing_4sec}. Effectively, this adds a 1/2-canceling pair to the Heegaard complex. 
\item To end, suppose that $I=\{1,2,3\}$. By \autoref{lem:sector-perturbation}, performing band surgery on $\Tcal$ does not change the isotopy class of $\Sigma$. At the same time, one disk of $\Da$ is split in half by the band $\rho_1\cup \overline{\rho}_3$. Hence, the Heegaard complex changes by the addition of a 1/2-canceling pair. 
\end{enumerate}
\end{proof}

\begin{conjecture}\label{conj:uniqueness_bridge_3mans}
Any two $4$-plane diagrams describing isotopic $3$-manifolds in $S^5$ are related by a finite sequence of interior Reidemeister moves, mutual braid moves, and $3$-manifold perturbations. 
\end{conjecture}

To prove the uniqueness conjecture above, one needs to overcome two challenges: 
(1) find a combination of moves on 4-plane diagrams that resembles a handle slide, and (2) prove that any two 4-plane diagrams of isotopic surfaces in $S^4$ admit a common perturbation. 

\appendix
\section{Computing homology groups of branched covers} \label{sec:appendix}

In this appendix, we compute classical invariants from algebraic topology of branched covers taking quadruplane diagrams as inputs. Given a closed 5-manifold which is 4-sected with the parameters $(\Sigma;H_{\alpha},H_{\beta},H_{\gamma},H_{\delta})$, we let $L_i = \ker(\iota) \colon H_1(\Sigma) \rightarrow H_1(H_{\mu})$ for $\mu = \alpha,\dots,\delta$. By a result of \cite{aribi23}, the homology groups can be determined solely from these Lagrangians.

\begin{theorem}[\cite{aribi23}]\label{th:homology}
 The homology of $W$ is the homology of the complex
 $$0\to\mathbb{Z}\xrightarrow{0}\bigoplus_{i=1}^n\left(\cap_{j\neq i}L_j\right)\xrightarrow{\delta_{n-1}}\dots\xrightarrow{\delta_{\ell+1}}\bigoplus_{|I|=\ell}\left(\cap_{i\in I}L_i\right)\xrightarrow{\delta_\ell}\dots\xrightarrow{\delta_2}\bigoplus_{i=1}^nL_i\xrightarrow{\delta_1} H_1(\Sigma)\xrightarrow{0}\mathbb{Z}\to 0,$$ where $\delta_j$ maps are defined as \begin{center}
$\delta_\ell\colon \bigoplus_{|I|=\ell}\left(\bigcap_{i\in I} L_i\right)\longrightarrow
\bigoplus_{|I|=\ell-1}\left(\bigcap_{i\in I} L_i\right), \delta_{\ell}(c) = (-1)^{\lvert\{\,s\in I\mid s<j\,\}\rvert}\,c.$
\end{center}
\end{theorem}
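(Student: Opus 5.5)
The statement to prove is the homology theorem of \cite{aribi23}: the homology of a quadrisected $5$-manifold $W$ is the homology of the displayed complex built from the Lagrangians $L_\mu$. My plan is to compute $H_*(W)$ from a Mayer--Vietoris spectral sequence (equivalently, a \v{C}ech-type double complex) for the cover $W=\bigcup_\mu H_\mu$, where the $H_\mu$ are the four $5$-dimensional $1$-handlebodies. The key input is that every $k$-fold intersection $H_I=\bigcap_{\mu\in I}H_\mu$ with $|I|\le 3$ is a $1$-handlebody, and the total intersection is the central surface $\Sigma$. Moreover, each $H_I$ deformation retracts onto a wedge of circles whose $H_1$ is a quotient of $H_1(\Sigma)$. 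Concretely, $H_1(H_I)=H_1(\Sigma)/\sum_{i\in I}L_i$ in a suitable indexing convention. Here $L_i$ is the kernel into the piece indexed by the complementary data, which I would set up so that the piece containing fewer sectors kills fewer curves. The convention would be fixed as in the diagrammatic description of \cite{aribi23}, where $\Sigma$ bounds $H_{I}$ and the kernel of $H_1(\Sigma)\to H_1(H_I)$ is a sum of Lagrangians.

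\textbf{Step 1 (local data).} For each subset $I$, I would identify $H_*(H_I)$: only $H_0=\mathbb{Z}$ and $H_1$, which is free and is a quotient of $H_1(\Sigma)$ by a sum of Lagrangians. At the top intersection $\Sigma$, the group $H_2=\mathbb{Z}$ also survives. All maps induced by inclusions are the natural quotient maps.

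\textbf{Step 2 (spectral sequence).} The Mayer--Vietoris spectral sequence then has $E^1_{p,q}=\bigoplus_{|I|=p+1}H_q(H_I)$, with only the rows $q=0,1$ (plus $q=2$ at $\Sigma$) nonzero. The $q=0$ row is the simplicial chain complex of a simplex, so it is acyclic except in degree $0$.

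\textbf{Step 3 (Koszul-type dualization).} The $q=1$ row is a complex of quotients of $H_1(\Sigma)$. Using that sums and intersections of the Lagrangians interact exactly as in a distributive lattice, I would show this row is quasi-isomorphic to the complex $\bigoplus_{|I|=\ell}\bigcap_{i\in I}L_i$ with signed inclusion differentials. The argument: the short exact sequences $0\to \sum L\to H_1(\Sigma)\to H_1(\Sigma)/\sum L\to 0$ splice, and inclusion--exclusion identifies the kernel complex. The constant $H_1(\Sigma)$ term and the two $\mathbb{Z}$'s account for the ends of the displayed complex.

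\textbf{Step 4 (degeneration).} Finally I would check that the higher differentials vanish and that there are no extension problems, e.g.\ by working over a field and then using freeness.

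The main obstacle I expect is Step 3: proving the lattice identities, namely the required distributivity $(L_i+L_j)\cap L_k=L_i\cap L_k+L_j\cap L_k$. Generic Lagrangians do not satisfy this, so it must come from the geometry of the quadrisection. My first attempt would extract it from the fact that each triple intersection is a handlebody, i.e.\ from genus-$g$ standardness of pairs and triples of cut systems.
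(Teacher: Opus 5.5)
The paper only quotes this result from \cite{aribi23}, so I am judging your sketch on its own.

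\textbf{What works.} Steps 1, 2 and 4 are sound. Write $W_K=\bigcap_{k\in K}W_k$, and label $L_j$ by the handlebody $\bigcap_{i\neq j}W_i$. The $E^1$ page has rows $q=0,1$ and a single $\mathbb{Z}=H_2(\Sigma)$ at $p=n-1$. So it degenerates at $E^2$ for degree reasons. Each total degree has at most one nonzero $E^2$ term, so there are no extension problems. No appeal to freeness is needed, and indeed $H_*(W)$ can have torsion. This gives $H_k(W)\cong E^2_{k-1,1}$ for $1\le k\le n$, computed from the quotient row $\bigoplus_{|K|=p+1}H_1(\Sigma)/\sum_{j\notin K}L_j$.

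\textbf{The gap is Step 3}, which carries all the content. There are three problems.

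(i) The Lagrangians do \emph{not} generate a distributive lattice. For $n=4$, granting the triple identities, the four-fold law $(L_1+L_2+L_3)\cap L_4=\sum_{i\le 3}L_i\cap L_4$ is equivalent to $H_2(W)=0$. So it fails for the $S^2\times S^3$ computed in the appendix. Non-distributivity at the top level is exactly what the theorem measures. Only proper subfamilies are distributive, and for $n=4$ that is precisely your triple identity $(L_i+L_j)\cap L_k=L_i\cap L_k+L_j\cap L_k$.

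(ii) That identity does not come from the source you propose. Triple intersections being handlebodies only makes each $L_i$ Lagrangian. Standardness of triples of cut systems is not known; it would be a Waldhausen-type theorem for trisections of $\#^kS^1\times S^3$. The identity comes instead from the single sectors. $\partial W_l\cong\#^kS^1\times S^3$ is trisected with Lagrangians $L_i,L_j,L_k$. The three-sector formula of Feller--Klug--Schirmer--Zemke then gives $0=H_2(\partial W_l)\cong\bigl((L_i+L_j)\cap L_k\bigr)/\bigl(L_i\cap L_k+L_j\cap L_k\bigr)$. So the argument is really an induction on the number of sectors.

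(iii) ``Splicing and inclusion--exclusion'' does not give the comparison, which must reverse the grading. $H_1(\Sigma)$ sits in total degree $n$ in your row but in degree $1$ in the target. A version that works:
\begin{itemize}
\item The sequences $0\to\sum_{j\in J}L_j\to H_1(\Sigma)\to H_1(\Sigma)/\sum_{j\in J}L_j\to 0$ trade your row for the sum complex, because $\bigoplus_{|J|=m}H_1(\Sigma)$ is acyclic except at $|J|=n-1$.
\item Resolve each $\sum_{j\in J}L_j$ ($J$ proper) by the intersection complex of $\{L_j\}_{j\in J}$. This resolution is exact precisely by (ii).
\item In the resulting double complex indexed by $\emptyset\neq I\subseteq J\subsetneq\{1,\dots,n\}$, fixing $I$ gives $\bigcap_{i\in I}L_i$ tensored with the augmented cochains of $\partial\Delta^{n-|I|-1}$. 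This is acyclic except at $|J|=n-1$.
\item The collapse matches homology at $|J|=m$ with homology of the displayed complex at $|I|=n-1-m$.
\end{itemize}

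\textbf{A shorter route} avoids lattice identities altogether: run the Mayer--Vietoris spectral sequence in \emph{cohomology}. Because $H_1(\Sigma)\to H_1(W_K)=H_1(\Sigma)/\sum_{j\notin K}L_j$ is onto, restriction identifies $H^1(W_K)$ with the annihilator of $\sum_{j\notin K}L_j$. Under the unimodular intersection form on $H_1(\Sigma)$ this annihilator is $\bigcap_{j\notin K}L_j^{\perp}=\bigcap_{j\notin K}L_j$, since each $L_j$ is a Lagrangian direct summand and so $L_j^\perp=L_j$. Restriction maps become the inclusions. Thus row $q=1$ is literally the displayed complex. The same degeneration gives $H^k(W)$ as its homology at $|I|=n-k$. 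Poincaré duality $H_{n+1-k}(W)\cong H^k(W)$, for orientable $W$ as in the appendix, then yields the statement.
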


Therefore, our first goal is to obtain the Lagrangians from a bridge 4-section. The input we need is a homomorphism $\rho\colon \pi_1(S^2\backslash \{p_0,p_1,\ldots,p_{2b-1}\})\rightarrow S_n$ that extends over the quadrapod. That is, for each $i\in \{1,2,3,4\},$ there exists a map $\rho_i$ that makes the following diagram commute.

\begin{center}
    \begin{tikzcd}[row sep=2.5em, column sep=3.5em]
\pi_1(S^2\setminus\{2b\ \text{pts}\})
  \arrow[r,"\iota_i"]
  \arrow[dr,"\rho"']
& \pi_1(D^3\setminus T_i)
  \arrow[d, dashed, "\rho_i"]
\\
& S_n
\end{tikzcd}
\end{center}
 We will let $S,\Sigma'$, and $\Sigma$ denote the bridge sphere downstairs, the center surface of the unbranched cover, and the quadrisection surface in the branched cover, respectively.

\begin{framed}
\noindent\textbf{Algorithm for computing the homology of a branched cover.}
\vspace{3mm}

\begin{enumerate}[leftmargin=0.75in]
\item[\textbf{Step 1.}] Write down a presentation for the fundamental group of the bridge sphere $\pi_1(S)$. We can always assume it has the form 
\begin{center}
    $\pi_1(S)=\langle x_0, x_1, \ldots , x_{2b-1} \;\ | \;\ x_0x_1 \cdots x_{2b-1}\rangle$. 
\end{center}

\item[\textbf{Step 2.}] Write down a presentation for the fundamental group of the punctured surface $\pi_1(\Sigma')$ in the unbranched cover $X'$. Label the basepoints of $\Sigma'$ as $P_1,P_2,\ldots, P_n$. We will actually get the presentation from a space homotopy equivalent to $X'$ formed by attaching edges from one vertex $V$ positioned disjoint from $X'$ to the basepoints and then attaching in some 2-cells following the following instructions. Choose a path $\gamma_j$ from $P_1$ to $P_j$ such that (1) $\gamma_j$ is a lift of a
word $\tau_j$ in the generators $x_i$ and (2) the union of the paths $\gamma_j$ is a tree $T$ with vertices $P_i$ and
with edges a subset of the $\widehat{x_i^j}$.  Each two cell fills the triangle $[P_i
, V, P_j ]$ with edges $e^{-1}_i,e_j,e^{-1}$
for each edge $e$ of
the tree $T$. Each \textbf{generator} $x_i^j$ has the form $\gamma_j * \widehat{x_i^j}*\gamma_{\rho(i)(j)}^{-1}.$ There are two types of \textbf{relations}. The first type is the claw relations corresponding to the 2-cells attached. The second type is dictated by $\rho,$ which will be worked out in detail in the next example.

\item[\textbf{Step 3.}]  Write down a presentation for the fundamental group of the closed surface $\pi_1(\Sigma)$ in the branched cover. This amounts to adding one relation for each
disjoint $k$-cycle in the permutation $\rho(x_i)$.

\item[\textbf{Step 4.}] By now, we are done at the central surface level. Next, take presentations of the group of a trivial tangle complements. We again go in stages to obtain presentations for the lift of these tangle complements to the unbranched cover and then to the branched cover.

\item[\textbf{Step 5.}] Determine a basis for each Lagrangian $L_i = \ker(\iota) \colon H_1(\Sigma) \rightarrow H_1(H_{\mu})$.

\item[\textbf{Step 6.}]
Apply \autoref{th:homology}. 
\end{enumerate}
\end{framed}


We now walk through an example in detail.

\begin{example}
    Consider the 6-bridge 4-section in \autoref{fig:S2_Spun_2}, where $\beta = \sigma_1^3$ using standard Artin generators  with the following homomorphism $\rho$ sending $x_0,x_1 \mapsto (1,2)$ and $x_2,x_3,\ldots, x_{11} \mapsto (1,3)$.

\begin{enumerate}[leftmargin=0.75in]

\item[\textbf{Step 1.}]  We start with a group presentation of the punctured sphere $\pi_1(S^2\backslash \{12 \;\ \text{points}\})=\langle x_0, x_1, \ldots , x_{11} \;\ | \;\ x_0x_1 \cdots x_{11}\rangle$. 

\item[\textbf{Step 2.}] The 3-sheeted cover is generated by $\lbrace x_0^1, x_1^1, \ldots , x_{11}^1, x_0^2, x_{1}^2, \ldots , x_{11}^2,x_{1}^3, \ldots , x_{11}^3\rbrace$. We can use $x_0^1=1$ and $x_2^1=1$ as claw relations. The remaining relations depend on $\rho$:
\begin{center}
$w_1=x_0^1x_1^2x_2^1x_3^3x_4^1x_5^3x_6^1x_7^3x_8^1x_9^3x_{10}^1x_{11}^3$

$w_2=x_0^2x_1^1x_2^2x_3^2x_4^2x_5^2x_6^2x_7^2x_8^2x_9^2x_{10}^2x_{11}^2$

$w_3=x_0^3x_1^3x_2^3x_3^1x_4^3x_5^1x_6^3x_7^1x_8^3x_9^1x_{10}^3x_{11}^1$
\end{center}
\noindent In conclusion, 
\begin{center}
    $\pi(\Sigma')=\langle x_0^1, x_1^1, \cdots , x_{11}^1, x_0^2, x_{1}^2, \cdots , x_{11}^2,x_{1}^3, \cdots , x_{11}^3 \;\ | \;\ x_0^1,x_2^1,w_1,w_2,w_3\rangle$.
\end{center}

\item[\textbf{Step 3.}] We now get the presentation for the closed central surface in the branched cover, which is a quotient of $\pi(\Sigma')$, where each disjoint cycle in $\rho(x_i)$ contributes an additional relation. The presentation of $\pi(\Sigma)$ we get from this method is 
\begin{align*}
    \langle x_0^1, x_1^1, \ldots , x_{12}^1, x_0^2, x_{1}^2, \ldots , x_{12}^2,x_{1}^3, \ldots , x_{12}^3 \;\ | \;\ x_0^1,x_2^1,w_1,w_2,w_3,x_0^1x_0^2,x_0^3,x_1^1x_1^2,x_1^3,x_j^1x_j^3x_j^2  \rangle,
\end{align*}
where $3\leq j \leq 11.$

\item[\textbf{Step 4.}] We start with presentations for the tangle complements via Wirtinger presentations applied to the quadruplane diagram.
\begin{align*}
\pi_1(B^3\backslash T_1) &= \langle x_0,\ldots,x_{11} \;\ | \;\ x_4x_{11},x_5x_6,x_7x_8,x_9x_{10},x_1x_2x_1x_2^{-1}x_1^{-1}x_3,x_0x_1x_2x_1x_2x_1^{-1}x_2^{-1}x_1^{-1} \rangle \\
\pi_1(B^3\backslash T_2) &= \langle x_0,\ldots,x_{11} \;\ | \;\ x_3x_{4},x_5x_6,x_7x_{10},x_8x_{9},x_1x_2x_1x_2^{-1}x_1^{-1}x_{11},x_0x_1x_2x_1x_2x_1^{-1}x_2^{-1}x_1^{-1} \rangle \\
\pi_1(B^3\backslash T_3) &= \langle x_0,\ldots,x_{11} \;\ | \;\ x_2x_{5},x_3x_4,x_7x_8,x_9x_{10},x_1x_6x_1x_6^{-1}x_1^{-1}x_{11},x_0x_1x_6x_1x_6x_1^{-1}x_6^{-1}x_1^{-1} \rangle \\
\pi_1(B^3\backslash T_4) &= \langle x_0,\ldots,x_{11} \;\ | \;\ x_3x_{10},x_4x_9,x_5x_8,x_6x_{7},x_1x_2x_1x_2^{-1}x_1^{-1}x_{11},x_0x_1x_2x_1x_2x_1^{-1}x_2^{-1}x_1^{-1} \rangle
\end{align*}

Now we lift these presentations to the unbranched cover dictated by $\rho$. For instance, consider the relation $x_1x_2x_1x_2^{-1}x_1^{-1}x_3$. This relation lifts to three relations. Let's demonstrate by following $\rho$ through the sheets. For example, if we start at sheet 1, we get
\begin{center}
$1 \overset{\rho(x_1)}{\mapsto} 2 \overset{\rho(x_2)}{\mapsto} 2\overset{\rho(x_1)}{\mapsto} 1 \overset{\rho(x_2^{-1})}{\mapsto} 3 \overset{\rho(x_1^{-1})}{\mapsto} 3 \overset{\rho(x_3)}{\mapsto} 1.$ 
\end{center}
Note that $\rho(x_i)$ is a transposition, so $\rho(x_i^{-1})= \rho(x_i)$. Thus, the corresponding lifted relation is
\begin{center}
$x_1^1x_2^{2}x_1^{2}(x_2^{3})^{-1}(x_1^{3})^{-1}x_3^{3}.$
\end{center}

Performing similar steps for the other relations, we obtain the following presentations for the four handlebodies in the spine of the closed manifold branched cover.
\begin{center}
$H_{\alpha} = \langle x_4^1, x_5^1, x_7^1, x_9^1 \rangle$

$H_{\beta} = \langle x_3^1, x_5^1, x_7^1, x_8^1 \rangle$

$H_{\gamma} = \langle x_3^1, x_6^1, x_7^1, x_9^1 \rangle$

$H_{\delta} = \langle x_3^1, x_4^1, x_5^1, x_6^1 \rangle$
\end{center}
This agrees with the fact that the 4-section of the central surface in the branched cover has genus 4.

\item[\textbf{Step 5.}] Now, we look at inclusion maps and Lagrangians. Abelianizing $\pi_1(\Sigma),$ we get that $H_1(\Sigma)$ has basis $\{x_3^1, x_4^1, x_5^1, x_6^1,x_7^1, x_8^1, x_9^1, x_{10}^1\}$. The output of the code displays the kernels of the inclusion maps $L_{\mu}$ as follows.
\begin{align*}
\text{Red} &= [
    (1, 0, 0, 0, 0, 0, 0, 0),
    (1, 0, 1, -1, 1, -1, 1, -1),
    (0, 0, 1, -1, 0, 0, 0, 0),
    (0, 0, 0, 0, 1, -1, 0, 0)
] \\
\text{Blue} &= [
    (1, -1, 0, 0, 0, 0, 0, 0),
    (0, 0, 1, -1, 0, 0, 0, 0),
    (0, 0, 0, 0, 1, 0, 0, -1),
    (0, 0, 0, 0, 0, 1, -1, 0)
] \\
\text{Green} &= [
    (1, -1, 0, 0, 0, 0, 0, 0),
    (0, 0, 1, 0, 0, 0, 0, 0),
    (1, -1, 1, 0, 1, -1, 1, -1),
    (0, 0, 0, 0, 1, -1, 0, 0)
] \\
\text{Purple} &= [
    (1, 0, 0, 0, 0, 0, 0, -1),
    (0, 1, 0, 0, 0, 0, -1, 0),
    (0, 0, 1, 0, 0, -1, 0, 0),
    (0, 0, 0, 1, -1, 0, 0, 0)
] 
\end{align*}

\item[\textbf{Step 6.}] The red matrix translates to 
\begin{center}
    $L_{\alpha}= \mathbb{Z}[x_3^1,x_3^1+x_5^1-x_6^1+x_7^1-x_8^1+x_9^1-x_{10}^1,x,x_5^1-x_6^1,x_7^1-x_8^1]$
\end{center}
 in compact form, for instance. In conclusion,  
the homology groups are $H_1 \cong H_4 \cong 0$ and $H_2\cong H_3\cong \mathbb{Z}$. Indeed, these are the homology groups of $S^2\times S^3.$
\end{enumerate}
\end{example}

\sloppy 
\printbibliography[title={References}]
\end{document}